\newlength{\mywidth}
\DeclareFontFamily{U}{tipa}{}
\DeclareFontShape{U}{tipa}{m}{n}{<->tipa10}{}
\newcommand{\arc@char}{{\usefont{U}{tipa}{m}{n}\symbol{62}}}%
\newcommand{\arc}[1]{\mathpalette\arc@arc{#1}}
\newcommand{\arc@arc}[2]{%
  \sbox0{$\m@th#1#2$}%
  \vbox{
    \hbox{\resizebox{\wd0}{\height}{\arc@char}}
    \nointerlineskip
    \box0
  }%
}
\newtheorem{thm}{Theorem}[section]
\newtheorem{prop}[thm]{Proposition}
\newtheorem{lem}[thm]{Lemma}
\newtheorem{cor}[thm]{Corollary}
\newtheorem*{cor*}{Corollary}
\theoremstyle{definition}
\newtheorem{definition}[thm]{Definition}
\newtheorem{example}[thm]{Example}
\newtheorem{notation}[thm]{Notation}
\newtheorem{thmx}{Theorem}
\DeclareMathOperator{\Int}{int}
\renewcommand*\env@matrix[1][\arraystretch]{%
  \edef\arraystretch{#1}%
  \hskip -\arraycolsep
  \let\@ifnextchar\new@ifnextchar
  \array{*\c@MaxMatrixCols c}}
\theoremstyle{remark}
\newtheorem{rem}[thm]{Remark}
\theoremstyle{question}
\newcommand{\R}{\mathbb{R}}  % The real numbers.
\newcommand{\D}{\mathbb{D}}
\newcommand{\C}{\mathbb{C}}  % The complex numbers.
\newcommand{\Z}{\mathbb{Z}}  % The integers numbers.
\newcommand{\N}{\mathbb{N}}  % The natural numbers.
\tikzset{
  mynode/.style={fill,circle,inner sep=1pt,outer sep=0pt}
}
\DeclareMathOperator{\dist}{dist}
\begin{document}

\title[Bers Slices in Families of Univalent Maps]{Bers Slices in Families of Univalent Maps}

\author[K. Lazebnik]{Kirill Lazebnik}
\address{Department of Mathematics, California Institute of Technology, Pasadena, California 91125, USA}
\email{lazebnik@caltech.edu }

\author[N. G. Makarov]{Nikolai G. Makarov}
\address{Department of Mathematics, California Institute of Technology, Pasadena, California 91125, USA}
\email{makarov@caltech.edu}

\author[S. Mukherjee]{Sabyasachi Mukherjee}
\address{School of Mathematics, Tata Institute of Fundamental Research, 1 Homi Bhabha Road, Mumbai 400005, India}
\email{sabya@math.tifr.res.in}

\date{\today}

\maketitle

\begin{abstract} We construct embeddings of Bers slices of ideal polygon reflection groups into the classical family of univalent functions $\Sigma$. This embedding is such that the conformal mating of the reflection group with the anti-holomorphic polynomial $z\mapsto\overline{z}^d$ is the Schwarz reflection map arising from the corresponding map in $\Sigma$. We characterize the image of this embedding in $\Sigma$ as a family of univalent rational maps. Moreover, we show that the limit set of every Kleinian reflection group in the closure of the Bers slice is naturally homeomorphic to the Julia set of an anti-holomorphic polynomial.
\end{abstract}

%\keywords{First keyword \and Second keyword \and More}
% \PACS{PACS code1 \and PACS code2 \and more}
%\subclass{MSC 30D05 \and MSC 37F10 \and 30D30}

\setcounter{tocdepth}{1}

\tableofcontents

\section{Introduction}
\label{Introduction}

In the 1980s, Sullivan proposed a dictionary between Kleinian groups and rational dynamics that was motivated by various common features shared by them \cite{Sul,SulMc}. However, the dictionary is not an automatic procedure to translate results in one setting to those in the other, but rather an inspiration for results and proof techniques. Several efforts to draw more direct connections between Kleinian groups and rational maps have been made in the last few decades (for example, see \cite{BP,McM95,LyMi,Pil03,BL20}). Amongst these, the questions of exploring dynamical relations between limit sets of Kleinian groups and Julia sets of rational maps, and binding together the actions of these two classes of conformal dynamical systems in the same dynamical plane play a central role in the current paper.

The notion of mating has its roots in the work of Bers on simultaneous uniformization of two Riemann surfaces. The simultaneous uniformization theorem allows one to mate two Fuchsian groups to obtain a quasiFuchsian group \cite{Bers60}. In the world of conformal dynamics, Douady and Hubbard introduced the notion of mating two polynomials to produce a rational map \cite{Dou83}. In each of these mating constructions, the key idea is to combine two ``similar'' conformal dynamical systems to produce a richer conformal dynamical system in the same class. Examples of ``hybrid dynamical systems'' that are \emph{conformal matings} of Kleinian reflection groups and anti-holomorphic rational maps (anti-rational for short) were constructed in \cite{LLMM1,LLMM2} as \emph{Schwarz reflection maps} associated with \emph{univalent rational maps}. Roughly speaking, this means that the dynamical planes of the Schwarz reflection maps in question can be split into two invariant subsets, on one of which the map behaves like an anti-rational map, and on the other, its grand orbits are equivalent to the grand orbits of a group. 

In the current paper, we further explore the aforementioned framework for mating Kleinian reflection groups with anti-rational maps, and show that all Kleinian reflection groups arising from (finite) circle packings satisfying a ``necklace'' condition can be mated with the anti-polynomial $\overline{z}^d$.  A \emph{necklace Kleinian reflection group} is the group generated by reflections in the circles of a finite circle packing whose contact graph is \emph{$2$-connected} and \emph{outerplanar}; i.e., the contact graph remains connected if any vertex is deleted, and has a face containing all the vertices on its boundary. The simplest example of a necklace Kleinian reflection group is given by reflections in the sides of a regular ideal $(d+1)$-gon in the unit disk $\D$ (see Definitions~\ref{necklace_schottky_group},~\ref{ideal_group}). This group, which we denote by $\pmb{\Gamma}_{d+1}$, can be thought of as a base point of the space of necklace groups generated by $(d+1)$ circular reflections. In fact, all necklace groups (generated by $(d+1)$ circular reflections) can be obtained by taking the closure of suitable quasiconformal deformations of $\pmb{\Gamma}_{d+1}$ in an appropriate topology. This yields the \emph{Bers compactification} $\overline{\beta(\pmb{\Gamma}_{d+1})}$ of the group $\pmb{\Gamma}_{d+1}$ (see Definitions~\ref{Bers_slice},~\ref{compactify_def}). 

To conformally mate a necklace group $\Gamma$ in $\overline{\beta(\pmb{\Gamma}_{d+1})}$ with an anti-polynomial, we associate a piecewise M{\"o}bius reflection map $\rho_\Gamma$ to $\Gamma$ that is \emph{orbit equivalent} to $\Gamma$ and enjoys Markov properties when restricted to the limit set (see Definition~\ref{reflection_map} and the following discussion). For $\pmb{\Gamma}_{d+1}$, the associated map $\rho_{\pmb{\Gamma}_{d+1}}$, restricted to its limit set, is topologically conjugate to the the anti-polynomial $\overline{z}^d$ on its Julia set. This yields our fundamental dynamical connection between a Kleinian limit set and a Julia set. Furthermore, the existence of the above topological conjugacy allows one to topologically glue the dynamics of $\rho_\Gamma$ on its ``filled limit set'' with the dynamics of $\overline{z}^d$ on its filled Julia set. In the spirit of the classical mating theory, it is then natural to seek a conformal realization of such a \emph{topological mating} (see Subsection~\ref{conformal_mating_subsec} for the definition of conformal mating). We remark that the aforementioned topological conjugacy is not quasisymmetric since it carries parabolic fixed points to hyperbolic fixed points, and hence, classical conformal welding techniques cannot be applied to construct the desired conformal matings. 

The definition of the map $\rho_\Gamma$ (in particular, the fact that it fixes the boundary of its domain of definition) immediately tells us that a conformal realization of the above topological mating must be an anti-meromorphic map defined on (the closure of) a simply connected domain fixing the boundary of the domain pointwise. A characterization of such maps now implies that such an anti-meromorphic map would be the Schwarz reflection map arising from a univalent rational map \cite[Lemma~2.3]{AS} (see Subsection~\ref{sigma_prelim_subsec} for the precise definitions). This observation leads us to the space $\Sigma_d^*$ of univalent rational maps. Indeed, the fact that each member $f$ of $\Sigma_d^*$ has an order $d$ pole at the origin translates to the fact that the associated Schwarz reflection map $\sigma_f$ has a super-attracting fixed point of local degree $d$ (note that $\overline{z}^d$ also has such a super-attracting fixed point in its filled Julia set). On the other hand, the space $\Sigma_d^*$ has a lot in common with the groups in the Bers compactification $\overline{\beta(\pmb{\Gamma}_{d+1})}$ too. In fact, for $f\in\Sigma_d^*$, the complement of $f(\D^*)$ resembles the bounded part of the fundamental domain of a necklace group in $\overline{\beta(\pmb{\Gamma}_{d+1})}$ (compare Figures~\ref{kleinian_cusp} and~\ref{extremal_qd_schwarz}). Using a variety of conformal and quasiconformal techniques, we prove that this resemblance can be used to construct a homeomorphism between the space $\Sigma_d^*$ of univalent rational maps and the Bers compactification $\overline{\beta(\pmb{\Gamma}_{d+1})}$, and the Schwarz reflection maps arising from $\Sigma_d^*$ are precisely the conformal matings of groups in $\overline{\beta(\pmb{\Gamma}_{d+1})}$ with the anti-polynomial $\overline{z}^d$.

\begin{thmx}\label{theorem_A} 
For each $f\in\Sigma_d^*$, there exists a unique $\Gamma_f \in \overline{\beta(\pmb{\Gamma}_{d+1})}$ such that the Schwarz reflection map $\sigma_f$ is a conformal mating of $\Gamma_f$ with $z\mapsto\overline{z}^d$. The map \begin{align} \Sigma_d^* \rightarrow \overline{\beta(\pmb{\Gamma}_{d+1})} \nonumber \\ f\mapsto\Gamma_f \phantom{asds} \nonumber \end{align} is a homeomorphism. 
\end{thmx}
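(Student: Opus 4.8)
\emph{Proof strategy.} I would break the theorem into four parts: (i) for each $f\in\Sigma_d^*$, the conformal mating of some $\Gamma_f\in\overline{\beta(\pmb\Gamma_{d+1})}$ with $\overline z^d$ exists and is realized by $\sigma_f$; (ii) $\Gamma_f$ is unique, so $f\mapsto\Gamma_f$ is well defined, and by the same rigidity it is injective; (iii) the map is surjective; (iv) it is bicontinuous. Throughout, the idea is to settle matters first on the ``nondegenerate'' locus of $\Sigma_d^*$ corresponding to the open Bers slice $\beta(\pmb\Gamma_{d+1})$, where quasiconformal surgery is available, and then to pass to $\partial\Sigma_d^*$ and $\partial\beta(\pmb\Gamma_{d+1})$ by a compactness argument.

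For existence on the nondegenerate locus, fix such an $f$. Its dynamical plane splits into the escaping set --- the immediate basin of the super-attracting fixed point of $\sigma_f$ of local degree $d$ at the pole, on which a B\"ottcher coordinate conjugates $\sigma_f$ to $\overline z^d$ --- and the non-escaping set $K(\sigma_f)$, on which the orbit-equivalence and Markov structure recorded after Definition~\ref{reflection_map} make the grand orbits those of a discrete reflection group. To extract the group, I would conjugate the first-return dynamics of $\sigma_f$ along $\partial K(\sigma_f)$ to the piecewise-M\"obius model $\rho_\Gamma$ of a necklace group $\Gamma$ by a quasiconformal map (available because $f$ lies in a quasiconformal deformation of $\pmb\Gamma_{d+1}$), spread the resulting Beltrami coefficient by the dynamics, solve the Beltrami equation, and take $\Gamma_f$ to be the group of the straightened model, normalized so that $\Gamma_f\in\beta(\pmb\Gamma_{d+1})$; verifying the two semiconjugacies in the definition of conformal mating (Subsection~\ref{conformal_mating_subsec}) on the two invariant pieces then shows $\sigma_f$ is the conformal mating of $\Gamma_f$ with $\overline z^d$.

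To pass to the boundary, take $f_n\to f$ with $f_n$ nondegenerate. The domains $\hat\C\setminus\overline{f_n(\D^*)}$ converge in the Carath\'eodory sense, so $\sigma_{f_n}\to\sigma_f$ locally uniformly, while the groups $\Gamma_{f_n}$ converge algebraically and geometrically to some $\Gamma_f\in\overline{\beta(\pmb\Gamma_{d+1})}$ by the definition of the Bers compactification; passing to the limit in the semiconjugacies --- the B\"ottcher coordinates and the tiles converge --- yields the mating. \emph{This is where the main difficulty lies}: the conjugacy between the limit set of $\Gamma_f$ and the Julia set of $\overline z^d$ is not quasisymmetric, since it sends parabolic fixed points to hyperbolic ones, so the limiting weld cannot be controlled by ordinary quasiconformal estimates; instead one must show that the relevant uniformizations satisfy a David (subexponentially integrable distortion) condition near the parabolic cusps and that this condition is stable under the limit. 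The same David-type measurable Riemann mapping theorem is needed for surjectivity, and proving that $\Sigma_d^*$ is closed in a suitable topology is a further technical point.

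For uniqueness and injectivity: if $\sigma_f$ is a conformal mating of both $\Gamma$ and $\Gamma'$, the two actions are conformally conjugate on Carath\'eodory-equivalent domains, hence M\"obius-conjugate, and the normalization of $\overline{\beta(\pmb\Gamma_{d+1})}$ forces $\Gamma=\Gamma'$; and if $\Gamma_f=\Gamma_g$, then the mating dynamics determines $\sigma_f$ up to M\"obius conjugacy, whence the normalization of $\Sigma_d^*$ gives $f=g$. For surjectivity, given $\Gamma\in\overline{\beta(\pmb\Gamma_{d+1})}$ one reverses the construction: form the topological mating of $\rho_\Gamma$ on its filled limit set with $\overline z^d$ on its filled Julia set using the topological conjugacy on the boundary (which persists from $\pmb\Gamma_{d+1}$ through deformations and limits), glue the standard structure on the $\overline z^d$-side to the $\Gamma$-invariant structure on the group side, and straighten by the David measurable Riemann mapping theorem to obtain an anti-meromorphic map of a simply connected domain fixing its boundary pointwise; by \cite[Lemma~2.3]{AS} this is $\sigma_f$ for a univalent rational $f\in\Sigma_d^*$ (the pole has order $d$ because the super-attracting point has local degree $d$), and $\Gamma_f=\Gamma$. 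Finally, $\overline{\beta(\pmb\Gamma_{d+1})}$ is compact, $\Sigma_d^*$ is compact once its closedness is established, and $f\mapsto\Gamma_f$ is continuous by the same Carath\'eodory-convergence argument as above; a continuous bijection between compact Hausdorff spaces is a homeomorphism, which finishes the proof.
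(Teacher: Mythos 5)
Your outline has the right global shape (existence, uniqueness/injectivity, surjectivity, continuity), but the route you propose has genuine gaps at the points that carry the whole weight of the theorem. First, the verification that $\sigma_f$ actually \emph{is} a conformal mating is treated as routine (``verifying the two semiconjugacies \dots then shows''), whereas this is the bulk of the work: one needs local connectivity of $\Lambda(\sigma_f)=\partial\mathcal{B}_\infty(\sigma_f)$ (Proposition~\ref{local_connectivity_prop}, proved via the expansion estimate of Lemma~\ref{domain_hyperbolicity}), the identity $\partial\mathcal{B}_\infty(\sigma_f)=\partial\mathcal{T}_\infty(\sigma_f)$ (Proposition~\ref{equality_prop}, via a Lefschetz fixed-point count), and above all the matching of laminations $\mathcal{E}_d\colon \mathbb{T}/\lambda(\Gamma_f)\to\mathbb{T}/\lambda(\sigma_f)$ (Proposition~\ref{laminations_correspond}), which in turn hinges on the label-preserving property of the uniformization $T(\Gamma_f)\to T(\sigma_f)$ and on the fact that the $0$-ray of $\sigma_f$ lands at the cusp with a prescribed label (Proposition~\ref{zero_ray_landing_cusp}, itself requiring connectedness of $\Sigma_d^*$, Proposition~\ref{sigma_d^*_connected_thm}). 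None of this appears in your proposal; without it the semiconjugacies $\psi_p$ and $\psi_\Gamma$ cannot be shown to fit together along the limit set. Relatedly, your uniqueness step (``conformally conjugate on Carath\'eodory-equivalent domains, hence M\"obius-conjugate'') needs a rigidity input such as Tukia's theorem for geometrically finite groups (used in Proposition~\ref{uniqueness}) and must keep track of labels, since without label-preservation uniqueness is simply false (see the example around Figure~\ref{non-uniqueness}).

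Second, your treatment of the degenerate locus and of surjectivity rests on an unproved David-surgery program: you assert that the limiting weld, and the straightening of the topological mating, require a David measurable Riemann mapping theorem, but you neither establish the subexponential distortion condition near the cusps nor its stability under limits; flagging it as ``the main difficulty'' does not discharge it. Moreover this machinery is unnecessary: the paper never welds across the limit set at all. The group $\Gamma_f$ is constructed \emph{statically and uniformly for all} $f\in\Sigma_d^*$ (including those with double points) by conformally uniformizing the droplet and realizing its contact combinatorics by a circle packing, then deforming quasiconformally within the Bers slice (Proposition~\ref{existence_of_group}); surjectivity uses only the classical MRMT because the nonstandard Beltrami coefficient is supported on the tiling set and is spread by the \emph{anti-conformal} dynamics, so its dilatation stays bounded. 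Your alternative of defining $\Gamma_f$ on the boundary as a limit of $\Gamma_{f_n}$ also has a well-definedness problem: independence of the approximating sequence is essentially the continuity statement itself, and Carath\'eodory convergence of $\sigma_{f_n}$ does not by itself control the boundary semiconjugacies. The paper instead proves continuity by comparing moduli of quadrilaterals cut out by the cusps (Lemma~\ref{quadrilaterals_lemma}, Theorem~\ref{homeo_thm}) together with the rigidity of Proposition~\ref{existence_of_group}; some such quantitative conformal invariant, rather than convergence of the dynamics, is what makes the limit group identifiable.
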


\noindent We remark that when $d=2$, both the spaces $\Sigma_d^*$ and $\overline{\beta(\pmb{\Gamma}_{d+1})}$ are singletons, and the conformal mating statement of Theorem \ref{theorem_A} is given in  \cite[Theorem~1.1]{LLMM1}. 

%\textcolor{blue}{The above theorem generalizes \cite[Theorem~1.1]{LLMM1}, where the conformal mating of the anti-polynomial $\overline{z}^2$ and the ideal triangle group $\pmb{\Gamma}_3$ was constructed as the Schwarz reflection map of the exterior of a deltoid curve. In fact, for $d=2$, both the spaces $\Sigma_d^*$ and $\overline{\beta(\pmb{\Gamma}_{d+1})}$ are singleton, so the homeomorphism statement of Theorem~\ref{theorem_A} is trivially true in that case.}

\begin{figure}[ht!]
{\includegraphics[width=1\textwidth]{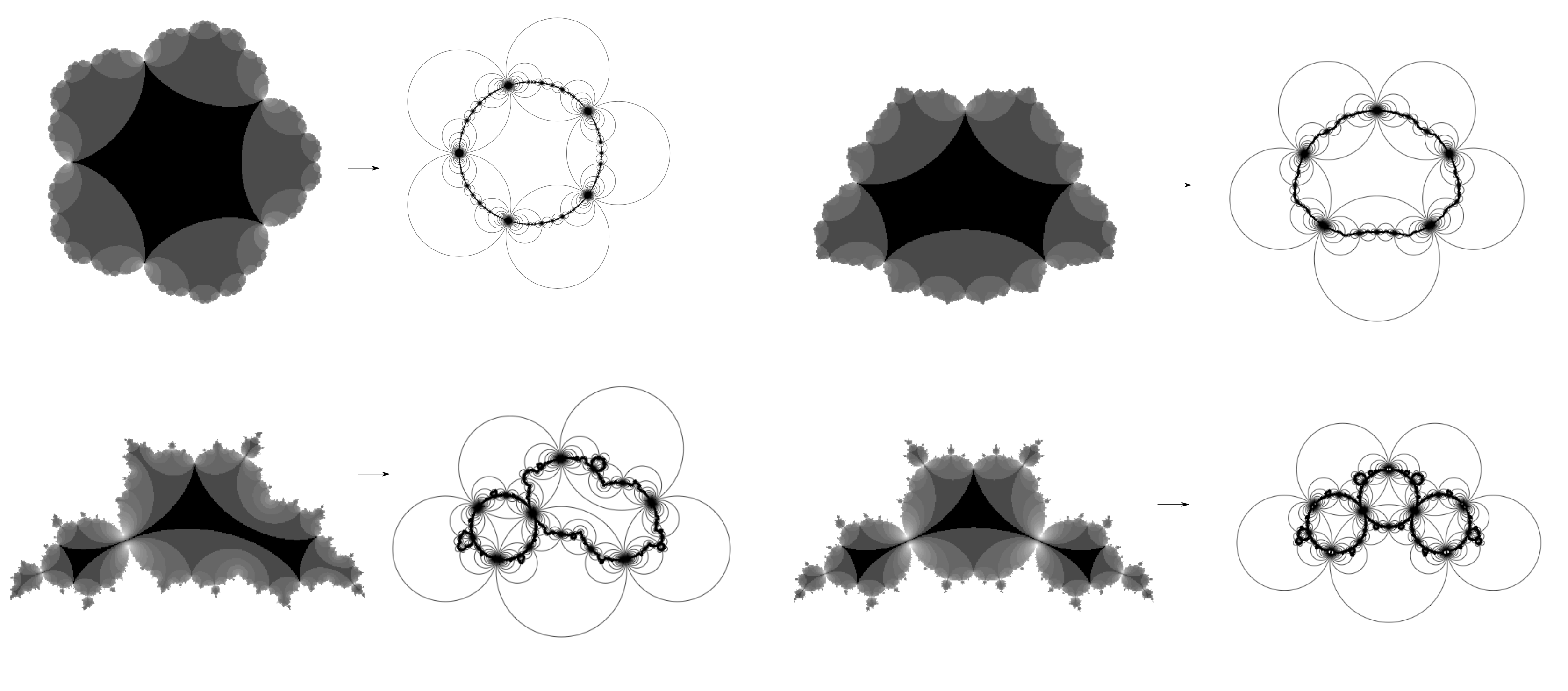}}
\caption{Illustrated is the mapping of Theorem \ref{theorem_A}. The dynamical planes of Schwarz reflection maps of elements in $\Sigma_4^*$ are illustrated next to the limit sets of the corresponding reflection groups in $\beta(\pmb{\Gamma}_{5})$. The top-left entry corresponds to the base points $z\mapsto z-1/(4z^4)$ and $\pmb{\Gamma}_{5}$ in $\Sigma_4^*$, $\beta(\pmb{\Gamma}_{5})$ respectively. The bottom-left and bottom-right dynamical planes lie on the boundary of the parameter spaces.}
\label{fig:circle_packing}
\end{figure}

It is worth mentioning that the homeomorphism between the parameter spaces appearing in Theorem~\ref{theorem_A} has a geometric interpretation. To see this, let us first note that just like the group $\pmb{\Gamma}_{d+1}$ is a natural base point in its Bers compactification, the map $f_0(z)=z-1/dz^d$ can be seen as a base point of $\Sigma_d^*$. In fact, the complement of $f_0(\D^*)$ is a $(d+1)$-gon that is conformally isomorphic to the (closure of the) bounded part of the fundamental domain of $\pmb{\Gamma}_{d+1}$. The pinching deformation technique for the family $\Sigma_d^*$, as developed in \cite{LMM1}, then shows that all other members of $\Sigma_d^*$ can be obtained from $f_0$ by quasiconformally deforming $\C\setminus f_0(\D^*)$ and letting various sides of this $(d+1)$-gon touch. Analogously, all groups in $\overline{\beta(\pmb{\Gamma}_{d+1})}$ can be obtained from $\pmb{\Gamma}_{d+1}$ by quasiconformally deforming the fundamental domain and letting the boundary circles touch (this also has the interpretation of pinching suitable geodesics on a $(d+1)$-times punctured sphere). This suggests that one can define analogues of \emph{Fenchel-Nielsen coordinates} on $\Int{\Sigma_d^*}$ and $\beta(\pmb{\Gamma}_{d+1})$ (the latter is just a real slice of the Teichm{\"u}ller space of $(d+1)$-times punctured spheres) using extremal lengths of path families connecting various sides of the corresponding $(d+1)$-gons. The homeomorphism of Theorem~\ref{theorem_A} is geometric in the sense that it respects these coordinates on $\Int{\Sigma_d^*}$ and $\beta(\pmb{\Gamma}_{d+1})$ (compare the proof of Theorem~\ref{homeo_thm}). 

We also note that the boundary of the Bers slice $\beta(\pmb{\Gamma}_{d+1})$ is considerably simpler than Bers slices of Fuchsian groups; more precisely, all groups on $\partial\beta(\pmb{\Gamma}_{d+1})$ are geometrically finite (or equivalently, cusps), and are obtained by pinching a special collection of curves on a $(d+1)$-times punctured sphere (see the last paragraph of Subsection~\ref{reflection_group_prelim_subsec}). It is this feature of the Bers slices of reflection groups that is responsible for continuity of the dynamically defined map from $\Sigma_d^*$ to $\overline{\beta(\pmb{\Gamma}_{d+1})}$. This should be contrasted with the usual Fuchsian situation where the natural map from one Bers slice to another typically does not admit a continuous extension to the Bers boundaries (see \cite{KeTh}).

We now turn our attention to the other theme of the paper. This is related to the parallel notion of \emph{laminations} that appears in the study of Kleinian groups and polynomial dynamics. The limit set of each group $\Gamma$ in $\overline{\beta(\pmb{\Gamma}_{d+1})}$ is topologically modeled as the quotient of the limit set of $\pmb{\Gamma}_{d+1}$ by a \emph{geodesic lamination} that is invariant under the reflection map $\rho_{\pmb{\Gamma}_{d+1}}$ (see Proposition~\ref{group_lamination_prop} and Remark~\ref{geod_lamination}). Due to the existence of a topological conjugacy between $\rho_{\pmb{\Gamma}_{d+1}}$ and $\overline{z}^d$, this geodesic lamination can be ``pushed forward'' to obtain a $\overline{z}^d$-invariant equivalence relation on the unit circle (such equivalence relations are known as polynomial laminations in holomorphic dynamics). Using classical results from holomorphic dynamics, we show that this $\overline{z}^d$-invariant equivalence relation is realized as the lamination of the Julia set of a degree $d$ anti-polynomial. This leads to our second main result.

\begin{thmx}\label{theorem_B}
Let $\Gamma\in\overline{\beta(\pmb{\Gamma}_{d+1})}$. Then there exists a critically fixed anti-polynomial $p$ of degree $d$ such that the dynamical systems  \begin{align} \rho_{\Gamma}: \Lambda(\Gamma) \rightarrow \Lambda(\Gamma), \nonumber \\ p: \mathcal{J}(p) \rightarrow \mathcal{J}(p) \nonumber \end{align} are topologically conjugate.
\end{thmx}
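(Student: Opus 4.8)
\emph{Overview.} The plan is to carry the lamination model of $\Lambda(\Gamma)$ furnished by Proposition~\ref{group_lamination_prop} across the conjugacy between $\rho_{\pmb{\Gamma}_{d+1}}$ and $\overline{z}^d$, obtaining a $\overline{z}^d$-invariant lamination on the circle, and then to realize that lamination as the combinatorial model of the Julia set of a critically fixed anti-polynomial by means of a Thurston-type rigidity argument. By Proposition~\ref{group_lamination_prop} (see also Remark~\ref{geod_lamination}), there is attached to $\Gamma$ a $\rho_{\pmb{\Gamma}_{d+1}}$-invariant geodesic lamination whose induced equivalence relation $\sim_\Gamma$ on $\Lambda(\pmb{\Gamma}_{d+1})$ is closed, has finitely many non-degenerate classes, each finite and the classes pairwise unlinked, and such that the quotient projection semiconjugates $\rho_{\pmb{\Gamma}_{d+1}}|_{\Lambda(\pmb{\Gamma}_{d+1})}$ to a map topologically conjugate to $\rho_\Gamma|_{\Lambda(\Gamma)}$. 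Moreover the description there exhibits $\sim_\Gamma$ as a lamination of ``critically fixed type'': the gaps on which $\rho_{\pmb{\Gamma}_{d+1}}$ acts with degree at least two are fixed and the induced dynamics fixes their critical points; this is the feature responsible for the critical fixedness appearing below.

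\emph{Transporting to the circle.} Let $\phi\colon\Lambda(\pmb{\Gamma}_{d+1})\to\partial\D$ be the conjugacy recalled in the introduction between $\rho_{\pmb{\Gamma}_{d+1}}|_{\Lambda(\pmb{\Gamma}_{d+1})}$ and $\overline{z}^d|_{\partial\D}$ (the Julia set of $\overline{z}^d$ being the unit circle). Pushing $\sim_\Gamma$ forward by $\phi$ gives a closed, $\overline{z}^d$-invariant equivalence relation $\approx$ on $\partial\D$ with finitely many non-degenerate classes, all finite and pairwise unlinked --- a $\overline{z}^d$-invariant lamination in the sense of Thurston and Kiwi, all of whose critical gaps are fixed by the above.

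\emph{Realization by an anti-polynomial.} I would then feed $\approx$ into the standard machinery attaching an anti-polynomial to an invariant lamination: the quotient $\overline{\D}/\!\approx$ is a pinched disk, the exterior disk $\D^*$ is untouched by $\approx$ and descends to a totally invariant simply connected domain (so that the realizing map is a polynomial rather than a proper rational map), and the self-map induced by $\overline{z}^d$ is a post-critically finite topological branched anti-cover of the sphere of degree $d$. Since all its critical points are fixed, this branched cover carries no Thurston obstruction, so Thurston's characterization --- in its anti-holomorphic form, or equivalently applied to the second iterate followed by extraction of an anti-holomorphic square root --- produces a critically fixed anti-polynomial $p$ of degree $d$ realizing it. As $p$ is post-critically finite its Julia set is locally connected, hence $\mathcal{J}(p)$ is the quotient of $\partial\D$ by the ray-equivalence relation of $p$; this relation is a $\overline{z}^d$-invariant lamination with the same non-degenerate classes as $\approx$, and rigidity of the combinatorial data forces it to equal $\approx$. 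Composing
\[
\Lambda(\Gamma)\ \cong\ \Lambda(\pmb{\Gamma}_{d+1})/\!\sim_\Gamma\ \xrightarrow{\ \overline{\phi}\ }\ \partial\D/\!\approx\ \cong\ \mathcal{J}(p)
\]
(the outer identifications being the conjugacies above and the middle one the homeomorphism induced by $\phi$) yields a topological conjugacy between $\rho_\Gamma$ on $\Lambda(\Gamma)$ and $p$ on $\mathcal{J}(p)$.

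\emph{Main obstacle.} The difficulty is concentrated in the realization step. One must verify that $\approx$ is an admissible invariant lamination and that the associated branched cover is genuinely unobstructed --- which is where the fact that all critical gaps are fixed enters, reducing the matter to the known absence of Thurston obstructions for critically fixed branched covers --- and, more subtly, one must promote the a priori semiconjugacy $\partial\D/\!\approx\to\mathcal{J}(p)$ to a homeomorphism, i.e.\ show that $\approx$ is exactly the lamination of $p$ and not a proper sub-relation. Transferring the classical polynomial statements to the anti-holomorphic category, and identifying which of the finitely many critically fixed anti-polynomials of degree $d$ occurs by matching the combinatorics of the necklace group (equivalently, of the pinched geodesics) with the Tischler graph of $p$, are the remaining points to be handled with care.
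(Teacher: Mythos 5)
Your overall strategy is the one the paper advertises in its introduction (push the group lamination forward through $\mathcal{E}_d$ to get a $\overline{z}^d$-invariant equivalence relation and realize it as the lamination of a critically fixed anti-polynomial), but it is not the route the paper actually takes, and as written it has gaps at precisely the decisive steps. First, the realization step is treated as a black box. The phrase ``Thurston's characterization in its anti-holomorphic form, or equivalently applied to the second iterate followed by extraction of an anti-holomorphic square root'' hides a real issue: Thurston-equivalence of $F\circ F$ to a rational map does not by itself yield an anti-rational representative of the orientation-reversing map $F$, so the ``equivalently'' needs an argument or a citation to an orientation-reversing version of the theorem; likewise ``the known absence of Thurston obstructions for critically fixed branched covers'' is itself a substantive statement that must be proved or properly cited, not an obvious fact. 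Second, even granting a critically fixed $p$ in the correct Thurston class, your final claim that the ray lamination of $p$ ``has the same non-degenerate classes as $\approx$'' and that ``rigidity of the combinatorial data forces it to equal $\approx$'' is exactly the content of the theorem, not a consequence of Thurston equivalence; you flag it as an obstacle but offer no mechanism for it. A smaller inaccuracy: the pushed-forward relation has countably infinitely many non-degenerate classes (all iterated preimages of the finitely many double-point classes, cf.\ Proposition~\ref{group_lamination_prop3}), not finitely many; only the set of classes up to the dynamics is finite, and this matters when you invoke lamination-realization machinery.

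For comparison, the paper sidesteps both difficulties by routing everything through $\Sigma_d^*$: by Theorem~\ref{theorem_A} one picks $f$ with $\Gamma_f=\Gamma$; one attaches to $f$ the abstract angled tree $(\mathcal{T}(f),\mathrm{deg},\angle)$ of Example~\ref{tree_to_f} and invokes Poirier's realization theorem for (orientation-reversing) angled tree maps to produce the critically fixed anti-polynomial $p_f$ directly from its angled Hubbard tree (Proposition~\ref{existence_poly_prop}), so no Thurston obstruction analysis or anti-holomorphic square root is needed. The identification of laminations is then done by hand rather than by appeal to rigidity: a Lefschetz count pins down all fixed points of $p_f$ (Proposition~\ref{cut_point_structure}), Lemma~\ref{cut_point_sara_cro} shows non-cut repelling fixed points receive a single fixed ray while cut points receive a period-two pair, and the observation that $m_{-d}$ has exactly one $2$-cycle in each pair of non-adjacent intervals between its fixed angles, combined with the angled-tree isomorphism and a choice of B{\"o}ttcher normalization, forces $\lambda(p_f)=\lambda(\sigma_f)$ (Proposition~\ref{laminations_correspond2}); since both laminations are generated by these finitely many classes (Proposition~\ref{schwarz_lamination_prop} and its analogue for $p_f$), this gives the conjugacy $\mathcal{J}(p_f)\cong\Lambda(\sigma_f)$, which is then composed with the conjugacy $\Lambda(\sigma_f)\cong\Lambda(\Gamma)$ of Proposition~\ref{laminations_correspond}. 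If you want to pursue your direct route, you would need to supply exactly these missing ingredients: a genuine orientation-reversing realization statement, an unobstructedness proof (or citation) for the critically fixed model map, and a combinatorial argument upgrading Thurston equivalence to equality of laminations.
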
 

We remark that the proof proceeds by showing that the systems in Theorem \ref{theorem_B} are both topologically conjugate to the Schwarz reflection map of an appropriate element of $\Sigma_d^*$ acting on its limit set (see Theorem \ref{theorem_B_modified}). This implies, in particular, that all the three fractals; namely, the Julia set of the anti-polynomial $p$, the limit set of the necklace group $\Gamma$, and the limit set of the Schwarz reflection map of an appropriate element of $\Sigma_d^*$, are homeomorphic. However, the incompatibility of the structures of cusp points on these fractals imply that they are not quasiconformally equivalent; i.e., there is no global quasiconformal map carrying one fractal to another (compare Figures~\ref{kleinian_cusp},~\ref{extremal_qd_schwarz}, and~\ref{fig:cubic_crit_fixed}). Theorem~\ref{theorem_B} plays an important role in the recent work \cite{LMMN}, where limit sets of necklace reflection groups are shown to be conformally removable. One of the main steps in the proof is to show that the topological conjugacy between $p\vert_{\mathcal{J}(p)}$ and $\rho_{\Gamma}\vert_{\Lambda(\Gamma)}$ (provided by Theorem~\ref{theorem_B}) can be extended to a David homeomorphism of the sphere.

%analyzes of the fine structure of the circle homeomorphism that conjugates $\overline{z}^d$ to $\rho_{\pmb{\Gamma}_{d+1}}$

%\begin{thmx}\label{theorem_B} 
%Let $f\in\Sigma_d^*$. Denote by $\sigma_f$ and $\Gamma_f$ the associated Schwarz reflection map and Kleinian reflection group determined by Definition \ref{Schwarz_reflection} and Theorem \ref{theorem_A}. Then, there exists a critically fixed anti-polynomial $p_f$ of degree $d$ such that the dynamical systems \begin{align} \sigma_f: \Lambda(\sigma_f) \rightarrow \Lambda(\sigma_f), \nonumber \\ \rho_{\Gamma_f}: \Lambda(\Gamma_f) \rightarrow \Lambda(\Gamma_f), \nonumber \\  p_f: \mathcal{J}(p_f) \rightarrow \mathcal{J}(p_f) \nonumber \end{align} are pairwise topologically conjugate. \end{thmx}

Let us now briefly outline the organization of the paper. Section~\ref{Preliminaries} collects fundamental facts and known results about the objects studied in the paper. More precisely, in Subsection~\ref{sigma_prelim_subsec}, we recall the definitions of the basic dynamical objects associated with the space $\Sigma_d^*$ of univalent rational maps. Subsection~\ref{reflection_group_prelim_subsec} introduces the class of reflection groups that will play a key role in the paper. Here we define the Bers slice $\beta(\pmb{\Gamma}_d)$ of the regular ideal polygon reflection group $\pmb{\Gamma}_d$ (following the classical construction of Bers slices of Fuchsian groups), and describe its compactification $\overline{\beta(\pmb{\Gamma}_d)}$ in a suitable space of discrete, faithful representations. To each reflection group $\Gamma\in \overline{\beta(\pmb{\Gamma}_d)}$, we then associate the reflection map $\rho_\Gamma$ that is \emph{orbit equivalent} to the group (this mimics a construction of Bowen and Series \cite{BoSe}). Using the reflection group $\rho_\Gamma$, we formalize the notion of \emph{conformal mating} of a reflection group and an anti-polynomial in Subsection~\ref{conformal_mating_subsec}. Section~\ref{homeomorphism} proves half of Theorem~\ref{theorem_A}; here we prove that there is a natural homeomorphism between the spaces $\Sigma_d^*$ and $\overline{\beta(\pmb{\Gamma}_d)}$. We should mention that the results of Section~\ref{homeomorphism} depend on some facts about the space $\Sigma_d^*$ (and the associated Schwarz reflection maps) whose proofs are somewhat technical and hence deferred to Section~\ref{Conformal_mating}. A recurring difficulty in our study is the unavailability of normal family arguments since Schwarz reflection maps are not defined on all of $\widehat{\mathbb{C}}$. After proving some preliminary results about the topology of the limit set of a Schwarz reflection map arising from $\Sigma_d^*$ in Subsections~\ref{local_connectivity} and~\ref{The Limit Set is the boundary of the Tiling set}, we proceed to the proofs of the statements about $\Sigma_d^*$ that are used in Section~\ref{homeomorphism} (more precisely, Lemma~\ref{fixed_point_ray_lem}, Proposition~\ref{sigma_d^*_connected_thm}, and~{\ref{zero_ray_landing_cusp}). The rest of Section~\ref{Conformal_mating} is devoted to the proof of the conformal mating statement of Theorem~\ref{theorem_A}. This completes the proof of our first main theorem. Finally, in Section~\ref{sullivan_sec}, we use the theory of Hubbard trees for anti-holomorphic polynomials to prove Theorem~\ref{theorem_B}.

\vspace{2mm}

\noindent\textbf{Acknowledgements.} The third author was supported by an endowment from Infosys Foundation.

\section{Preliminaries}
\label{Preliminaries}

\begin{notation} We denote by $\mathbb{D}^*$ the exterior unit disc $\widehat{\mathbb{C}}\setminus\overline{\mathbb{D}}$. The Julia set of a holomorphic or anti-holomorphic polynomial $p:\widehat{\mathbb{C}}\rightarrow\widehat{\mathbb{C}}$ will be denoted by $\mathcal{J}(p)$, and its filled Julia set by $\mathcal{K}(p)$. 

%The limit set of a group $\Gamma$ of automorphisms of $\widehat{\mathbb{C}}$ will be denote by $\Lambda(\Gamma)$, and its ordinary set by $\Omega(\Gamma)$.
\end{notation}

\subsection{The Space $\Sigma_d^*$ and Schwarz Reflection Maps}\label{sigma_prelim_subsec}

\begin{definition}\label{Sigma_d} We will denote by $\Sigma_d^*$ the following class of rational maps:

\[ \Sigma_d^* := \left\{ f(z)= z+\frac{a_1}{z} + \cdots +\frac{a_d}{z^d} : a_d=-\frac{1}{d}\textrm{ and } f|_{\mathbb{D}^*} \textrm{ is conformal.}\right\}. \]

\end{definition}

Note that for each $d\geq2$, the space $\Sigma_d^*$ can be regarded as a slice of the space of schlicht functions:
$$\Sigma := \left\{ f(z)= z+\frac{a_1}{z} + \cdots +\frac{a_d}{z^d}+\cdots :\ f\vert_{\D^*} \textrm{ is conformal}\right\}.$$

We endow $\Sigma_d^*$ with the topology of coefficient-wise convergence. Clearly, this topology is equivalent to that of uniform convergence on compact subsets of $\D^*$. 

\begin{definition}\label{Schwarz_reflection} Given $f\in\Sigma_d^*$, we define the associated \emph{Schwarz reflection map} $\sigma_f:f(\D^*)\to\widehat{\C}$ by the following diagram:

\[
  \begin{tikzcd}
    \mathbb{D}^* \arrow{r}{z\mapsto1/\bar{z}} & \mathbb{D} \arrow{d}{f} \\
     f(\mathbb{D}^*) \arrow{r}{\sigma_f} \arrow{u}{f^{-1}}  & \widehat{\mathbb{C}}
  \end{tikzcd}
\] 
\end{definition}

The map $\sigma_f:\sigma_f^{-1}(f(\D^*))\to f(\D^*)$ is a proper branched covering map of degree $d$ (branched only at $\infty$), and $\sigma_f:\sigma_f^{-1}(\Int{f(\D^*)^c})\to\Int{f(\D^*)^c}$ is a degree $(d+1)$ covering map. 

We also note that $\infty$ is a super-attracting fixed point of $\sigma_f$; more precisely, $\infty$ is a fixed critical point of $\sigma_f$ of multiplicity $(d-1)$. 

 \begin{definition} Let $f\in\Sigma_d^*$. We define the \emph{basin of infinity} for $\sigma_f$ as  \[ \mathcal{B}_\infty(\sigma_f) := \{ z \in \widehat{\mathbb{C}} : \sigma_f^{\circ n}(z) \xrightarrow{n\rightarrow\infty} \infty \}.  \] \end{definition}
 
\begin{rem}\label{bottcher_coordinate} 
Let $f\in\Sigma_d^*$. Since $\sigma_f$ has no critical point other than $\infty$ in $\mathcal{B}_\infty(\sigma_f)$, the proof of \cite[Theorem~9.3]{MR2193309} may be adapted to show the existence of a \emph{B\"ottcher coordinate} for $\sigma_f$: a conformal map \begin{equation}\label{bottcher_conjugacy} \phi_{\sigma_f}: \mathbb{D}^* \rightarrow \mathcal{B}_\infty(\sigma_f) \textrm{ such that } \phi_{\sigma_f}^{-1} \circ \sigma_f \circ \phi_{\sigma_f}(u)= \overline{u}^d,\ \forall\ u\in\D^*.\end{equation} Since \[\sigma_f(z)=-\frac{\overline{z}^d}{d}+O(\overline{z}^{d-1})\ \textrm{as}\ z\to\infty,
\] we may choose $\phi_{\sigma_f}$ such that \begin{align}\label{bot_norm}\phi_{\sigma_f}'(\infty)=d^{\frac{1}{d-1}}e^{\frac{i\pi}{d+1}}.\end{align} As in  \cite[Theorem~9.3]{MR2193309}, any B\"ottcher coordinate for $\sigma_f$ is unique up to multiplication by a $d+1^{\textrm{st}}$ root of unity. Thus, (\ref{bot_norm}) determines a \emph{unique} B\"ottcher coordinate $\phi_{\sigma_f}$ which we will henceforth refer to as the B\"ottcher coordinate for $\sigma_f$.
\end{rem}

The set $\widehat{\mathbb{C}}\setminus f(\mathbb{D}^*)$ is called the \emph{droplet}, or \emph{fundamental tile}, and is denoted by $T(\sigma_f)$. By \cite[Proposition~2.8]{LMM1} and \cite[Lemma~2.4]{2014arXiv1411.3415L}, the curve $\partial T=f(\mathbb{T})$ has $(d+1)$ distinct cusps and at most $(d-2)$ double points.The \emph{desingularized droplet} $T^o(\sigma_f)$ is defined as \[ T^o(\sigma_f) := T(\sigma_f) \setminus \left\{ \zeta: \zeta \textrm{ is a cusp or double point of } f(\mathbb{T}) \right\}. \] 

\begin{definition}\label{tiling_set_def}
The \emph{tiling set} $\mathcal{T}_\infty(\sigma_f)$ is defined as:

\[ \mathcal{T}_\infty(\sigma_f) := T^o(\sigma_f) \cup \left\{ z\in\widehat{\mathbb{C}} : \sigma_f^{\circ n}(z) \in T^o(\sigma_f) \textrm{ for some } n\geq1 \right\}. \] Lastly, we define the \emph{limit set} of $\sigma_f$ by $\Lambda(\sigma_f):=\partial \mathcal{T}_\infty(\sigma_f)$.
\end{definition}

For more details on the space $\Sigma_d^*$ and the associated Schwarz reflection maps, we refer the readers to \cite{LMM1}.

\subsection{Reflection Groups and the Bers Slice}\label{reflection_group_prelim_subsec}

%\subsubsection{Kleinian Reflection Groups}\label{reflection_group_def_subsec}

\begin{notation} We denote by $\textrm{Aut}^\pm(\widehat{\C})$ be the group of all M{\"o}bius and anti-M{\"o}bius automorphisms of $\widehat{\C}$. \end{notation}

\begin{definition}\label{Kleinian_reflection_group} 
A discrete subgroup $\Gamma$ of $\textrm{Aut}^\pm(\widehat{\C})$ is called a \emph{Kleinian reflection group} if $\Gamma$ is generated by reflections in finitely many Euclidean circles.

%A discrete subgroup of $\Gamma$ of $\textrm{Aut}^\pm(\widehat{\C})$ generated by reflections in finitely many Euclidean circles is called a \emph{Kleinian reflection group}.
\end{definition}

\begin{rem}\label{discreteness_rem} For a Euclidean circle $C$, consider the upper hemisphere $S\subset\mathbb{H}^3:=\{(x,y,t)\in\R^3: t>0\}$ such that $\partial S\cap\partial\mathbb{H}^3= C$. Reflection in the Euclidean circle $C$ extends naturally to reflection in $S$, and defines an orientation-reversing isometry of $\mathbb{H}^3$.  Hence, a Kleinian reflection $\Gamma$ group can be thought of as a $3$-dimensional hyperbolic reflection group. 

Since a Kleinian reflection group is discrete, by \cite[Part~II, Chapter~5, Proposition~1.4]{Vin1}, we can choose its generators to be reflections in Euclidean circles $C_1,\cdots,C_d$ such that: \begin{align}\label{condition_2} \tag{$\star$} 
\textrm{ For each } i\textrm{, the closure of the bounded component of }\widehat{\mathbb{C}}\setminus C_i \textrm{ does not contain any other }C_j. \end{align}

%\textrm{ For each } i\textrm{, the closure of the bounded component of }\widehat{\mathbb{C}}\setminus C_i \textrm{ does not contain any other }C_j. \end{align}

%By \cite[Part~II, Chapter~5, Theorem~1.2, Proposition~1.4]{Vin1}, a reflection group $\Gamma\leq\textrm{Aut}^\pm(\widehat{\C})$ is discrete if and only if it can be generated by reflections in Euclidean circles $C_1,\cdots,C_{d}$ such that
%\begin{itemize}
%\item[($\star$)] for each $i$, the closure of the bounded component of $\widehat{\mathbb{C}}\setminus C_i$ does not contain any other $C_j$, and 

%\item[($\star\star$)] if $C_i$ and $C_j$ ($i\neq j$) intersect, they do so tangentially or at an integral submultiple of $\pi$.
%\end{itemize}

\noindent We will always assume that a chosen generating set for a Kleinian reflection group $\Gamma$ satisfies Conditions~($\star$). 
\end{rem}

%If the cardinality of such a generating set is $d$, the rank of the group $\Gamma$ is $d$.

\begin{definition}\label{limit_regular_def} Let $\Gamma$ be a Kleinian reflection group. The \emph{domain of discontinuity} of $\Gamma$, denoted $\Omega(\Gamma)$, is the maximal open subset of $\widehat{\C}$ on which the elements of $\Gamma$ form a normal family. The \emph{limit set} of $\Gamma$, denoted by $\Lambda(\Gamma)$, is defined by $\Lambda(\Gamma):=\widehat{\mathbb{C}}\setminus\Omega(\Gamma)$. 

%1) The \emph{domain of discontinuity} of a Kleinian reflection group $\Gamma$ is defined as the maximal open subset on which the action of $\Gamma$ is properly discontinuous (or equivalently, the maximal open subset of $\widehat{\C}$ on which the elements of $\Gamma$ form a normal family). It is denoted by $\Omega(\Gamma)$.

%2) The complement of the domain of discontinuity of $\Gamma$ in $\widehat{\C}$ is called the \emph{limit set} of $\Gamma$. It is denoted by $\Lambda(\Gamma)$.
\end{definition}

\begin{figure}[ht!]
\includegraphics[width=0.45\textwidth]{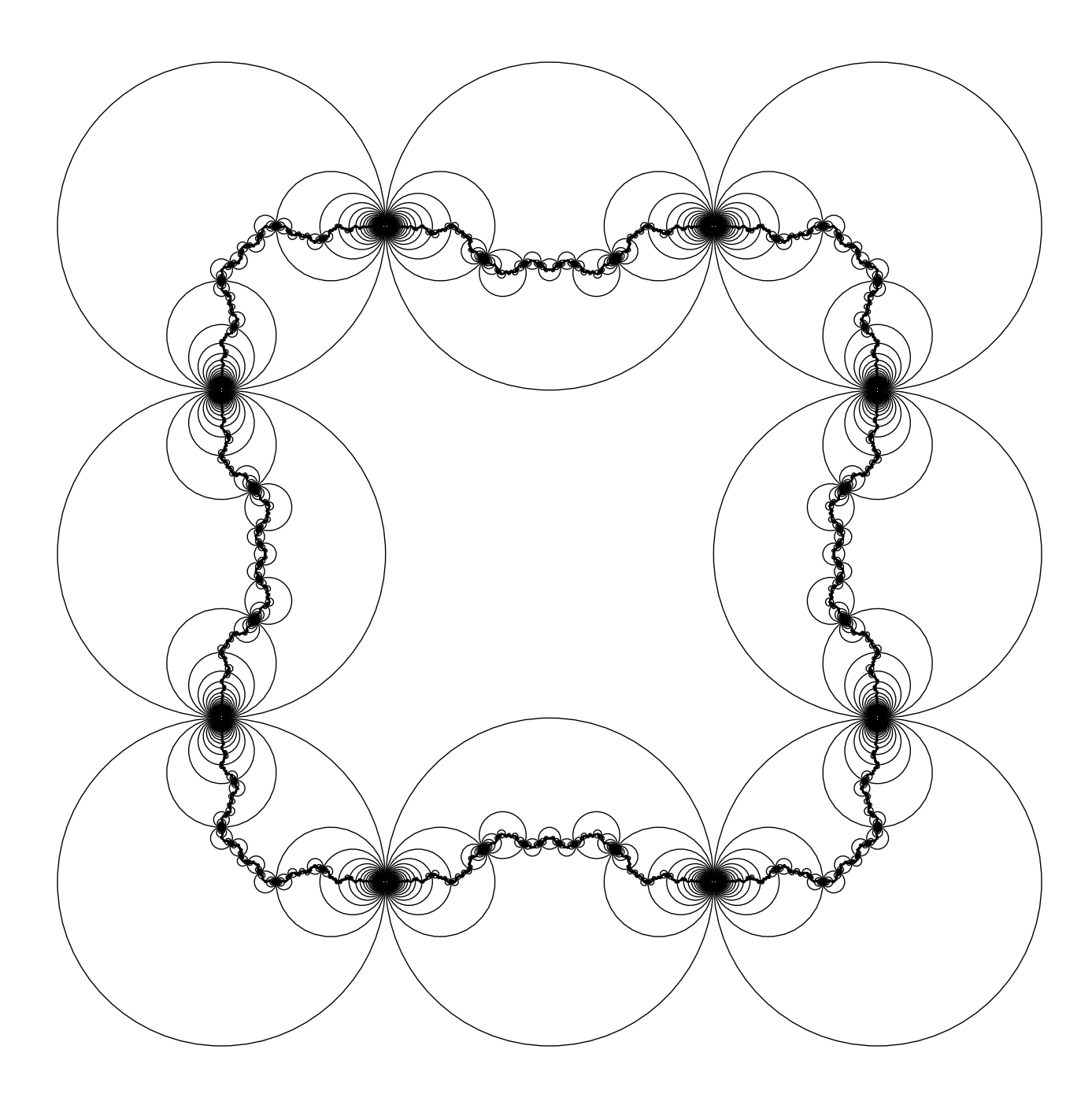}\ \includegraphics[width=0.45\textwidth]{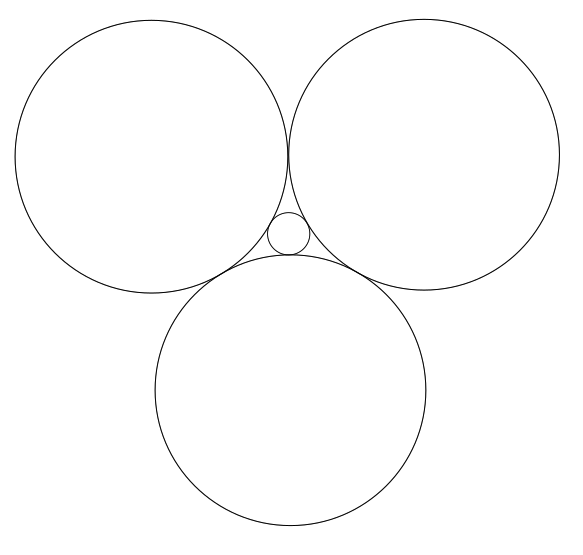}
\caption{On the left is an interior necklace group, but the group generated by the circles pictured on the right violates condition (2) of Definition \ref{necklace_schottky_group}.  }
\label{fig:necklace_schottky_group}
\end{figure}

For a Euclidean circle $C$, the bounded complementary component of $C$ will be called the \emph{interior} of $C$, and will be denoted by $\Int{C}$.

\begin{definition}\label{necklace_schottky_group} 
Let $\Gamma$ be a Kleinian reflection group. We say $\Gamma$ is a \emph{necklace group} (see Figure \ref{fig:necklace_schottky_group}) if it can be generated by reflections in Euclidean circles $C_1,\cdots, C_d$ such that: 
\begin{enumerate} 
\item each circle $C_i$ is tangent to $C_{i+1}$ (with $i+1$ taken mod $d$), 

\item the boundary of the unbounded component of $\widehat{\mathbb{C}}\setminus\cup_iC_i$ intersects each $C_i$, and 

\item the circles $C_i$ have pairwise disjoint interiors. 
\end{enumerate} 
If, furthermore, $C_{i-1}$ and $C_{i+1}$ are the only circles to which any $C_i$ is tangent, then $\Gamma$ is an \emph{interior necklace group}. 
\end{definition}

\begin{rem} In Definition~\ref{necklace_schottky_group}, Condition (2) ensures that each circle $C_i$ is ``seen'' from $\infty$ - see Figure \ref{fig:necklace_schottky_group}. When choosing a generating set for a necklace group, we always assume the generating set is chosen so as to satisfy Conditions (1)-(3), and the circles $C_1,\cdots, C_d$ are labelled clockwise around $\infty$. We note that a necklace group $\Gamma$ generated by reflections in $d$ Euclidean circles is isomorphic to the free product of $d$ copies of $\Z/2\Z$. 
 \end{rem}

\begin{notation} Given a necklace group $\Gamma$ with generating set given by reflections in circles $C_1,\cdots, C_d$,  let  \[ \mathcal{F}_\Gamma:=\displaystyle\widehat{\C}\setminus\left(\bigcup_{i=1}^{d}(\Int{C_i}\cup\{ C_j \cap C_i : j\not =i \}) \right).  \]  \end{notation}

\begin{prop}\label{fund_dom_prop}
Let $\Gamma$ be a necklace group. Then $\mathcal{F}_\Gamma$ is a fundamental domain for $\Gamma$.
\end{prop}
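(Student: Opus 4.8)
The plan is to verify the two defining properties of a fundamental domain for a group generated by reflections: that no two points of $\mathcal{F}_\Gamma$ are $\Gamma$-equivalent (so distinct $\Gamma$-translates of $\mathcal{F}_\Gamma$ are disjoint, at least on interiors), and that the $\Gamma$-translates of $\overline{\mathcal{F}_\Gamma}$ cover $\widehat{\C}$. Since $\Gamma$ is generated by reflections $r_1,\dots,r_d$ in circles $C_1,\dots,C_d$ with pairwise disjoint interiors, this is the classical setup of a reflection group acting on a region bounded by hyperplanes (here, circles/geodesic planes in $\mathbb{H}^3$), and the natural tool is the Poincar\'e polyhedron theorem or, more elementarily, a direct ping-pong argument adapted to the tangency configuration.

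First I would set up the combinatorial picture: let $D_i := \Int{C_i}$ be the open disk bounded by $C_i$, so that $\mathcal{F}_\Gamma = \widehat{\C} \setminus \bigcup_i \overline{D_i}$ after accounting for the removed tangency points (the set $\{C_j\cap C_i\}$ consists of finitely many points, the tangency points, lying on the $C_i$; removing them from the closed disks is a harmless adjustment that does not affect the measure-theoretic or topological content). The key geometric input is Condition~$(\star)$ from Remark~\ref{discreteness_rem} together with the disjoint-interiors Condition (3) of Definition~\ref{necklace_schottky_group}: the circles $C_i$ bound disjoint open disks, and $\mathcal{F}_\Gamma$ is the common exterior. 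I would then run the standard ping-pong / induction-on-word-length argument: for a reduced word $w = r_{i_1} r_{i_2} \cdots r_{i_n}$ with $i_k \neq i_{k+1}$, show by induction on $n$ that $w(\mathcal{F}_\Gamma) \subset \overline{D_{i_1}}$. The inductive step uses that $r_{i}$ maps the exterior of $C_i$ into $\overline{D_i}$, and that for $j \neq i$ the disk $\overline{D_j}$ (minus tangency point if $C_i, C_j$ are tangent) lies in the exterior of $C_i$, which follows from disjointness of interiors. This shows $w(\mathcal{F}_\Gamma) \cap \mathcal{F}_\Gamma = \emptyset$ for every nontrivial $w$, giving the tiling (disjointness) property; the tangency points require care since they are shared, but they form a set that is swept out correctly once one checks that each tangency point $C_i \cap C_j$ is a parabolic fixed point whose orbit is handled by the group generated by $r_i, r_j \cong \Z/2\Z * \Z/2\Z$.

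For the covering property, I would argue that $\bigcup_{\gamma \in \Gamma} \gamma(\overline{\mathcal{F}_\Gamma}) = \Omega(\Gamma)$ and that this union together with $\Lambda(\Gamma)$ is all of $\widehat\C$; concretely, given any $z \in \Omega(\Gamma)$, if $z \notin \overline{\mathcal{F}_\Gamma}$ then $z \in D_i$ for some $i$, apply $r_i$ to move it out of $\overline{D_i}$, and iterate — the point is that this process terminates (or the accumulation point lies in $\Lambda(\Gamma)$), which follows because each application of a generator strictly increases hyperbolic distance in $\mathbb{H}^3$ from a fixed basepoint, exactly as in the proof that the orbit of a Schottky-type configuration tiles. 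Alternatively, and perhaps more cleanly, one invokes the Klein–Maskit combination theorem or Poincar\'e's theorem directly: the circles $C_i$ with the tangency pattern give a polyhedron in $\mathbb{H}^3$ with the reflections as side-pairings satisfying the (degenerate, parabolic) edge-cycle conditions, so Poincar\'e's theorem immediately yields that $\overline{\mathcal{F}_\Gamma}$ (lifted to $\mathbb{H}^3$) is a fundamental domain, and intersecting with $\partial\mathbb{H}^3$ gives the statement.

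The main obstacle I anticipate is the bookkeeping around the tangency points: because the $C_i$ are mutually tangent (rather than disjoint, as in a genuine Schottky group), $\mathcal{F}_\Gamma$ is not relatively compact in $\Omega(\Gamma)$, its closure meets $\Lambda(\Gamma)$ at the parabolic points, and the ping-pong argument has equality $w(\mathcal{F}_\Gamma) \subset \overline{D_{i_1}}$ with boundary contact rather than strict containment. One must check that the tangency points are not double-counted — i.e., that each parabolic fixed point has a well-defined orbit under $\Gamma$ meeting $\overline{\mathcal{F}_\Gamma}$ in the prescribed (already-removed) set — which is why the definition of $\mathcal{F}_\Gamma$ excises the points $C_j \cap C_i$ in the first place. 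I would handle this by treating the rank-one parabolic subgroups $\langle r_i, r_j\rangle$ generated by tangent pairs separately and noting that their action on the common tangent point is the standard $\mathbb{Z}/2 * \mathbb{Z}/2$ action on a horoball, so no extra identifications on $\overline{\mathcal{F}_\Gamma}$ are introduced beyond those already accounted for.
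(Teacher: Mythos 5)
Your closing ``alternatively'' is precisely the paper's proof: the paper considers the convex hyperbolic polyhedron $\mathcal{P}_\Gamma\subset\mathbb{H}^3$ bounded by the hemispheres $S_i$ over the circles $C_i$ (as in Remark~\ref{discreteness_rem}), quotes the reflection-group form of Poincar\'e's theorem \cite[Part~II, Chapter~5, Theorem~1.2]{Vin1} to conclude that $\mathcal{P}_\Gamma$ is a fundamental domain for $\Gamma$ acting on $\mathbb{H}^3$, and then passes to the sphere via $\mathcal{F}_\Gamma=\overline{\mathcal{P}_\Gamma}\cap\Omega(\Gamma)$, citing \cite[\S 3.5]{Marden} for the fact that this intersection is a fundamental domain for the action on $\Omega(\Gamma)$. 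Your primary, more elementary route (ping-pong on the disks plus a pull-back argument for covering) is a legitimate alternative, but two steps need tightening before it is a proof. First, the disjointness claim ``$w(\mathcal{F}_\Gamma)\cap\mathcal{F}_\Gamma=\emptyset$ for every nontrivial $w$'' is literally false for $w=r_i$: the reflection $r_i$ fixes $C_i$ pointwise, and $C_i$ minus its tangency points is contained in $\mathcal{F}_\Gamma$ (the interiors being disjoint, no point of $C_i$ lies in any $\Int{C_j}$), so $r_i(\mathcal{F}_\Gamma)$ meets $\mathcal{F}_\Gamma$ along that entire circle. What your induction actually yields, and what suffices, is that every $\Gamma$-orbit meets $\mathcal{F}_\Gamma$ at most once: for a reduced word of length at least two, $w(\mathcal{F}_\Gamma)\subset r_{i_1}\bigl(\overline{\Int{C_{i_2}}}\bigr)\subset\overline{\Int{C_{i_1}}}$ can touch $\mathcal{F}_\Gamma$ only at excised tangency points, while a length-one word identifies wall points only with themselves. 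Second, the covering half is only gestured at: the ``hyperbolic distance from a basepoint strictly increases'' argument applies to points of $\mathbb{H}^3$, not directly to points of $\Omega(\Gamma)\subset\partial\mathbb{H}^3$; for $z\in\Omega(\Gamma)$ whose pull-back process never terminates you must show that $z$ lies in a nested sequence of images of closed disks whose intersection is contained in $\Lambda(\Gamma)$ (a shrinking or normality argument is needed here, not just the Schottky analogy), or else lift to $\mathbb{H}^3$ and come back down --- which is exactly the Vinberg-plus-Marden route the paper takes. So the proposal is correct in outline; its clean form coincides with the paper's short proof, and its elementary form works once these two points are made precise.
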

\begin{proof}
Let $\mathcal{P}_\Gamma$ be the convex hyperbolic polyhedron (in $\mathbb{H}^3$) whose relative boundary in $\mathbb{H}^3$ is the union of the hyperplanes $S_i$ (see Remark~\ref{discreteness_rem}). Then, by \cite[Part~II, Chapter~5, Theorem~1.2]{Vin1}, $\mathcal{P}_\Gamma$ is a fundamental domain for the action of $\Gamma$ on $\mathbb{H}^3$. It now follows that $\mathcal{F}_\Gamma=\overline{\mathcal{P}_\Gamma}\cap\Omega(\Gamma)$ (where the closure is taken in $\Omega(\Gamma)\cup\mathbb{H}^3$) is a fundamental domain for the action of $\Gamma$ on $\Omega(\Gamma)$ \cite[\S 3.5]{Marden}. 
\end{proof}

\noindent It will be useful in our discussion to have a canonical interior necklace group to refer to:

\begin{definition}\label{ideal_group} Consider the Euclidean circles $\mathbf{C}_1,\cdots, \mathbf{C}_{d}$ where $\mathbf{C}_j$ intersects $|z|=1$ at right-angles at the roots of unity $\exp{(\frac{2\pi i\cdot(j-1)}{d})}$, $\exp{(\frac{2\pi i\cdot j}{d})}$. Let $\rho_j$ be the reflection map in the circle $\mathbf{C}_j$. By \cite[Part~II, Chapter~5, Theorem~1.2]{Vin1}, this defines a necklace group $$\pmb{\Gamma}_d:=\langle \rho_1,\cdots, \rho_{d}: \rho_1^2=\cdots=\rho_{d}^2=1\rangle,$$ that acts on the Riemann sphere.
\end{definition}

\begin{definition} Let $\Gamma$ be a discrete subgroup of $\textrm{Aut}^\pm(\widehat{\C})$. An isomorphism 
$$
\xi:\pmb{\Gamma}_{d}\rightarrow \Gamma
$$
is said to be \emph{weakly type-preserving}, or \emph{w.t.p.}, if 
\begin{enumerate}\upshape
\item $\xi(g)$ is orientation-preserving if and only if $g$ is orientation-preserving, and 

\item $\xi(g)\in\Gamma$ is a parabolic M{\"o}bius map for each parabolic M{\"o}bius map $g\in \pmb{\Gamma}_{d}$.
\end{enumerate}
\end{definition}

\noindent In order to construct the \emph{Bers slice} of the group $\pmb{\Gamma}_d$ and describe its compactification, we need to define a representation space for $\pmb{\Gamma}_d$. For necklace groups, the information encoded by a representation (defined below) is equivalent to the data given by a labeling of the underlying circle packing. We will see in Section~\ref{homeomorphism} that working with the space of representations (as opposed to the space of necklace groups without a labeling of the underlying circle packings) is crucial for the homeomorphism statement of Theorem~\ref{theorem_A} (compare Figure~\ref{non-uniqueness}).

\begin{definition}\label{D_def} We define 
\[ \mathcal{D}(\pmb{\Gamma}_{d}):= \lbrace   \xi :  \pmb{\Gamma}_{d} \to \Gamma\vert\ \Gamma \textrm{ is a discrete subgroup of } \textrm{Aut}^\pm(\widehat{\C}),\\\textrm{ and } \xi \textrm{ is a w.t.p. isomorphism} \rbrace.\]

\noindent We endow $\mathcal{D}(\pmb{\Gamma}_d)$ with the topology of \emph{algebraic convergence}: we say that a sequence $(\xi_n)_{n=1}^{\infty}\subset\mathcal{D}(\pmb{\Gamma}_d)$ converges to $\xi\in\mathcal{D}(\pmb{\Gamma}_d)$ if $\xi_n(\rho_i)\to\xi(\rho_i)$ coefficient-wise (as $n\rightarrow\infty$) for $i=1,\cdots,d$.

\end{definition}

\begin{rem}\label{rep_var_def_rem} Let $\xi\in\mathcal{D}(\pmb{\Gamma}_{d})$. Since for each $i\in \Z/d\Z$, the M{\"o}bius map $\rho_i\circ\rho_{i+1}$ is parabolic (this follows from the fact that each $\mathbf{C}_i$ is tangent to $\mathbf{C}_{i+1}$), the w.t.p. condition implies that $\xi(\rho_i)\circ\xi(\rho_{i+1})$ is also parabolic. As each $\xi(\rho_i)$ is an anti-conformal involution, it follows that $\xi(\rho_i)$ is M{\"o}bius conjugate to the circular reflection $z\mapsto 1/\overline{z}$ or the antipodal map $z\mapsto -1/\overline{z}$. A straightforward computation shows that the composition of $-1/\overline{z}$ with either the reflection or the antipodal map with respect to any circle has two distinct fixed points in $\widehat{\C}$, and hence not parabolic. Therefore, it follows that no $\xi(\rho_i)$ is M{\"o}bius conjugate to the antipodal map $-1/\overline{z}$. Hence, each $\xi(\rho_i)$ must be the reflection in some Euclidean circle $C_i$. Thus, $\Gamma=\xi(\pmb{\Gamma}_{d})$ is generated by reflections in the circles $C_1, \cdots, C_{d}$. The fact that $\xi(\rho_i)\circ\xi(\rho_{i+1})$ is parabolic now translates to the condition that each $C_i$ is tangent to $C_{i+1}$ (for $i\in \Z/d\Z$). However, new tangencies among the circles $C_i$ may arise. Moreover, that $\xi$ is an isomorphism rules out non-tangential intersection between circles $C_i$, $C_j$ (indeed, a non-tangential intersection between $C_i$ and $C_j$ would introduce a new relation between $\xi(\rho_i)$ and $\xi(\rho_j)$, compare \cite[Part~II, Chapter~5, \S 1.1]{Vin1}). Therefore, $\Gamma=\xi(\pmb{\Gamma}_{d})$ is a Kleinian reflection group satisfying properties (1) and (3) of necklace groups.
\end{rem}

\begin{definition} Let $\tau$ be a conformal map defined in a neighborhood of $\infty$ with $\tau(\infty)=\infty$. We will say $\tau$ is \emph{tangent to the identity at $\infty$} if $\tau'(\infty)=1$. We will say $\tau$ is \emph{hydrodynamically normalized} if \[ \tau(z)=z+O(1/z) \textrm{ as } z\rightarrow\infty. \] 
\end{definition}

\begin{definition}\label{Bers_slice}
Let $\textrm{Bel}_{\pmb{\Gamma}_d}$ denote those Beltrami coefficients $\mu$ invariant under $\pmb{\Gamma}_d$, satisfying $\mu=0$ a.e. on $\D^*$. Let $\tau_\mu:\mathbb{C}\rightarrow\mathbb{C}$ denote the quasiconformal integrating map of $\mu$, with the hydrodynamical normalization. The \emph{Bers slice} of $\Gamma$ is defined as \[ \beta(\pmb{\Gamma}_d) := \{\xi\in\mathcal{D}(\pmb{\Gamma}_d)\textrm{ | } \xi(g)= \tau_\mu\circ g\circ \tau_\mu^{-1}\ \textrm{for all}\ g\in\pmb{\Gamma}_d,\ \textrm{where}\ \mu \in  \textrm{Bel}_{\pmb{\Gamma}_d} \}. \]
\end{definition}

\begin{rem} There is a natural free $\textrm{PSL}_2(\mathbb{C})$-action on $\mathcal{D}(\pmb{\Gamma}_d)$ given by conjugation, and so it is natural to consider the space $\textrm{AH}(\pmb{\Gamma}_d):=\mathcal{D}(\pmb{\Gamma}_d)/\textrm{PSL}_2(\mathbb{C})$. The following definition of the Bers slice, where no normalization for $\tau_\mu$ is specified, is more aligned with the classical Kleinian group literature: 
\begin{align}\label{alternative_bers} 
\{\xi\in\textrm{AH}(\pmb{\Gamma}_d)\textrm{ | } \xi(g)= \tau_\mu\circ g\circ \tau_\mu^{-1}\ \textrm{for all}\ g\in\pmb{\Gamma}_d,\ \textrm{where}\ \mu \in  \textrm{Bel}_{\pmb{\Gamma}_d} \} \tag{$\star$}.
\end{align}
Our Definition~\ref{Bers_slice} of $\beta(\pmb{\Gamma}_d)$ is simply a canonical choice of representative from each equivalence class of (\ref{alternative_bers}), and will be more appropriate for the present work. 
\end{rem}

\begin{lem}\label{qc_images} Let $\mu\in\emph{Bel}_{\pmb{\Gamma}_d}$, and $\tau_\mu:\mathbb{C}\rightarrow\mathbb{C}$ an integrating map. Then $ \tau_\mu \circ \pmb{\Gamma}_d \circ \tau_\mu^{-1}$ is a necklace group.
\end{lem}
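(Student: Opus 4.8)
The plan is to verify, condition by condition, that $\Gamma := \tau_\mu \circ \pmb{\Gamma}_d \circ \tau_\mu^{-1}$ satisfies the requirements of Definition~\ref{necklace_schottky_group}: that it is a discrete subgroup of $\textrm{Aut}^\pm(\widehat{\C})$ generated by finitely many circular reflections, and that the resulting generating set satisfies conditions (1)--(3) there. The engine of the argument is the remark that an integrating map $\tau_\mu\colon\C\to\C$ is in particular an orientation-preserving homeomorphism of $\widehat{\C}$ fixing $\infty$, and that $\pmb{\Gamma}_d$-invariance of $\mu$ makes $\tau_\mu$ conjugate the relevant measurable conformal structures, hence conjugate $\pmb{\Gamma}_d$ into $\textrm{Aut}^\pm(\widehat{\C})$.

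First I would show $\Gamma\subseteq\textrm{Aut}^\pm(\widehat{\C})$ and that $\Gamma$ is discrete. For $g\in\pmb{\Gamma}_d$, invariance of $\mu$ under $g$ implies (with the standard convention involving a complex conjugation when $g$ reverses orientation) that $\tau_\mu\circ g$ has the same Beltrami coefficient as $\tau_\mu$, so $\tau_\mu\circ g\circ\tau_\mu^{-1}$ is a $1$-quasiconformal self-map of $\widehat{\C}$, i.e.\ an element of $\textrm{Aut}^\pm(\widehat{\C})$, M{\"o}bius or anti-M{\"o}bius according to the type of $g$. Since each generator $\rho_i$ of $\pmb{\Gamma}_d$ is the reflection in $\mathbf{C}_i$, which meets $\mathbb{T}$ orthogonally and so preserves $\D^*$, the group $\pmb{\Gamma}_d$ acts properly discontinuously on $\D^*$; conjugating by the homeomorphism $\tau_\mu$, the group $\Gamma$ acts properly discontinuously on the nonempty open set $\tau_\mu(\D^*)$, and a subgroup of $\textrm{Aut}^\pm(\widehat{\C})$ that does so must be discrete.

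Next I would identify the reflection circles. Set $g_i:=\tau_\mu\circ\rho_i\circ\tau_\mu^{-1}$; by the previous step $g_i$ is an anti-M{\"o}bius involution, and one checks directly that $\mathrm{Fix}(g_i)=\tau_\mu(\mathrm{Fix}(\rho_i))=\tau_\mu(\mathbf{C}_i)$, a Jordan curve and in particular nonempty. An anti-M{\"o}bius involution with nonempty fixed-point set is the reflection in a circle (the remaining anti-M{\"o}bius involutions are conjugate to the antipodal map $z\mapsto-1/\overline{z}$ and are fixed-point free), so $g_i$ is the reflection in $C_i:=\tau_\mu(\mathbf{C}_i)$; and since $g_i(\infty)=\tau_\mu(\rho_i(\infty))$ is $\tau_\mu$ applied to the centre of $\mathbf{C}_i$, which lies in $\C$, the circle $C_i$ avoids $\infty$ and is a genuine bounded Euclidean circle. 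Therefore $\Gamma=\langle g_1,\dots,g_d\rangle$ is a Kleinian reflection group, and -- the point that drives the rest -- $\tau_\mu$ carries each round circle $\mathbf{C}_i$ onto the round circle $C_i$ while fixing $\infty$.

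Finally I would transfer conditions (1)--(3) of Definition~\ref{necklace_schottky_group} from $\pmb{\Gamma}_d$, which satisfies them by Definition~\ref{ideal_group}, to $\Gamma$ along the homeomorphism $\tau_\mu$, a purely topological matter. For (1): $\mathbf{C}_i\cap\mathbf{C}_{i+1}$ is a single point, hence so is $C_i\cap C_{i+1}=\tau_\mu(\mathbf{C}_i)\cap\tau_\mu(\mathbf{C}_{i+1})$, and distinct round circles meeting in exactly one point are tangent. For (3): since $\tau_\mu$ fixes $\infty$, it carries the bounded complementary component $\Int{\mathbf{C}_i}$ onto the bounded complementary component $\Int{C_i}$; pairwise disjointness of the $\Int{\mathbf{C}_i}$ together with injectivity of $\tau_\mu$ gives pairwise disjointness of the $\Int{C_i}$. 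For (2): $\tau_\mu$ carries the $\infty$-containing (hence unbounded) component of $\widehat{\C}\setminus\bigcup_i\mathbf{C}_i$ onto that of $\widehat{\C}\setminus\bigcup_i C_i$ and boundaries onto boundaries, so the boundary of the latter meets each $C_i$ because the boundary of the former meets each $\mathbf{C}_i$; condition~($\star$) of Remark~\ref{discreteness_rem} for this generating set then follows formally from (3). I expect the only point needing real care to be the Beltrami-coefficient claim at the start of the second paragraph -- the standard naturality of the measurable Riemann mapping theorem under symmetries of $\mu$ -- where one must handle orientations correctly for the anti-conformal generators $\rho_i$, reading ``$\pmb{\Gamma}_d$-invariance of $\mu$'' through the pull-back that conjugates by complex conjugation in the orientation-reversing case. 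Everything afterwards -- the classification of anti-M{\"o}bius involutions, the implication ``properly discontinuous somewhere $\Rightarrow$ discrete,'' and the topological transfer of (1)--(3) -- is soft.
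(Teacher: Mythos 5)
Your proof is correct and follows essentially the same route as the paper's: conjugate the generators, use $\pmb{\Gamma}_d$-invariance of $\mu$ to see that each $\tau_\mu\circ\rho_i\circ\tau_\mu^{-1}$ is an anti-M{\"o}bius involution fixing $\tau_\mu(\mathbf{C}_i)$, hence the reflection in the Euclidean circle $\tau_\mu(\mathbf{C}_i)$, and then transfer conditions (1)--(3) of Definition~\ref{necklace_schottky_group} topologically along $\tau_\mu$. The paper's version is terser (it dismisses the verification of the necklace conditions as ``readily verified'' and does not dwell on discreteness), whereas you spell out the discreteness argument and the classification of anti-M{\"o}bius involutions, but the substance is identical.
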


\begin{proof} 
By definition, the maps $\tau_\mu \circ \rho_i \circ \tau_\mu^{-1}$ generate the group $ \tau_\mu \circ \pmb{\Gamma}_d \circ \tau_\mu^{-1}$. By invariance of $\mu$, each $\tau_\mu \circ \rho_i \circ \tau_\mu^{-1}$ is an anti-conformal involution of $\widehat{\mathbb{C}}$, hence an anti-M\"obius transformation. Since $\tau_\mu \circ \rho_i \circ \tau_\mu^{-1}$ fixes $\tau_\mu(C_i)$, and interchanges its two complementary components, it follows that $\tau_\mu(C_i)$ is a Euclidean circle and hence $\tau_\mu \circ \rho_i \circ \tau_\mu^{-1}$ is reflection in the circle $\tau_\mu(C_i)$. One readily verifies that the circles $\tau_\mu(C_i)$ satisfy the conditions of Definition~\ref{necklace_schottky_group}, and so the result follows.
\end{proof}

\begin{prop}\label{compactify_prop}
The Bers slice $\beta(\pmb{\Gamma}_d)$ is pre-compact in $\mathcal{D}(\pmb{\Gamma}_d)$, and for each $\xi\in\overline{\beta(\pmb{\Gamma}_d)}$, the group $\xi(\pmb{\Gamma}_d)$ is a necklace group.
\end{prop}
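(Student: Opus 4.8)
The plan is to prove precompactness of $\beta(\pmb{\Gamma}_d)$ in $\mathcal{D}(\pmb{\Gamma}_d)$ by a normal families argument on the integrating maps, and then to identify the limit representations as necklace groups using the geometric constraints already isolated in Remark~\ref{rep_var_def_rem} and Lemma~\ref{qc_images}. First I would take a sequence $\xi_n\in\beta(\pmb{\Gamma}_d)$, with $\xi_n(g)=\tau_{\mu_n}\circ g\circ\tau_{\mu_n}^{-1}$ for $\mu_n\in\textrm{Bel}_{\pmb{\Gamma}_d}$ and $\tau_{\mu_n}$ hydrodynamically normalized. The key point is that each $\mu_n$ vanishes on $\D^*$, so $\tau_{\mu_n}$ is conformal on $\D^*$; being hydrodynamically normalized it lies in the class $\Sigma$ (or rather restricts to a map in $\Sigma$ after the appropriate identification). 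The class of univalent maps $z+O(1/z)$ on $\D^*$ is a normal family (by the area theorem / Koebe-type bounds, the coefficients are uniformly bounded), so after passing to a subsequence $\tau_{\mu_n}\vert_{\D^*}$ converges locally uniformly on $\D^*$ to some univalent $\tau\vert_{\D^*}$, and in particular the images $\tau_{\mu_n}(\mathbf{C}_i)$ of the generating circles converge. Hence the generators $\xi_n(\rho_i)$, which are reflections in the Euclidean circles $\tau_{\mu_n}(\mathbf{C}_i)$ (by Lemma~\ref{qc_images}), converge coefficient-wise provided these circles do not degenerate.

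The main obstacle is exactly to rule out degeneration: a priori the circles $\tau_{\mu_n}(\mathbf{C}_i)$ could shrink to points or blow up, so that the limiting generators fail to be honest reflections, or the limit representation fails to be discrete or faithful. Here is where the necklace structure does the work. By Lemma~\ref{qc_images} the circles $C_i^{(n)}:=\tau_{\mu_n}(\mathbf{C}_i)$ form a necklace configuration: consecutive circles are tangent, they have pairwise disjoint interiors, and each is seen from $\infty$. Tangency of $C_i^{(n)}$ and $C_{i+1}^{(n)}$ around the chain forces the radii to stay comparable to one another, and the hydrodynamic normalization $\tau_{\mu_n}(z)=z+O(1/z)$ pins down the scale: the union $\bigcup_i C_i^{(n)}$ encloses a bounded region whose diameter and distance from $\infty$ are controlled, since $\tau_{\mu_n}$ is uniformly close to the identity near $\infty$ (this uses that the $\tau_{\mu_n}$ form a normal family near $\infty$ with all of them tangent to the identity). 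Consequently the radii of the $C_i^{(n)}$ are bounded above and below away from $0$, and their centers lie in a fixed compact set. Passing to a further subsequence, $C_i^{(n)}\to C_i$ for genuine Euclidean circles $C_i$, and $\xi_n(\rho_i)\to\rho_{C_i}$ coefficient-wise.

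It then remains to check that the limit $\xi:\pmb{\Gamma}_d\to\Gamma:=\langle\rho_{C_1},\dots,\rho_{C_d}\rangle$ lies in $\mathcal{D}(\pmb{\Gamma}_d)$ and that $\Gamma$ is a necklace group. For the latter: limits of necklace configurations are necklace configurations — tangency is a closed condition, disjointness of interiors passes to the limit (interiors may touch on the boundary but cannot overlap, since an overlap would be an open condition violated along the sequence), and the ``seen from $\infty$'' condition (2) likewise persists because it is witnessed by an arc of each $C_i^{(n)}$ on the boundary of the unbounded component, and these arcs have definite length by the radius bounds. So the $C_i$ satisfy (1)--(3) of Definition~\ref{necklace_schottky_group} and $\Gamma$ is a necklace group, which in particular is discrete (it is a hyperbolic reflection group in the circles, cf.\ Remark~\ref{discreteness_rem} and \cite[Part~II, Chapter~5, Theorem~1.2]{Vin1}). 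Finally, for $\xi\in\mathcal{D}(\pmb{\Gamma}_d)$ I must verify that $\xi$ is a w.t.p.\ isomorphism: the relations $\rho_i^2=1$ survive, and no new relations are introduced because the only way the free product structure can collapse is through a non-tangential intersection of two of the $C_i$, which is excluded as above (compare \cite[Part~II, Chapter~5, \S1.1]{Vin1}); the parabolic elements $\rho_i\rho_{i+1}$ map to $\rho_{C_i}\rho_{C_{i+1}}$, which is parabolic precisely because $C_i$ and $C_{i+1}$ are tangent. This gives weak type-preservation and completes the proof.
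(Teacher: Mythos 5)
Your overall strategy is the same as the paper's (normal families for the hydrodynamically normalized maps $\tau_{\mu_n}\vert_{\D^*}$, convergence of the image circles, and then checking that the limit configuration still satisfies Definition~\ref{necklace_schottky_group} and that the limit representation is a w.t.p.\ isomorphism), but the step you yourself identify as the main obstacle --- ruling out degeneration of individual circles --- is argued incorrectly. The assertion that ``tangency of $C_i^{(n)}$ and $C_{i+1}^{(n)}$ around the chain forces the radii to stay comparable to one another'' is false: a chain of circles satisfying conditions (1)--(3) of Definition~\ref{necklace_schottky_group} can perfectly well contain one circle that is tiny compared to its neighbours (wedge a small circle into the cusp between two large tangent ones and close up the chain), so tangency plus the normalization at $\infty$ only gives an \emph{upper} bound (the circles stay in a fixed compact set), not the lower bound on each radius that you need. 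Without that lower bound your subsequential limits $C_i$ could be points, the limiting ``generators'' would not be reflections, and the limit would not lie in $\mathcal{D}(\pmb{\Gamma}_d)$ at all. The correct source of the lower bound is the conformality of $\tau_{\mu_n}$ on $\D^*$, which you already have in hand: each $\mathbf{C}_i$ meets $\D^*$ in a fixed nondegenerate arc, a compact subarc of which is mapped by $\tau_{\mu_n}$ into $C_i^{(n)}$; since $\tau_{\mu_n}\to\tau$ locally uniformly on $\D^*$ and $\tau$ is univalent (nonconstant by the normalization at $\infty$, hence injective by Hurwitz), these image arcs converge to a nondegenerate arc, so the radii of the $C_i^{(n)}$ are bounded below and the circles converge (through three converging, non-collinear points) to genuine Euclidean circles. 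This is exactly the mechanism in the paper's proof, where the circle $C_i^\infty$ is produced as the circle containing $\tau_\infty(\mathbf{C}_i\cap\D^*)$.

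A secondary instance of the same gap: your argument that condition (2) (each circle is ``seen from $\infty$'') persists in the limit ``because these arcs have definite length by the radius bounds'' does not follow from the radius bounds alone. A circle of definite radius can a priori be swallowed into the cusp between its two neighbours as they become tangent to each other, so its visible arc could shrink to a point even though its radius does not. To exclude this one again uses conformality on $\D^*$: the unbounded component $\Pi^o(\pmb{\Gamma}_d)$ of the fundamental domain lies in $\D^*$, its image under $\tau_{\mu_n}$ lies in the unbounded component of the complement of the disks $\Int{C_j^{(n)}}$, and hence the visible part of $C_i^{(n)}$ contains $\tau_{\mu_n}(\mathbf{C}_i\cap\D^*)$, which converges to a nondegenerate arc; passing to the limit, the boundary of the unbounded component still meets each $C_i^\infty$. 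The remaining parts of your proposal (tangency and disjointness of interiors are closed conditions, discreteness via Vinberg, faithfulness from the absence of non-tangential intersections, and weak type-preservation from tangency of consecutive circles) are fine and agree with the paper.
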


\begin{proof} Let $(\xi_n)_{n=1}^{\infty}$ be a sequence in $\beta(\pmb{\Gamma}_d)$, and $\tau_n: \mathbb{C}\rightarrow\mathbb{C}$ the associated quasiconformal maps as in Definition \ref{Bers_slice}. Since each $\tau_n$ is conformal in $\mathbb{D}^*$ and is hydrodynamically normalized, by a standard normal family result (see \cite[Theorem~1.10]{CG1} for instance) there exists a conformal map $\tau_{\infty}$ of $\mathbb{D}^*$ such that $\tau_n \rightarrow \tau_{\infty}$ uniformly on compact subsets of $\mathbb{D}^*$, perhaps after passing to a subsequence which we reenumerate $(\tau_n)$. 

By Lemma \ref{qc_images}, each $\tau_n(\mathbf{C}_i)$ is a Euclidean circle which we denote by $C_i^n$. Since $\tau_n \rightarrow \tau_{\infty}$ uniformly on compact subsets of $D^*$, $\tau_{\infty}(C_i\cap\mathbb{D}^*)$ must be a subarc of a Euclidean circle which we denote by $C_i^\infty$. Denote furthermore by $\rho_i^n$, $\rho_i^{\infty}$ the reflections in the circles $C_i^n$, $C_i^{\infty}$ (respectively), and by $\Gamma_\infty$ the group generated by reflections in the circles $(C_i^\infty)_{i=1}^d$. Let $\xi_{\infty}: \pmb{\Gamma}_d\rightarrow \Gamma_\infty$ be the homomorphism defined by $\xi_{\infty}(\rho_i):=\rho_i^{\infty}$. We see that $C_i^n\rightarrow C_i^\infty$ as $n\rightarrow\infty$ in the Hausdorff sense, whence it follows that $\rho_i^n\rightarrow\rho_i^{\infty}$. This proves algebraic convergence $(\xi_n) \rightarrow \xi_{\infty}$. Hausdorff convergence of $C_i^n\rightarrow C_i^\infty$ also implies that each $C_i^{\infty}$ intersects tangentially with $C_{i+1}^\infty$, so that $\xi_\infty$ is weakly type preserving. Similar considerations show that the circles $C_i^\infty$ have pairwise disjoint interiors, and the boundary of the unbounded component of $\widehat{\mathbb{C}}\setminus\cup_i C_i^\infty$ intersects each $C_i^\infty$. In particular, there cannot be any non-tangential intersection among the circles $C_i^\infty$. It now follows that $\xi_\infty$ is indeed an isomorphism, and $\Gamma_\infty=\xi_\infty(\pmb{\Gamma}_d)$ is a necklace group. 
\end{proof}

\begin{definition}\label{compactify_def} We refer to $\overline{\beta(\pmb{\Gamma}_d)} \subset \mathcal{D}(\pmb{\Gamma}_d)$ as the \emph{Bers compactification} of the Bers slice $\beta(\pmb{\Gamma}_d)$. We refer to $\overline{\beta(\pmb{\Gamma}_d)}\setminus\beta(\pmb{\Gamma}_d)$ as the \emph{Bers boundary}.
\end{definition}

\begin{rem}\label{rep_group_identify_rem} 
We will often identify $\xi\in\overline{\beta(\pmb{\Gamma}_d)}$ with the group $\Gamma:=\xi(\pmb{\Gamma}_d)$, and simply write $\Gamma\in\overline{\beta(\pmb{\Gamma}_d)}$, but always with the understanding of an associated representation $\xi:\pmb{\Gamma}_d \rightarrow \Gamma$. Since $\xi$ is completely determined by its action on the generators $\rho_1,\cdots, \rho_{d}$ of $\pmb{\Gamma}_d$, this is equivalent to remembering the `labeled' circle packing $C_1,\cdots,C_d$, where $\xi(\rho_i)$ is reflection in the circle $C_i$, for $i=1,\cdots,d$. 
\end{rem}

\begin{rem}\label{apollo_rem}
The Apollonian gasket reflection group (see the right-hand side of Figure \ref{fig:necklace_schottky_group}) is an example of a Kleinian reflection group in $\mathcal{D}(\pmb{\Gamma}_d)\setminus\overline{\beta(\pmb{\Gamma}_d)}$.
\end{rem}

\begin{notation} For $\Gamma\in\overline{\beta(\pmb{\Gamma}_d)}$, we denote the component of $\Omega(\Gamma)$ containing $\infty$ by $\Omega_\infty(\Gamma)$.
\end{notation}

\begin{prop}\label{inv_comp_limit_boundary}
Let $\Gamma\in\overline{\beta(\pmb{\Gamma}_d)}$. Then the following hold true.
\begin{enumerate}
\item $\Omega_\infty(\Gamma)$ is simply connected, and $\Gamma$-invariant.

\item $\partial\Omega_\infty(\Gamma)=\Lambda(\Gamma)$.
 
\item $\Lambda(\Gamma)$ is connected, and locally connected.

\item  All bounded components of $\Omega(\Gamma)$ are Jordan domains.
\end{enumerate}
\end{prop}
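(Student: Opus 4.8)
The plan is to establish the four assertions by a "transfer from $\pmb{\Gamma}_d$" strategy: prove them first for the model group $\pmb{\Gamma}_d$, then use the quasiconformal deformation structure to carry them to all of $\beta(\pmb{\Gamma}_d)$, and finally take Hausdorff/algebraic limits to reach the Bers boundary $\overline{\beta(\pmb{\Gamma}_d)}$. For $\pmb{\Gamma}_d$ itself, $\Omega_\infty$ is (the reflection-extension of) the round disc $\D^*$ together with its orbit, and the classical theory of geometrically finite reflection groups (Poincaré polyhedron theorem via Proposition~\ref{fund_dom_prop}, together with the outerplanar/necklace structure) gives that the exterior component is simply connected, $\Gamma$-invariant, that $\partial\Omega_\infty=\Lambda(\Gamma)$, that the limit set is connected (the contact graph being $2$-connected/connected forces the limit set to be a connected "necklace" curve), and that the bounded complementary components are round discs, hence Jordan. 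Local connectivity for $\pmb{\Gamma}_d$ follows from geometric finiteness (cusped, no degenerate ends). For any $\xi\in\beta(\pmb{\Gamma}_d)$, the integrating map $\tau_\mu$ is a global quasiconformal homeomorphism of $\widehat{\C}$ conjugating $\pmb{\Gamma}_d$ to $\Gamma$, so it carries $\Omega_\infty(\pmb{\Gamma}_d)$ to $\Omega_\infty(\Gamma)$, $\Lambda(\pmb{\Gamma}_d)$ to $\Lambda(\Gamma)$, and round discs to quasicircle discs; all four properties are quasiconformally invariant, so they persist throughout $\beta(\pmb{\Gamma}_d)$.

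The substantive work is at the Bers boundary. Here I would run the normal-family argument already used in the proof of Proposition~\ref{compactify_prop}: given $\Gamma\in\overline{\beta(\pmb{\Gamma}_d)}$, write it as an algebraic limit of $\Gamma_n\in\beta(\pmb{\Gamma}_d)$ with associated conformal maps $\tau_n:\D^*\to\Omega_\infty(\Gamma_n)$ (conformal and hydrodynamically normalized on $\D^*$) converging locally uniformly to a conformal $\tau_\infty:\D^*\to\widehat{\C}$. For (1), one shows $\tau_\infty(\D^*)=\Omega_\infty(\Gamma)$ (using that $\tau_\infty$ is univalent, its image is $\Gamma$-invariant since the $C_i^n$ converge to the $C_i^\infty$, and no point of $\tau_\infty(\D^*)$ can be a limit point because limit points are limits of fixed points lying in the complement); then simple connectivity of $\Omega_\infty(\Gamma)$ is immediate as it is a univalent image of $\D^*$, and $\Gamma$-invariance passes to the limit. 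Part (2) is the inclusion $\Lambda(\Gamma)\subset\partial\Omega_\infty(\Gamma)$, which needs that there are no other components of $\Omega(\Gamma)$ accumulating "around" $\Lambda(\Gamma)$ from outside — this follows from the necklace structure: the limit set of a necklace group lies on the boundary of $\Omega_\infty$ because every bounded complementary component of the circle packing maps under the group into $\D^*$'s orbit side. Part (4), that bounded components of $\Omega(\Gamma)$ are Jordan, I would deduce from the fact that each such component is a $\Gamma$-image of the interior of one of the boundary circles $C_i^\infty$ intersected appropriately, and the interiors of $C_i^\infty$ are round discs; applying group elements (Möbius maps) keeps them round, and one checks that the tangency pattern does not pinch a bounded component into a non-Jordan shape (this is where Condition~($\star$) and the fact that $\xi$ is an isomorphism, hence no non-tangential intersections, are used).

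The genuine obstacle is part (3), local connectivity of $\Lambda(\Gamma)$ for $\Gamma$ on the Bers boundary, because one cannot simply transport local connectivity through a Hausdorff limit, and $\tau_\infty$ need not extend continuously to $\mathbb{T}$ a priori. The natural route is to invoke geometric finiteness of every group in $\overline{\beta(\pmb{\Gamma}_d)}$: as remarked in the introduction, all boundary groups are cusps obtained by pinching a prescribed finite collection of curves on a $(d+1)$-punctured sphere, so they are geometrically finite with no degenerate ends; by the standard theory (e.g.\ a boundary-extension/Carathéodory argument for geometrically finite groups, or directly the fact that the limit set of a geometrically finite Kleinian group with connected limit set is locally connected), $\Lambda(\Gamma)$ is locally connected, equivalently the Riemann map $\tau_\infty:\D^*\to\Omega_\infty(\Gamma)$ extends continuously to $\overline{\D^*}$. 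Alternatively, and perhaps more self-containedly, I would argue that $\Lambda(\Gamma)$ is a nested limit of the round-circle packings defining $\Gamma_n$ with uniformly shrinking mesh (controlled by the fact that, for necklace groups, the diameters of the level-$k$ tiles in the fundamental-domain tessellation decay geometrically, uniformly over $\overline{\beta(\pmb{\Gamma}_d)}$ because of compactness), and shrinking-mesh nested packings have locally connected residual sets. Either way, the key input is that the Bers boundary consists only of geometrically finite groups with uniform geometric control, which is exactly what the necklace/outerplanar hypothesis buys us; establishing that uniform control (the shrinking of tiles) is the technical heart of the argument.
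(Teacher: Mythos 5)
Your treatment of parts (1)--(3) is essentially the paper's route: the paper also gets (1) from the conformal map $\D^*\to\Omega_\infty(\Gamma)$ built into the construction of the compactification, gets (2) from the standard fact that the boundary of an invariant component of $\Omega(\Gamma)$ is the whole limit set, and gets local connectivity in (3) exactly as you do, from geometric finiteness together with the Anderson--Maskit theorem (applied, strictly speaking, to the orientation-preserving index-two subgroup $\Gamma^+$, which has the same limit set --- a small point you should make explicit, since the cited result is about Kleinian groups). Your alternative ``shrinking mesh'' route for (3) is speculative, but since you also give the geometric-finiteness argument, that part is fine.

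The genuine gap is in part (4). Your argument rests on the claim that each bounded component of $\Omega(\Gamma)$ is ``a $\Gamma$-image of the interior of one of the boundary circles $C_i^\infty$,'' hence a round disc. This is false: the interiors of the packing circles are not contained in $\Omega(\Gamma)$ at all, since $r_i\bigl(\Lambda(\Gamma)\setminus\Int{C_i}\bigr)\subset\Lambda(\Gamma)\cap\Int{C_i}$, so every $\Int{C_i}$ meets the limit set. The actual bounded components of $\Omega(\Gamma)$ are the domains tiled by the $\Gamma$-orbits of the bounded pieces of the fundamental domain $\mathcal{F}_\Gamma$ (cf.\ Proposition~\ref{regular_fund_pre_image_prop}); for a boundary group their closures are bounded by fractal sub-continua of $\Lambda(\Gamma)$ --- e.g.\ when an extra tangency pinches off a sub-chain of four or more circles, the corresponding component is bounded by the quasicircle limit set of that sub-necklace group, which is not round --- so there is no ``round disc plus M\"obius images'' structure to exploit, and the follow-up remark that ``one checks that the tangency pattern does not pinch a bounded component into a non-Jordan shape'' is precisely the statement to be proved, not an argument. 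The paper's proof instead derives (4) from (2)--(3): connectedness of $\Lambda(\Gamma)$ makes each bounded component $\mathcal{U}$ of $\Omega(\Gamma)$ simply connected with locally connected boundary, and then $\partial\mathcal{U}\subset\Lambda(\Gamma)=\partial\Omega_\infty(\Gamma)$ forces the extended Riemann map of $\mathcal{U}$ to be injective on the circle (two prime ends landing at a common point would produce a crosscut separating points of $\partial\Omega_\infty(\Gamma)$ from the connected set $\Omega_\infty(\Gamma)$), so $\mathcal{U}$ is a Jordan domain. You would need to replace your (4) with an argument of this kind.
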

\begin{proof}
1) It is evident from the construction of the Bers compactification that for each $\Gamma\in\overline{\beta(\pmb{\Gamma}_d)}$, there is a conformal map from $\D^*$ onto $\Omega_\infty(\Gamma)$ that conjugates the action of $\pmb{\Gamma}_d$ on $\D^*$ to that of $\Gamma$ on $\Omega_\infty(\Gamma)$. Hence, $\Omega_\infty(\Gamma)$ is simply connected, and invariant under $\Gamma$.

2) This follows from (1) and the fact that the boundary of an invariant component of the domain of discontinuity is the entire limit set. 

3) Connectedness of $\Lambda(\Gamma)$ follows from (2) and that $\Omega_\infty(\Gamma)$ is simply connected. For local connectivity, first note that the index two Kleinian subgroup $\Gamma^+$ consisting of words of even length of $\Gamma$ is geometrically finite. Then $\Lambda(\Gamma)=\Lambda(\Gamma^+)$, hence $\Lambda(\Gamma^+)$ is connected. Since $\Gamma^+$ is geometrically finite with a connected limit set, it now follows from \cite{AnMa} that $\Lambda(\Gamma^+)=\Lambda(\Gamma)$ is locally connected.

4) By (3), each component of $\Omega(\Gamma)\setminus\Omega_\infty(\Gamma)$ is simply connected with a locally connected boundary. That such a component $\mathcal{U}$ is Jordan follows from the fact that $\partial\mathcal{U}\subset\Lambda(\Gamma)=\partial\Omega_\infty(\Gamma)$.
\end{proof}

\noindent To a group $\Gamma\in\overline{\beta(\pmb{\Gamma}_d)}$, we now associate a reflection map $\rho_\Gamma$ that will play an important role in the present work.

\begin{definition}\label{reflection_map} Let $\Gamma\in\overline{\beta(\pmb{\Gamma}_d)}$, generated by reflections $(r_i)_{i=1}^d$ in circles $(C_i)_{i=1}^d$. We define the associated \emph{reflection map} $\rho_\Gamma$ by: \begin{align} \rho_{\Gamma} : \bigcup_{i=1}^d \overline{\textrm{int}(C_i)} \rightarrow \widehat{\mathbb{C}} \nonumber \hspace{17mm} \\ \hspace{10mm} z\longmapsto r_i(z) \textrm{        if } z \in \overline{\textrm{int}(C_i)}.  \nonumber\end{align}
\end{definition}

\begin{figure}[ht!]
\begin{tikzpicture}
 \node[anchor=south west,inner sep=0] at (-1,-1) {\includegraphics[width=0.48\textwidth]{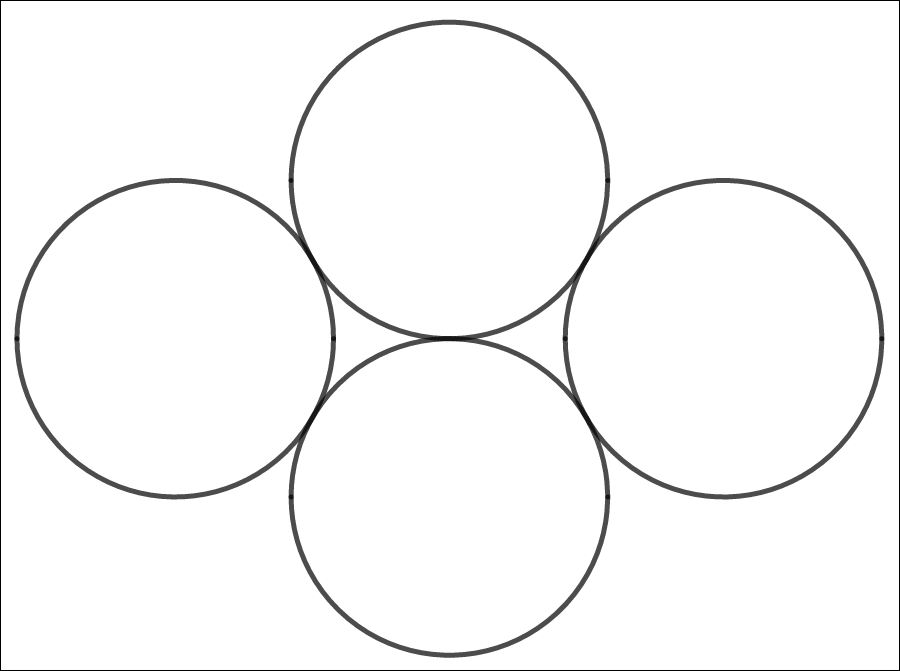}};
\node[anchor=south west,inner sep=0] at (7.5,-1) {\includegraphics[width=0.48\textwidth]{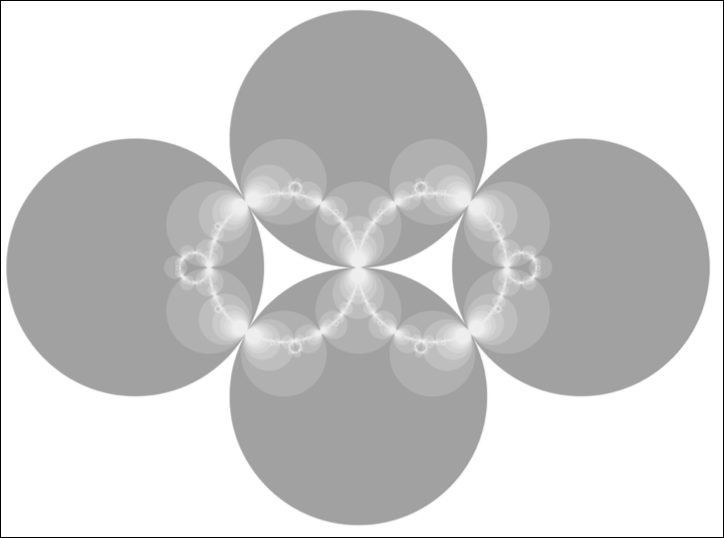}};
\node at (5.4,1.8) {$\Int{C_1}$};
\node at (3,3.6) {$\Int{C_2}$};
\node at (0.6,2) {$\Int{C_3}$};
\node at (3,0.5) {$\Int{C_4}$};
\node at (1.2,4.2) {$\mathcal{F}$};
\node at (3.8,1.9) {\begin{tiny}$\mathcal{F}$\end{tiny}};
\node at (2.16,1.9) {\tiny$\mathcal{F}$};
\node at (8.7,4) {$\Omega_\infty(\Gamma)$};
\end{tikzpicture}
\caption{Left: The circles $C_{i}$ generate a Kleinian reflection group $\Gamma\in\partial\beta(\pmb{\Gamma}_4)$. The map $\rho_\Gamma$ is defined piece-wise on the union of the closed disks $\overline{\Int{C_i}}$. The fundamental domain $\mathcal{F}=\mathcal{F}_\Gamma$ (for the action of $\Gamma$ on $\Omega(\Gamma)$) is the complement of these open disks with the singular boundary points removed. The connected components of $\mathcal{F}$ are marked. Right: The unbounded component of the domain of discontinuity $\Omega(\Gamma)$ is $\Omega_\infty(\Gamma)$. Every point in $\Omega(\Gamma)$ escapes to $\mathcal{F}$ under iterates of $\rho_\Gamma$. The point of tangential intersection of $C_{2}$ and $C_{4}$ is the fixed point of an accidental parabolic of the index two Kleinian subgroup $\widetilde{\Gamma}$.}
\label{kleinian_cusp}
\end{figure}

\begin{definition} Let $\Gamma$ be a Kleinian reflection group, and $f: D\rightarrow\widehat{\mathbb{C}}$ a mapping defined on a domain $D$. We say that $\Gamma$ and $f$ are \emph{orbit-equivalent} if for any two points $z,w\in\widehat{\mathbb{C}}$, there exists $g\in\Gamma$ with $g(z)=w$ if and only if there exist non-negative integers $n_1, n_2$ such that $f^{\circ n_1}(z)=f^{\circ n_2}(w)$.
\end{definition}

\begin{prop}\label{orbit_equiv_prop}
Let $\Gamma\in\overline{\beta(\pmb{\Gamma}_d)}$. The map $\rho_\Gamma$ is orbit equivalent to $\Gamma$ on $\widehat{\C}$.
\end{prop}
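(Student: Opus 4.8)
The plan is to directly relate the grand orbits of the piecewise-reflection map $\rho_\Gamma$ to the $\Gamma$-orbits. Fix the generating reflections $r_1, \dots, r_d$ in circles $C_1, \dots, C_d$ satisfying Conditions~($\star$) and the necklace conditions. The key structural fact to extract first is that the sets $\overline{\interior(C_i)}$ have pairwise disjoint interiors (Condition~(3) of Definition~\ref{necklace_schottky_group}), so that on the interior $\interior(C_i)$ the map $\rho_\Gamma$ is unambiguously equal to $r_i$, and ambiguity occurs only at the (finitely many) tangency points, which are fixed points of the relevant reflections and hence cause no trouble. From this, one direction is immediate: if $\rho_\Gamma^{\circ n_1}(z) = \rho_\Gamma^{\circ n_2}(w)$ then, since each application of $\rho_\Gamma$ is an application of some generator $r_i \in \Gamma$, we get $g_1(z) = g_2(w)$ for suitable words $g_1, g_2 \in \Gamma$, so $g_2^{-1} g_1 \in \Gamma$ carries $z$ to $w$.

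For the reverse direction I would argue that every point of $\widehat{\C}$ is eventually mapped by $\rho_\Gamma$ into the fundamental domain $\mathcal{F}_\Gamma$ (with singular points removed), i.e. $\mathcal{F}_\Gamma$ together with the singular set is ``absorbing'' for $\rho_\Gamma$, and that two points land on the \emph{same} point of $\mathcal{F}_\Gamma$ under iteration precisely when they lie in the same $\Gamma$-orbit. Concretely: given $z$, if $z \notin \bigcup_i \interior(C_i)$ then $z$ already lies in $\overline{\mathcal{F}_\Gamma}$; otherwise $z \in \overline{\interior(C_i)}$ for a unique $i$, and $\rho_\Gamma(z) = r_i(z)$. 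The point is that applying $\rho_\Gamma$ corresponds exactly to the standard ``folding'' algorithm that, by Poincar\'e's polyhedron theorem (invoked via Proposition~\ref{fund_dom_prop}), rewrites any point into the fundamental domain by successively applying the generator whose circle currently contains the point. One must check that this terminates, i.e. that the orbit of $z$ under $\rho_\Gamma$ reaches $\overline{\mathcal{F}_\Gamma}$ in finitely many steps — this is where I expect the real work to be. The cleanest way is to use the hyperbolic picture from Remark~\ref{discreteness_rem}: lift everything to $\mathbb{H}^3$, where $\rho_\Gamma$ corresponds to reflecting in whichever face hyperplane $S_i$ separates the current point from the interior of the polyhedron $\mathcal{P}_\Gamma$; since $\mathcal{P}_\Gamma$ is a fundamental domain and the tessellation $\{g\mathcal{P}_\Gamma\}$ is locally finite, a geodesic from the point into $\interior(\mathcal{P}_\Gamma)$ crosses only finitely many tiles, and the reflection algorithm decreases the number of such crossings, hence terminates. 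Special attention is needed for points on $\Lambda(\Gamma)$ (where the ``geodesic into $\mathcal{P}_\Gamma$'' degenerates) — but for those points, as well as for the parabolic fixed points on $\partial\mathcal{F}_\Gamma$, orbit equivalence can be checked separately using that $\rho_\Gamma$ restricted to $\Lambda(\Gamma)$ records exactly the symbolic coding by the generators, which matches the $\Gamma$-orbit structure.

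Having established the absorbing property, I finish as follows. Suppose $g \in \Gamma$ with $g(z) = w$. Iterate $\rho_\Gamma$ on $z$ until it reaches a point $z_0 \in \overline{\mathcal{F}_\Gamma}$, and likewise iterate on $w$ to reach $w_0 \in \overline{\mathcal{F}_\Gamma}$; say $\rho_\Gamma^{\circ n_1}(z) = z_0 = h_1(z)$ and $\rho_\Gamma^{\circ n_2}(w) = w_0 = h_2(w)$ with $h_1, h_2 \in \Gamma$. Then $z_0$ and $w_0$ are $\Gamma$-equivalent (via $h_2 g h_1^{-1}$) points of the fundamental domain $\mathcal{F}_\Gamma$; by the defining property of a fundamental domain — no two distinct points of $\mathcal{F}_\Gamma$ are $\Gamma$-equivalent, and points of $\overline{\mathcal{F}_\Gamma}\setminus\mathcal{F}_\Gamma$ are identified only via the face-pairing reflections, which is exactly the identification $\rho_\Gamma$ already implements — we conclude $z_0 = w_0$ (after possibly one more application of $\rho_\Gamma$ to each, to resolve boundary identifications). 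Hence $\rho_\Gamma^{\circ n_1}(z) = \rho_\Gamma^{\circ n_2}(w)$ (with $n_i$ adjusted), which is the desired conclusion. The main obstacle, as noted, is the termination/absorbing statement and its behavior at the limit set and at tangency points; once that is in hand, the rest is bookkeeping with words in $\Gamma$.
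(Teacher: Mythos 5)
Your forward direction coincides with the paper's and is fine. The converse, however, has a genuine gap: your entire strategy rests on the claim that every point of $\widehat{\C}$ is eventually carried by $\rho_\Gamma$ into $\overline{\mathcal{F}_\Gamma}$, and this is false exactly on the limit set. Indeed $\Lambda(\Gamma)$ is completely invariant under $\rho_\Gamma$ and $\Omega(\Gamma)=\bigcup_{n\geq0}\rho_\Gamma^{-n}(\mathcal{F}_\Gamma)$ (Proposition~\ref{regular_fund_pre_image_prop}), so no point of $\Lambda(\Gamma)$ --- in particular no tangency point, which is fixed by $\rho_\Gamma$ yet excluded from $\mathcal{F}_\Gamma$ --- ever reaches the fundamental domain. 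Your hyperbolic termination argument also does not apply there: the points in question lie on $\partial\mathbb{H}^3$, where the tessellation by copies of $\mathcal{P}_\Gamma$ accumulates on $\Lambda(\Gamma)$, so there is no finite count of crossed tiles to decrease. The proposed remedy, that on $\Lambda(\Gamma)$ the ``symbolic coding by the generators matches the $\Gamma$-orbit structure,'' is not an argument but essentially a restatement of the proposition restricted to $\Lambda(\Gamma)$; as written, your converse is established only for $z,w\in\Omega(\Gamma)$.

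The gap also signals that the route is much heavier than needed. The paper's proof of the converse is a short induction on the word length of $g=r_{s_1}\cdots r_{s_n}$ with $g(z)=w$: for $n=1$, since $r_{s_1}$ interchanges the two complementary components of $C_{s_1}$ and fixes $C_{s_1}$ pointwise, at least one of $z,w$ lies in $\overline{\Int{C_{s_1}}}$, and for that point one application of $\rho_\Gamma$ is exactly $r_{s_1}$, so $z$ and $w$ are identified in one step of the grand orbit; the inductive step factors $g$ as $r_{s_1}\cdot(r_{s_2}\cdots r_{s_n})$ and concatenates the resulting relations. This argument is purely combinatorial, uses no absorbing property, no Poincar\'e polyhedron theorem, and treats points of $\Omega(\Gamma)$ and $\Lambda(\Gamma)$ uniformly --- precisely the case distinction on which your approach founders. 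If you want to salvage your write-up, replace the absorbing/termination machinery by this word-length induction (your first paragraph about disjoint interiors and tangency points being fixed is then all the structural input you need).
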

\begin{proof}
Suppose $z,w\in\widehat{\C}$ are such that there exist $n_1, n_2\in\mathbb{N}$ such that $\rho^{n_1}(z)=\rho^{n_2}(w)$. Since $\rho_\Gamma$ acts by the generators $r_i$ of the group $\Gamma$, it follows directly that there exists $g\in\Gamma$ with $g(z)=w$. Conversely, let $z,w\in\widehat{\C}$ be such that there exists $g\in \Gamma$ with $g(z)=w$. By definition, we have that $g=r_{s_1}r_{s_2}\cdots r_{s_n}$, for some $s_1,\cdots s_n\in\{1,\cdots, d\}$. Suppose first that $n=1$. Note that either $z$ or $w$ must belong to $\overline{\Int{C_{s_1}}}$. Since $r_{s_1}(z)=w$ implies $r_{s_1}(w)=z$, there is no loss of generality in assuming that $z\in\overline{\Int{C_{s_1}}}$. Now, the condition $r_{s_1}(z)=w$ can be written as $\rho_\Gamma(z)=w$. The case $n>1$ now follows by induction. 
\end{proof}

\begin{notation} For $\Gamma\in\overline{\pmb{\Gamma}_d}$, we will denote by $T^o(\Gamma)$ the union of all bounded components of the fundamental domain $\mathcal{F}_\Gamma$ (see Proposition~\ref{fund_dom_prop}), and by $\Pi^o(\Gamma)$ the unique unbounded component of $\mathcal{F}_\Gamma$. We also set $$T(\Gamma):=\overline{T^o(\Gamma)},\ \textrm{and}\ \Pi(\Gamma):=\overline{\Pi^o(\Gamma)}.$$
\end{notation}

\begin{rem}\label{notation_analogy}
The set $T(\Gamma)$ should be thought of as the analogue of a droplet $T(\sigma_f)$ (this analogy will become transparent in Proposition~\ref{existence_of_group}). On the other hand, the notation $\Pi(\Gamma)$ is supposed to remind the readers that (the closure of) the unbounded component of $\mathcal{F}_\Gamma$ is a ``polygon."
\end{rem}

\begin{prop}\label{regular_fund_pre_image_prop} Let $\Gamma\in\overline{\beta(\pmb{\Gamma}_d)}$. Then:
\[ \Omega(\Gamma)=\displaystyle\bigcup_{n\geq0}\rho_\Gamma^{-n}(\mathcal{F}_\Gamma)\emph{, and }\Omega_\infty(\Gamma)=\displaystyle\bigcup_{n\geq0}\rho_\Gamma^{-n}(\Pi^o(\Gamma)).\] In particular, $\Lambda(\Gamma)$ is completely invariant under $\rho_\Gamma$.
\end{prop}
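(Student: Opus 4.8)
The plan is to establish the two displayed identities and then deduce complete invariance of $\Lambda(\Gamma)$ as a formal consequence. I would start with the fundamental domain picture supplied by Proposition~\ref{fund_dom_prop}: $\mathcal{F}_\Gamma$ is a fundamental domain for the action of $\Gamma$ on $\Omega(\Gamma)$, so $\Omega(\Gamma) = \bigcup_{g \in \Gamma} g(\mathcal{F}_\Gamma)$ (up to the identifications on the boundary, which are harmless here since we are taking a union of closed-up pieces). The orbit-equivalence of $\rho_\Gamma$ and $\Gamma$ on $\widehat{\C}$ (Proposition~\ref{orbit_equiv_prop}) is the right tool to convert this group-orbit union into a union of $\rho_\Gamma$-preimages, but one must be slightly careful because orbit-equivalence is a statement about grand orbits, whereas the claimed identity $\Omega(\Gamma) = \bigcup_{n\ge 0}\rho_\Gamma^{-n}(\mathcal{F}_\Gamma)$ is one-sided (only backward iterates of $\rho_\Gamma$). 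So the cleaner route is a direct combinatorial induction on word length, exactly paralleling the proof of Proposition~\ref{orbit_equiv_prop}.

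Concretely: for the inclusion $\bigcup_{n\ge 0}\rho_\Gamma^{-n}(\mathcal{F}_\Gamma) \subseteq \Omega(\Gamma)$, note $\mathcal{F}_\Gamma \subseteq \Omega(\Gamma)$ and $\Omega(\Gamma)$ is completely invariant under each generator $r_i$; since $\rho_\Gamma$ acts locally as some $r_i$, we get $\rho_\Gamma^{-1}(\Omega(\Gamma)) \subseteq \Omega(\Gamma)$, hence by induction $\rho_\Gamma^{-n}(\mathcal{F}_\Gamma)\subseteq\Omega(\Gamma)$ for all $n$. For the reverse inclusion, take $z \in \Omega(\Gamma)$. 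Since $\mathcal{F}_\Gamma$ is a fundamental domain, there is $g = r_{s_1}\cdots r_{s_n}\in\Gamma$ (reduced word, chosen so that consecutive indices differ, using the free-product structure) with $g(z) \in \mathcal{F}_\Gamma$. I would then show by induction on $n$ that $z \in \rho_\Gamma^{-n}(\mathcal{F}_\Gamma)$: the key point is that if $g(z)\in\mathcal{F}_\Gamma$ with $g = r_{s_1}\cdots r_{s_n}$ reduced and nontrivial, then $z$ must lie in $\overline{\Int(C_{s_n})}$ — otherwise $r_{s_n}(z)$ would already be "on the wrong side" and one checks $g(z)$ could not land in the fundamental domain — so $\rho_\Gamma(z) = r_{s_n}(z)$, and $r_{s_1}\cdots r_{s_{n-1}}$ applied to $\rho_\Gamma(z)$ lands in $\mathcal{F}_\Gamma$, completing the induction. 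The identity for $\Omega_\infty(\Gamma)$ is the same argument restricted to the unbounded component: $\Pi^o(\Gamma)$ is the part of $\mathcal{F}_\Gamma$ inside $\Omega_\infty(\Gamma)$, $\Omega_\infty(\Gamma)$ is $\Gamma$-invariant by Proposition~\ref{inv_comp_limit_boundary}(1), and the same word-length induction shows every point of $\Omega_\infty(\Gamma)$ iterates forward under $\rho_\Gamma$ into $\Pi^o(\Gamma)$, while $\rho_\Gamma^{-n}(\Pi^o(\Gamma)) \subseteq \Omega_\infty(\Gamma)$ by invariance.

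Finally, complete invariance of $\Lambda(\Gamma)$ under $\rho_\Gamma$ follows from complete invariance of $\Omega(\Gamma)$: the first identity shows $\rho_\Gamma^{-1}(\Omega(\Gamma)) \subseteq \Omega(\Gamma)$; for the other direction, since $\mathcal{F}_\Gamma \subseteq \rho_\Gamma^{-1}(\mathcal F_\Gamma \cup \rho_\Gamma(\mathcal F_\Gamma))$... more simply, $\rho_\Gamma(\Omega(\Gamma)) \subseteq \Omega(\Gamma)$ because $\rho_\Gamma$ acts by generators of $\Gamma$ and $\Omega(\Gamma)$ is $\Gamma$-invariant, so $\Omega(\Gamma)$ is completely invariant under $\rho_\Gamma$ wherever $\rho_\Gamma$ is defined, and taking complements in $\widehat{\C}$ gives that $\Lambda(\Gamma)$ is completely invariant. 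The main obstacle I anticipate is the inductive step establishing that $z\in\overline{\Int(C_{s_n})}$ when $g(z)\in\mathcal{F}_\Gamma$: this requires knowing the combinatorics of how the tiles $g(\mathcal{F}_\Gamma)$ are arranged — specifically that $g(\mathcal{F}_\Gamma)$ for $g$ starting (on the left) with $r_{s_1}$ sits inside $\overline{\Int(C_{s_1})}$ — which is exactly where condition~$(\star)$ / Definition~\ref{necklace_schottky_group} and the Poincaré polyhedron theorem cited in Proposition~\ref{fund_dom_prop} enter. Everything else is bookkeeping.
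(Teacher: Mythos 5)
Your proposal is correct and follows essentially the same route as the paper, whose proof simply cites Proposition~\ref{fund_dom_prop}, Proposition~\ref{inv_comp_limit_boundary}, and Proposition~\ref{orbit_equiv_prop}; you have merely supplied the details, including the right observation that the grand-orbit statement of Proposition~\ref{orbit_equiv_prop} should be replaced by the word-length induction from its proof (using the ping-pong fact that a reduced word beginning with $r_{s_1}$ maps $\mathcal{F}_\Gamma$ into $\overline{\Int C_{s_1}}$) to get the one-sided identity. The only point you assert rather than prove, namely $\mathcal{F}_\Gamma\cap\Omega_\infty(\Gamma)=\Pi^o(\Gamma)$, is also used without comment in the paper and is harmless.
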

\begin{proof}
This follows from Propositions~\ref{fund_dom_prop},~\ref{inv_comp_limit_boundary},~\ref{orbit_equiv_prop}.\end{proof}

\begin{rem}\label{defn_of_conjug} Let $\Gamma\in\overline{\beta(\pmb{\Gamma}_d)}$. We now briefly describe the covering properties of $\rho_\Gamma:\Lambda(\Gamma)\to\Lambda(\Gamma)$. To this end, first note that $$\Lambda(\Gamma)=\bigcup_{i=1}^{d}\left( \overline{\Int{C_{i}}}\cap\Lambda(\Gamma)\right)$$
(see Figure~\ref{kleinian_cusp}). The interiors of these $d$ ``partition pieces'' are disjoint, and $\rho_\Gamma$ maps each of them injectively onto the union of the others. This produces a Markov partition for the degree $d$ orientation-reversing covering map $\rho_\Gamma: \Lambda(\Gamma)\to\Lambda(\Gamma)$. In the particular case of the base group $\pmb{\Gamma}_d$, the above discussion yields a Markov partition $$\mathbb{T}=\displaystyle\bigcup_{j=1}^{d}\left[\exp{\left(\frac{2\pi i(j-1)}{d}\right)},\exp{\left(\frac{2\pi ij}{d}\right)}\right]$$ of the map $\rho_{\pmb{\Gamma}_d}:\mathbb{T}\to\mathbb{T}$. Note that the expanding map $$\overline{z}^{d-1}:\mathbb{T}\to\mathbb{T},$$ or equivalently, $$m_{-(d-1)}:\R/\Z\to\R/\Z,\ \theta\mapsto-(d-1)\theta$$ also admits the same Markov partition with the same transition matrix (identifying $\mathbb{T}$ with $\R/\Z$). Following \cite[\S 3.2]{LLMM1}, one can define a homeomorphism $$\mathcal{E}_{d-1}:\mathbb{T}\to\mathbb{T}$$ via the coding maps of $\rho_{\pmb{\Gamma}_d}\vert_{\mathbb{T}}$ and $\overline{z}^{d-1}\vert_{\mathbb{T}}$ such that $\mathcal{E}_{d-1}$ maps $1$ to $1$, and conjugates $\rho_{\pmb{\Gamma}_d}$ to $\overline{z}^{d-1}$ (or $m_{-(d-1)}$). Since both $\rho_{\pmb{\Gamma}_d}$ and $\overline{z}^{d-1}$ commute with the complex conjugation map and $\mathcal{E}_{d-1}$ fixes $1$, one sees that $\mathcal{E}_{d-1}$ commutes with the complex conjugation map as well.
\end{rem}

The next result provides us with a model of the dynamics of $\rho_\Gamma$ on the limit set $\Lambda(\Gamma)$ as a quotient of the action of $\rho_{\pmb{\Gamma}_d}$ on the unit circle.

\begin{prop}\label{group_lamination_prop} Let $\Gamma \in \overline{\beta(\pmb{\Gamma}_d)}$. There exists a conformal map $\phi_\Gamma: \mathbb{D}^* \rightarrow \Omega_{\infty}(\Gamma)$ such that 
\begin{align}\label{group_conjugacy}    
\rho_{\pmb{\Gamma}_d} (z) = \phi_\Gamma^{-1} \circ \rho_{\Gamma} \circ \phi_\Gamma(z) \textrm{, for } z\in  \mathbb{D}^*\setminus \Int{\Pi(\pmb{\Gamma}_d)}.   
\end{align}  
The map $\phi_\Gamma$ extends continuously to a semi-conjugacy $\phi_\Gamma: \mathbb{T} \rightarrow \Lambda(\Gamma)$ between $\rho_{\pmb{\Gamma}_d}|_{\mathbb{T}}$ and $\rho_{\Gamma}|_{\Lambda(\Gamma)}$, and $\phi_\Gamma$ sends cusps of $\partial \Pi(\pmb{\Gamma}_d)$ to cusps of $\partial \Pi(\Gamma)$ with labels preserved. 
\end{prop}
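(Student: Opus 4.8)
The plan is to build $\phi_\Gamma$ as a boundary value of a limit of uniformizing maps, exploiting the fact that $\Gamma$ sits in the Bers compactification. Recall from Proposition~\ref{inv_comp_limit_boundary}(1) that $\Omega_\infty(\Gamma)$ is simply connected and $\Gamma$-invariant, and from its proof that there is a conformal isomorphism $\phi_\Gamma:\D^*\to\Omega_\infty(\Gamma)$ conjugating the $\pmb{\Gamma}_d$-action on $\D^*$ to the $\Gamma$-action on $\Omega_\infty(\Gamma)$; in particular $\phi_\Gamma\circ\rho_i=r_i\circ\phi_\Gamma$ on $\D^*$ for every generator, where $\rho_i$ (resp.\ $r_i$) is reflection in $\mathbf C_i$ (resp.\ $C_i$). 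This equivariance is exactly \eqref{group_conjugacy}: if $z\in\D^*\setminus\Int\Pi(\pmb{\Gamma}_d)$, then $z\in\overline{\Int\mathbf C_i}$ for some $i$, so $\rho_{\pmb{\Gamma}_d}(z)=\rho_i(z)$ and $\phi_\Gamma(z)\in\overline{\Int C_i}$, whence $\rho_\Gamma(\phi_\Gamma(z))=r_i(\phi_\Gamma(z))=\phi_\Gamma(\rho_i(z))=\phi_\Gamma(\rho_{\pmb{\Gamma}_d}(z))$. One subtlety: $\phi_\Gamma$ as produced in Proposition~\ref{inv_comp_limit_boundary} is only canonical up to precomposition with an element of $\mathrm{Aut}(\D^*)$ commuting with $\pmb{\Gamma}_d$; I would pin it down (e.g.\ by requiring $\phi_\Gamma$ to send $\Int\Pi(\pmb{\Gamma}_d)$ to $\Int\Pi(\Gamma)$ with the combinatorics of the cusps matched, which is possible since both polygons have $d$ ideal vertices cyclically labelled) so that the ``labels preserved'' clause holds and so that \eqref{group_conjugacy} holds on the nose rather than up to a symmetry.

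\textbf{Continuous extension to the circle.} Next I would invoke local connectivity. By Proposition~\ref{inv_comp_limit_boundary}(3), $\Lambda(\Gamma)=\partial\Omega_\infty(\Gamma)$ is locally connected, so by Carathéodory's theorem the conformal map $\phi_\Gamma:\D^*\to\Omega_\infty(\Gamma)$ extends to a continuous surjection $\phi_\Gamma:\overline{\D^*}\to\overline{\Omega_\infty(\Gamma)}$, hence restricts to a continuous surjection $\mathbb T\to\Lambda(\Gamma)$. It remains to check that this boundary map intertwines $\rho_{\pmb{\Gamma}_d}|_{\mathbb T}$ and $\rho_\Gamma|_{\Lambda(\Gamma)}$. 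The identity $\phi_\Gamma\circ\rho_{\pmb{\Gamma}_d}=\rho_\Gamma\circ\phi_\Gamma$ holds on $\D^*\setminus\Int\Pi(\pmb{\Gamma}_d)$, an open set whose closure contains $\mathbb T$ (each boundary arc of the Markov partition lies in $\mathbb T\cap\overline{\Int\mathbf C_i}$); since $\phi_\Gamma$, $\rho_{\pmb{\Gamma}_d}$ (piecewise Möbius, continuous on $\mathbb T$), and $\rho_\Gamma$ are all continuous on the relevant closed sets, the semi-conjugacy relation passes to the boundary by density. A small care point: $\rho_\Gamma$ is a priori only defined on $\bigcup\overline{\Int C_i}$, but $\Lambda(\Gamma)\subset\bigcup\overline{\Int C_i}$ by Remark~\ref{defn_of_conjug}, so $\rho_\Gamma|_{\Lambda(\Gamma)}$ makes sense and is continuous there; likewise $\rho_{\pmb{\Gamma}_d}|_{\mathbb T}$ is the genuine expanding interval map of Remark~\ref{defn_of_conjug}.

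\textbf{Cusps and labels.} For the last clause, a cusp of $\partial\Pi(\pmb{\Gamma}_d)$ is a point $\exp(2\pi i k/d)\in\mathbb T$ where two consecutive boundary circles $\mathbf C_k,\mathbf C_{k+1}$ are tangent; correspondingly $\partial\Pi(\Gamma)$ has a cusp at $C_k\cap C_{k+1}$, which is a parabolic fixed point of $r_k r_{k+1}\in\Gamma$. Since $\phi_\Gamma$ conjugates the $\pmb{\Gamma}_d$-action to the $\Gamma$-action, it carries the parabolic fixed point of $\rho_k\rho_{k+1}$ (namely $\exp(2\pi i k/d)$) to the parabolic fixed point of $r_k r_{k+1}$ (namely $C_k\cap C_{k+1}$); this is forced because the parabolic fixed point is the unique point of $\mathbb T$, resp.\ $\partial\Omega_\infty(\Gamma)$, fixed by that element, and conformal conjugacy preserves both the element's type and its fixed-point set. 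The matching of labels is then exactly the normalization of $\phi_\Gamma$ chosen in the first paragraph, propagated around the cycle by equivariance.

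\textbf{Main obstacle.} The genuinely nontrivial input is Proposition~\ref{inv_comp_limit_boundary}(3), i.e.\ local connectivity of $\Lambda(\Gamma)$, which underlies the Carathéodory extension; once that is granted, everything else is a density/continuity argument. The other delicate point is bookkeeping: ensuring the single uniformizer $\phi_\Gamma$ can be normalized so that \eqref{group_conjugacy} holds literally, the polygon $\Int\Pi(\pmb{\Gamma}_d)$ maps to $\Int\Pi(\Gamma)$, and all $d$ cusp labels are simultaneously preserved — this is possible because the symmetry group of $(\D^*,\pmb{\Gamma}_d)$ acting on the cyclically labelled cusps is finite and one orbit of normalizations realizes the correct combinatorics, but it should be stated carefully rather than waved through.
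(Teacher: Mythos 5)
Your proposal is correct and follows essentially the same route as the paper: the conjugating conformal map is extracted from the Bers-compactification construction (the paper assembles it by mapping $\Int{\Pi(\pmb{\Gamma}_d)}$ to $\Int{\Pi(\Gamma)}$ label-preservingly and extending by iterated Schwarz reflection, which is the same map you obtain from the proof of Proposition~\ref{inv_comp_limit_boundary}(1) once you fix the normalization), and the boundary extension is again local connectivity of $\Lambda(\Gamma)$ plus Carath\'eodory, with the semi-conjugacy passed to $\mathbb{T}$ by continuity and density. Your parabolic-fixed-point argument for the ``labels preserved'' clause is only a minor variant of the paper's ``by construction'' bookkeeping, not a genuinely different method.
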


\begin{proof} Recall that $\pmb{\Gamma}_d$ is generated by reflections $\rho_i$ in circles $(\mathbf{C}_i)_{i=1}^d$. It follows from Propositions~\ref{fund_dom_prop} and~\ref{inv_comp_limit_boundary} that $\Pi^o(\pmb{\Gamma}_d)$ and $\Pi^o(\Gamma)$ are fundamental domains for the actions of $\pmb{\Gamma}_d$ and $\Gamma$ on $\mathbb{D}^*$ and $\Omega_{\infty}(\Gamma)$, respectively. By the proofs of Lemma~\ref{qc_images} and Proposition~\ref{compactify_prop}, there is a conformal mapping $\phi_\Gamma: \Int{\Pi(\pmb{\Gamma}_d)} \rightarrow \Int{\Pi(\Gamma)}$ whose extension to $\partial\Pi(\pmb{\Gamma}_d)$ is a label-preserving homeomorphism onto $\partial\Pi(\Gamma)$. Thus, by the Schwarz reflection principle, we may extend $\phi_\Gamma$ to a conformal mapping $\phi_\Gamma: \mathbb{D}^* \rightarrow \Omega_{\infty}(\Gamma)$ which satisfies (\ref{group_conjugacy}) by construction. 

Note that $\partial\Omega_{\infty}(\Gamma)=\Lambda(\Gamma)$. By local connectedness of $\Lambda(\Gamma)$ (see Proposition~\ref{inv_comp_limit_boundary}), the map $\phi_\Gamma$ extends continuously to a semi-conjugacy $\mathbb{T}\rightarrow\Lambda(\Gamma)$.
\end{proof}

\noindent We will now introduce the notion of a \emph{label-preserving homeomorphism}, which will play an important role in the proof of Theorem~\ref{theorem_A}.

\begin{rem}\label{labeling} For $f_0(z):=z-1/(dz^d)$, label the non-zero critical points of $f_0$ as $\xi_1^{f_0},\cdots, \xi_{d+1}^{f_0}$ in counter-clockwise order with $\xi_1^{f_0}=e^{\frac{i\pi}{d+1}}$. Note that the critical points of $f$ vary continuously depending on $f\in\Sigma_d^*$, and $f\in\Sigma_d^*$ can not have a double critical point on $\mathbb{T}$. Since $\Sigma_d^*$ is connected by Proposition~\ref{sigma_d^*_connected_thm}, there is a unique labeling $\xi_1^f, \cdots, \xi_{d+1}^f$ of critical points of any $f\in\Sigma_d^*$ such that $f\mapsto \xi_i^f$ is continuous (for $i=1,\cdots,d+1$). This in turn determines a labeling of the cusps $\zeta_i^f:=f(\xi_i^f)$ of $f(\mathbb{T})$ such that $f\mapsto \zeta_i^f$ is continuous ($i=1,\cdots,d+1$).

Similarly, label the cusps of $\partial T(\pmb{\Gamma}_d)$ as $\eta_1, \cdots, \eta_{d}$ in counter-clockwise order with $\eta_1=1$. This determines a labeling of cusps of $\partial T(\Gamma)$ for any $\Gamma \in \overline{\beta(\pmb{\Gamma}_d)}$ as the group $\Gamma$ is the image under a representation of $\pmb{\Gamma}_d$. 
\end{rem}

\begin{definition}\label{label-preserving} Let $f\in\Sigma_d^*$ and $\Gamma \in \overline{\beta(\pmb{\Gamma}_{d+1})}$. We say that a homeomorphism $h: T(\Gamma) \rightarrow T(\sigma_f)$ is \emph{label-preserving} if $h$ maps cusps of $\partial T(\Gamma)$ to cusps of $\partial T(\sigma_f)$, and  $h$ preserves the labeling of cusps of $\partial T(\Gamma)$ and $\partial T(\sigma_f)$. 

Similarly, for $f, f'\in\Sigma_d^*$ (respectively, for $\Gamma,\Gamma' \in \overline{\beta(\pmb{\Gamma}_{d+1})}$), a homeomorphism $h: T(\sigma_f) \rightarrow T(\sigma_{f'})$ (respectively, $h: T(\Gamma) \rightarrow T(\Gamma')$) is called \emph{label-preserving} if $h$ maps the boundary cusps to the boundary cusps preserving their labels.\end{definition}

We conclude this subsection with a discussion of the connection between the Bers slice of the reflection group $\pmb{\Gamma}_d$ and a classical Teichm{\"u}ller space. Let $\pmb{\Gamma}_d^+$ be the index two subgroup of $\pmb{\Gamma}_d$ consisting of all M{\"o}bius maps in $\pmb{\Gamma}_d$. Then, $\pmb{\Gamma}_d^+$ is Fuchsian group (it preserves $\D$ and $\D^*$). Using Proposition~\ref{fund_dom_prop}, it is seen that the top and bottom surfaces $S^+:=\D^*/\pmb{\Gamma}_d^+$ and $S^-:=\D/\pmb{\Gamma}_d^+$ associated with the Fuchsian group $\pmb{\Gamma}_d^+$ are $d$ times punctured spheres. Moreover, the anti-M{\"o}bius reflection $\rho_i$ in the circle $\mathbf{C}_i$ descends to anti-conformal involutions on $S^\pm$ fixing all the punctures (the resulting involution is independent of $i\in\{1,\cdots,d\}$). We will denote this involution on $S^-$ by $\iota$.

By definition, each $\xi\in\mathcal{D}(\pmb{\Gamma}_d)$ defines a discrete, faithful, w.t.p. representation of $\pmb{\Gamma}_d^+$ into $\textrm{PSL}_2(\C)$. If $\xi\in\beta(\pmb{\Gamma}_d)$, then $\xi$ is induced by a quasiconformal map that is conformal on $\D^*$. Hence, such a representation of $\pmb{\Gamma}_d^+$ lies in the Bers slice of $\pmb{\Gamma}_d^+$. Thus, $\beta(\pmb{\Gamma}_d)$ embeds into the Teichm{\"u}ller space of a $d$ times punctured sphere. 

On the other hand, each $\xi\in\overline{\beta(\pmb{\Gamma}_d)}\setminus\beta(\pmb{\Gamma}_d)$ induces a representation of $\pmb{\Gamma}_d^+$ that lies on the boundary of the Bers slice of the Fuchsian group $\pmb{\Gamma}_d^+$. The index two Kleinian group $\Gamma^+$ of $\Gamma:=\xi(\pmb{\Gamma}_d)$ is geometrically finite (a fundamental polyhedron for the action of $\Gamma^+$ on $\mathbb{H}^3$ is obtained by ``doubling'' a fundamental polyhedron for $\Gamma$, and hence it has finitely many sides). In fact, $\Gamma^+$ is a \emph{cusp group} that is obtained by pinching a special collection of simple closed curves on $S^-$. Indeed, since $S^-$ is equipped with a natural involution $\iota$, any $\pmb{\Gamma}_d$-invariant Beltrami coefficient on $\D$ induces an $\iota$-invariant Beltrami coefficient on $S^-$. Hence, the simple closed geodesics on $S^-$ that can be pinched via quasiconformal deformations with $\pmb{\Gamma}_d$-invariant Beltrami coefficients are precisely the ones invariant under $\iota$. Moreover, the $\iota$-invariant simple closed geodesics on $S^-$ bijectively correspond to pairs of non-tangential circles $\mathbf{C}_i$ and $\mathbf{C}_j$; more precisely, they are the projections to $S^-$ of hyperbolic geodesics of $\D$ with end-points at the two fixed points of the loxodromic M{\"o}bius map $\rho_i\circ\rho_j$. Hence, a group $\Gamma^+$ on the Bers boundary is obtained as a limit of a sequence of quasiFuchsian deformations of $\pmb{\Gamma}_d^+$ that pinch a disjoint union of $\iota$-invariant simple, closed, essential geodesics on the bottom surface $S^-$ without changing the (marked) conformal equivalence class of the top surface $S^+$. If $\xi(\rho_i)$ is reflection in the circle $C_i$ (for $i=1,\cdots,d$), then a point of intersection of some $C_i$ and $C_j$ with $j\neq i,i\pm1\ (\textrm{mod}\ d)$ corresponds to an \emph{accidental parabolic} $\xi(\rho_i\circ\rho_j)$ for $\xi(\pmb{\Gamma}_d^+)$. Furthermore, the quotient $$\mathcal{M}(\Gamma^+):=\left(\mathbb{H}^3\cup\Omega(\Gamma^+)\right)/\Gamma^+$$ is an infinite volume $3$-manifold whose conformal boundary $\partial\mathcal{M}(\Gamma^+):=\Omega(\Gamma^+)/\Gamma^+$ consists of finitely many punctured spheres.

\subsection{Conformal Mating}\label{conformal_mating_subsec} In this Subsection we define the notion of conformal mating in Theorem \ref{theorem_A}. Our definitions follow \cite{AFST_2012_6_21_S5_839_0}, to which we refer for a more extensive discussion of conformal mating. 

\begin{notation} For $\Gamma\in\overline{\beta(\pmb{\Gamma}_{d})}$, recall $\Omega_\infty(\Gamma)$ denotes the unbounded component of $\Omega(\Gamma)$. We let $\mathcal{K}(\Gamma):=\mathbb{C}\setminus\Omega_\infty(\Gamma)$.
\end{notation}

\begin{rem}\label{setting_up_equiv_reltn} Let $w\mapsto p(w)$ be a monic, anti-holomorphic polynomial such that $\mathcal{J}(p)$ is connected and locally connected. Let $d:=\textrm{deg}(p)$, and denote by $\phi_p: \mathbb{D}^*\rightarrow \mathcal{B}_\infty(p)$ the B\"ottcher coordinate for $p$ such that $\phi_p'(\infty)=1$. We note that since $\partial \mathcal{K}(p)=\mathcal{J}(p)$ is locally connected by assumption, it follows that $\phi_p$ extends to a continuous semi-conjugacy between $z\mapsto\overline{z}^{d}|_{\mathbb{T}}$ and $p|_{\mathcal{J}(p)}$. Now let $\Gamma\in\overline{\beta(\pmb{\Gamma}_{d+1})}$. As was shown in Proposition~\ref{group_lamination_prop}, there is a natural continuous semi-conjugacy $\phi_\Gamma: \mathbb{T} \rightarrow \Lambda(\Gamma)$ between $\rho_{\pmb{\Gamma}_{d+1}}|_{\mathbb{T}}$ and $\rho_\Gamma|_{\Lambda(\Gamma)}$. Recall from Remark \ref{defn_of_conjug} that $\mathcal{E}_d: \mathbb{T} \rightarrow \mathbb{T}$ is a topological conjugacy between $\rho_{\pmb{\Gamma}_{d+1}}\vert_{\mathbb{T}}$ and $z\mapsto\overline{z}^{d}|_{\mathbb{T}}$.
 \end{rem}

\begin{definition}\label{conf_mating_equiv_reltn} 
Let notation be as in Remark~\ref{setting_up_equiv_reltn}. We define an equivalence relation $\sim$ on $\mathcal{K}(\Gamma) \sqcup \mathcal{K}(p)$ by specifying $\sim$ is generated by $\phi_\Gamma(t)\sim\phi_p(\overline{\mathcal{E}_d(t)})$ for all $t\in\mathbb{T}$.
\end{definition}

\begin{definition}\label{mating} Let $\Gamma\in\overline{\beta(\pmb{\Gamma}_{d+1})}$, $p$ a monic, anti-holomorphic polynomial such that $\mathcal{J}(p)$ is connected and locally connected, and $f\in\Sigma_d^*$. We say that $\sigma_f$ is a \emph{conformal mating} of $\Gamma$ with $p$ if there exist continuous maps \[ \psi_p: \mathcal{K}(p) \rightarrow \widehat{\mathbb{C}}\setminus \mathcal{T}_\infty(\sigma_f) \textrm{ and } \psi_\Gamma: \mathcal{K}(\Gamma) \rightarrow \overline{\mathcal{T}_\infty(\sigma_f)},  \] conformal on $\Int{\mathcal{K}(p)}$, $\Int{\mathcal{K}(\Gamma)}$, respectively, such that \begin{enumerate} \item $\psi_p\circ p(w)=\sigma_f\circ\psi_p(w)$ for $w\in \mathcal{K}(p)$, \item$\psi_\Gamma: T(\Gamma) \rightarrow T(\sigma_f)$ is label-preserving and $\psi_\Gamma\circ \rho_\Gamma(z)=\sigma_f\circ\psi_\Gamma(z)$ for $z\in \mathcal{K}(\Gamma)\setminus \Int{T^o(\Gamma)}$, \item $\psi_\Gamma(z)=\psi_p(w) \textrm{ if and only if } z\sim w$ where $\sim$ is as in Definition \ref{conf_mating_equiv_reltn}. \end{enumerate}
\end{definition}

\subsection{Convergence of Quadrilaterals}\label{quad_conv_subsec}

\noindent We conclude Section \ref{Preliminaries} by recalling a notion of convergence for quadrilaterals (see \cite[\S I.4.9]{MR0344463}) which will be useful to us in the proof of Theorem \ref{theorem_A}. We will usually denote a topological quadrilateral by $Q$, and its modulus by $M(Q)$.

\begin{definition}\label{definition_of_quadrilateral_convergence} The sequence of quadrilaterals $Q_n$ (with a-sides $a_i^n$ and b-sides $b_i^n$, $i=1,2$, $n\in\mathbb{N}$) converges to the quadrilateral $Q$ (with a-sides $a_i$ and b-sides $b_i$, $i=1,2$) if to every $\varepsilon>0$ there corresponds an $n_{\varepsilon}$ such that for $n\geq n_{\varepsilon}$, every point of $a_i^n$, $b_i^n$, $i=1,2$, and every interior point of $Q_n$ has a spherical distance of at most $\varepsilon$ from $a_i$, $b_i$, and $Q$, respectively. 
\end{definition}

\begin{thm}\label{convergence_of_quadrilaterals}\cite[\S I.4.9]{MR0344463} If the sequence of quadrilaterals $Q_n$ converges to a quadrilateral $Q$, then \[ \lim_{n\rightarrow\infty} M(Q_n)=M(Q). \]
\end{thm}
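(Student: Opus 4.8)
The statement to prove is Theorem~\ref{convergence_of_quadrilaterals}: if a sequence of quadrilaterals $Q_n$ converges to a quadrilateral $Q$ (in the sense of Definition~\ref{definition_of_quadrilateral_convergence}), then $M(Q_n)\to M(Q)$.

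\bigskip

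The plan is to prove the two inequalities $\limsup_n M(Q_n)\le M(Q)$ and $\liminf_n M(Q_n)\ge M(Q)$ separately, using the two dual extremal-length characterizations of the modulus of a quadrilateral. Recall that $M(Q)$ can be computed either as the extremal length of the family of curves joining the two $a$-sides inside $Q$, or as the reciprocal of the extremal length of the family joining the two $b$-sides; equivalently, $M(Q)$ is characterized by the existence of a conformal map of $Q$ onto a Euclidean rectangle $R(Q)=(0,M(Q))\times(0,1)$ carrying $a$-sides to vertical sides and $b$-sides to horizontal sides.

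\textbf{Upper bound.} First I would fix a conformal map $\varphi:Q\to R(Q)$ onto the canonical rectangle and pull back the Euclidean metric to get an extremal (admissible) metric $\rho_0=|\varphi'|$ on $Q$ realizing the $a$-side extremal length $M(Q)$. The idea is to use a slightly enlarged version of this metric as a test metric for $Q_n$. Since every interior point of $Q_n$ and every point of its sides lies within spherical distance $\varepsilon$ of $Q$ (and of the corresponding sides), for large $n$ the quadrilateral $Q_n$ is contained in a small spherical neighborhood $N_\varepsilon(Q)$, and any curve in $Q_n$ joining $a_1^n$ to $a_2^n$ is ``close'' to a curve in (a neighborhood of) $Q$ joining $a_1$ to $a_2$. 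Extend $\rho_0$ to a metric on $N_\varepsilon(Q)$ (e.g. by a McShane-type Lipschitz extension of $\log\rho_0$, or simply by taking $\rho_0$ to be a fixed smooth metric agreeing with $|\varphi'|$ on a slightly smaller subquadrilateral and controlled elsewhere), multiply by $(1+\delta)$ to absorb the boundary discrepancy, and check that for $n$ large this is admissible for the $a$-side family of $Q_n$; its $\rho$-area over $N_\varepsilon(Q)$ exceeds the $\rho_0$-area over $Q$ by only $o(1)$ as $\varepsilon\to0$. This yields $M(Q_n)\le (1+\delta)^2 M(Q)+o(1)$, hence $\limsup_n M(Q_n)\le M(Q)$.

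\textbf{Lower bound.} For this I would apply the upper-bound argument to the ``conjugate'' quadrilaterals $Q^*$ and $Q_n^*$ obtained by interchanging the roles of $a$-sides and $b$-sides, noting that the $Q_n^*$ converge to $Q^*$ in the same sense and that $M(Q^*)=1/M(Q)$, $M(Q_n^*)=1/M(Q_n)$. The upper bound gives $\limsup_n M(Q_n^*)\le M(Q^*)$, i.e. $\limsup_n 1/M(Q_n)\le 1/M(Q)$, which is exactly $\liminf_n M(Q_n)\ge M(Q)$. Combining the two inequalities gives $\lim_n M(Q_n)=M(Q)$.

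\textbf{Main obstacle.} The delicate point is the comparison of curve families across the varying domains in the upper-bound step: one must make precise the claim that an admissible metric for the $a$-side family of $Q$, suitably extended to a fixed neighborhood and slightly dilated, becomes admissible for the $a$-side family of $Q_n$ once $n$ is large. This requires knowing that a curve in $Q_n$ joining its two $a$-sides stays within $N_\varepsilon(Q)$ and has a $\rho$-length bounded below by (nearly) the distance, in the extended metric, between the two $a$-sides of $Q$ — which uses that the $a$-sides $a_i^n$ lie $\varepsilon$-close to $a_i$. Handling the behavior of the extended metric near $\partial Q$ (where $|\varphi'|$ may blow up at the corners) is the technical heart; choosing $\rho_0$ to be comparable to the canonical metric only on a slightly smaller subrectangle, so that the extension across the thin collar contributes negligible area, circumvents this. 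Everything else is the standard length–area method, and the reduction of the lower bound to the upper bound via conjugate quadrilaterals is routine.
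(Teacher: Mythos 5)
First, note that the paper does not prove Theorem~\ref{convergence_of_quadrilaterals} at all: it is quoted from Lehto--Virtanen \cite[\S I.4.9]{MR0344463}, whose classical argument goes through the canonical conformal map onto a rectangle and Rengel-type inequalities rather than through extending extremal metrics across $\partial Q$. Judged on its own, your sketch has a bookkeeping error and, more importantly, a genuine gap at exactly the point you call the technical heart. The bookkeeping error: with your stated convention $M(Q)=$ extremal length of the $a$-side family, exhibiting a test metric $\rho$ for which every curve of the $a$-family of $Q_n$ has $\rho$-length at least $\ell$ yields $M(Q_n)\ge \ell^2/A(\rho)$, a \emph{lower} bound (extremal length is a supremum over metrics). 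So the argument of your ``upper bound'' paragraph, if its claims held, would prove $\liminf_n M(Q_n)\ge M(Q)$, and the conjugate-quadrilateral step would then deliver $\limsup_n M(Q_n)\le M(Q)$; as written, each half asserts the wrong inequality. That part is repairable by swapping labels, since the duality step itself is sound (the convergence notion of Definition~\ref{definition_of_quadrilateral_convergence} is symmetric in the sides and $M(Q^*)=1/M(Q)$).

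The substantive gap is the admissibility of the extended metric, and the fix you propose does not work. If $\rho$ is comparable to $|\varphi'|$ only on a compactly contained subquadrilateral and is made small in the collar (so that its area is negligible), then curves of the $a$-family which run entirely in the collar along a $b$-side --- these exist already in $Q$ itself, e.g.\ a curve from a point of $a_1$ near the corner $a_1\cap b_1$ to a point of $a_2$ near $a_2\cap b_1$ hugging $b_1$ --- acquire only small $\rho$-length, so the metric is not admissible even for $Q$, let alone for $Q_n$; if instead you make the extension large along the collar so as to preserve their length, its area is no longer negligible and the boundary behaviour you were trying to avoid reappears. (Note also that the blow-up of $|\varphi'|$ is not an area problem in the first place: $\int_Q|\varphi'|^2$ equals the area of the rectangle.) The correct localization keeps the full extent in the $b$-direction, e.g.\ $\rho=|\varphi'|$ on $\varphi^{-1}\bigl((\eta M,(1-\eta)M)\times(0,1)\bigr)$ and $0$ elsewhere, which needs no extension beyond $Q$; but then the entire difficulty is concentrated in the claim you dispose of with ``any curve in $Q_n$ joining $a_1^n$ to $a_2^n$ is close to a curve in $Q$ joining $a_1$ to $a_2$'': one must prove that, for $n$ large, every such curve --- whose endpoints are only $\varepsilon$-close to $a_1,a_2$ and which may repeatedly leave $Q$ --- contains a subarc in $Q$ crossing the band between the two vertical lines $\{x=\eta M\}$ and $\{x=(1-\eta)M\}$. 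This is a topological crossing lemma that uses the $\varepsilon$-closeness of \emph{all four} sides of $Q_n$ to the corresponding sides of $Q$ (it fails if the $b$-side condition is dropped), and nothing in your sketch establishes it. Supplying that lemma, or following the Lehto--Virtanen route via the canonical map and Rengel's inequality, is what an actual proof requires.
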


\section{A Homeomorphism Between Parameter Spaces}
\label{homeomorphism}

The purpose of this Section is to define the mapping in Theorem \ref{theorem_A} and prove that it is a homeomorphism. We will prove the conformal mating statement in Theorem \ref{theorem_A} in Section \ref{Conformal_mating}. First we will need the following rigidity result.

\begin{prop}\label{uniqueness} Let $\Gamma$, $\Gamma'$ be necklace groups. Suppose there exist homeomorphisms \[ h_1: T(\Gamma) \rightarrow T(\Gamma')\emph{ and }h_2: \Pi(\Gamma) \rightarrow \Pi(\Gamma')  \] which agree on cusps of $\partial T(\Gamma)$, and map cusps of $\partial T(\Gamma)$ to cusps of $\partial T(\Gamma')$. Suppose furthermore that $h_1$, $h_2$ are conformal on $T^o(\Gamma')$, $F^o(\Gamma')$, respectively. Then $h_1$, $h_2$ are restrictions of a common $M \in \emph{Aut}(\mathbb{C})$ such that \[\Gamma'=M\circ\Gamma\circ M^{-1}.\] 
\end{prop}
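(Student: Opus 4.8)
The idea is to glue $h_1$ and $h_2$ along the cusps of $\partial T(\Gamma)$ to produce a single homeomorphism $H$ of $\widehat{\C}$, show that $H$ is quasiconformal, and then use the $\rho_\Gamma$-dynamics (equivalently, the group action) to propagate conformality from the fundamental domain to all of $\widehat{\C}$, forcing $H$ to be M\"obius by Weyl's lemma. First I would recall that $\mathcal{F}_\Gamma$ has a unique unbounded component $\Pi^o(\Gamma)$ and finitely many bounded components whose union is $T^o(\Gamma)$, and that $\widehat{\C}$ is tiled by the $\Gamma$-translates of $\overline{\mathcal{F}_\Gamma}$ together with the limit set $\Lambda(\Gamma)$ (Propositions~\ref{fund_dom_prop},~\ref{regular_fund_pre_image_prop}). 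The hypotheses say $h_1$ and $h_2$ agree on the cusps of $\partial T(\Gamma)$; but the cusps are exactly the points where $T(\Gamma)$ and $\Pi(\Gamma)$ meet inside $\widehat{\C}$ (the singular points removed from $\mathcal{F}_\Gamma$ — tangency points between the $C_i$), so $h_1$ and $h_2$ together give a well-defined continuous bijection $h: T(\Gamma)\cup\Pi(\Gamma) = \overline{\mathcal{F}_\Gamma} \cup \{\text{cusps}\} \to \overline{\mathcal{F}_{\Gamma'}}\cup\{\text{cusps}\}$, label-preservingly, conformal on the interiors $T^o(\Gamma')$... wait, on $T^o(\Gamma)$ and $\Pi^o(\Gamma)$ (reading $F^o$ as $\Pi^o$).

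Next, extend $h$ to $\widehat{\C}$ equivariantly: for $z$ in a tile $g(\overline{\mathcal{F}_\Gamma})$, $g\in\Gamma$, set $H(z) := \xi(g)\bigl(h(g^{-1}z)\bigr)$, where $\xi:\Gamma\to\Gamma'$ is the isomorphism determined by $r_i \mapsto r_i'$ (this is well-defined since $h$ is label-preserving, hence intertwines the reflections $r_i$ in the boundary circles $C_i$ of $\mathcal{F}_\Gamma$ with the reflections $r_i'$ of $\mathcal{F}_{\Gamma'}$ on the common boundary arcs; one checks compatibility on shared boundary circles using that the stabilizer of a boundary circle is generated by the corresponding reflection, and on cusps using that $h$ respects the parabolic tangency structure). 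Then $H$ is a homeomorphism of each $\Omega(\Gamma)$-part, conformal on $\Omega(\Gamma)\setminus(\Lambda(\Gamma)\cup\partial\mathcal{F}_\Gamma\text{-translates})$, and since $h$ is a homeomorphism across the cusps and the circles $C_i$ are analytic arcs, $H$ is actually conformal across each reflection circle by the Schwarz reflection principle — so $H$ is conformal on all of $\Omega(\Gamma)$ and conjugates $\Gamma$ to $\Gamma'$ there. It remains to show $H$ extends continuously and conformally across $\Lambda(\Gamma)$. Since $\Lambda(\Gamma)$ has measure zero (a limit set of a geometrically finite Kleinian reflection group with connected, properly embedded boundary; $\Gamma^+$ is geometrically finite, so $\Lambda$ has zero area unless it is all of $\widehat\C$, which it is not), it suffices to know $H$ extends to a homeomorphism of $\widehat\C$: then $H$ is a homeomorphism that is conformal off a measure-zero set and locally quasiconformal off $\Lambda(\Gamma)$, hence quasiconformal globally, hence (being conformal a.e.) M\"obius, i.e. $H = M \in \mathrm{Aut}(\widehat\C)$ with $\Gamma' = M\Gamma M^{-1}$; restricting $M$ to $T(\Gamma)$ and $\Pi(\Gamma)$ recovers $h_1$, $h_2$ by construction.

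For the continuous extension across $\Lambda(\Gamma)$: use that $\Omega_\infty(\Gamma)$ is simply connected with locally connected boundary $\Lambda(\Gamma)$ (Proposition~\ref{inv_comp_limit_boundary}), that $H$ conjugates $\rho_\Gamma$ to $\rho_{\Gamma'}$ on $\Omega_\infty(\Gamma)$, and that $H$ carries the tiling of $\Omega_\infty(\Gamma)$ by $\rho_\Gamma$-preimages of $\Pi^o(\Gamma)$ to the corresponding tiling of $\Omega_\infty(\Gamma')$ with uniformly controlled combinatorics; a shrinking-of-tiles argument (diameters of the nested tiles tend to $0$ as the depth grows, by expansion of $\rho_\Gamma$ near $\Lambda$ and geometric finiteness) then shows the boundary values match up continuously, and the same for each bounded Jordan component of $\Omega(\Gamma)$.

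\textbf{Main obstacle.} The delicate point is the continuous extension of $H$ across the limit set $\Lambda(\Gamma)$ and verifying global quasiconformality there: one must show the nested tiles shrink (which near cusp points of $\partial\Pi(\Gamma)$ is slow and requires the geometric finiteness / parabolicity control on both sides), and that the resulting homeomorphism of $\widehat\C$ is indeed quasiconformal despite being built from infinitely many pieces whose dilatations a priori blow up near $\Lambda(\Gamma)$ — this is precisely why one needs $|\Lambda(\Gamma)| = 0$ together with the removability/QC-gluing lemma, rather than a naive bounded-dilatation estimate. Everything else (equivariant extension, Schwarz reflection across the $C_i$, the final Weyl's-lemma step) is routine.
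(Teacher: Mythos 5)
The first half of your plan coincides with the paper's: gluing $h_1,h_2$ at the cusps and propagating equivariantly by the reflections is exactly the "iterated Schwarz reflection" step, and it does produce a conformal isomorphism $\Omega(\Gamma)\to\Omega(\Gamma')$ that induces the isomorphism $\xi:\Gamma\to\Gamma'$. The divergence, and the gap, is in your endgame. Your key inference is: "$H$ is a homeomorphism of $\widehat{\C}$ that is conformal off $\Lambda(\Gamma)$, and $\Lambda(\Gamma)$ has measure zero, hence $H$ is quasiconformal globally, hence M\"obius." That implication is false as a general principle: measure zero is necessary but far from sufficient for a compact set to be removable for (quasi)conformal homeomorphisms, and there exist measure-zero sets (even Jordan curves) supporting non-M\"obius homeomorphisms of $\widehat{\C}$ that are conformal off the set. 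What you actually need is conformal removability of $\Lambda(\Gamma)$, and for these fractal limit sets that is a genuinely deep fact --- it is precisely the main result of \cite{LMMN} (proved via David extensions, and using Theorem~B of this paper as input), so it cannot be invoked as a routine "QC-gluing lemma" here without circularity or a substantial independent argument. Your acknowledged "main obstacle" (shrinking of tiles near parabolics to get the continuous extension of $H$ across $\Lambda(\Gamma)$) is also nontrivial, but even granting it, the quasiconformality-across-$\Lambda$ step does not follow from $\lvert\Lambda(\Gamma)\rvert=0$ alone.

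The paper avoids all of this: after the Schwarz-reflection extension to a conformal isomorphism of the ordinary sets conjugating the group actions, it concludes directly from Tukia's isomorphism theorem for geometrically finite M\"obius groups \cite[Theorem~4.2]{MR783351} (applied using that the index-two subgroups are geometrically finite), which upgrades such a conformal conjugacy on $\Omega(\Gamma)$ to a global M\"obius conjugacy without any continuous extension across the limit set or any removability statement. If you want to keep your more hands-on route, you would have to either prove removability of $\Lambda(\Gamma)$ (essentially redoing \cite{LMMN}) or replace that step by an argument in the spirit of Tukia's --- e.g.\ extending the conjugacy over the limit set using the group action on fixed points of loxodromics/parabolics and geometric finiteness --- rather than by a measure-zero argument.
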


\begin{proof} By iterated Schwarz reflection, we may extend the disjoint union of the maps $h_1$, $h_2$ to a conformal isomorphism of the ordinary sets $\Omega(\Gamma)$, $\Omega(\Gamma')$. Since $\Gamma$, $\Gamma'$ are geometrically finite, the conclusion then follows from \cite[Theorem~4.2]{MR783351}.
\end{proof}

\begin{prop}\label{existence_of_group} Let $f\in\Sigma_d^*$. There exists a unique $\Gamma_f\in \overline{\beta(\pmb{\Gamma}_{d+1})}$ such that there is a label-preserving homeomorphism \[ h: T(\Gamma_f) \rightarrow T(\sigma_f) \] with $h$ conformal on \emph{int }$T(\Gamma_f)$. 
\end{prop}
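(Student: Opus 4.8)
The statement asserts existence and uniqueness of a group $\Gamma_f \in \overline{\beta(\pmb{\Gamma}_{d+1})}$ whose droplet-analogue $T(\Gamma_f)$ is label-preservingly, conformally-on-the-interior homeomorphic to the droplet $T(\sigma_f)$. Uniqueness is the easy half: if $\Gamma_f$ and $\Gamma_f'$ both worked, then composing the two homeomorphisms gives a homeomorphism $T(\Gamma_f) \to T(\Gamma_f')$ conformal on the interior and preserving cusp labels; one still needs a matching homeomorphism on the polygon parts $\Pi(\Gamma_f) \to \Pi(\Gamma_f')$ agreeing on cusps. The polygon part is handled because both $\Pi(\Gamma_f)$ and $\Pi(\Gamma_f')$ are conformally the exterior disk with prescribed boundary cusp data coming from the same base representation, so there is a canonical conformal identification (this is essentially the content of Proposition~\ref{group_lamination_prop} and the construction in Proposition~\ref{compactify_prop}); then Proposition~\ref{uniqueness} upgrades the pair to a global Möbius conjugacy, forcing $\Gamma_f = \Gamma_f'$ as marked groups.

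\textbf{Existence} is the substantial part, and I expect it to be the main obstacle. The plan is to realize $\Gamma_f$ by a quasiconformal surgery / pinching argument rather than by a direct construction. Start from the base point: $f_0(z) = z - 1/(dz^d)$ corresponds to $\pmb{\Gamma}_{d+1}$, since $T(\sigma_{f_0})$ is a regular ideal $(d+1)$-gon conformally isomorphic to $T(\pmb{\Gamma}_{d+1})$ (as noted in the introduction). For general $f \in \Sigma_d^*$, the droplet $T(\sigma_f)$ is a $(d+1)$-gon (in the sense of a Jordan-ish domain with $d+1$ cusps on its boundary and at most $d-2$ double points, by \cite[Prop.~2.8]{LMM1} and \cite[Lemma~2.4]{2014arXiv1411.3415L}); the double points record which pairs of sides have been "pinched." The combinatorics of this pinching — which sides touch — must be shown to be realizable by a necklace group in $\overline{\beta(\pmb{\Gamma}_{d+1})}$, i.e. by letting the appropriate boundary circles of the packing become tangent. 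So the steps are: (i) read off from $T(\sigma_f)$ the pattern of identified sides and verify it defines an admissible "pinching" of the ideal $(d+1)$-gon (the $2$-connected/outerplanar necklace condition should follow from the structure of $\partial T(\sigma_f)$ as the boundary of a simply connected complementary region with a single unbounded complementary component); (ii) take a sequence (or the appropriate limit) of quasiconformal deformations of $\pmb{\Gamma}_{d+1}$ with $\pmb{\Gamma}_{d+1}$-invariant Beltrami coefficients supported in $\D$ that pinches exactly the corresponding $\iota$-invariant geodesics on $S^- = \D/\pmb{\Gamma}_{d+1}^+$, using precompactness of the Bers slice (Proposition~\ref{compactify_prop}) to extract a limit group $\Gamma_f \in \overline{\beta(\pmb{\Gamma}_{d+1})}$; (iii) match the interior: on $T^o(\Gamma_f)$ one needs a conformal isomorphism onto $\Int T(\sigma_f)$, which one gets because after the pinching both interiors are disjoint unions of disks with the same conformal moduli — this is precisely where the Fenchel–Nielsen / extremal-length coordinates alluded to in the introduction enter, and one must choose the pinching parameters so that the moduli of the pieces of $T^o(\Gamma_f)$ match those of the pieces of $\Int T(\sigma_f)$.

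\textbf{Where the difficulty lies.} Two points need care. First, matching the conformal moduli: the map $f \mapsto$ (moduli of components of $\Int T(\sigma_f)$) and the map (pinching parameters) $\mapsto$ (moduli of components of $T^o(\Gamma)$) must be shown to have overlapping ranges and, ultimately (for Theorem~\ref{theorem_A}), to match up homeomorphically; here I would invoke the pinching-deformation theory for $\Sigma_d^*$ from \cite{LMM1} to say that every admissible tuple of moduli (including degenerate ones, where a modulus goes to a limit corresponding to a cusp/double point) is achieved both by some $f$ and by some $\Gamma$. Second, the boundary behaviour and labels: one must check that the conformal map $h: \Int T(\Gamma_f) \to \Int T(\sigma_f)$ extends continuously to the boundary (using local connectivity of $\Lambda(\Gamma_f)$ from Proposition~\ref{inv_comp_limit_boundary} and the cusp structure) and that it can be arranged to send the cusp labelled $i$ of $\partial T(\Gamma_f)$ to the cusp labelled $i$ of $\partial T(\sigma_f)$ — this uses the continuous labellings set up in Remark~\ref{labeling} together with connectedness of $\Sigma_d^*$ (Proposition~\ref{sigma_d^*_connected_thm}), propagating the label-matching from the base point $f_0 \leftrightarrow \pmb{\Gamma}_{d+1}$, where it is manifest, along a path in parameter space. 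The technical heart, then, is the extremal-length/modulus bookkeeping that guarantees the combinatorial pinching data \emph{and} the conformal data of $T(\sigma_f)$ are simultaneously realized by a single group in $\overline{\beta(\pmb{\Gamma}_{d+1})}$; the rest is an assembly of the rigidity statement (Proposition~\ref{uniqueness}), precompactness (Proposition~\ref{compactify_prop}), and standard continuous-extension arguments.
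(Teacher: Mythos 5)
Your uniqueness sketch is essentially the paper's: compose the two interior maps to get $h_1$, take $h_2=\phi_{\Gamma'}\circ\phi_{\Gamma}^{-1}$ on the polygon parts (which is $z+O(1/z)$ at $\infty$ because both groups carry the hydrodynamic normalization from the Bers-slice construction), apply Proposition~\ref{uniqueness}, and conclude the Möbius map is the identity \emph{because of that normalization} — a point you should state explicitly, since a Möbius conjugacy alone does not give $\Gamma_f=\Gamma_f'$.

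For existence, however, your plan has a genuine gap exactly where you locate the "technical heart." You propose to choose the pinching parameters on the group side so that the conformal classes (moduli) of the components of $T^o(\Gamma)$ \emph{exactly} match those of the components of $\Int{T(\sigma_f)}$. That is a surjectivity/realization statement on the group side — every labeled tuple of conformal classes of curvilinear polygons with the given tangency combinatorics is realized by some $\Gamma\in\overline{\beta(\pmb{\Gamma}_{d+1})}$ — and nothing you cite proves it: the pinching theory of \cite{LMM1} concerns $\Sigma_d^*$, not the reflection groups, and limits of pinching deformations a priori control only \emph{which} curves are pinched, not the residual conformal structure of the droplet pieces in the limit, so "choose the parameters so the moduli match" is not an available move without a separate continuity/properness (or degree) argument in those extremal-length coordinates. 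The paper avoids this entirely: it only needs \emph{combinatorial} agreement, i.e.\ a label-preserving diffeomorphism $g:T(\Gamma)\to T(\sigma_f)$ with bounded dilatation (for $f(\mathbb{T})$ without double points one takes $\Gamma=\pmb{\Gamma}_{d+1}$; with double points one first realizes the tangency pattern by a circle packing, via \cite[Proposition~6.1]{MR2131318}, or by pinching). One then spreads the Beltrami coefficient $\mu_g=g_{\overline z}/g_z$ over the $\Gamma$-orbit, sets it to $0$ elsewhere (in particular on $\D^*$, which keeps the deformed group in $\overline{\beta(\pmb{\Gamma}_{d+1})}$), integrates with the hydrodynamic normalization to get $\tau_g$, and defines $\Gamma_f:=\tau_g\circ\Gamma\circ\tau_g^{-1}$ and $h:=g\circ\tau_g^{-1}$, which is automatically conformal on $\Int{T(\Gamma_f)}$. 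In other words, the quasiconformal straightening deforms the group so that the moduli match by construction; the moduli bookkeeping you defer is precisely what this surgery makes unnecessary. As written, your argument cannot be completed without either supplying that realization-of-moduli theorem (a substantial statement, close in strength to part of Theorem~\ref{theorem_A} itself) or falling back on the straightening construction above.
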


%\begin{prop}\label{existence_of_group} Let $f\in\Sigma_d^*$. There exists a group $\Gamma_f \in \overline{\beta(\pmb{\Gamma}_{d+1})}$ and a homeomorphism \[ h: T(\Gamma_f) \rightarrow T(\sigma_f) \] such that  $h$ is conformal on $T^o_{\Gamma_f}$, and $h$ maps cusps of $\partial T(\Gamma_f)$ to cusps of $f(\mathbb{T})$. \end{prop}

\begin{proof}[Proof of Existence.] We first assume that $f(\mathbb{T})$ has no double points. Let \[ g: T(\pmb{\Gamma}_{d+1}) \rightarrow T(\sigma_f) \] be a label-preserving diffeomorphism such that \[||g_{\overline{z}}/g_{z}||_{L^{\infty}(T(\pmb{\Gamma}_{d+1}))}<1.\] Define a Beltrami coefficient $\mu_g$ by \[ \mu_g(u):=g_{\overline{z}}(u)/g_{z}(u) \textrm{ for } u\in T(\pmb{\Gamma}_{d+1}),  \] and

\[ \mu_g(u) := \begin{cases} 
      \mu_g(r_i^{\circ n}(u)) & \textrm{ if } u \in r_i^{-n}(T(\pmb{\Gamma}_{d+1})) \textrm{ for } 1\leq i\leq d+1 \textrm{ and } n\geq1, \\
      0 & \textrm{otherwise.}
   \end{cases}
\]

\noindent Denote by $\tau_g:\widehat{\mathbb{C}}\rightarrow\widehat{\mathbb{C}}$ the integrating map of $\mu_g$, normalized so that \[\tau_g(z)=z+O(1/|z|) \textrm{ as } z\rightarrow\infty.\] We claim that $\Gamma_f:=\tau_g\circ\pmb{\Gamma}_{d+1}\circ\tau_g^{-1}$ satisfies the conclusions of Proposition \ref{existence_of_group}. Indeed, \[ \tau_g\circ\pmb{\Gamma}_{d+1}\circ\tau_g^{-1} \in \beta({\pmb{\Gamma}_{d+1}}) \] since $\mu_g\equiv0$ on $\D^*$. The map \[ h := g\circ\tau_g^{-1}: T(\Gamma_f) \rightarrow T(\sigma_f) \] is conformal on  $\textrm{int } T_{\Gamma_f}$  since $\tau_g$ is the integrating map for $g_{\overline{z}}/g_z$. Lastly, we see that $h$ is label-preserving since $\tau_g^{-1}$ and $g$ are both label-preserving by definition.

Next we consider the case that $f(\mathbb{T})$ has at least one double point. We claim the existence of $\Gamma \in \overline{\beta(\pmb{\Gamma}_{d+1})}$ such that there is a label-preserving diffeomorphism $ g: T(\Gamma) \rightarrow T(\sigma_f). $ Given the existence of such a $\Gamma$, the same quasiconformal deformation argument as above produces the desired group $\Gamma_f$ and homeomorphism $h$.

The existence of such a $\Gamma$ may be proven by pinching geodesics on the $(d+1)$-times punctured sphere $\D/\pmb{\Gamma}_d^+$ (where $\pmb{\Gamma}_d^+$ is the index $2$ Kleinian subgroup of $\pmb{\Gamma}_{d+1}$ consisting of orientation-preserving automorphisms of $\mathbb{C}$), or adapting the techniques used in the proof of \cite[Theorem~4.11]{LMM1}. Alternatively, we may prove the existence of $\Gamma$ by associating a planar vertex $v_i$, for $1\leq i \leq d+1$, to each analytic arc connecting two cusps of $f(\mathbb{T})$, as in Figure \ref{fig:circle_packing}. Connect two vertices $v_i$, $v_j$ by an edge if and only if the corresponding analytic arcs have non-empty intersection. This defines a simplicial 2-complex $K$ in the plane. $K$ is a combinatorial closed disc, and hence \cite[Proposition~6.1]{MR2131318} shows that there is a circle packing $(C_i')_{i=1}^{d+1}$ of $\mathbb{D}$ for $K$, with each $C_i'$ tangent to $\partial\mathbb{D}$. Quasiconformally deforming this circle packing group so that there is a label-preserving conformal map to $\Pi(\pmb{\Gamma}_{d+1})$ gives the desired $\Gamma\in\overline{\beta(\pmb{\Gamma}_{d+1})}$ (up to M{\"o}bius conjugacy).
\end{proof}

\begin{figure}[ht!]
{\includegraphics[width=1\textwidth]{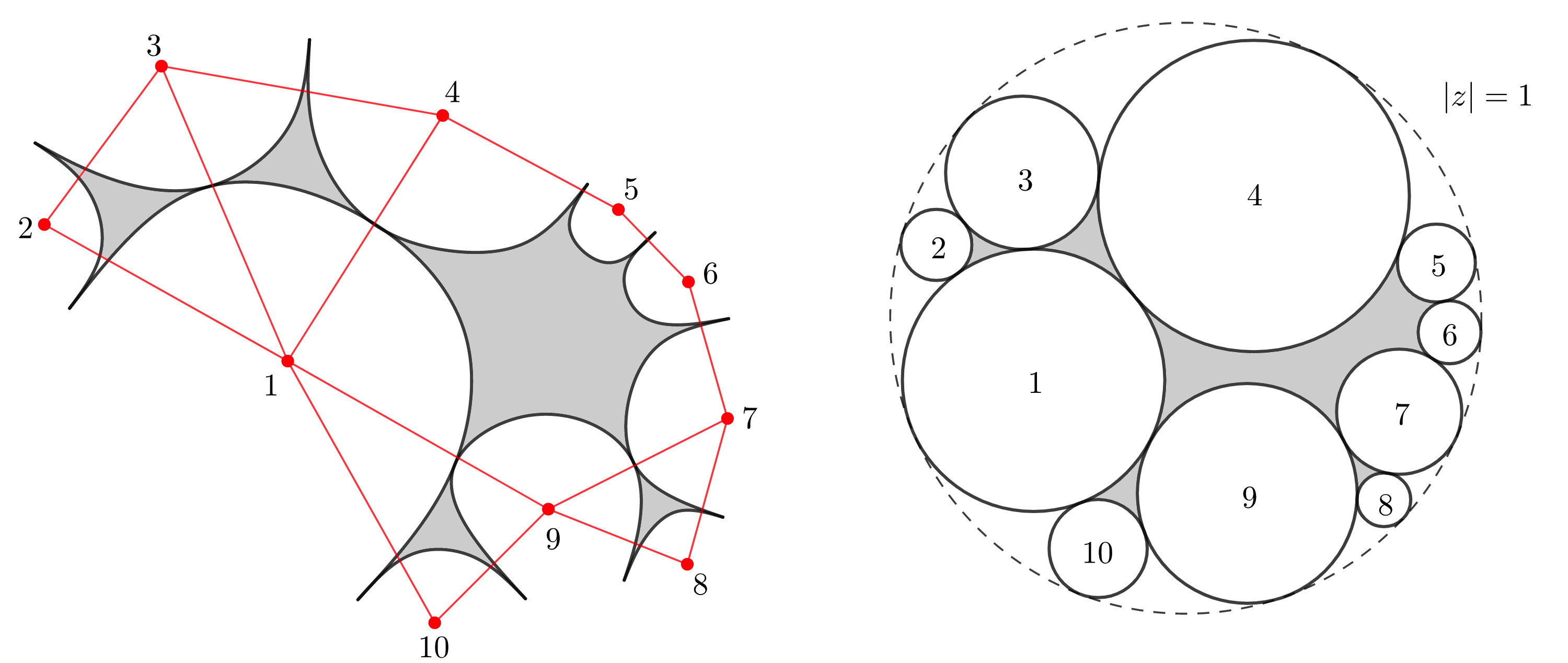}}
\caption{Illustrated is the procedure of associating a circle packing to an element of $\Sigma_d^*$.}
\label{fig:circle_packing}
\end{figure}

\begin{proof}[Proof of Uniqueness.] If $\Gamma$, $\Gamma'$ both satisfy the conclusions of the Proposition, we may take $h_1$, $h_2$ as in Proposition \ref{uniqueness}, where $h_2(z)=z+O(1/z)$ as $z\rightarrow\infty$ since $\Gamma$, $\Gamma' \in\overline{\beta(\pmb{\Gamma}_{d+1})}$. Thus as $h_2$ extends to an automorphism of $\mathbb{C}$ by Proposition \ref{uniqueness}, it follows that $h_2=\textrm{id}$, and hence $\Gamma=\Gamma'$. 
\end{proof}

\begin{rem} The requirement that $h$ be label-preserving is essential to the uniqueness statement in the conclusion of Proposition \ref{existence_of_group}: see Figure \ref{non-uniqueness}. \end{rem}

\begin{figure}[ht!]
\begin{tikzpicture}
\draw (1,0) circle (1);
\draw (1,2.4) circle (1);
\draw (2,1.2) circle (0.55);
\draw (0,1.2) circle (0.55);

\node at (1,2.4) {$C_1$};
\node at (0,1.2) {$C_2$};
\node at (1,0) {$C_3$};
\node at (2,1.2) {$C_4$};

\draw (-4.4,0.2) circle (0.55);
\draw (-4.4,2.2) circle (0.55);
\draw (-3.2,1.2) circle (1);
\draw (-5.6,1.2) circle (1);

\node at (-4.4,0.2) {$C_3$};
\node at (-4.4,2.2) {$C_1$};
\node at (-3.2,1.2) {$C_4$};
\node at (-5.6,1.2) {$C_2$};

  \draw [-] (-11,3.2) to [out=300,in=240] (-9.6,3.2);
  \draw [-] (-11,0.4) to [out=60,in=120] (-9.6,0.4);
  \draw [-] (-11,3.2) to [out=300,in=60] (-11,0.4); 
   \draw [-] (-9.6,0.4) to [out=120,in=240] (-9.6,3.2);
   
\node at (-10.4,0) {$f(\mathbb{T})$};
\node at (-9.2,3.2) {$\zeta_1^f$};
\node at (-11.4,3.2) {$\zeta_2^f$};
\node at (-11.4,0.4) {$\zeta_3^f$};
\node at (-9.2,0.4) {$\zeta_4^f$};

\end{tikzpicture}
\caption{ Let $f(z):=z+t/z-1/(3z^3)\in \Sigma_3^*$ with $t>0$. Then there are exactly two elements of $\overline{\beta(\pmb{\Gamma}_{4})}$ whose corresponding interior fundamental domains are conformally isomorphic (with cusps preserved) to $T(\sigma_f)$. However, only one of these conformal isomorphisms is label-preserving.}
\label{non-uniqueness}
\end{figure}
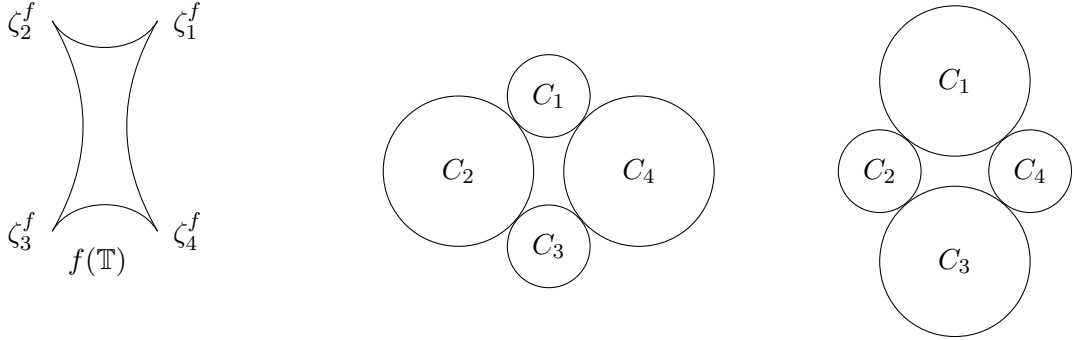

\begin{prop}\label{bijection_statement} The mapping \begin{align} \Sigma_d^* \rightarrow \overline{\beta(\pmb{\Gamma}_{d+1})}   \tag{$\star$} \label{map_defn} \\ f\mapsto\Gamma_f \phantom{aasds} \nonumber \end{align} defined in Proposition \ref{existence_of_group} is a bijection.
\end{prop}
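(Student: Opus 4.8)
The plan is to show that the map $f\mapsto\Gamma_f$ from Proposition~\ref{existence_of_group} is both injective and surjective, using the rigidity provided by Proposition~\ref{uniqueness} for one direction and a quasiconformal surgery / uniformization argument for the other. Throughout I will use that $\Gamma_f$ already comes equipped with a label-preserving homeomorphism $h_f: T(\Gamma_f)\to T(\sigma_f)$ that is conformal on $\Int T(\Gamma_f)$.

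\textbf{Injectivity.} Suppose $f, f'\in\Sigma_d^*$ satisfy $\Gamma_f=\Gamma_{f'}=:\Gamma$. Then $h_f: T(\Gamma)\to T(\sigma_f)$ and $h_{f'}: T(\Gamma)\to T(\sigma_{f'})$ are label-preserving homeomorphisms conformal on the interior, so $h_{f'}\circ h_f^{-1}: T(\sigma_f)\to T(\sigma_{f'})$ is a label-preserving homeomorphism that is conformal on $\Int T(\sigma_f)$. I want to upgrade this to a global M\"obius map. The argument should mirror the uniqueness proof in Proposition~\ref{existence_of_group}: by iterated Schwarz reflection (using that $\sigma_f$, $\sigma_{f'}$ are the Schwarz reflections in the real-analytic curves $f(\mathbb{T})$, $f'(\mathbb{T})$), extend $h_{f'}\circ h_f^{-1}$ to a conformal isomorphism between the tiling sets $\mathcal{T}_\infty(\sigma_f)$ and $\mathcal{T}_\infty(\sigma_{f'})$ conjugating $\sigma_f$ to $\sigma_{f'}$; this conformal map extends across the common boundary $\Lambda(\sigma_f)$ (which has empty interior and is removable in the relevant sense, by local connectivity results quoted earlier, or one argues directly that the Schwarz reflections patch the map into a global conformal automorphism of $\widehat{\mathbb{C}}$). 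Normalizing by the hydrodynamic condition (both $f$ and $f'$ are tangent to the identity at $\infty$, so the induced map fixes $\infty$ with derivative $1$ and is hydrodynamically normalized), the resulting M\"obius map is the identity, forcing $f(\mathbb{D}^*)=f'(\mathbb{D}^*)$ and then $f=f'$ by the normalization of $\Sigma_d^*$. The main obstacle here is making the "patch across $\Lambda(\sigma_f)$" step rigorous without a normal-family argument; I expect this is handled by the fact that the conformal map on the tiling set is already a global homeomorphism of $\widehat{\mathbb{C}}$ respecting the two Schwarz reflections, so analyticity on the complement of a nowhere-dense removable set plus continuity gives analyticity everywhere --- or, more simply, one observes that $h_{f'}\circ h_f^{-1}$ already conjugates $\sigma_f$ to $\sigma_{f'}$ on a neighborhood of $T(\sigma_f)$ and uses the functional equation to propagate conformality.

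\textbf{Surjectivity.} Given $\Gamma\in\overline{\beta(\pmb{\Gamma}_{d+1})}$, I must produce $f\in\Sigma_d^*$ with $\Gamma_f=\Gamma$; by injectivity it suffices to produce \emph{some} $f$ together with a label-preserving homeomorphism $T(\Gamma)\to T(\sigma_f)$ conformal on the interior. The strategy: the fundamental domain $T^o(\Gamma)=T^o(\Gamma)$ is a finite union of Jordan domains (by Proposition~\ref{inv_comp_limit_boundary}) whose boundaries meet at the cusps; uniformize $\Int T(\Gamma)$ (componentwise) and assemble a candidate droplet. More efficiently, I would run the pinching/surgery construction from the proof of Proposition~\ref{existence_of_group} in reverse: first treat $\Gamma=\pmb{\Gamma}_{d+1}$, where $T(\pmb{\Gamma}_{d+1})$ is conformally a regular ideal $(d+1)$-gon and is realized as $T(\sigma_{f_0})$ for the base point $f_0(z)=z-1/(dz^d)$ (this identification is stated in the introduction); then for a general $\Gamma$ obtained from $\pmb{\Gamma}_{d+1}$ by a $\pmb{\Gamma}_{d+1}$-invariant Beltrami deformation supported off $\mathbb{D}^*$ (together with a limit/pinching to reach the Bers boundary), transport that Beltrami coefficient through the identification $T(\pmb{\Gamma}_{d+1})\cong T(\sigma_{f_0})$ to a Beltrami coefficient on $\widehat{\mathbb{C}}\setminus f_0(\mathbb{D}^*)$, extend it by zero on $f_0(\mathbb{D}^*)$ and by pulling back under iterates of $\sigma_{f_0}$ to get a $\sigma_{f_0}$-invariant Beltrami coefficient, integrate it, and check that the integrating map conjugates $\sigma_{f_0}$ to $\sigma_f$ for some $f\in\Sigma_d^*$ (the normalization at $\infty$ keeps us in $\Sigma_d^*$ and the invariance keeps $\sigma_f$ a Schwarz reflection). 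For boundary points of $\overline{\beta(\pmb{\Gamma}_{d+1})}$ one passes to a limit, using precompactness of $\Sigma_d^*$-type families and the convergence-of-quadrilaterals tool (Theorem~\ref{convergence_of_quadrilaterals}) to control degeneration of the droplet as sides pinch together. The main obstacle is the boundary case: ensuring the limiting univalent map $f$ still lies in $\Sigma_d^*$ (i.e.\ the limit droplet genuinely has $(d+1)$ cusps and the pinched sides become double points rather than collapsing further), and that the limiting homeomorphism is still label-preserving and conformal on the interior; this is where the combinatorial circle-packing description of $T(\Gamma)$ (as in Figure~\ref{fig:circle_packing}) and the structure theory of $\partial\beta(\pmb{\Gamma}_{d+1})$ as cusp groups pinching a prescribed curve system are essential, and I would lean on the pinching deformation technology of \cite{LMM1} to control the $\Sigma_d^*$ side of the limit.

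With injectivity and surjectivity established, the map \eqref{map_defn} is a bijection, which is the assertion of Proposition~\ref{bijection_statement}. (Continuity of this bijection and of its inverse --- hence that it is a homeomorphism --- is a separate matter handled later via the Fenchel--Nielsen / extremal-length coordinates alluded to in the introduction; here only the bijection is claimed.)
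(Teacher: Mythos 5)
Your surjectivity sketch is essentially the paper's argument: starting from the base point $f_0(z)=z-1/(dz^d)$, pull back the conformal structure of $T^o(\Gamma)$ to $T(\sigma_{f_0})$ by a vertex-preserving quasiconformal map, spread it by the dynamics, and straighten; the only difference is that for groups on the Bers boundary the paper simply invokes \cite[Theorem~4.11]{LMM1} to produce an $f\in\Sigma_d^*$ whose droplet is quasiconformally equivalent to $T(\Gamma)$, rather than re-running the pinching limit itself, so that part of your proposal is fine.

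The injectivity argument, however, has a genuine gap. Your map $h_{f'}\circ h_f^{-1}$, extended by lifting, is defined only on the tiling sets $\mathcal{T}_\infty(\sigma_f)\to\mathcal{T}_\infty(\sigma_{f'})$; it is \emph{not} defined on the basin of infinity, so neither the claim that it ``extends across $\Lambda(\sigma_f)$'' nor the hydrodynamic normalization at $\infty$ makes sense as stated. Conformal removability does not let a conformal map defined on one side of a boundary jump across it -- one needs a global homeomorphism conformal off the set -- and in any case removability of $\Lambda(\sigma_f)$ is not available here (it is the subject of the later work \cite{LMMN}); the paper replaces this by the pullback argument of \cite[Theorem~5.1]{LMM1}. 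More importantly, once you supply the missing piece on $\mathcal{B}_\infty(\sigma_f)$ via B\"ottcher coordinates, there is a $(d+1)$-st-root-of-unity ambiguity (the rotations commuting with $\overline{z}^d$), and whether the choice with derivative $1$ at $\infty$ actually fits together continuously with the droplet-side map is exactly the point at issue: without resolving it you can only conclude $f'=M_\omega\circ f\circ M_\omega^{-1}$ for some $(d+1)$-st root of unity $\omega$, not $f=f'$. The paper resolves it by connecting the two definitions of $\Psi$ along the $0$-ray, using the normalization (\ref{bot_norm}) of the B\"ottcher coordinate together with Proposition~\ref{zero_ray_landing_cusp} (the $0$-ray lands at the cusp $\zeta_1^f$, whose proof in turn requires connectedness of $\Sigma_d^*$, Proposition~\ref{sigma_d^*_connected_thm}) and the label-preserving property of $h$; compare Figure~\ref{non-uniqueness} for why the labeling cannot be dispensed with. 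Your proposal never engages with this rotational ambiguity, so the step from ``global conformal conjugacy'' to ``identity'' is unjustified.
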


\begin{proof} We sketch a proof of surjectivity of (\ref{map_defn}). Suppose first that $\Gamma\in\overline{\beta(\pmb{\Gamma}_{d+1})}$ is an interior necklace group, and let $f_0(z):=z-1/(dz^d)  \in \Sigma_d^*$. Pull back the standard conformal structure on $T^o(\Gamma)$ by a quasiconformal mapping $\mathfrak{h}: T(\sigma_{f_0}) \rightarrow T^o(\Gamma)$ which preserves vertices, spread this conformal structure under the action of $\sigma_f$ and extend elsewhere by the standard conformal structure, then straighten. This gives the desired element of $\Sigma_d^*$ which maps to $\Gamma$ (see the proof of \cite[Theorem~4.11]{LMM1} for details on quasiconformal deformations of $f$). If $\Gamma\in\overline{\beta(\pmb{\Gamma}_{d+1})}$ is not an interior necklace group, $\Gamma$ still satisfies Condition (2) of Definition \ref{necklace_schottky_group} by Proposition \ref{compactify_prop}, and so by \cite[Theorem~4.11]{LMM1} there exists $f\in\Sigma_d^*$ and a quasiconformal mapping $\mathfrak{h}: T(\sigma_f) \rightarrow T(\Gamma)$ preserving singularities, whence the above arguments apply. 

We now show injectivity of (\ref{map_defn}). Let $f$, $f' \in \Sigma_d^*$ such that $\Gamma_f=\Gamma_{f'}$. Recall from Remark \ref{bottcher_coordinate} that the B\"ottcher coordinates for $\sigma_f$, $\sigma_{f'}$ are both tangent to $z\mapsto wz$ at $\infty$ for the same $w$. Thus there is a conjugacy $\Psi$ between $\sigma_f$, $\sigma_{f'}$ in a neighborhood of $\infty$ satisfying $\Psi'(\infty)=1$. Since $\Gamma_f=\Gamma_{f'}$, there is a label-preserving conformal isomorphism of $T(\sigma_f)\rightarrow T(\sigma_{f'})$, which defines $\Psi$ in a finite part of the plane (disjoint from the neighborhood of $\infty$ in which $\Psi$ is a conjugacy). Since the $0$-rays for $\sigma_f$, $\sigma_{f'}$ both land at a cusp with the same label (see Proposition~\ref{zero_ray_landing_cusp}), the definition of $\Psi$ in the finite part of the plane and near $\infty$ can be connected along the $0$-ray such that $\Psi$ is a conjugacy along the $0$-ray. The pullback argument of \cite[Theorem~5.1]{LMM1} now applies to show that $\Psi$ is the restriction of a M\"obius transformation $M$. Since $f$, $f\in\Sigma_d^*$, the map $M$ is multiplication by a $d+1^{\textrm{st}}$ root of unity. Since $\Psi'(\infty)=1$, it follows that $\Psi(z)\equiv z$. 
\end{proof}

\noindent We now wish to show that the mapping of Proposition \ref{bijection_statement} is in fact a homeomorphism, for which we first need the following lemma:

\begin{lem}\label{quadrilaterals_lemma} Let $U$, $V$ be Jordan domains. Let $n\geq4$, and suppose $u_1,\cdots, u_n \in \partial U$  and $v_1,\cdots, v_n \in \partial V$ are oriented positively with respect to $U$, $V$ (respectively). Suppose furthermore that the quadrilaterals \[ U(u_j, u_{j+1}, u_{k}, u_{k+1}), \hspace{4mm} V(v_j, v_{j+1}, v_{k}, v_{k+1}) \] have the same modulus for each $j,k$ with $1\leq j < j+2 \leq k \leq n-1$. Then there is a conformal map \[ f: U \rightarrow V \textrm{ such that } f(u_i) = v_i\textrm{, } 1\leq i \leq n.\]
\end{lem}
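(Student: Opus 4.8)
The plan is to build the conformal map $f$ by conformal welding, using the equality of moduli of the sub-quadrilaterals to control how a Riemann map of $U$ compares with a Riemann map of $V$ along the boundary. First I would fix the conformal type: let $\varphi: U \to \mathbb{D}$ and $\psi: V \to \mathbb{D}$ be Riemann maps, which extend to homeomorphisms of the closures since $U, V$ are Jordan domains. Set $a_i := \varphi(u_i)$ and $b_i := \psi(v_i)$ on $\mathbb{T}$, listed in positive cyclic order. The map $f = \psi^{-1} \circ g \circ \varphi$ will do the job provided I can produce a M\"obius automorphism — or more generally a conformal self-map — $g$ of $\mathbb{D}$ with $g(a_i) = b_i$ for all $i$. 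In general one cannot match $n \geq 4$ boundary points by a disk automorphism, so the real content is that the modulus hypotheses force the configuration $(a_1,\dots,a_n)$ to be conformally equivalent to $(b_1,\dots,b_n)$.

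The key step is therefore: the ordered $n$-tuple of boundary points of a Jordan domain, up to conformal equivalence, is determined by the moduli of the quadrilaterals $U(u_j,u_{j+1},u_k,u_{k+1})$ for $1 \le j < j+2 \le k \le n-1$. I would prove this by induction on $n$. The base case $n = 4$ is exactly the statement that a quadrilateral is determined up to conformal equivalence by its single modulus, which is classical (one normalizes both quadrilaterals to a standard rectangle and composes). For the inductive step, given the map $f_0: U(u_1,\dots,u_{n-1}) \to V(v_1,\dots,v_{n-1})$ matching the first $n-1$ marked points (which exists by the inductive hypothesis applied to the sub-configuration), I must show $f_0(u_n) = v_n$, i.e. that the last point is already forced. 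The freedom left after fixing $n-1 \ge 3$ boundary points is zero-dimensional — three boundary points already pin down a disk automorphism — so once $f_0$ is constructed it is the unique conformal map sending $u_i \mapsto v_i$ for $i = 1,\dots,n-1$, and one checks that one of the given modulus equalities involving the index $n$ (say the quadrilateral $U(u_1,u_2,u_{n-1},u_n)$ versus $V(v_1,v_2,v_{n-1},v_n)$, or $U(u_{n-2},u_{n-1},u_n,u_1)$ depending on the index range) pins down the location of $f_0(u_n)$: since $f_0$ already maps $u_1,u_2,u_{n-1}$ to $v_1,v_2,v_{n-1}$, the quadrilateral $V(v_1,v_2,v_{n-1}, f_0(u_n))$ has the same modulus as $V(v_1,v_2,v_{n-1},v_n)$, and as $f_0(u_n)$ ranges over the boundary arc between $v_{n-1}$ and $v_1$ this modulus is a strictly monotone function of the point, hence $f_0(u_n) = v_n$.

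I expect the main obstacle to be bookkeeping the index ranges so that the inductive step always has an available modulus equality touching the index $n$ and three already-matched indices, and checking the strict monotonicity of the modulus of $V(v_1,v_2,v_{n-1},v_\ast)$ as $v_\ast$ moves along the complementary boundary arc. The monotonicity is intuitively clear (enlarging the quadrilateral by sliding one vertex strictly increases or decreases the modulus, by the monotonicity and superadditivity properties of moduli of quadrilaterals — see \cite[\S I.4]{MR0344463}), but it needs to be stated carefully: one should fix the other three vertices, normalize to a rectangle, and observe that the fourth vertex moving monotonically along its arc corresponds to a monotone change in the side-length ratio. Once this is in hand, the induction closes and $f := \psi^{-1} \circ f_0' \circ \varphi$, where $f_0'$ is the disk automorphism realizing the matched boundary configuration, is the desired conformal map $U \to V$ with $f(u_i) = v_i$ for all $i$.
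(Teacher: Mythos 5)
Your proposal is correct and follows essentially the same route as the paper: Riemann maps to the disk, matching three boundary points, and then pinning down each remaining vertex one at a time using the hypothesis that the corresponding quadrilaterals have equal modulus. The only cosmetic difference is that you justify the pinning step by strict monotonicity of the modulus in the fourth vertex, whereas the paper gets the same injectivity immediately from the fact that equal moduli yield a conformal map matching all four vertices together with the rigidity of M\"obius maps fixing three boundary points of $\mathbb{D}$ --- a shortcut you could adopt to avoid proving the monotonicity claim.
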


\begin{proof} Let $\Phi_U: U\rightarrow\mathbb{D}$, $\Phi_V: V \rightarrow\mathbb{D}$ be conformal maps such that \begin{align}\label{does_fix} \Phi_U(u_j)=\Phi_V(v_j)  \textrm{ for } 1\leq j \leq 3.\end{align} Suppose by way of contradiction that \begin{align}\label{doesnt_fix} \Phi_U(u_4)\not=\Phi_V(v_4). \end{align} Since $U(u_1, u_2, u_3, u_4)$, $V(v_1, v_2, v_3, v_4)$ have the same modulus, it follows that there is a conformal map $g: U\rightarrow V$ with $g(u_j)=v_j$ for $1\leq j \leq 4$. But then \[  \Phi_V \circ g \circ \Phi_U^{-1}: \mathbb{D} \rightarrow \mathbb{D} \] is a Mobius transformation which fixes $\Phi_U(u_1)$, $\Phi_U(u_2)$, $\Phi_U(u_3) \in \partial\mathbb{D}$ by (\ref{does_fix}), but is not the identity by (\ref{doesnt_fix}), and this is a contradiction. This shows that \[ \Phi_U(u_4)=\Phi_V(v_4), \] and the same argument applied recursively shows that \[ \Phi_U(u_j)=\Phi_V(v_j), \textrm{ for } 4\leq j \leq n. \] The lemma follows by taking $f=\Phi_V^{-1}\circ\Phi_U$.
\end{proof}

\begin{thm}\label{homeo_thm} The mapping \begin{align} \Sigma_d^* \rightarrow \overline{\beta(\pmb{\Gamma}_{d+1})} \nonumber \\ f\mapsto\Gamma_f \phantom{aasds} \nonumber \end{align} defined in Proposition~\ref{existence_of_group} is a homeomorphism.
\end{thm}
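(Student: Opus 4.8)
The plan is to prove that the bijection $f \mapsto \Gamma_f$ from Proposition~\ref{bijection_statement} is a homeomorphism by establishing continuity in both directions; since $\Sigma_d^*$ is locally compact and $\overline{\beta(\pmb{\Gamma}_{d+1})}$ is Hausdorff (and both are second countable), it actually suffices to prove the map is continuous together with one of: properness, or continuity of the inverse. I would aim to prove continuity of $f \mapsto \Gamma_f$ directly, and then argue continuity of the inverse by a compactness/normal-families-on-$\D^*$ argument, which is where the real work lies.

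First I would set up the continuity of $f \mapsto \Gamma_f$. Take $f_n \to f$ in $\Sigma_d^*$, i.e. coefficient-wise, equivalently uniformly on compact subsets of $\D^*$. Then the droplets $T(\sigma_{f_n}) = \widehat{\C} \setminus f_n(\D^*)$ converge to $T(\sigma_f)$ in the Hausdorff sense, and crucially the cusps $\zeta_i^{f_n} = f_n(\xi_i^{f_n})$ converge to $\zeta_i^f$ with labels preserved (Remark~\ref{labeling}). The group $\Gamma_f$ was built so that there is a label-preserving homeomorphism $h_f: T(\Gamma_f) \to T(\sigma_f)$ conformal on the interior; by Proposition~\ref{existence_of_group}'s uniqueness, $\Gamma_f$ is determined by the conformal-plus-labeling data of $T(\sigma_f)$. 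The strategy here is to use \emph{moduli of quadrilaterals}: the conformal type of a $(d+1)$-marked Jordan domain (or its desingularized version when there are double points) is captured by the moduli of the quadrilaterals $T^o(\sigma_f)(\zeta_j, \zeta_{j+1}, \zeta_k, \zeta_{k+1})$, and these vary continuously in $f$ by Theorem~\ref{convergence_of_quadrilaterals} together with the quadrilateral-convergence of the $T(\sigma_{f_n})$ to $T(\sigma_f)$. Hence the conformal types of the $T^o(\Gamma_{f_n})$ — and by Lemma~\ref{quadrilaterals_lemma} the marked domains $\Pi(\Gamma_{f_n})$ as well (which are determined by the requirement of admitting a label-preserving conformal map to $\Pi(\pmb{\Gamma}_{d+1})$, hence are rigid) — converge. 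Since $\Sigma_d^*$ is precompact-friendly here: extract, using Proposition~\ref{compactify_prop}, a convergent subsequence $\Gamma_{f_n} \to \Gamma_\infty \in \overline{\beta(\pmb{\Gamma}_{d+1})}$; the Hausdorff convergence of generating circles then gives $T(\Gamma_{f_n}) \to T(\Gamma_\infty)$ and $\Pi(\Gamma_{f_n}) \to \Pi(\Gamma_\infty)$ as marked domains, and a normal-families argument on the conformal maps $h_{f_n}$ (normalized appropriately, e.g. hydrodynamically on $\Pi$) produces a limiting label-preserving homeomorphism $T(\Gamma_\infty) \to T(\sigma_f)$ conformal on the interior. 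By the uniqueness in Proposition~\ref{existence_of_group}, $\Gamma_\infty = \Gamma_f$. Since every subsequential limit equals $\Gamma_f$ and the ambient space is compact Hausdorff, $\Gamma_{f_n} \to \Gamma_f$; this is continuity.

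For the inverse, I would argue continuity of $\Gamma \mapsto f_\Gamma := (f \mapsto \Gamma_f)^{-1}(\Gamma)$. Take $\Gamma_n \to \Gamma$ in $\overline{\beta(\pmb{\Gamma}_{d+1})}$, so the generating circles converge in the Hausdorff sense (proof of Proposition~\ref{compactify_prop}), whence $T(\Gamma_n) \to T(\Gamma)$ and $\Pi(\Gamma_n) \to \Pi(\Gamma)$ with labels preserved. Let $f_n := f_{\Gamma_n}$. Each $f_n \in \Sigma_d^*$ is a normalized univalent map on $\D^*$ with $f_n(z) = z + a_1^n/z + \cdots - 1/(dz^d)$; by the area theorem / standard compactness of $\Sigma$, $(f_n)$ is a normal family on $\D^*$, so extract $f_{n_k} \to g$ uniformly on compacta of $\D^*$. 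One must check $g \in \Sigma_d^*$: the coefficient $a_d = -1/d$ persists, and univalence is preserved unless it degenerates — but degeneration would mean $g(\D^*)$ is not a Jordan-exterior domain of the right type, which I would rule out using the moduli of the quadrilaterals of $T(\sigma_{f_{n_k}}) = \widehat{\C}\setminus f_{n_k}(\D^*)$: these equal the moduli of the corresponding quadrilaterals of $T(\Gamma_{n_k})$ (via $h_{f_{n_k}}$), which converge to those of $T(\Gamma)$ by Theorem~\ref{convergence_of_quadrilaterals}, and hence stay bounded and bounded away from $0$ and $\infty$, preventing collapse. Then $T(\sigma_g) = \widehat{\C}\setminus g(\D^*)$ is a marked domain conformally equivalent (label-preserving) to $T(\Gamma)$, so $\Gamma_g = \Gamma$ by the bijection, i.e. $g = f_\Gamma$. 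Again every subsequential limit is $f_\Gamma$, and since the coefficients stay in a compact set this forces $f_n \to f_\Gamma$ in $\Sigma_d^*$.

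The main obstacle I anticipate is the bookkeeping when $f(\mathbb{T})$ (equivalently $\partial T(\Gamma)$) has \emph{double points}: then $T(\sigma_f)$ is not a Jordan domain and one must work with the desingularized droplet $T^o(\sigma_f)$, decompose it into its (finitely many) Jordan components, and track which quadrilaterals / boundary arcs survive in the limit, especially across parameters where the number of double points jumps (cusp formation on $\partial\beta(\pmb{\Gamma}_{d+1})$). Controlling the degeneration of marked domains — showing that sides can only pinch together in the prescribed combinatorial way dictated by the necklace / circle-packing structure, and that no extra collapse occurs — is the delicate point; here the moduli-of-quadrilaterals estimates together with the rigidity from Proposition~\ref{uniqueness} and the fact (from the last paragraph of Subsection~\ref{reflection_group_prelim_subsec}) that all boundary groups are geometrically finite cusps obtained by pinching a restricted family of geodesics are what make the argument go through. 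A secondary, more routine obstacle is organizing the normal-families extractions so that the limiting maps glue correctly across the $0$-ray, exactly as in the injectivity argument of Proposition~\ref{bijection_statement}; I would reuse that mechanism verbatim.
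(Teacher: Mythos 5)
Your proposal follows essentially the same route as the paper: extract a subsequential limit $\Gamma_\infty'$ in the compact space $\overline{\beta(\pmb{\Gamma}_{d+1})}$, compare moduli of the cusp-marked quadrilaterals of $T(\sigma_{f_n})$ and $T(\Gamma_{f_n})$ via Theorem~\ref{convergence_of_quadrilaterals} and Lemma~\ref{quadrilaterals_lemma}, and conclude with the uniqueness statement of Proposition~\ref{existence_of_group}, with the inverse direction handled by the symmetric argument (the paper literally says it is ``similar''). The double-point bookkeeping you flag as the main obstacle is resolved in the paper simply by declaring $M(Q_n):=\infty$ (resp.\ $M(Q_n):=0$) when the $a$-sides (resp.\ $b$-sides) touch, noting via \cite{LMM1} that only one degeneration can occur, and applying Lemma~\ref{quadrilaterals_lemma} componentwise to the bounded components of $\C\setminus f_\infty(\mathbb{T})$ and $\Int{T(\Gamma_\infty')}$.
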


\begin{proof} Let $(f_n)_{n=1}^{\infty}\in\Sigma_d^*$, and suppose $f_n \rightarrow f_\infty \in\Sigma_d^*$. We abbreviate $\Gamma_n:=\Gamma_{f_n}$, $\Gamma_\infty:=\Gamma_{f_\infty}$. We want to show that \[ \Gamma_n \xrightarrow{n\rightarrow\infty} \Gamma_\infty \textrm{ in } \overline{\beta(\pmb{\Gamma}_{d+1})}.\] As $\overline{\beta(\pmb{\Gamma}_{d+1})}$ is compact, we may assume, after passing to a subsequence, that $\Gamma_n$ converges to some $\Gamma_\infty' \in \overline{\beta(\pmb{\Gamma}_{d+1})}$. 

We denote the critical values of $f_n$ by $\zeta_1^n,\cdots, \zeta_{d+1}^n$ (with the labeling chosen in Remark~\ref{labeling}). For $j,k$ with $1\leq j < j+2 \leq k \leq d$, we consider the quadrilateral \[ Q_n:=T(\sigma_{f_n})( \zeta_j^n, \zeta_{j+1}^n, \zeta_k^n, \zeta_{k+1}^n ), \] where we allow for the possibility that \begin{align}\label{a_sides_touch}\arc{\zeta_j^n\zeta_{j+1}^n}\cap \arc{\zeta_k^n \zeta_{k+1}^n} \not=\emptyset.  \end{align} Note that the arcs in (\ref{a_sides_touch}) may intersect in at most one point (see \cite[Proposition 4.8]{LMM1}), in which case we define \begin{align} M(Q_n):=\infty. \nonumber \end{align} Similarly, if  \begin{align}\label{b_sides_touch}\arc{\zeta_{j+1}^n\zeta_k^n} \cap \arc{\zeta_{k+1}^n\zeta_j^n} \not=\emptyset\textrm{, then } M(Q_n):=0.  \end{align} We note that only one of (\ref{a_sides_touch}) or (\ref{b_sides_touch}) may occur (see \cite[Proposition 4.8]{LMM1}). 

We also consider the quadrilaterals \[R_n:= T(\Gamma_n)(h_n^{-1}(\zeta_j^n), h_n^{-1}(\zeta_{j+1}^n), h_n^{-1}(\zeta_k^n), h_n^{-1}(\zeta_{k+1}^n) ), \] where \[h_n: R_n \rightarrow Q_n \] is a label-preserving conformal isomorphism by Proposition \ref{existence_of_group}, so that \begin{align}\label{equality_of_modulus} M(Q_n) = M(R_n) \textrm{ for all } n.\end{align} Since $f_n \rightarrow f_\infty$ in $\Sigma_d^*$, it follows from Theorem \ref{convergence_of_quadrilaterals} that \begin{align}\label{equality_of_modulus2} M(Q_n)\rightarrow M(Q_\infty) \textrm{ as } n\rightarrow\infty. \end{align} 

Now consider the quadrilateral \[ T(\Gamma_{\infty}')(\eta_j^\infty, \eta_{j+1}^\infty, \eta_k^\infty, \eta_{k+1}^\infty  ), \] where $\eta_j^{\infty}:=\lim_n h_n^{-1}(\zeta_j^n)$ is a cusp of $T(\Gamma_{\infty}')$. Since $\Gamma_n \rightarrow \Gamma_\infty'$ in $\overline{\beta(\pmb{\Gamma}_{d+1})}$, it follows that \[M(R_n) \rightarrow M( T(\Gamma_{\infty}')( \eta_j^\infty, \eta_{j+1}^\infty, \eta_k^\infty, \eta_{k+1}^\infty  ) ) \textrm{ as } n\rightarrow\infty. \] Thus by (\ref{equality_of_modulus}) and (\ref{equality_of_modulus2}), \[ M(Q_\infty) = M( T(\Gamma_{\infty}')( \eta_j^\infty, \eta_{j+1}^\infty, \eta_k^\infty, \eta_{k+1}^\infty  ) ).\]

\noindent As $j$, $k$ are arbitrary, Lemma \ref{quadrilaterals_lemma} applied to bounded components of $\mathbb{C}\setminus f_\infty(\mathbb{T})$ and $\Int{T(\Gamma_\infty')}$ yields a label-preserving conformal isomorphism \[ T(\Gamma_\infty') \rightarrow T(\sigma_{f_\infty}). \] Thus by the uniqueness of Proposition \ref{existence_of_group}, we have \[ \Gamma_\infty' = \Gamma_\infty, \] as needed. We conclude that  \begin{align} \Sigma_d^* \rightarrow \overline{\beta(\pmb{\Gamma}_{d+1})} \nonumber \\ \nonumber f \mapsto \Gamma_f \phantom{aasds} \end{align} is continuous, and the proof of continuity of the inverse is similar. 
\end{proof}

%\begin{comment}
%On the other hand, \[ M(Q_{\infty}) = M( T_{\Gamma_{\infty}}(  h_\infty^{-1}(\zeta_j^n), h_\infty^{-1}(\zeta_{j+1}^n), h_\infty^{-1}(\zeta_k^n), h_\infty^{-1}(\zeta_{k+1}^n)  ) ) \] where $h_\infty$ is the homeomorphism of Proposition \ref{existence_of_group}. Thus by (\ref{equality_of_modulus}) and (\ref{equality_of_modulus2}), \[ M( T_{\Gamma_{\infty}'}( \xi_j^\infty, \xi_{j+1}^\infty, \xi_k^\infty, \xi_{k+1}^\infty  ) )   =  M( T_{\Gamma_{\infty}}(  h_\infty^{-1}(\zeta_j^n), h_\infty^{-1}(\zeta_{j+1}^n), h_\infty^{-1}(\zeta_k^n), h_\infty^{-1}(\zeta_{k+1}^n)  ) ). \] 
%\end{comment}

\section{Conformal Matings of Reflection groups and Polynomials}
\label{Conformal_mating}

The purpose of Section \ref{Conformal_mating} is to prove the conformal mating statement of Theorem \ref{theorem_A}. In Section \ref{local_connectivity} we will show that $\partial \mathcal{B}_\infty(\sigma_f)$ is locally connected for $f\in\Sigma_d^*$, whence in Section \ref{The Limit Set is the boundary of the Tiling set} we will show that $\mathcal{B}_\infty(\sigma_f)$ and $\mathcal{T}_\infty(\sigma_f)$ share a common boundary. Sections \ref{External Rays in the Schwarz Reflection Plane} and \ref{External Rays for the Reflection Group} study laminations of $\mathbb{T}$ induced by $\sigma_f$ and necklace groups $\Gamma$, whence it is shown in Section \ref{lamination_relation_sec} that for $f\in\Sigma_d^*$, the laminations induced by $\sigma_f$ and $\Gamma_f$ are compatible. Finally, in Section \ref{Proof of Conformal Mating}, we deduce that $\sigma_f$ is a conformal mating of $\Gamma_f$ and $w\mapsto\overline{w}^d$ (see Definition~\ref{mating}).
 
\subsection{Local Connectivity}
\label{local_connectivity}

\begin{lem}\label{domain_hyperbolicity} Let $f\in\Sigma_d^*$. Then \[\left|\overline{\partial}\sigma_f(z)\right| > 1\textrm{, } \forall z \in f(\mathbb{D}^*).\]
%There exists some $\lambda>1$ such that $\vert\overline{\partial}\sigma\vert>\lambda$ on $T^{\mathrm{hyp}}(\sigma)$.
\end{lem}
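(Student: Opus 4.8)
The plan is to compute $\overline{\partial}\sigma_f$ explicitly using the defining diagram for the Schwarz reflection map. Recall from Definition~\ref{Schwarz_reflection} that $\sigma_f = f\circ \iota \circ f^{-1}$ on $f(\mathbb{D}^*)$, where $\iota(z)=1/\overline{z}$ is reflection in the unit circle. First I would differentiate this composition. Since $f$ and $f^{-1}$ are holomorphic and $\iota$ is anti-holomorphic, $\sigma_f$ is anti-holomorphic, and the chain rule gives, for $w=f(u)\in f(\mathbb{D}^*)$ with $u\in\mathbb{D}^*$,
\[
\overline{\partial}\sigma_f(w) \;=\; f'(\iota(u))\cdot \overline{\iota'(u)}\cdot \overline{(f^{-1})'(w)} \;=\; \frac{f'(1/\overline{u})}{\overline{f'(u)}}\cdot\overline{\left(\frac{-1}{\overline{u}^2}\right)}\cdot\frac{1}{1} ,
\]
after using $(f^{-1})'(w)=1/f'(u)$. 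Thus
\[
\left|\overline{\partial}\sigma_f(f(u))\right| \;=\; \frac{\left|f'(1/\overline{u})\right|}{\left|f'(u)\right|}\cdot\frac{1}{|u|^2}.
\]
So the claim $|\overline{\partial}\sigma_f|>1$ on $f(\mathbb{D}^*)$ is equivalent to the inequality $|f'(1/\overline{u})| > |u|^2\,|f'(u)|$ for all $u\in\mathbb{D}^*$; equivalently, writing $z=1/\overline{u}\in\mathbb{D}$, to $|f'(z)| > |f'(1/\overline z)|/|z|^2$, i.e. a statement comparing the derivative of $f$ at a point of $\mathbb{D}$ (where $f$ is \emph{not} a priori defined — but $f$ is a rational map, so it makes sense) with its derivative at the reflected point in $\mathbb{D}^*$.

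The cleanest route is to introduce the auxiliary function $g(u) := \overline{\iota\circ f^{-1}\circ f \circ \iota}$ — or more directly, to observe that the estimate $|\overline\partial\sigma_f|>1$ on the open set $f(\mathbb{D}^*)$ should be obtained by a Schwarz–Pick / boundary-Schwarz argument. Specifically, consider $F:=\iota\circ \sigma_f\circ f = \iota \circ f\circ\iota$, which maps $\mathbb{D}^*$ to $\iota(f(\mathbb{D}^*))$. Since $f$ is univalent on $\mathbb{D}^*$ and fixes $\infty$ with $f'(\infty)=1$, and $f(\mathbb{T})=\partial T(\sigma_f)$ is a curve bounding the droplet which is \emph{disjoint from} $\mathbb{D}^*$ except along $\mathbb{T}$... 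Actually the key geometric input is: $f(\mathbb{D}^*)\supsetneq \mathbb{D}^*$ is false in general, so instead I would use the right comparison domain. The correct statement to exploit is that $\sigma_f$ maps $f(\mathbb{D}^*)$ over itself with the droplet $T(\sigma_f)=\widehat{\mathbb C}\setminus f(\mathbb{D}^*)$ being forward-invariant-complement-like; more precisely $\sigma_f^{-1}(f(\mathbb{D}^*))\subsetneq f(\mathbb{D}^*)$ since $T(\sigma_f)\subset \widehat{\mathbb C}\setminus\overline{f(\mathbb{D}^*)}$... Let me instead use the concrete rational-map comparison. Since $a_d=-1/d$, near $0$ we have $f(z)\sim -1/(dz^d)$, so $f$ has a pole of order $d$ at $0$; the self-map $h:=\iota\circ f^{-1}\circ\sigma_f^{-1}$...

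The honest main obstacle, and where I would focus effort, is establishing the comparison $|f'(1/\overline u)| > |u|^2|f'(u)|$ for $u\in\mathbb{D}^*$. Here is the approach I expect to work: the function $\psi(u) := \overline{f(1/\overline u)}$ extends $f$ to a rational map, and one checks that $\Phi:= \psi^{-1}\circ f$ (wherever defined, in a neighborhood of $\mathbb{T}$ inside $\mathbb{D}^*$) satisfies $\Phi(\mathbb{T})\subset\mathbb{T}$ and, using univalence of $f$ on $\mathbb{D}^*$ together with the pole structure at $0$, maps $\mathbb{D}^*$ strictly inside $\mathbb{D}^*$ — so by the Schwarz lemma applied at the fixed point $\infty$ (with $\Phi'(\infty)$ computable from the leading coefficients), $|\Phi'|>1$ on $\mathbb{T}$ and then by the maximum principle for $|\Phi'|\cdot(\text{hyperbolic density ratio})$ throughout $\mathbb{D}^*$; unwinding the definition of $\Phi'$ in terms of $f'$ yields exactly the desired inequality, hence $|\overline\partial\sigma_f|>1$. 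If the global Schwarz-lemma setup meets a technical snag (e.g. $\Phi$ not globally defined on all of $\mathbb{D}^*$ because $f^{-1}$ is only defined on $f(\mathbb{D}^*)$), I would restrict to $u$ near $\mathbb{T}$ and propagate the inequality to all of $f(\mathbb{D}^*)$ via the maximum principle for the subharmonic function $\log|\overline\partial\sigma_f|$ on the hyperbolic domain $\sigma_f^{-1}(f(\mathbb{D}^*))$, using that $\log|\overline\partial\sigma_f|\to+\infty$ at $\infty$ (since $\sigma_f$ has a superattracting fixed point there, its derivative blows up) and controlling the boundary values along $f(\mathbb{T})$.
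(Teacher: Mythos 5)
Your opening computation is correct and coincides with the paper's first step: differentiating the relation $\sigma_f(f(w)) = f(1/\overline{w})$ gives
\[
\bigl|\overline{\partial}\sigma_f(f(w))\bigr| \;=\; \frac{\bigl|f'(1/\overline{w})\bigr|}{|w|^2\,\bigl|f'(w)\bigr|},\qquad |w|>1,
\]
so the lemma is equivalent to $|f'(1/\overline{w})| > |w|^2|f'(w)|$ on $\mathbb{D}^*$. From this point on, however, there is a genuine gap: you never identify, let alone prove, the fact that makes this inequality true. The paper invokes the identity $f'(1/\overline{w}) = \overline{w}^{\,d+1}\,\overline{f'(w)}$ (see \cite[Lemma~2.6]{2014arXiv1411.3415L}), which amounts to saying that all $d+1$ finite critical points of $f$ lie on the unit circle; this follows from univalence of $f$ on $\mathbb{D}^*$ together with the normalization $a_d=-1/d$ (the monic polynomial $z^{d+1}f'(z)$ has constant term $-d\,a_d=1$, so the product of its roots has modulus one, and since none lie in $\mathbb{D}^*$ they must all lie on $\mathbb{T}$). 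With this identity the displayed quantity equals $|w|^{d-1}>1$ and the lemma is immediate.

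Neither of your substitute routes supplies this input. The auxiliary map $\Phi=\psi^{-1}\circ f$ with $\psi(u)=\overline{f(1/\overline{u})}$ does not satisfy $\Phi(\mathbb{T})\subset\mathbb{T}$ in general: on $\mathbb{T}$ one has $\psi(u)=\overline{f(u)}$, so $\psi(\mathbb{T})=\overline{f(\mathbb{T})}$, which differs from $f(\mathbb{T})$ unless the droplet happens to be symmetric about the real axis, so the Schwarz--Pick setup does not get off the ground. The fallback via the minimum (not maximum) principle for $\log|\overline{\partial}\sigma_f|$ is circular: harmonicity of this function on $f(\mathbb{D}^*)$ requires that $\overline{\partial}\sigma_f$ have no zeros there, and by your own formula a zero occurs at $f(1/\overline{c})$ for every critical point $c$ of $f$ in $\mathbb{D}\setminus\{0\}$ --- ruling such $c$ out is exactly the missing fact, and no soft argument can avoid it, since if such a $c$ existed the lemma would simply be false at $f(1/\overline{c})$. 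The boundary analysis along $f(\mathbb{T})$ at the cusps, where $f'(1/\overline{w})$ and $f'(w)$ vanish simultaneously, also relies on the critical points lying on $\mathbb{T}$. So the proposal is missing the one essential ingredient --- the location of the critical points of $f$, encoded in the self-duality identity --- and as written does not prove the lemma.
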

\begin{proof}
Let $\sigma:=\sigma_f$. From Definition \ref{Schwarz_reflection}, we see that \begin{equation}\label{domain_hyperbolicity1}\sigma(f(w))=f\left(1/\overline{w}\right)\textrm{, for } \vert w\vert>1. \end{equation} Taking the $\overline\partial$-derivative of (\ref{domain_hyperbolicity1}) yields 
\begin{equation}
\overline{\partial}\sigma\left(f(w)\right)\cdot\overline{f'(w)}=-\frac{1}{\overline{w}^2}\cdot f'\left(\frac{1}{\overline{w}}\right),\ \textrm{for}\ \vert w\vert>1. 
\label{deltoid_schwarz_derivative}
\end{equation} 
By \cite[Lemma~2.6]{2014arXiv1411.3415L}, we also have that 
\begin{equation}
f'\left(\frac{1}{\overline{w}}\right)=\overline{w}^{d+1}\overline{f'(w)},\ \forall w\in \C.
\label{self_dual}
\end{equation}
Combining (\ref{deltoid_schwarz_derivative}) and (\ref{self_dual}), we conclude that $$\vert\overline{\partial}\sigma(f(w))\vert=\vert w\vert^{d-1}>1 \textrm{ for } \vert w\vert>1.$$
\end{proof}

\begin{prop}\label{local_connectivity_prop} Let $f\in\Sigma_d^*$. Then $\partial \mathcal{B}_\infty(\sigma_f)$ is locally connected.
\end{prop}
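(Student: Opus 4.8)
The plan is to establish local connectivity of $\partial\mathcal{B}_\infty(\sigma_f)$ by showing that the B\"ottcher coordinate $\phi_{\sigma_f}:\mathbb{D}^*\to\mathcal{B}_\infty(\sigma_f)$ extends continuously to $\mathbb{T}$; by Carath\'eodory's theorem this is equivalent to local connectivity of $\partial\mathcal{B}_\infty(\sigma_f)$ (with $\partial\mathcal{B}_\infty(\sigma_f)$ playing the role of the boundary of the simply connected domain $\mathcal{B}_\infty(\sigma_f)$). The standard tool for such extension statements is a Yoccoz-type argument showing that external rays land, but the cleanest route here should exploit the expansion estimate just proven in Lemma~\ref{domain_hyperbolicity}: the Schwarz reflection $\sigma_f$ is strictly expanding (in the anti-holomorphic sense, $|\overline{\partial}\sigma_f|>1$) on $f(\mathbb{D}^*)$, which is precisely the region outside the droplet.

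The key steps, in order, are as follows. First I would fix a conformal metric in which $\sigma_f$ is uniformly expanding on a neighborhood of $\partial\mathcal{B}_\infty(\sigma_f)\cap f(\mathbb{D}^*)$; since the relevant part of $\partial\mathcal{B}_\infty(\sigma_f)$ is contained in the closure of $f(\mathbb{D}^*)$ and Lemma~\ref{domain_hyperbolicity} gives $|\overline{\partial}\sigma_f|>1$ there, a compactness argument on suitable iterated preimages yields a uniform bound $|\overline{\partial}\sigma_f|\geq\lambda>1$ away from the cusps and double points of $\partial T(\sigma_f)$. Second, I would transport the problem to the $\mathbb{D}^*$ side via $\phi_{\sigma_f}$, so that it suffices to show the images under $\phi_{\sigma_f}$ of the round circles $\{|u|=r\}$ shrink to points as $r\to1$, uniformly; equivalently, that the harmonic measure / extremal length of the "annular shells" $\phi_{\sigma_f}(\{r<|u|<r^{1/d}\})$ decays geometrically. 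Third, I would run the telescoping estimate: a crosscut of $\mathcal{B}_\infty(\sigma_f)$ at level $r$ pulls back under $\sigma_f$ (using the B\"ottcher conjugacy $\phi_{\sigma_f}^{-1}\circ\sigma_f\circ\phi_{\sigma_f}(u)=\overline{u}^d$) to a crosscut at level $r^{1/d}$, and the expansion bound forces the Euclidean diameters of these crosscuts to decay, so the nested sequence of subdomains they cut off has diameters tending to $0$. This gives a continuous extension of $\phi_{\sigma_f}$ to $\overline{\mathbb{D}^*}$, hence local connectivity.

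The main subtlety — and the step I expect to be the real obstacle — is that $\sigma_f$ is \emph{not} globally defined on $\widehat{\mathbb{C}}$ (it is defined only on $f(\mathbb{D}^*)$), so there is no normal-family or global-hyperbolicity argument available, and one must be careful that the backward orbits of crosscuts under $\sigma_f$ stay inside the domain of definition $\sigma_f^{-1}(f(\mathbb{D}^*))$. Near the cusps of $\partial T(\sigma_f)$, the expansion factor degenerates (the derivative bound from Lemma~\ref{domain_hyperbolicity} is $|w|^{d-1}$, which tends to $1$ as $w\to\mathbb{T}$), so the uniform estimate fails exactly at the cusps and double points; here one must argue separately, controlling the geometry of $\partial\mathcal{B}_\infty(\sigma_f)$ near these finitely many singular points — for instance, by noting that a cusp of $\partial T(\sigma_f)$ is an accessible boundary point and that the local branches of $\sigma_f$ near it still contract crosscuts in the appropriate one-sided sense, or by a direct estimate on the cusp asymptotics of $f(\mathbb{T})$. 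Managing this degeneration at the cusps, and verifying that the pulled-back crosscuts do not escape the domain, is where the bulk of the technical work will lie; away from the cusps the argument is the standard expanding-dynamics proof of local connectivity.
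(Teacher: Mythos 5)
Your overall skeleton (extend the B\"ottcher coordinate $\phi_{\sigma_f}$ continuously to $\mathbb{T}$ and invoke Carath\'eodory, with Lemma~\ref{domain_hyperbolicity} as the engine) is the same as the paper's, but there is a genuine gap at exactly the point you flag as "the bulk of the technical work": the behavior at the cusps and double points of $f(\mathbb{T})$. These are fixed points of $\sigma_f$ on $\partial\mathcal{B}_\infty(\sigma_f)$ exhibiting \emph{parabolic} behavior (the factor $|w|^{d-1}$ from Lemma~\ref{domain_hyperbolicity} tends to $1$ there), so the two mechanisms your plan relies on both fail near them: the pulled-back crosscuts/equipotential pieces do not shrink geometrically (at a parabolic point the decay is at best polynomial in the number of iterates), and the "annular shells" $\phi_{\sigma_f}(\{r<|u|<r^{1/d}\})$ do not have geometrically decaying diameter at the angles landing there. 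Saying that the local branches "still contract crosscuts in the appropriate one-sided sense" is not an argument; making it precise amounts to the standard but nontrivial local analysis of landing at parabolic points (Fatou petals or an explicit cusp asymptotic), which you do not supply. So as written the proof is incomplete precisely where uniform expansion degenerates.

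For comparison, the paper sidesteps any uniform bound and any cusp-local analysis by the Douady--Hubbard device (Orsay notes, Ch.~10): working inside $X=\mathcal{B}_\infty(\sigma_f)\cap\mathbb{C}$, on which $\sigma_f$ is a degree-$d$ covering (this also disposes of your worry about pullbacks leaving the domain of definition), one sets $h(s)$ to be the supremum of lengths of $\sigma_f$-lifts of closed curves in $X$ of length at most $s$. Strict pointwise expansion gives $h(s)<s$, subdivision gives $h(ks)\le k\,h(s)$, hence $s-h(s)\to\infty$ and $h^{\circ n}(L)\downarrow 0$; applying this to the equipotential parametrizations $\gamma_n$ at levels $2^{1/d^n}$ shows they form a Cauchy sequence in $C(\mathbb{T},\mathbb{C})$, yielding the continuous extension of $\phi_{\sigma_f}$ to $\mathbb{T}$. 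If you want to keep your route, you must actually carry out the parabolic-point estimate at each cusp and double point; otherwise the length-functional argument is the cleaner repair.
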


\begin{rem} Our proof follows the strategy taken in \cite[Chapter 10]{orsay}. \end{rem}

\begin{proof} 
Let $\sigma:=\sigma_f$, $\phi_\sigma:=\phi_{\sigma_f}$ be as in Remark~\ref{bottcher_coordinate}, and $X:=\mathcal{B}_\infty(\sigma)\cap\mathbb{C}$. Note that $\sigma: X \rightarrow X$ is a $d\hspace{-1mm}:\hspace{-1mm}1$ covering map. Define an equipotential curve \[ E(r) := \phi_{\sigma}\left(\{ z\in\mathbb{C} :  |z| = r\} \right). \] We define, for $n\geq1$, parametrizations $\gamma_n: \mathbb{T} \rightarrow E(2^{1/d^n})$ by: \[ \gamma_n(e^{2\pi i\theta}):=\phi_{\sigma}(2^{1/d^{n}}e^{2\pi i\theta}). \] By (\ref{bottcher_conjugacy}), we have: $$\sigma\circ \gamma_{n+1}(e^{2\pi i\theta})=\gamma_n(e^{-2\pi i d\theta}).$$ We will show that the sequence $(\gamma_n)_{n=1}^{\infty}$ forms a Cauchy sequence in the complete metric space $C(\mathbb{T}, \mathbb{C})$. We will denote the length of a curve $\gamma$ by $l(\gamma)$, and the lift of $\gamma$ under $\sigma$ by $\widetilde{\gamma}$.

To this end, define $h: \mathbb{R}_{\geq0}\rightarrow \mathbb{R}_{\geq0}$ by \begin{equation} h(s):= \sup_{ \substack{\gamma\in C(\mathbb{T}, X) \\ l(\gamma)\leq s} } \left\{ l(\widetilde{\gamma}) : \sigma(\widetilde{\gamma}) = \gamma \right\}. \end{equation} We claim that \begin{equation}\label{equation_relations} h(s) < s \textrm{ and } h(ks) \leq kh(s)\textrm{, }\forall s > 0 \textrm{ and } k\in\mathbb{N}. \end{equation} Indeed, the first inequality of (\ref{equation_relations}) follows from Lemma \ref{domain_hyperbolicity}. The second inequality in (\ref{equation_relations}) follows from the triangle inequality. It follows from (\ref{equation_relations}) that $s-h(s)\rightarrow\infty$ as $s\rightarrow\infty$. Let $\ell:=\dist(\gamma_0,\gamma_1)$, and choose $L>\ell$ sufficiently large such that $L-h(L)>\ell$. One has: \[\dist(\gamma_2, \gamma_{0}) \leq \dist(\gamma_2, \gamma_{1}) + \dist(\gamma_{1},\gamma_0) \leq h( \ell ) + \ell \leq h(L) + \ell < L. \] Similarly, an inductive procedure yields \[ \dist(\gamma_n, \gamma_{0}) < L\textrm{, } \forall n\geq0. \] Observe that \[ \dist(\gamma_n, \gamma_{n+p}) \leq h^{\circ n}( \dist(\gamma_0, \gamma_p  ) ) < h^{\circ n}(L).  \] The sequence $(h^{\circ n}(L))_{n=1}^{\infty} \in \mathbb{R}_{\geq0}$ is decreasing and converges to a fixpoint of $h$, and this fixpoint must be $0$ by (\ref{equation_relations}). Thus $(\gamma_n)_{n=1}^{\infty}$ is a Cauchy sequence, and the limit is a continuous extension of \[ \phi_{\sigma}: \mathbb{D}^*\rightarrow \mathcal{B}_\infty(\sigma) \textrm{ to } \phi_{\sigma}: \mathbb{T} \rightarrow \partial \mathcal{B}_\infty(\sigma). \] Local connectivity of $\partial \mathcal{B}_\infty(\sigma)$ follows from a theorem of Carath\'eodory. 
\end{proof}

\subsection{The Limit Set is The Boundary of The Basin of Infinity}
\label{The Limit Set is the boundary of the Tiling set}

The goal of this Subsection is to prove the following:

\begin{prop}\label{equality_prop} Let $f\in\Sigma_d^*$. Then $\partial \mathcal{B}_\infty(\sigma_f)= \partial \mathcal{T}_{\infty}(\sigma_f)$. 

\end{prop}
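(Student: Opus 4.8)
The plan is to show the two boundaries coincide by establishing the set-theoretic relationship between $\mathcal{B}_\infty(\sigma_f)$ and $\mathcal{T}_\infty(\sigma_f)$, namely that they are disjoint open sets whose union is dense in $\widehat{\C}$, and that neither boundary can contain interior points of the other. First I would note that $\mathcal{B}_\infty(\sigma_f)$ and $\mathcal{T}_\infty(\sigma_f)$ are disjoint: a point whose orbit escapes to $\infty$ cannot eventually land in $T^o(\sigma_f)$, since $T^o(\sigma_f)\subset\widehat{\C}\setminus f(\D^*)$ is disjoint from $\mathcal{B}_\infty(\sigma_f)$ (indeed $\infty\in\mathrm{int}\, f(\D^*)$ and the basin is contained in $f(\D^*)$ together with its preimages, while $T(\sigma_f)=\widehat{\C}\setminus f(\D^*)$). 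This gives $\partial\mathcal{B}_\infty(\sigma_f)\cap\mathcal{T}_\infty(\sigma_f)=\emptyset$ and symmetrically, so it suffices to prove $\widehat{\C}=\mathcal{B}_\infty(\sigma_f)\sqcup\mathcal{T}_\infty(\sigma_f)\sqcup(\partial\mathcal{B}_\infty(\sigma_f)\cap\partial\mathcal{T}_\infty(\sigma_f))$, equivalently that every point not in the tiling set lies in the closure of the basin and vice versa.

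The core of the argument is an escaping-orbit dichotomy: I would show that for $f\in\Sigma_d^*$, every point $z$ whose entire forward orbit under $\sigma_f$ remains in the domain of definition $f(\D^*)$ (so that the orbit never enters $T^o(\sigma_f)$, nor the singular set) must lie in $\mathcal{B}_\infty(\sigma_f)$. This is where the hyperbolicity estimate of Lemma~\ref{domain_hyperbolicity}, $|\overline\partial\sigma_f(z)|>1$ on $f(\D^*)$, does the work: on the forward-invariant set where the orbit stays in $f(\D^*)$, the map is uniformly expanding in a suitable (e.g. hyperbolic or a conformally adapted) metric, so orbits that do not escape to $\infty$ would have to accumulate on a nonexpanding recurrent set, which is excluded; more precisely, one shows the set of non-escaping points with orbit confined to $f(\D^*)$ is empty, so such an orbit either escapes to $\infty$ or eventually meets $T^o(\sigma_f)\cup\{\text{singular points}\}$. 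A clean way to package this: let $K$ be the complement of $\mathcal{T}_\infty(\sigma_f)\cup\mathcal{B}_\infty(\sigma_f)$; then $\sigma_f$ maps $K$ into itself (points of $K$ never reach $T^o$ and never escape, and $\sigma_f$ is defined at them since $T(\sigma_f)\cap K$ has empty interior relative to $K$... care is needed at the singular points, handled via the desingularization), and $K\subset\partial\mathcal{B}_\infty(\sigma_f)$ by the expansion argument, whence $K=\partial\mathcal{B}_\infty(\sigma_f)$ because one direction is the disjointness already noted and the reverse needs: any boundary point of the basin is non-escaping (it's not in the open basin) and its orbit avoids $T^o$ (else it would be in $\mathcal{T}_\infty$, contradicting $\partial\mathcal{B}_\infty\cap\mathcal{T}_\infty=\emptyset$ once we know $\mathcal{T}_\infty$ is open). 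The final identification $\partial\mathcal{B}_\infty(\sigma_f)=\partial\mathcal{T}_\infty(\sigma_f)$ then follows since $\mathcal{T}_\infty(\sigma_f)$ is open, $\mathcal{T}_\infty\sqcup\mathcal{B}_\infty\sqcup K=\widehat{\C}$, and a point of $K$ is a limit of basin points (by its definition as $\partial\mathcal{B}_\infty$) and also a limit of tiling-set points — the latter because $\mathcal{T}_\infty$ is dense in $\widehat{\C}\setminus\mathcal{B}_\infty$, which follows by pulling back a neighborhood of $T^o$ along inverse branches, using the same expansion to guarantee neighborhoods of any non-basin point eventually map over $T^o$.

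For the density/openness bookkeeping I would use: $T^o(\sigma_f)$ is open (it is $T(\sigma_f)$ minus finitely many points), $\sigma_f$ is an open map on its domain away from the critical point $\infty$, so each preimage $\sigma_f^{-n}(T^o(\sigma_f))$ is open, hence $\mathcal{T}_\infty(\sigma_f)$ is open; the basin $\mathcal{B}_\infty(\sigma_f)$ is open as usual. Local connectivity of $\partial\mathcal{B}_\infty(\sigma_f)$ from Proposition~\ref{local_connectivity_prop} is not logically needed for this particular equality but reassures that the common boundary is well-behaved; I would not rely on it here.

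The main obstacle I anticipate is the behavior at the cusps and double points of $\partial T(\sigma_f)$ — precisely the singular points removed to form $T^o(\sigma_f)$ — and, relatedly, at parabolic-type points on $\partial\mathcal{B}_\infty(\sigma_f)$ where the expansion $|\overline\partial\sigma_f|>1$ degenerates in the limit as $z$ approaches $f(\mathbb{T})=\partial T(\sigma_f)$ (the estimate $|\overline\partial\sigma_f(f(w))|=|w|^{d-1}\to 1$ as $|w|\to 1$). So the uniform expansion is only available on compact subsets of the open domain $f(\D^*)$, and an orbit could in principle shadow the boundary. The fix is the standard one for Schwarz reflections / expanding-on-the-boundary maps: combine the interior expansion with a neighborhood-of-the-boundary analysis — near a regular boundary point $\sigma_f$ behaves like an anti-holomorphic reflection and a point there is already on $\partial\mathcal{T}_\infty$ tautologically (it is in $\partial T(\sigma_f)\subset\overline{\mathcal{T}_\infty}$ and also a limit of exterior points mapping into $f(\D^*)$), while near cusps/double points one invokes the finite-ness of the singular set together with the fact that such points are preperiodic or have controlled orbits, so they too land in $\overline{\mathcal{T}_\infty}\cap\overline{\mathcal{B}_\infty}$. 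Carrying out this boundary analysis cleanly, rather than the interior expansion, is the real content of the proof.
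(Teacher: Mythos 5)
Your central ``escaping-orbit dichotomy'' is false, and it is what the whole argument rests on. The set of non-escaping points whose forward orbits stay in $f(\D^*)$ is not empty: up to the countable grand orbit of the finitely many singular points of $f(\mathbb{T})$, it is exactly the limit set $\Lambda(\sigma_f)=\partial\mathcal{B}_\infty(\sigma_f)$, which is a nondegenerate continuum (the boundary of the simply connected domain $\mathcal{B}_\infty(\sigma_f)\cong\D^*$, whose complement contains the nonempty open set $T^o(\sigma_f)$), hence uncountable. The estimate $\lvert\overline{\partial}\sigma_f\rvert>1$ on $f(\D^*)$ does not force orbits to escape to $\infty$; expansion is precisely the behavior one expects \emph{on} a large invariant non-escaping set (compare a hyperbolic polynomial, which is expanding on its Julia set, or $\rho_\Gamma$ on $\Lambda(\Gamma)$). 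Indeed, if your dichotomy held, every point of $K=\widehat{\C}\setminus\left(\mathcal{T}_\infty(\sigma_f)\cup\mathcal{B}_\infty(\sigma_f)\right)$ would have an orbit hitting one of the finitely many cusps or double points, so $K$ would be countable --- contradicting your own conclusion $K=\partial\mathcal{B}_\infty(\sigma_f)$. Since both of your key assertions ($K\subset\partial\mathcal{B}_\infty(\sigma_f)$, and density of $\mathcal{T}_\infty(\sigma_f)$ in the complement of the basin) are delegated to this ``expansion argument,'' the proof collapses at its core. (A smaller slip: a non-singular point of $f(\mathbb{T})$ lies in $T^o(\sigma_f)\subset\mathcal{T}_\infty(\sigma_f)$ itself, not on $\partial\mathcal{T}_\infty(\sigma_f)$.)

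What actually has to be proved, and how the paper does it, is different in both directions. The inclusion $\partial\mathcal{B}_\infty(\sigma_f)\subset\partial\mathcal{T}_\infty(\sigma_f)$ comes not from expansion but from normality: on $\widehat{\C}\setminus\overline{\mathcal{T}_\infty(\sigma_f)}$ all iterates are defined and omit the nonempty open set $T^o(\sigma_f)$, and since $\infty$ is the only singular value, the classification of Fatou components forces $\mathcal{B}_\infty(\sigma_f)=\widehat{\C}\setminus\overline{\mathcal{T}_\infty(\sigma_f)}$ (this is the decomposition~(\ref{decomp}) behind Lemma~\ref{easy_inclusion_lem}). The hard inclusion $\partial\mathcal{T}_\infty(\sigma_f)\subset\partial\mathcal{B}_\infty(\sigma_f)$ amounts to ruling out points of $\partial\mathcal{T}_\infty(\sigma_f)$ buried in $\Int{\overline{\mathcal{T}_\infty(\sigma_f)}}$ (e.g.\ boundary arcs shared by tiles that never see the basin); your sketch never engages this. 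The paper proves it by first showing, via the Lefschetz fixed-point count for the glued-in map $\check{\sigma}$ (Lemma~\ref{sing_on_limit}), that the fixed external rays land at the singular points of $f(\mathbb{T})$, so each invariant component $\mathcal{U}$ of $\mathcal{T}_\infty(\sigma_f)$ has a boundary point in $\partial\mathcal{B}_\infty(\sigma_f)$; then the semiconjugacy of $\sigma_f\vert_{\partial\mathcal{U}}$ with an ideal-polygon reflection map makes the backward orbit of that point dense in $\partial\mathcal{U}$, and complete invariance of $\partial\mathcal{B}_\infty(\sigma_f)$ yields $\partial\mathcal{U}\subset\partial\mathcal{B}_\infty(\sigma_f)$, whence $\Int{\overline{\mathcal{T}_\infty(\sigma_f)}}=\mathcal{T}_\infty(\sigma_f)$ (Lemma~\ref{basin_exterior_tiling}) and the proposition follows. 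Some ray-landing ingredient of this kind is exactly where the parabolic degeneration of the expansion along $f(\mathbb{T})$ --- which you flagged as ``the real content'' but did not resolve --- must be confronted; it cannot be bypassed by the interior expansion estimate alone.
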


\noindent The proof of Proposition \ref{equality_prop} will be carried out by way of several lemmas below. First we record the following definition:

\begin{definition}\label{external_rays_sigma} Let $f\in\Sigma_d^*$. An \emph{external ray} for $\sigma_f$ is a curve \[ t \mapsto \phi_{\sigma_f}(te^{i\theta})\textrm{, } t\in(1,\infty) \] for some $\theta\in[0,2\pi)$, where $\phi_{\sigma_f}$ is the B\"ottcher coordinate of Remark \ref{bottcher_coordinate}. For $\theta\in[0,2\pi)$, we refer to \[ \{ \phi_{\sigma_f}(te^{i\theta}) : t\in(1,\infty) \} \] as the $\theta$-ray of $\sigma_f$.
\end{definition}

\begin{rem} By Proposition \ref{local_connectivity_prop}, each external ray of $\sigma_f$ lands, in other words $\lim_{t\rightarrow1^+}\phi_{\sigma_f}(te^{i\theta})$ exists for each $\theta\in[0,2\pi)$.
\end{rem}

\begin{notation} Let $\Sigma_{d,k}^*$ denote the collection of those $f\in\Sigma_d^*$ such that $f(\mathbb{T})$ has exactly $k$ double points. 
\end{notation}

\noindent For the remainder of this subsection we fix $f\in\Sigma_{d,k}^*$ and denote $\sigma:=\sigma_f$. Let us first record the straightforward inclusion:

\begin{lem}\label{easy_inclusion_lem}
$\partial \mathcal{B}_\infty(\sigma)\subset\partial \mathcal{T}_\infty(\sigma)$.
\end{lem}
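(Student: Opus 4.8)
The claim is the ``easy'' inclusion $\partial\mathcal{B}_\infty(\sigma)\subset\partial\mathcal{T}_\infty(\sigma)$, where $\mathcal{T}_\infty(\sigma)$ is the tiling set of Definition~\ref{tiling_set_def}. The plan is to show that a boundary point of the basin $\mathcal{B}_\infty(\sigma)$ cannot lie in the interior of $\mathcal{T}_\infty(\sigma)$ and cannot lie in the exterior of $\overline{\mathcal{T}_\infty(\sigma)}$; since $\partial\mathcal{B}_\infty(\sigma)$ is nonempty and $\mathcal{B}_\infty(\sigma)$ is disjoint from $\mathcal{T}_\infty(\sigma)$, this forces $\partial\mathcal{B}_\infty(\sigma)\subset\partial\mathcal{T}_\infty(\sigma)$.

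\textbf{Key steps.} First I would observe that $\mathcal{B}_\infty(\sigma)$ and $\mathcal{T}_\infty(\sigma)$ are disjoint open sets: every point of $\mathcal{T}_\infty(\sigma)$ eventually maps into the desingularized droplet $T^o(\sigma)$, which lies in the complement of $f(\D^*)=\widehat{\C}\setminus T(\sigma)$ and hence is not in the domain where $\sigma$ iterates toward $\infty$; more precisely, points of $\mathcal{T}_\infty(\sigma)$ have a bounded (in fact eventually constant-in-$T^o$) orbit, so they are not in $\mathcal{B}_\infty(\sigma)$. Next, take $z\in\partial\mathcal{B}_\infty(\sigma)$. Since $\mathcal{B}_\infty(\sigma)$ is open and disjoint from the open set $\mathcal{T}_\infty(\sigma)$, the point $z$ is not an interior point of $\mathcal{T}_\infty(\sigma)$: any neighborhood of $z$ meets $\mathcal{B}_\infty(\sigma)$, hence is not contained in $\mathcal{T}_\infty(\sigma)$. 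It remains to show $z\in\overline{\mathcal{T}_\infty(\sigma)}$, i.e. every neighborhood of $z$ meets $\mathcal{T}_\infty(\sigma)$. For this I would use the landing of external rays: by Proposition~\ref{local_connectivity_prop} and the remark following Definition~\ref{external_rays_sigma}, $\partial\mathcal{B}_\infty(\sigma)$ is the set of landing points of external rays, and the boundary of the basin is accessible from a dense (in the circle of angles) family of rays. Near $z$, the Böttcher picture describes $\mathcal{B}_\infty(\sigma)$ as a half-neighborhood bounded by $\partial\mathcal{B}_\infty(\sigma)$; on the ``other side'' of $\partial\mathcal{B}_\infty(\sigma)$ from the basin, points either lie in $T(\sigma)$ (part of the droplet, hence in $\overline{\mathcal{T}_\infty(\sigma)}$ up to the removed singular points) or are pulled back by $\sigma$ into $T^o(\sigma)$. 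Concretely: a small disk around $z$ is not entirely contained in $\mathcal{B}_\infty(\sigma)\cup\partial\mathcal{B}_\infty(\sigma)$ (since $\partial\mathcal{B}_\infty$ has empty interior, local connectivity notwithstanding), so it contains a point $w\notin\overline{\mathcal{B}_\infty(\sigma)}$; tracking the orbit of $w$ and using that the only escaping points are in $\mathcal{B}_\infty(\sigma)$, the orbit of $w$ must eventually enter $T^o(\sigma)$ (this is where the dynamical decomposition $\widehat\C = \mathcal{B}_\infty\sqcup\mathcal{T}_\infty\sqcup\Lambda$ into basin, tiling set, limit set is invoked — points not in the basin and not in the limit set are in the tiling set), so $w\in\mathcal{T}_\infty(\sigma)$. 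Hence every neighborhood of $z$ meets $\mathcal{T}_\infty(\sigma)$, giving $z\in\overline{\mathcal{T}_\infty(\sigma)}$, and combined with $z\notin\Int\mathcal{T}_\infty(\sigma)$ we get $z\in\partial\mathcal{T}_\infty(\sigma)$.

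\textbf{Main obstacle.} The delicate point is the final density argument: showing that a point $w$ near $z$ that is not in the basin actually falls into the tiling set rather than being a new limit point. This requires knowing that the complement of $\mathcal{B}_\infty(\sigma)\cup\mathcal{T}_\infty(\sigma)$ has empty interior — equivalently that $\Lambda(\sigma)=\partial\mathcal{T}_\infty(\sigma)$ has empty interior — or alternatively reasoning purely at the level of the basin boundary using that $\partial\mathcal{B}_\infty(\sigma)$ is a Jordan-type boundary locally separated from the rest of the plane by pieces of $\partial T(\sigma)$ and their preimages. I expect the cleanest route is to invoke that $T(\sigma)$ has nonempty interior lying in $\overline{\mathcal{T}_\infty(\sigma)}$ and that the forward orbit of any point eventually either escapes (lands in the basin) or, being trapped in a region that repeatedly maps across $T^o(\sigma)$, actually hits $T^o(\sigma)$; so the only genuinely subtle input is the local structure of $\partial\mathcal{B}_\infty(\sigma)$ near $z$, for which the Böttcher coordinate and local connectivity (Proposition~\ref{local_connectivity_prop}) suffice to guarantee that $z$ is approached by points outside $\overline{\mathcal{B}_\infty(\sigma)}$.
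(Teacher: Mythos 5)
Your overall strategy (show $z\in\partial\mathcal{B}_\infty(\sigma)$ lies in $\overline{\mathcal{T}_\infty(\sigma)}$ but not in $\mathcal{T}_\infty(\sigma)$) is fine in outline, but the step that carries all the content has a genuine gap. You claim that a small disk around $z$ cannot be contained in $\mathcal{B}_\infty(\sigma)\cup\partial\mathcal{B}_\infty(\sigma)$ ``since $\partial\mathcal{B}_\infty(\sigma)$ has empty interior.'' That is a non sequitur: empty interior of the boundary only rules out the disk lying inside $\partial\mathcal{B}_\infty(\sigma)$, not inside $\overline{\mathcal{B}_\infty(\sigma)}$. (For a polynomial with dendrite Julia set, $\overline{\mathcal{B}_\infty}$ is the whole sphere and every neighborhood of a Julia point lies in $\mathcal{B}_\infty\cup\partial\mathcal{B}_\infty$, even though $\partial\mathcal{B}_\infty$ has empty interior.) So the existence of a nearby point $w\notin\overline{\mathcal{B}_\infty(\sigma)}$ is exactly what needs proof, not a topological freebie. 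Moreover, the dynamical dichotomy you then invoke --- ``the forward orbit of any point eventually either escapes or hits $T^o(\sigma)$'' --- is false as stated: points of $\Lambda(\sigma)=\partial\mathcal{T}_\infty(\sigma)$ do neither, so this cannot be the mechanism.

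What you actually need, and what you end up citing mid-proof, is the decomposition $\widehat{\C}=\mathcal{T}_\infty(\sigma)\sqcup\partial\mathcal{T}_\infty(\sigma)\sqcup\mathcal{B}_\infty(\sigma)$, i.e.\ the statement that every point outside $\overline{\mathcal{T}_\infty(\sigma)}$ escapes to $\infty$. But this is precisely the nontrivial content of the paper's proof of Lemma~\ref{easy_inclusion_lem}: it is established there from the classification of periodic Fatou components together with the observation that $\sigma$ has a single singular value, at $\infty$ (on the complement of $\overline{\mathcal{T}_\infty(\sigma)}$ all iterates of $\sigma$ are defined, and any stable component other than the basin of $\infty$ would require an additional singular orbit). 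Assuming this decomposition is circular in the present context, and once you do have it, your neighborhood analysis is superfluous anyway: $z\in\partial\mathcal{B}_\infty(\sigma)$ is not in the open set $\mathcal{B}_\infty(\sigma)$, nor in the open set $\mathcal{T}_\infty(\sigma)$ (which is disjoint from $\mathcal{B}_\infty(\sigma)$, so a point of $\mathcal{T}_\infty(\sigma)$ has a neighborhood missing $\mathcal{B}_\infty(\sigma)$), hence it lies in $\partial\mathcal{T}_\infty(\sigma)$ directly. To repair your write-up, replace the empty-interior argument by a proof of the decomposition itself along the lines just indicated.
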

\begin{proof}
We note that $\mathcal{T}_\infty(\sigma)$ is open, whence the relation \begin{align}\label{decomp} \widehat{\C}=\mathcal{T}_\infty(\sigma)\sqcup\partial \mathcal{T}_\infty(\sigma)\sqcup\mathcal{B}_\infty(\sigma) \end{align} follows from the classical classification of periodic Fatou components and the observation that $\sigma$ has only one singular value (at $\infty$). The Lemma follows from (\ref{decomp}).
\end{proof}

\noindent The proof of the opposite inclusion is a bit more involved, and we split the main arguments into a couple of lemmas.

\begin{lem}\label{sing_on_limit}
The landing points of the fixed external rays of $\sigma$ are singular points of $f(\mathbb{T})$. 
\end{lem}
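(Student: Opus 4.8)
\textbf{Proof strategy for Lemma~\ref{sing_on_limit}.}

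The plan is to analyze how many fixed external rays $\sigma$ has and where they can land, then argue that the landing points cannot be smooth points of $f(\mathbb{T})$. First I would count the fixed rays. Since $\phi_\sigma$ conjugates $\sigma$ to $u\mapsto\overline u^d$ on $\mathbb{D}^*$, the $\theta$-ray is fixed precisely when $-d\theta\equiv\theta \pmod{2\pi}$, i.e. $\theta\in\{2\pi j/(d+1): j=0,\dots,d\}$, giving exactly $d+1$ fixed rays. Each of these lands (by the remark following Proposition~\ref{local_connectivity_prop}), and each landing point is a fixed point of $\sigma$ lying on $\partial\mathcal{B}_\infty(\sigma)\subseteq\widehat{\mathbb C}\setminus f(\mathbb{D}^*) = T(\sigma_f)$. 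So the landing points are fixed points of $\sigma$ lying in the droplet $T(\sigma_f)$.

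Next I would invoke the structure of $\sigma$ on the droplet. On $\Int(T(\sigma_f))$ (equivalently on a neighborhood of the desingularized droplet), $\sigma_f$ is an anti-conformal homeomorphism onto its image; by Lemma~\ref{domain_hyperbolicity} combined with the diagram defining $\sigma_f$, the relevant derivative bound shows $\sigma_f$ is ``expanding'' in a suitable sense only on $f(\mathbb{D}^*)$, while on the droplet the behaviour is different. The key point is that an anti-conformal involution-type map fixing a smooth boundary point would force that point to be a hyperbolic (in fact repelling, by the expansion estimate transported via $\phi_\sigma$) fixed point with a well-defined local smooth dynamics — but the only fixed points of $\sigma_f$ available inside $T^o(\sigma_f)$, counted with the right multiplicity, are already accounted for by the dynamics, and a repelling fixed point of $\sigma_f$ landing at an interior smooth point of $f(\mathbb{T})$ is impossible because near a smooth boundary point $\sigma_f$ extends anti-conformally across $f(\mathbb{T})$ and acts as the identity on $f(\mathbb{T})$ there, contradicting repulsion transverse to the curve. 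Thus each fixed-ray landing point must be a point where $f(\mathbb{T})$ fails to be smooth, i.e. a cusp or double point — a singular point of $f(\mathbb{T})$.

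To make the last step precise I would argue as follows: suppose a fixed external ray lands at $p\in f(\mathbb{T})$ with $p$ a smooth (non-singular) point. Then $\sigma_f$ extends to an anti-holomorphic map in a full neighborhood of $p$ fixing the analytic arc $f(\mathbb{T})$ pointwise near $p$; such a map is, in local coordinates straightening the arc to $\mathbb{R}$, of the form $z\mapsto\bar z + O(\bar z^2)$, hence its derivative at $p$ has modulus $1$. On the other hand, transporting via the Böttcher coordinate $\phi_\sigma$ and the conjugacy to $\overline u^d$, a fixed external ray lands at a fixed point that is repelling for $\sigma_f$ restricted to $\partial\mathcal{B}_\infty(\sigma_f)$ — more carefully, Lemma~\ref{domain_hyperbolicity} gives $|\overline\partial\sigma_f|>1$ on all of $f(\mathbb{D}^*)$, and the landing point is an accumulation point of $f(\mathbb{D}^*)\cap\mathcal{B}_\infty(\sigma_f)$, so the extended derivative at $p$ must have modulus $\geq 1$, with equality ruled out since the expansion estimate is strict and survives in the limit on one side. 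This contradicts $|\sigma_f'(p)|=1$. Therefore $p$ is singular.

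\textbf{Main obstacle.} The delicate point is the boundary regularity/derivative argument at the landing point: reconciling the strict interior expansion $|\overline\partial\sigma_f|>1$ on $f(\mathbb{D}^*)$ with the behaviour of $\sigma_f$ on the droplet side, and rigorously excluding the borderline modulus-one case at a putative smooth landing point. One must be careful that $\sigma_f$ is \emph{not} defined on all of $\widehat{\mathbb C}$ (the recurring difficulty flagged in the introduction), so a naive ``repelling fixed point'' argument via normal families is unavailable; instead the argument has to be local, using the anti-analytic continuation of $\sigma_f$ across smooth arcs of $f(\mathbb{T})$ (available by the Schwarz reflection principle precisely at non-singular points) together with the strict inequality of Lemma~\ref{domain_hyperbolicity}. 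I expect the cleanest route is: at a smooth point $\sigma_f$ fixes the arc and is anti-conformal across it, so it is conjugate to $\bar z$ to first order, giving a non-expanding fixed point, which is incompatible with the ray (which lives in $\mathcal{B}_\infty$ where $\sigma_f$ is strictly expanding) landing there.
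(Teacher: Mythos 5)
There is a genuine gap, in two places. First, your containment $\partial\mathcal{B}_\infty(\sigma_f)\subseteq\widehat{\mathbb{C}}\setminus f(\mathbb{D}^*)=T(\sigma_f)$ is false: since $\mathcal{B}_\infty(\sigma_f)\subset f(\mathbb{D}^*)$, all one gets is $\partial\mathcal{B}_\infty(\sigma_f)\subset\overline{f(\mathbb{D}^*)}$, and the limit set $\Lambda(\sigma_f)=\partial\mathcal{B}_\infty(\sigma_f)$ is a fractal set that is emphatically not contained in the droplet (it contains the boundaries of all iterated preimages of $T(\sigma_f)$, which lie deep inside $f(\mathbb{D}^*)$). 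Because of this, your argument never confronts the actual danger: a fixed external ray could a priori land at a fixed point of $\sigma_f$ lying in $f(\mathbb{D}^*)$, off the curve $f(\mathbb{T})$ entirely. Lemma~\ref{domain_hyperbolicity} makes any such fixed point repelling, which is perfectly consistent with its being the landing point of a fixed ray, so nothing in your proposal excludes it. This is precisely what the paper's proof is designed to rule out: it glues attracting model dynamics into the invariant components of $\mathcal{T}_\infty(\sigma_f)$ to obtain a globally defined degree $d$ orientation-reversing branched cover $\check{\sigma}$ of $\widehat{\mathbb{C}}$, checks every fixed point of $\check{\sigma}$ is attracting or repelling, and applies the Lefschetz fixed-point formula: the count $d+2k+3$ is exactly exhausted by the $k+2$ attracting fixed points and the $d+k+1$ singular points of $f(\mathbb{T})$, leaving no room for any other fixed point of $\sigma_f$ on $\partial\mathcal{T}_\infty(\sigma_f)\supset\partial\mathcal{B}_\infty(\sigma_f)$.

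Second, even for the case you do address (a putative smooth landing point on $f(\mathbb{T})$), your derivative argument does not produce a contradiction. From (\ref{deltoid_schwarz_derivative}) and (\ref{self_dual}) one has $|\overline{\partial}\sigma_f(f(w))|=|w|^{d-1}$, so as one approaches a smooth point of $f(\mathbb{T})$ from $f(\mathbb{D}^*)$ the expansion factor tends to exactly $1$; the strict inequality does \emph{not} ``survive in the limit on one side,'' and comparing it with the modulus-one derivative of the reflection-type extension at the smooth point gives $1\geq 1$, i.e.\ nothing. The clean way to dispose of smooth points is much simpler and does not involve derivatives at all: by Definition~\ref{tiling_set_def}, a non-singular point of $f(\mathbb{T})$ belongs to $T^o(\sigma_f)\subset\mathcal{T}_\infty(\sigma_f)$, which is open and disjoint from $\mathcal{B}_\infty(\sigma_f)$, so such a point is not on $\partial\mathcal{B}_\infty(\sigma_f)$ and no external ray can land there. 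In short, the easy half of the lemma is handled by topology of the tiling set, and the hard half (no fixed points of $\sigma_f$ on the limit set other than the singular points) requires a global fixed-point count of the type the paper carries out; your local expansion-versus-reflection argument supplies neither.
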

\begin{proof}
Note that the landing points of the fixed rays of $\sigma$ are necessarily fixed points of $\sigma$ on $\partial \mathcal{B}_\infty(\sigma)$. Since $\partial \mathcal{B}_\infty(\sigma)\subset\partial \mathcal{T}_\infty(\sigma)$ by Lemma \ref{easy_inclusion_lem}, the result will follow if we can prove that the only fixed points of $\sigma|_{\partial \mathcal{T}_\infty(\sigma)}$ are the singular points of $f(\mathbb{T})$. This will be shown via the Lefschetz fixed-point formula.

As $f$ has $k$ double points, there are $k+1$ forward-invariant components $\mathcal{U}_1,\cdots, \mathcal{U}_{k+1}$ of $\mathcal{T}_\infty(\sigma)$, each containing a single component of $T^o(\sigma)$. Let $T_i:=\mathcal{U}_i\cap T^o(\sigma)$, so that $T_i$ is naturally a $(3+j_i)$-gon ($j_i\geq0$) whose vertices are the $3+j_i$ singularities of $f(\mathbb{T})$ lying on $\partial\mathcal{U}_i$. Each $\mathcal{U}_i$ is a simply connected domain as it can be written as an increasing union of pullbacks (under $\sigma$) of $T_i$ (see also \cite[Proposition~5.6]{LLMM1}). Moreover, we can map each $T_i$ conformally to a $(3+j_i)$-gon in $\mathbb{D}$ whose edges are geodesics of $\mathbb{D}$. By iterated Schwarz reflection, one now obtains a Riemann map from $\mathbb{D}$ onto $\mathcal{U}_i$. Using Lemma~\ref{domain_hyperbolicity}, one can mimic the proof of Proposition~\ref{local_connectivity_prop} to show that each $\partial \mathcal{U}_i$ is locally connected. 

We now consider a quasiconformal homeomorphism $\chi_i:T^o(\pmb{\Gamma}_{3+j_i})\to T_i$ that sends the boundary cusps to the boundary cusps. Lifting $\chi_i$ by $\rho_{\pmb{\Gamma}_{3+j_i}}$ and $\sigma$, we obtain a quasiconformal homeomorphism $\chi_i:\D\to\mathcal{U}_i$ that conjugates $\rho_{\pmb{\Gamma}_{3+j_i}}$ to $\sigma$. Since $\partial \mathcal{U}_i$ is locally connected, $\chi_i$ extends continuously to the boundary, and yields a topological semi-conjugacy between $\rho_{\pmb{\Gamma}_{3+j_i}}\vert_{\mathbb{T}}$ and $\sigma\vert_{\partial\mathcal{U}_i}$. It now follows from Remark~\ref{defn_of_conjug} that $$\widehat{\chi}_i:=\chi_i\circ\mathcal{E}_{2+j_i}^{-1}:\mathbb{T}\to\partial\mathcal{U}_i$$ is a topological semi-conjugacy between $\overline{z}^{2+j_i}\vert_{\mathbb{T}}$ and $\sigma\vert_{\partial\mathcal{U}_i}$. Let $\widehat{\chi}_i:\overline{\D}\to\overline{\mathcal{U}_i}$ be an arbitrary continuous extension of $\widehat{\chi}_i\vert_{\mathbb{T}}$ such that $\widehat{\chi}_i$ maps $\D$ homeomorphically onto $\mathcal{U}_i$.

We now (topologically) glue attracting basins into the domains $\mathcal{U}_i$:

\begin{equation*}
\check{\sigma}(w):=\left\{\begin{array}{ll}
                    \sigma(w) & \mbox{on}\ \widehat{\C}\setminus\bigcup_{i=1}^{k+1}\mathcal{U}_i, \\
                     \widehat{\chi}_i\left(\overline{\widehat{\chi}_i^{-1}(w)}^{2+j_i}\right) & \mbox{on}\ \mathcal{U}_i,\mbox{ for}\ i=1,\cdots, k+1.
                                          \end{array}\right. 
\end{equation*}

\noindent The map $\check{\sigma}$ is a degree $d$ orientation-reversing branched cover of $\widehat{\mathbb{C}}$. We argue that each fixed point of $\check{\sigma}$ is either attracting or repelling. By Lemma \ref{domain_hyperbolicity} and construction of $\check{\sigma}$, this is the case for each fixed point in $\widehat{\mathbb{C}}\setminus \cup_i\partial\mathcal{U}_i$. We note that $\check{\sigma}$ has $k+2$ attracting fixed points (one in each $\mathcal{U}_i$ and one at $\infty$). Also by Lemma \ref{domain_hyperbolicity} and construction of $\check{\sigma}$, any fixed point of $\check{\sigma}$ on $\partial\mathcal{U}_i\setminus\{ \textrm{singular values of } f(\mathbb{T}) \}$ must be repelling. The singular values of $f(\mathbb{T})$ are fixed under $\check{\sigma}$ by construction. Such fixed points exhibit \emph{parabolic} behavior under $\sigma$, and near such a fixed point, the complement of $\cup_{i=1}^{k+1} \mathcal{U}_i$ lies in the corresponding repelling petals (compare \cite[Propositions~6.10, 6.11]{LLMM1}). Moreover, by construction of $\check{\sigma}$, the singular values of $f(\mathbb{T})$ are also repelling for $\check{\sigma}|_{\cup_{i=1}^{k+1} \mathcal{U}_i}$. Thus singular values of $f(\mathbb{T})$ are repelling fixed points of $\check{\sigma}$, and so each fixed point of $\check{\sigma}$ is either attracting or repelling.
 
By the Lefschetz fixed-point formula (see \cite[Lemma~6.1]{2014arXiv1411.3415L}), we may thus conclude that $\check{\sigma}$ has $(d+2k+3)$ fixed points in $\mathbb{S}^2$. We have already counted that $\check{\sigma}$ has $k+2$ attracting fixed points, and $d+k+1$ repelling fixed points at singular values of $f(\mathbb{T})$, so that we can conclude there are no other fixed points of $\check{\sigma}$. Since $\sigma$ and $\check{\sigma}$ have the same fixed points on $\partial \mathcal{T}_\infty(\sigma)$, it follows that the singular points of $\partial T(\sigma)$ are the only fixed points of $\sigma$ on $\partial \mathcal{T}_\infty(\sigma)$.  
\end{proof}

\begin{lem}\label{basin_exterior_tiling}
$\Int{\overline{\mathcal{T}_\infty(\sigma)}}=\mathcal{T}_\infty(\sigma)$. In particular, each component of $\mathcal{T}_\infty(\sigma)$ is a Jordan domain.
\end{lem}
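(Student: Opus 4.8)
The statement $\Int{\overline{\mathcal{T}_\infty(\sigma)}}=\mathcal{T}_\infty(\sigma)$ is equivalent (since $\mathcal{T}_\infty(\sigma)$ is open and $\mathcal{T}_\infty(\sigma)\subset\Int{\overline{\mathcal{T}_\infty(\sigma)}}$ always) to the assertion that $\partial\mathcal{T}_\infty(\sigma)$ has empty interior, i.e.\ that no point of the limit set $\Lambda(\sigma)$ is an interior point of $\overline{\mathcal{T}_\infty(\sigma)}$. The natural approach is to combine this with the companion fact, established by the preceding lemmas, that $\partial\mathcal{B}_\infty(\sigma)\subset\partial\mathcal{T}_\infty(\sigma)$ and that the fixed rays of $\sigma$ land on singular points of $f(\mathbb{T})$. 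First I would invoke the decomposition $\widehat{\C}=\mathcal{T}_\infty(\sigma)\sqcup\partial\mathcal{T}_\infty(\sigma)\sqcup\mathcal{B}_\infty(\sigma)$ from Lemma~\ref{easy_inclusion_lem}; this shows $\overline{\mathcal{T}_\infty(\sigma)}=\mathcal{T}_\infty(\sigma)\cup\partial\mathcal{T}_\infty(\sigma)=\widehat{\C}\setminus\mathcal{B}_\infty(\sigma)$, so $\Int{\overline{\mathcal{T}_\infty(\sigma)}}=\widehat{\C}\setminus\overline{\mathcal{B}_\infty(\sigma)}$. Hence the claim $\Int{\overline{\mathcal{T}_\infty(\sigma)}}=\mathcal{T}_\infty(\sigma)$ is exactly the statement $\partial\mathcal{B}_\infty(\sigma)=\partial\mathcal{T}_\infty(\sigma)$, which is precisely Proposition~\ref{equality_prop}. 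Since this lemma is used \emph{en route} to Proposition~\ref{equality_prop}, I must be careful not to argue circularly; the correct reading is that this lemma is the combinatorial heart whose verification feeds into the final assembly of Proposition~\ref{equality_prop}.

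The non-circular route is as follows. I would show directly that every point $z\in\partial\mathcal{T}_\infty(\sigma)$ lies in $\partial\mathcal{B}_\infty(\sigma)$, using the preceding structural results. Start from the forward-invariant components $\mathcal{U}_1,\dots,\mathcal{U}_{k+1}$ of $\mathcal{T}_\infty(\sigma)$, together with $\mathcal{B}_\infty(\sigma)$ itself, which are the $k+2$ immediate basins of the attracting (super-attracting) cycles of $\check\sigma$ as constructed in Lemma~\ref{sing_on_limit}. Every component of $\mathcal{T}_\infty(\sigma)$ is an iterated $\sigma$-preimage of some $T_i\subset\mathcal{U}_i$, hence is eventually mapped into one of the $\mathcal{U}_i$; since each $\mathcal{U}_i$ has locally connected boundary (shown inside the proof of Lemma~\ref{sing_on_limit}) and $\sigma$ is locally a branched cover, each component of $\mathcal{T}_\infty(\sigma)$ is a Jordan domain — this yields the ``In particular'' clause. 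For the main clause, suppose $z\in\partial\mathcal{T}_\infty(\sigma)$. Because $\check\sigma$ has been arranged so that \emph{all} of its fixed points (and, by the same Lefschetz count applied to iterates, all periodic points) are attracting or repelling, and the only non-escaping behavior for $\sigma$ itself is either falling into a $\mathcal{U}_i$ or staying on $\partial\mathcal{T}_\infty(\sigma)$, a small neighborhood of $z$ must meet $\mathcal{B}_\infty(\sigma)$: otherwise a whole neighborhood of $z$ would consist of tiling-set orbits, forcing $z$ to be interior to $\overline{\bigcup_i\mathcal{U}_i}$, contradicting the expansion estimate $|\overline\partial\sigma|>1$ on $f(\mathbb{D}^*)$ (Lemma~\ref{domain_hyperbolicity}) which precludes an open set of the domain from being swallowed entirely by the finitely many $\mathcal{U}_i$. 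Hence $z\in\partial\mathcal{B}_\infty(\sigma)$, giving $\partial\mathcal{T}_\infty(\sigma)\subset\partial\mathcal{B}_\infty(\sigma)$, which together with Lemma~\ref{easy_inclusion_lem} is Proposition~\ref{equality_prop} and therefore the lemma.

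Concretely, the cleanest implementation avoids re-deriving Proposition~\ref{equality_prop}: I would argue that if $w\in\Int{\overline{\mathcal{T}_\infty(\sigma)}}\setminus\mathcal{T}_\infty(\sigma)$, then a neighborhood $N$ of $w$ is contained in $\overline{\mathcal{T}_\infty(\sigma)}=\widehat{\C}\setminus\mathcal{B}_\infty(\sigma)$, so no point of $N$ escapes to $\infty$ under $\sigma$. Pushing $N$ forward by $\sigma$ (and using that $\sigma$ is open where defined, being anti-holomorphic), the forward orbit of $N$ stays in the compact set $\widehat{\C}\setminus\mathcal{B}_\infty(\sigma)$; by the expansion $|\overline\partial\sigma|>1$, spherical areas of the images $\sigma^{\circ n}(N\cap f(\mathbb{D}^*))$ cannot shrink, and any limit of these images is a nonempty open set on which all orbits are bounded. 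But bounded orbits that avoid $\mathcal{B}_\infty(\sigma)$ must (by the classification of Fatou components for the single-singular-value map $\sigma$, as in Lemma~\ref{easy_inclusion_lem}) land in $\bigcup_i\mathcal{U}_i$, a \emph{fixed} finite union of simply connected domains each of whose boundary is $\sigma$-invariant up to the parabolic/repelling analysis of Lemma~\ref{sing_on_limit}; expansion then forces $N$ to actually be contained in $\mathcal{T}_\infty(\sigma)$ after finitely many steps, contradicting $w\notin\mathcal{T}_\infty(\sigma)$.

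\textbf{Main obstacle.} The delicate point is ruling out that a nonempty open subset of $\widehat{\C}\setminus\mathcal{B}_\infty(\sigma)$ could fail to be in $\mathcal{T}_\infty(\sigma)$ while never escaping — i.e.\ genuinely controlling the dynamics on $\partial\mathcal{T}_\infty(\sigma)$. Here one must use the full force of the $\check\sigma$ construction from Lemma~\ref{sing_on_limit}: that after gluing in attracting basins, \emph{every} fixed point (hence, via Lefschetz applied to $\check\sigma^{\circ n}$, every periodic point) is hyperbolic, so $\partial\mathcal{T}_\infty(\sigma)$ carries no attracting or parabolic cycle of $\check\sigma$, and the parabolic points of $\sigma$ on $\partial T(\sigma)$ have their attracting petals absorbed into the $\mathcal{U}_i$. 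Combined with local connectivity of $\partial\mathcal{B}_\infty(\sigma)$ (Proposition~\ref{local_connectivity_prop}) and of each $\partial\mathcal{U}_i$, this leaves no room for a wandering open set or a Siegel-type obstruction, and the expansion estimate closes the argument.
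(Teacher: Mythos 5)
Your opening reduction is fine: granted the decomposition in Lemma~\ref{easy_inclusion_lem}, the lemma is indeed equivalent to the inclusion $\partial\mathcal{T}_\infty(\sigma)\subset\partial\mathcal{B}_\infty(\sigma)$. The gap is in your proof of that inclusion. The mechanism you propose --- iterate a neighborhood $N\subset\overline{\mathcal{T}_\infty(\sigma)}$ forward, invoke a Fatou-component classification and the expansion $|\overline{\partial}\sigma|>1$ to force $N$ into $\bigcup_i\mathcal{U}_i$ --- does not go through. You cannot iterate $N$ forward indefinitely: the points of $\mathcal{T}_\infty(\sigma)$ (which are dense in $N$) enter $T^o(\sigma_f)$ after finitely many steps, where $\sigma$ is undefined, so $\sigma^{\circ n}(N\cap f(\D^*))$ is not meaningful for large $n$; there is no classification of Fatou components or normal-family argument available for $\sigma$ acting on open sets (the paper stresses exactly this difficulty, since $\sigma$ is not defined on all of $\widehat{\C}$); the ``areas cannot shrink'' step is unjustified ($\sigma$ is neither globally defined nor injective); and, most importantly, the expansion of Lemma~\ref{domain_hyperbolicity} degenerates precisely where the difficulty sits, since $|\overline{\partial}\sigma(f(w))|=|w|^{d-1}\to1$ as $|w|\to1$, so there is no uniform expansion near $f(\mathbb{T})$ and the cusps behave parabolically; ``expansion closes the argument'' cannot be invoked there. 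The auxiliary claim that all periodic (not merely fixed) points of $\check{\sigma}$ are hyperbolic by Lefschetz applied to iterates is likewise unproved (and unnecessary). Finally, local connectivity of $\partial\mathcal{U}_i$ alone does not give the ``In particular'' clause: a simply connected domain with locally connected boundary need not be Jordan; one needs injectivity of the boundary extension, which here comes from $\partial\mathcal{U}\subset\partial\mathcal{B}_\infty(\sigma)$ together with connectivity of $\mathcal{B}_\infty(\sigma)$, i.e.\ from the main clause rather than prior to it.

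The missing idea is a density argument, not an expansion argument. For each invariant component $\mathcal{U}$ of $\mathcal{T}_\infty(\sigma)$, the proof of Lemma~\ref{sing_on_limit} provides a topological semi-conjugacy between a circle reflection map $\rho_{\pmb{\Gamma}_{3+j_i}}\vert_{\mathbb{T}}$ and $\sigma\vert_{\partial\mathcal{U}}$; hence the iterated $\sigma$-preimages of the landing point $p$ of the $0$-ray (a cusp of $f(\mathbb{T})$ by Lemma~\ref{sing_on_limit}, and a point of $\partial\mathcal{B}_\infty(\sigma)$ because it is the landing point of an external ray) are dense in $\partial\mathcal{U}$. Since $\partial\mathcal{B}_\infty(\sigma)$ is closed and completely invariant, this gives $\partial\mathcal{U}\subset\partial\mathcal{B}_\infty(\sigma)$; the same argument applies to every invariant component, and any other component is mapped onto an invariant one by some iterate. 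The lemma then follows by point-set topology: a component $W$ of $\Int{\overline{\mathcal{T}_\infty(\sigma)}}$ must contain a full component $\mathcal{U}$ of $\mathcal{T}_\infty(\sigma)$ (as $\partial\mathcal{T}_\infty(\sigma)$ is nowhere dense), and if $\mathcal{U}\subsetneq W$ then $W$ would contain a point of $\partial\mathcal{U}\subset\partial\mathcal{B}_\infty(\sigma)$, which is impossible because $W$ is an open subset of $\overline{\mathcal{T}_\infty(\sigma)}$, a set disjoint from $\mathcal{B}_\infty(\sigma)$. Your proposal never uses this backward-orbit density, and the tools you substitute for it fail at the cusps, which is exactly where the content of the lemma lies.
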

\begin{proof}

Let $\mathcal{U}$ denote a component of $\mathcal{T}_\infty(\sigma)$. We first show that $\partial\mathcal{U}\subset \partial \mathcal{B}_\infty(\sigma)$. First assume $\mathcal{U}$ is the forward-invariant component of $\mathcal{T}_\infty(\sigma)$ which contains the landing point $p$ of the $0$-ray of $\sigma$. As in the proof of Lemma \ref{sing_on_limit}, we note that $\sigma|_{\partial \mathcal{U}}$ is topologically semi-conjugate to $\rho_{\pmb{\Gamma}_{2+j_i}}|_{\mathbb{T}}$. Thus the iterated pre-images of $p$ under $\sigma$ are dense in $\partial\mathcal{U}$. Since $p\in\partial \mathcal{B}_\infty(\sigma)$ by Lemma \ref{sing_on_limit}, and $\partial \mathcal{B}_\infty(\sigma)$ is completely invariant, it follows that $\partial \mathcal{U}\subset\partial \mathcal{B}_\infty(\sigma)$. A similar argument applies to show that the boundary of any forward-invariant component of $\mathcal{T}_\infty(\sigma)$ is contained in $\partial \mathcal{B}_\infty(\sigma)$. Lastly, any other component of $\mathcal{T}_\infty(\sigma)$ maps (under some iterate of $\sigma$) onto one of the invariant components of $\mathcal{T}_\infty(\sigma)$, so that $\partial\mathcal{U}\subset \partial \mathcal{B}_\infty(\sigma)$ for any component $\mathcal{U}$ of $\mathcal{T}_\infty(\sigma)$. 

Note that since $\mathcal{T}_\infty(\sigma)$ is open, we have $\mathcal{T}_\infty(\sigma)\subset \textrm{int }{\overline{\mathcal{T}_\infty(\sigma)}}$. Let us now pick a component $W$ of $ \textrm{int }{\overline{\mathcal{T}_\infty(\sigma)}}$. Since $\partial \mathcal{T}_\infty(\sigma)$ is nowhere dense in $\mathbb{C}$, it follows that $W$ must intersect some component $\mathcal{U}$ of $\mathcal{T}_\infty(\sigma)$. As $W$ is a maximal open connected subset of $\overline{\mathcal{T}_\infty(\sigma)}$, it follows $\mathcal{U}\subset W$. However, if $\mathcal{U}\subsetneq W$, then $\partial\mathcal{U}$ must contain some point not belonging to $\partial\overline{\mathcal{T}_\infty(\sigma)}=\partial \mathcal{B}_\infty(\sigma)$, and this contradicts what was shown in the previous paragraph. Thus $\mathcal{U} = W$, and so $\textrm{int }{\overline{\mathcal{T}_\infty(\sigma)}} \subset \mathcal{T}_\infty(\sigma)$. The conclusion of the lemma follows.
\end{proof}

\begin{proof}[Proof of Proposition~\ref{equality_prop}] Let \[x\in\partial \mathcal{T}_\infty(\sigma)\subset\overline{\mathcal{T}_\infty(\sigma)}= \left( \textrm{int }{\overline{\mathcal{T}_\infty(\sigma)}}\right)\sqcup\partial\overline{\mathcal{T}_\infty(\sigma)}.\] By Lemma~\ref{basin_exterior_tiling} and the openness of $\mathcal{T}_\infty(\sigma)$, it follows that \[x\in\partial\overline{\mathcal{T}_\infty(\sigma)}=\partial\mathcal{B}_\infty(\sigma).\] Hence, $\partial \mathcal{T}_\infty(\sigma)\subset\partial\mathcal{B}_\infty(\sigma)$, and together with Lemma~\ref{easy_inclusion_lem}, this proves Proposition \ref{equality_prop}.
\end{proof}

\begin{cor}\label{decomp_cor} Let $f\in\Sigma_d^*$. Then \[ \widehat{\mathbb{C}} = \mathcal{B}_\infty(\sigma_f) \sqcup \Lambda(\sigma_f) \sqcup \mathcal{T}_\infty(\sigma_f), \] where $\Lambda(\sigma_f)= \partial\mathcal{B}_\infty(\sigma_f)=\partial\mathcal{T}_{\infty}(\sigma_f)$ is the limit set of $\sigma_f$.
\end{cor}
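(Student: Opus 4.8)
The plan is to derive Corollary~\ref{decomp_cor} directly from the three decompositions already established in this subsection, so very little new work is needed. First I would recall the disjoint decomposition $\widehat{\C}=\mathcal{T}_\infty(\sigma_f)\sqcup\partial\mathcal{T}_\infty(\sigma_f)\sqcup\mathcal{B}_\infty(\sigma_f)$ obtained in the proof of Lemma~\ref{easy_inclusion_lem} (equation~(\ref{decomp})); this already gives a partition of the sphere into the tiling set, its boundary, and the basin of infinity, using only that $\mathcal{T}_\infty(\sigma_f)$ is open, that $\sigma_f$ has a unique singular value (at $\infty$), and the classical classification of periodic Fatou components.

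Next I would invoke Proposition~\ref{equality_prop}, which states $\partial\mathcal{B}_\infty(\sigma_f)=\partial\mathcal{T}_\infty(\sigma_f)$. Substituting this common boundary into~(\ref{decomp}) gives $\widehat{\C}=\mathcal{B}_\infty(\sigma_f)\sqcup\partial\mathcal{B}_\infty(\sigma_f)\sqcup\mathcal{T}_\infty(\sigma_f)$, which is precisely the asserted trichotomy. It then only remains to identify the middle set with the limit set $\Lambda(\sigma_f)$: by Definition~\ref{tiling_set_def} the limit set of $\sigma_f$ is \emph{defined} to be $\partial\mathcal{T}_\infty(\sigma_f)$, so $\Lambda(\sigma_f)=\partial\mathcal{T}_\infty(\sigma_f)=\partial\mathcal{B}_\infty(\sigma_f)$ by Proposition~\ref{equality_prop}, completing the chain of equalities claimed in the corollary.

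I do not anticipate a genuine obstacle here, since all the substance is contained in Proposition~\ref{equality_prop} and the proof of Lemma~\ref{easy_inclusion_lem}; the corollary is essentially a bookkeeping consequence. If there is any subtlety, it is only in making sure that the partition~(\ref{decomp}) is into genuinely disjoint sets — that $\mathcal{T}_\infty(\sigma_f)$ and $\mathcal{B}_\infty(\sigma_f)$ really are open and disjoint, which follows because $\mathcal{B}_\infty(\sigma_f)$ is the (open) attracting basin of the super-attracting fixed point $\infty$, $\mathcal{T}_\infty(\sigma_f)$ is open by construction, and no point can lie in both since orbits in $\mathcal{T}_\infty(\sigma_f)$ eventually land in $T^o(\sigma_f)\subset\widehat{\C}\setminus\mathcal{B}_\infty(\sigma_f)$ while the escaping set is exactly $\mathcal{B}_\infty(\sigma_f)$. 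With these observations in place the corollary follows immediately.
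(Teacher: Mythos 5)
Your proof is correct and is essentially identical to the paper's: the paper also deduces the corollary directly from the decomposition~(\ref{decomp}) in the proof of Lemma~\ref{easy_inclusion_lem}, Proposition~\ref{equality_prop}, and the definition $\Lambda(\sigma_f)=\partial\mathcal{T}_\infty(\sigma_f)$ from Subsection~\ref{sigma_prelim_subsec}. Your closing remarks on disjointness are harmless but already contained in the derivation of~(\ref{decomp}).
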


\begin{proof} 
This follows from (\ref{decomp}), Proposition \ref{equality_prop}, and the definition of $\Lambda(\sigma_f)$ (see Subsection~\ref{sigma_prelim_subsec}). 
\end{proof}

%\begin{rem} In view of Proposition \ref{equality_prop}, we will refer to the common boundary of $ \mathcal{B}_\infty(\sigma_f)$ and $\mathcal{T}_{\infty}(\sigma_f)$ as the \emph{limit set} of $\sigma_f$, and write $\Lambda(\sigma_f)$ as in Subsection~\ref{sigma_prelim_subsec}. 
%\end{rem}

\subsection{Lamination for The Limit Set $\Lambda(\sigma_f)$}
\label{External Rays in the Schwarz Reflection Plane}

In this Subsection, we study further the external rays of $\sigma_f$ introduced already in Definition \ref{external_rays_sigma}. Recall the B\"ottcher coordinate $\phi_{\sigma_f}: \mathbb{D}^* \rightarrow \mathcal{B}_\infty(\sigma_f)$ of Remark \ref{bottcher_coordinate}.

\begin{rem} 
By Proposition \ref{local_connectivity_prop}, $\phi_{\sigma_f}$ extends continuously to  a surjection \[ \phi_{\sigma_f}: \mathbb{T} \rightarrow \Lambda(\sigma_f) \textrm{ such that } \sigma_f\circ\phi_{\sigma_f}(u) = \phi_{\sigma_f}(\overline{u}^d).  \] We denote the set of all fibers of points of $\Lambda(\sigma_f)$ under the semi-conjugacy $\phi_{\sigma_f}$ by $\lambda(\sigma_f)$. Clearly, $\lambda(\sigma_f)$ defines an equivalence relation on $\mathbb{T}$. The description of $\lambda(\sigma_f)$ will be crucial to the proof of the conformal mating statement in Theorem \ref{theorem_A}. 
\end{rem}

\begin{rem} 
We will usually identify $\mathbb{T}$ with $\mathbb{R}/\mathbb{Z}$ and the map $z\mapsto\overline{z}^d$ on $\mathbb{T}$ with the map \[ m_{-d}: \mathbb{R}/\mathbb{Z} \rightarrow \mathbb{R}/\mathbb{Z},\] defined by $m_{-d}(x):=-dx$.
\end{rem}

\begin{lem}\label{fixed_point_ray_lem} 
Let $f\in\Sigma_d^*$. Then: 
\begin{enumerate}
\item Each cusp of $f(\mathbb{T})$ is the landing point of a unique external ray of $\mathcal{B}_\infty(\sigma_f)$, and the angle of this ray is fixed under $m_{-d}$.

\item Each double point of $f(\mathbb{T})$ is the landing point of exactly two external rays of $\mathcal{B}_\infty(\sigma_f)$, and the angles of the corresponding two rays form a $2$-cycle under $m_{-d}$. 
\end{enumerate}
\end{lem}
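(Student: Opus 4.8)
The plan is to count fixed points and $2$-cycles of the map $m_{-d}$ on $\mathbb{T}$ and match them up with the singular points of $f(\mathbb{T})$ via the combinatorics already established in Lemma~\ref{sing_on_limit}. First I would recall that by Corollary~\ref{decomp_cor} the limit set $\Lambda(\sigma_f)=\partial\mathcal{B}_\infty(\sigma_f)$ is locally connected, so $\phi_{\sigma_f}$ extends to a continuous semi-conjugacy $\mathbb{T}\to\Lambda(\sigma_f)$ between $m_{-d}$ and $\sigma_f$. A fixed external ray (i.e.\ one whose angle is fixed by $m_{-d}$) must land at a fixed point of $\sigma_f$ on $\Lambda(\sigma_f)$, and by Lemma~\ref{sing_on_limit} every such landing point is a singular point (cusp or double point) of $f(\mathbb{T})$. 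Conversely, every singular point of $f(\mathbb{T})$ lies on $\partial\mathcal{B}_\infty(\sigma_f)$ (again Lemma~\ref{sing_on_limit}/Corollary~\ref{decomp_cor}) and is fixed by $\sigma_f$ by construction, so each singular point is the landing point of at least one external ray whose angle is fixed by $m_{-d}$.

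The counting step is the heart of the argument. The map $m_{-d}:\mathbb{R}/\mathbb{Z}\to\mathbb{R}/\mathbb{Z}$, $x\mapsto -dx$, has exactly $d+1$ fixed points (solutions of $(d+1)x\equiv 0$), and the number of points of period dividing $2$ is $|{-d}^2-1|\cdot$, more precisely the solutions of $d^2x\equiv x$, i.e.\ $(d^2-1)x\equiv 0$, of which there are $d^2-1$; removing the $d+1$ fixed points leaves $d^2-1-(d+1)=d^2-d-2=(d-2)(d+1)$ points lying in genuine $2$-cycles, hence $(d-2)(d+1)/2$ such $2$-cycles. On the geometric side, I would invoke the structure of $\partial T(\sigma_f)=f(\mathbb{T})$: by the discussion following Definition~\ref{tiling_set_def} (citing \cite[Proposition~2.8]{LMM1}, \cite[Lemma~2.4]{2014arXiv1411.3415L}), $f(\mathbb{T})$ has exactly $d+1$ cusps and, if $f\in\Sigma_{d,k}^*$, exactly $k$ double points with $0\le k\le d-2$. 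Each cusp, being a ``one-sided'' boundary point of a single component $\mathcal{U}_i$ of $\mathcal{T}_\infty(\sigma_f)$, can be accessed from $\mathcal{B}_\infty(\sigma_f)$ along exactly one external ray; each double point lies on the boundary of two distinct components $\mathcal{U}_i$, $\mathcal{U}_j$ (it is the image of two distinct points of $\mathbb{T}$ under the two semi-conjugacies of the proof of Lemma~\ref{sing_on_limit}), and is accessed from $\mathcal{B}_\infty(\sigma_f)$ along exactly two external rays. This is the point where I expect to do the most careful work: one must show precisely that a cusp admits a unique accessing ray and a double point admits exactly two, using local connectivity of $\Lambda(\sigma_f)$ together with the fact that in a neighborhood of a cusp the tiling set is a single (parabolic) petal while near a double point it is two petals (compare \cite[Propositions~6.10,~6.11]{LLMM1}), so that the complement $\mathcal{B}_\infty(\sigma_f)$ is locally one sector at a cusp and two sectors at a double point.

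Putting the two counts together: the $d+1$ fixed rays land, one at each of the $d+1$ cusps, giving part (1) (each cusp gets exactly one, and the landing angle is fixed by $m_{-d}$ since the ray is fixed). For part (2), there are $d+1$ fixed rays already accounted for by cusps, so no ray landing at a double point can be fixed; since a double point is a fixed point of $\sigma_f$, any ray landing there must be permuted among finitely many rays landing at the same point, and since there are exactly two such rays they must form a $2$-cycle under $m_{-d}$ (a transposition of the two angles), so the two angles form a genuine $2$-cycle. Finally I would note that this exhausts all the $2$-cycles: $k$ double points account for $2k$ rays forming $k$ two-cycles, consistent with $k\le d-2\le (d-2)(d+1)/2$, and the remaining $2$-cycles of $m_{-d}$ correspond to rays landing at non-singular (but necessarily repelling-periodic) points of $\Lambda(\sigma_f)$, which is irrelevant to the statement. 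The main obstacle, as indicated, is the local accessibility count at cusps versus double points; once that is pinned down the rest is bookkeeping.
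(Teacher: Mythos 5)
You leave the actual content of the lemma unproved: the step you defer (``this is the point where I expect to do the most careful work'') is precisely the assertion that a cusp admits exactly one accessing ray and a double point exactly two, and without it nothing in parts (1)--(2) is established. Moreover, the route you sketch for closing it --- the local petal/sector picture of $\mathcal{B}_\infty(\sigma_f)$ near the singular point --- does not directly suffice. With $\Lambda(\sigma_f)$ locally connected, the number of external rays landing at $\zeta$ is a \emph{global} access count: it equals the number of connected components of $\left(\widehat{\C}\setminus\mathcal{B}_\infty(\sigma_f)\right)\setminus\{\zeta\}=\overline{\mathcal{T}_\infty(\sigma_f)}\setminus\{\zeta\}$, and a sector count at one small scale neither controls this number nor is it actually derived in your sketch (quoting the petal statements of \cite{LLMM1} gives local dynamics at $\zeta$, not the component count). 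The paper closes exactly this gap by a global pullback argument: for a cusp $\zeta$, each set $\bigcup_{r=0}^{n}\sigma_f^{-r}(T\setminus\{\zeta\})$ is connected, and the closure of the union over all $n$ is $\overline{\mathcal{T}_\infty(\sigma_f)}$, so $\overline{\mathcal{T}_\infty(\sigma_f)}\setminus\{\zeta\}$ has a single component; for a double point the same argument confines everything to the two pieces $S_1\cup S_2$ containing $T\setminus\{\zeta\}$, giving exactly two components. Combined with Corollary~\ref{decomp_cor} and local connectivity this yields the ray counts; some argument of this kind must appear in a complete proof.

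Once those counts are in place, your bookkeeping agrees with the paper's: uniqueness of the ray at a fixed cusp forces that ray to be fixed, the $d+1$ fixed angles of $m_{-d}$ are then exhausted by the $d+1$ cusps, and the two (necessarily non-fixed) rays at a double point are permuted by $\sigma_f$, hence form a $2$-cycle. Two slips should still be repaired. First, your intermediate claim that every singular point is the landing point of a ray ``whose angle is fixed by $m_{-d}$'' is false at double points, and the general inference ``fixed boundary point $\Rightarrow$ some fixed ray lands there'' fails (e.g.\ at $\alpha$-type fixed points of polynomials, where a cycle of rays of higher period lands); what is true and needed is only that the set of angles landing at a fixed point is $m_{-d}$-invariant. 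Second, the census of $2$-cycles of $m_{-d}$ (there are $(d-2)(d+1)/2$ of them) plays no role in the argument; all that is used is that $m_{-d}$ has exactly $d+1$ fixed angles.
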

\begin{proof}
We abbreviate $\sigma:=\sigma_f$ and $T:=T(\sigma_f)$. Let $\zeta$ be a singular point of $f(\mathbb{T})$. Since $\zeta\in\partial \mathcal{T}_{\infty}(\sigma)$, Propositions \ref{local_connectivity_prop} and \ref{equality_prop} imply that $\zeta$ is the  landing point of at least one external ray of $\sigma$.

\bigskip

\noindent \emph{Proof of (1).}  Assume $\zeta$ is a cusp of $f(\mathbb{T})$. We will show that $\zeta$ is not a cut-point of $\overline{\mathcal{T}_\infty(\sigma)}$. Let $S$ be the connected component of $\overline{\mathcal{T}_\infty(\sigma)}\setminus\{\zeta\}$ which contains $T\setminus\{\zeta\}$. It follows from the covering properties of $\sigma$ that \[ \bigcup_{k=0}^n\sigma^{-k}(T\setminus\{\zeta\}) \] is a connected subset of $\overline{\mathcal{T}_\infty(\sigma)}\setminus\{\zeta\}$, so that \[ \bigcup_{k=0}^n\sigma^{-k}(T\setminus\{\zeta\}) \subset S\textrm{, } \forall n\geq0. \] It follows that \[ \overline{\mathcal{T}_\infty(\sigma)}\subset\overline{\bigcup_{r=0}^\infty\sigma^{-r}(T\setminus\{\zeta\})}\subset \overline{S} \subset S\cup\{\zeta\}. \] By our choice of $S$, we have that \[ \overline{\mathcal{T}_\infty(\sigma)} = S\cup\{\zeta\}, \] so that $\zeta$ is not a cut point of $\overline{\mathcal{T}_\infty(\sigma)}$. Hence, $\zeta$ is the landing point of exactly one external ray $\gamma$ of $\mathcal{B}_\infty(\sigma)$. Since $\sigma(\zeta)=\zeta$, it follows that $\sigma(\gamma)$ also lands at $\zeta$, whence $\sigma(\gamma)=\gamma$. In particular, the angle of $\gamma$ is fixed under $m_{-d}$. \qed

\bigskip

\noindent \emph{Proof of (2).} Suppose now that $\zeta$ is a double point of $f(\mathbb{T})$. Let $S_1$, $S_2$ be the two components of $\overline{\mathcal{T}_\infty(\sigma)}\setminus\{\zeta\}$ such that $T\setminus\{\zeta\}\subset S_1\cup S_2$. A similar argument as in the proof of (1) yields that \[ \overline{\mathcal{T}_\infty(\sigma)}=S_1\cup S_2\cup\{\zeta\}. \] In particular, $S_1$, $S_2$ are the only components of $\overline{\mathcal{T}_\infty(\sigma)}\setminus\{\zeta\}$. Thus there are only two accesses to $\zeta$ from $\mathcal{B}_\infty(\sigma)$, and hence there are exactly two external rays landing at $\zeta$. By (1), the $(d+1)$ fixed rays land at the $(d+1)$ distinct cusps on $f(\mathbb{T})$. Therefore, the angles of the two rays landing at $\zeta$ must be of period two, forming a $2$-cycle under $m_{-d}$.
\end{proof}

\begin{notation} We denote by $\mathcal{A}^{\textrm{cusp}}$ the angles of external rays of $\sigma_f$ landing at the cusps of $f(\mathbb{T})$, and by $\mathcal{A}^{\textrm{double}}$ the angles of external rays of $\sigma_f$ landing at the double points of $f(\mathbb{T})$.  
\end{notation}

\begin{rem} By Proposition~\ref{fixed_point_ray_lem}, $\mathcal{A}^{\textrm{cusp}}$ is the set of angles fixed under $m_{-d}$. On the other hand, for $f\in\Sigma_{d,k}^*$, $\mathcal{A}^{\textrm{double}}$ consists of $2k$ angles of period two which we enumerate as $\{\alpha_1,\alpha_1',\cdots,\alpha_{k},\alpha_{k}'\}$ where the rays at angles $\alpha_i,\alpha_i'$ land at a common point.
\end{rem}

\begin{figure}[ht!]
\includegraphics[scale=0.5]{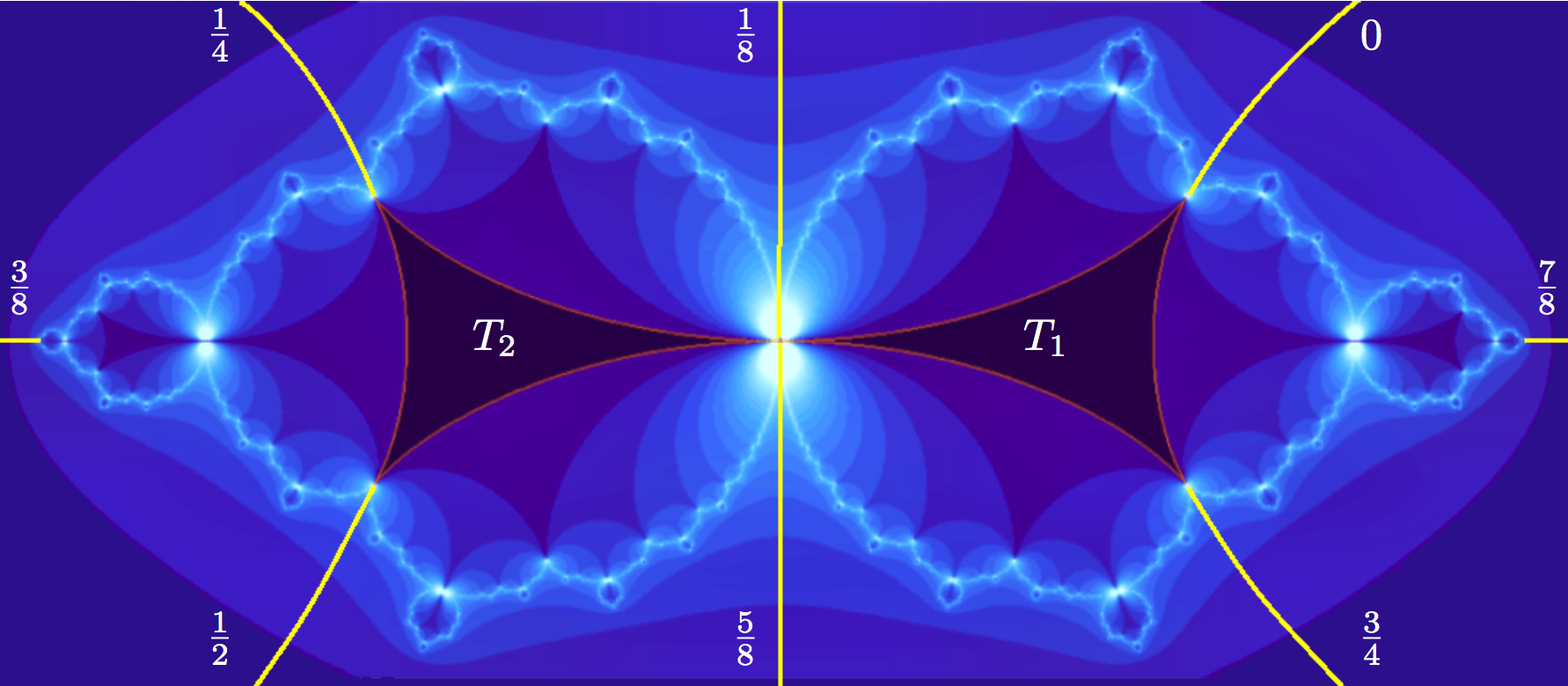}
\caption{Shown is $\mathcal{T}_{\infty}(\sigma_f)$ for $f(z):=z-2/(3z)-1/(3z^3)$. Also pictured are several external rays for $\sigma_f$: here $\mathcal{A}^{\textrm{cusp}} = \{ 0, 1/4, 1/2, 3/4\}$, and $\mathcal{A}^{\textrm{double}}=\{1/8, 5/8\}$. This figure was made by Seung-Yeop Lee.}
\label{extremal_qd_schwarz}
\end{figure}

\begin{rem}\label{Markov} 
Let $f\in\Sigma_{d,k}^*$. The union of $T(\sigma_f)$ with the external rays of $\sigma_f$ at angles  $\mathcal{A}^{\textrm{cusp}}$ and $\mathcal{A}^{\textrm{double}}$ cut the limit set $\Lambda(\sigma)$ into $d+2k+1$ pieces that form a Markov partition for the dynamics $\sigma:\Lambda(\sigma)\to\Lambda(\sigma)$ (see Figure \ref{extremal_qd_schwarz}). Correspondingly, the angles in $\mathcal{A}^{\textrm{cusp}}\cup\mathcal{A}^{\mathrm{double}}$ determine a Markov partition for $m_{-d}:\R/\Z\to\R/\Z$, and the elements of this Markov partition have diameter at most $1/d$. 
\end{rem}

\begin{prop}\label{schwarz_lamination_prop} 
Let $f\in\Sigma_{d,k}^*$, and $x\in\lambda(\sigma_f)$ be a non-trivial equivalence class. Then, the following hold true.
\begin{enumerate}
\item $\vert x\vert=2$, and $m_{-d}^{\circ n}(x)=\{\alpha_i, \alpha_i'\}$ for some $n\geq0$ and $1\leq i \leq k$. 

\item If $n_0$ is the smallest non-negative integer with $m_{-d}^{\circ n_0}(x)=\{\alpha_i, \alpha_i'\}$, then $x$ is contained in a connected component of $\mathbb{T}\setminus m_{-d}^{-(n_0-1)}(\mathcal{A}^{\textrm{double}}\cup\mathcal{A}^{\textrm{cusp}})$.
\end{enumerate}
\end{prop}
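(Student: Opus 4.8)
The plan is to analyze the equivalence relation $\lambda(\sigma_f)$ using the Markov partition described in Remark~\ref{Markov} together with the fact, established in the proof of Lemma~\ref{sing_on_limit}, that the boundary of each forward-invariant component $\mathcal{U}_i$ of $\mathcal{T}_\infty(\sigma_f)$ carries a topological semi-conjugacy from $\rho_{\pmb{\Gamma}_{3+j_i}}|_{\mathbb{T}}$ (equivalently $\overline{z}^{2+j_i}|_{\mathbb{T}}$). The key structural input is that $\phi_{\sigma_f}|_{\mathbb{T}}$ identifies two angles $s\sim t$ exactly when the corresponding prime ends of $\mathcal{B}_\infty(\sigma_f)$ land at a common point of $\Lambda(\sigma_f)$, and by Corollary~\ref{decomp_cor} and Lemma~\ref{basin_exterior_tiling} every such common landing point is a cut point of $\Lambda(\sigma_f)$ that separates $\widehat{\C}$; since each component of $\mathcal{T}_\infty(\sigma_f)$ is a Jordan domain, a point where $\Lambda(\sigma_f)$ is cut must be a point where finitely many such Jordan components (and/or $\mathcal{B}_\infty(\sigma_f)$) meet. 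So I would first argue that a non-trivial fiber corresponds to a point $\zeta\in\Lambda(\sigma_f)$ at which two accesses from $\mathcal{B}_\infty(\sigma_f)$ exist, and that such a $\zeta$ must eventually map under $\sigma_f$ to a singular point of $f(\mathbb{T})$ — because the only way to create a cut point of $\overline{\mathcal{T}_\infty(\sigma_f)}$ is for two boundary arcs of components of $\mathcal{T}_\infty(\sigma_f)$ to touch, and the touching points of the closures $\overline{\mathcal{U}_i}$ are (iterated preimages of) the double/cusp points of $\partial T(\sigma_f)$.

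For part (1): I would show $|x| = 2$ by the following argument. Suppose $x$ is a non-trivial fiber with landing point $\zeta := \phi_{\sigma_f}(\text{any element of }x)$. Pushing forward under $m_{-d}$ (equivalently, applying $\sigma_f$), the landing point moves to $\sigma_f(\zeta)$, and since $\Lambda(\sigma_f)$ maps onto itself this eventually enters a neighborhood governed by one of the invariant pieces; more precisely, I would use the Markov partition of Remark~\ref{Markov}, whose pieces have diameter $\le 1/d$, to show that iterating $m_{-d}$ expands the convex hull of $x$ until it must "hit" the partition — concretely, if no iterate of $x$ equals a pair $\{\alpha_i,\alpha_i'\}$, then all iterates of $x$ stay inside single Markov pieces, $m_{-d}$ acts as an expanding injection on the arc between the two angles of $x$, and the arc length grows without bound, a contradiction since there is a definite lower gap between distinct angles of $x$ preventing collapse. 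Once some $m_{-d}^{\circ n}(x) = \{\alpha_i,\alpha_i'\}$ (which has exactly two elements by Lemma~\ref{fixed_point_ray_lem}(2)), injectivity of $m_{-d}$ on each Markov piece forces $x$ itself to have exactly two elements and to consist of one angle from each "side". I should also rule out that $x$ could have three or more elements by the same diameter-$1/d$ / injectivity-on-pieces argument: if $|x|\ge 3$ then two of the angles of $x$ would lie in a common Markov piece and map injectively, so the image $m_{-d}(x)$ also has $\ge 3$ elements, and induction plus the fact that the only "merging" happens at the double points (period-two, hence fibers of size exactly $2$) gives a contradiction.

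For part (2): Let $n_0$ be minimal with $m_{-d}^{\circ n_0}(x) = \{\alpha_i,\alpha_i'\}$. I would argue that for each $0 \le m < n_0$, the set $m_{-d}^{\circ m}(x)$ lies in a single component of $\mathbb{T}\setminus m_{-d}^{-(n_0-1-m)}(\mathcal{A}^{\textrm{double}}\cup\mathcal{A}^{\textrm{cusp}})$, and specializing to $m=0$ gives the claim. This is again a Markov-partition statement: the sets $m_{-d}^{-j}(\mathcal{A}^{\textrm{double}}\cup\mathcal{A}^{\textrm{cusp}})$ refine the Markov partition of Remark~\ref{Markov}, and if $x$ straddled one of the walls at level $n_0-1$ then applying $m_{-d}$ would separate the two angles of $x$ by a wall of $\mathcal{A}^{\textrm{double}}\cup\mathcal{A}^{\textrm{cusp}}$ itself at the last step, forcing the two rays at these separated angles to land in distinct Markov pieces — contradicting that they land at the common point where the rays at $\alpha_i, \alpha_i'$ land. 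Equivalently and more cleanly: the two external rays at the angles of $x$ together with their common landing point $\zeta$ bound a region; the preimages $\sigma_f^{-j}(\text{rays at }\mathcal{A}^{\textrm{cusp}}\cup\mathcal{A}^{\textrm{double}})$ for $j \le n_0-1$ are the rays landing at singular points of $\partial\mathcal{U}_i$-type components appearing within $n_0-1$ steps, and none of these can cross the arc cut off by $x$ before time $n_0$, by minimality of $n_0$.

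The main obstacle I anticipate is making rigorous the passage from "prime ends land at a common point" to the combinatorial statement that the two angles sit in a single Markov cell and map injectively until they hit $\{\alpha_i,\alpha_i'\}$ — i.e., carefully justifying that the only identifications come from the double points and that no new identifications are created by the "filling in" of the Jordan components $\mathcal{U}_i$. This requires the local-connectivity and Jordan-domain facts (Proposition~\ref{local_connectivity_prop}, Lemma~\ref{basin_exterior_tiling}, Corollary~\ref{decomp_cor}) plus a careful count of accesses at each point of $\Lambda(\sigma_f)$, together with the semi-conjugacy on each $\partial\mathcal{U}_i$ from the proof of Lemma~\ref{sing_on_limit} to control exactly which boundary points of different components can be identified. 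The expansion estimate — that an arc strictly inside Markov cells of size $\le 1/d$ grows under $m_{-d}$ — is the other point needing care, but it is essentially the standard fact that $m_{-d}$ is uniformly expanding and the Markov cells are mapped homeomorphically, so I would phrase it as: if $x$ never hits the double-point cycles, then $\diam(\text{conv}(x))$ is multiplied by $d$ at each step while staying $\le 1/d$, which is impossible unless $x$ is a single point.
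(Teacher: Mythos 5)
Your plan is essentially the paper's own argument: the paper proves (1) by noting that a non-trivial class corresponds to rays co-landing at a cut point $w$, ruling out cusps via Lemma~\ref{fixed_point_ray_lem}(1), and showing that if the orbit of $w$ never hit a double point then $\theta,\theta'\in x$ would share a well-defined itinerary for the Markov partition of Remark~\ref{Markov}, contradicting expansivity of $m_{-d}$ -- which is exactly your ``arc trapped in cells of diameter $\le 1/d$ while being expanded'' step -- and then uses local injectivity of $\sigma_f$ on $\Lambda(\sigma_f)\setminus f(\mathbb{T})$ to pull back the ``exactly two rays'' statement at the double point and conclude $|x|=2$ and $m_{-d}^{\circ n_0}(x)=\{\alpha_i,\alpha_i'\}$, with (2) following from the landing pattern and injectivity on partition pieces, as you say. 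The only difference is your opening geometric claim that cut points must be tangency points of tile closures, which is not needed (and is close to assuming the conclusion); the load-bearing argument in both your proposal and the paper is the itinerary/expansivity one, together with the cusp exclusion and local injectivity that you correctly flag as the points requiring care.
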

\begin{proof}[Proof of (1).] We abbreviate $\sigma:=\sigma_f$.  If $x\in\lambda(\sigma_f)$ is non-trivial, $x$ is a collection of $\geq2$ angles whose corresponding external rays for $\sigma$ land at a cut-point $w$ of $\Lambda(\sigma)$. Let $\theta$, $\theta'$ be distinct angles in $x$. Suppose $w$ is not a double point of $f(\mathbb{T})$. As $w$ is a cut-point of $\Lambda(\sigma)$, it follows from Lemma~\ref{fixed_point_ray_lem} that $w$ is not a cusp of $f(\mathbb{T})$. 

Suppose by way of contradiction that no iterate of $\sigma$ maps $w$ to a double point of $f(\mathbb{T})$. Note that no iterate of $\sigma$ can map $w$ to a cusp of $f(\mathbb{T})$, as $\sigma$ is a local homeomorphism on $\Lambda(\sigma)\setminus f(\mathbb{T})$ and cusps of $f(\mathbb{T})$ are not cut-points of $\Lambda(\sigma)$ by Lemma \ref{fixed_point_ray_lem}. Thus, $w$ has a well-defined itinerary (or symbol sequence) with respect to the Markov partition of $\Lambda(\sigma)$ in Remark \ref{Markov}. This implies that the angles $\theta$ and $\theta'$ have the same (well-defined) itinerary with respect to the corresponding Markov partition of $\R/\Z$. However, this contradicts expansivity of the map $m_{-d}:\R/\Z\to\R/\Z$, as the distance between $m_{-d}^{\circ j}(\theta)$ and $m_{-d}^{\circ j}(\theta')$ must exceed $1/(d+1)$ for some $j\in\N$. This contradiction proves that $\sigma^{\circ n}(w)$ is a double point of $f(\mathbb{T})$ for some $n\geq0$. Let use choose the smallest $n$ with this property, and call it $n_0$.

By Lemma \ref{fixed_point_ray_lem}, there are exactly two rays $\{\alpha_i, \alpha_i'\}$ landing at the double point $\sigma^{\circ n_0}(w)$. As $\sigma$ is a local homeomorphism on $\Lambda(\sigma)\setminus f(\mathbb{T})$, it follows that $\theta$, $\theta'$ are the only two rays landing at $w$, and that $\sigma^{\circ n_0}$ maps the pair of rays at angles $\{\theta,\theta'\}$ to the pair of rays at angles $\{\alpha_i,\alpha_i'\}$. In other words, $x=\{\theta, \theta' \}$ and $m_{-d}^{\circ n_0}(x)=\{\alpha_i, \alpha_i'\}$.  
\end{proof}

\begin{proof}[Proof of (2).] This follows from the landing patterns of the rays corresponding to the angles in $\mathcal{A}^{\textrm{double}}\cup\mathcal{A}^{\textrm{cusp}}$ and injectivity of $\sigma$ on the interior of each piece of the Markov partition of $\Lambda(\sigma)$ defined in Remark~\ref{Markov}.  
\end{proof}

We conclude this subsection with a proof of connectedness of $\Sigma_d^*$ (which was used to define the labeling of the cusps on $f(\mathbb{T})$ in Remark~\ref{labeling}), and a dynamical characterization of the cusp $\zeta_1^f$ as the landing point of the $0$-ray of $\sigma_f$, for $f\in\Sigma_d^*$ (which was used in the injectivity step of the proof of Proposition~\ref{bijection_statement}).

\begin{prop}\label{sigma_d^*_connected_thm}
$\Sigma_d^*$ is connected.
\end{prop}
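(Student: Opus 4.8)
The plan is to connect an arbitrary $f \in \Sigma_d^*$ to the base point $f_0(z) = z - 1/(dz^d)$ by a path inside $\Sigma_d^*$. The natural mechanism is the pinching deformation theory for the family $\Sigma_d^*$ developed in \cite{LMM1}; in fact Theorem~\ref{homeo_thm} identifies $\Sigma_d^*$ with $\overline{\beta(\pmb{\Gamma}_{d+1})}$, but since connectedness of $\Sigma_d^*$ is logically prior (it is invoked in Remark~\ref{labeling} to set up the labeling used throughout Section~\ref{homeomorphism}), the argument must be self-contained and run on the $\Sigma_d^*$ side. So first I would recall from \cite[Theorem~4.11]{LMM1} (and the surrounding quasiconformal deformation machinery) that every $f \in \Sigma_d^*$ is obtained from $f_0$ by a quasiconformal deformation supported on $T(\sigma_{f_0}) = \widehat{\C} \setminus f_0(\D^*)$: one pulls back a Beltrami coefficient on the fundamental tile, spreads it by the dynamics of $\sigma_{f_0}$, extends by zero on $\mathcal{B}_\infty$, and straightens, obtaining a map in $\Sigma_d^*$. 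The key point is that this construction depends continuously on the Beltrami coefficient and that the space of admissible Beltrami coefficients on $T(\sigma_{f_0})$ (i.e.\ those whose straightening produces a member of $\Sigma_d^*$, allowing sides of the $(d+1)$-gon to touch) is itself connected — indeed it is parametrized by deformations of a polygon together with a finite set of pinching parameters, each living in a contractible set.

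The cleanest way to organize this is: (i) show that every $f$ with $f(\mathbb{T})$ having \emph{no} double points lies in the path component of $f_0$, by noting that the diffeomorphism $g: T(\pmb{\Gamma}_{d+1}) \to T(\sigma_f)$ used in the proof of Proposition~\ref{existence_of_group} can be deformed to the identity-type map giving $f_0$ through a path of diffeomorphisms, yielding a path of Beltrami coefficients $\mu_t$ and hence (by continuity of the solution to the Beltrami equation in the coefficient) a path $t \mapsto f_t$ in $\Sigma_d^*$; and (ii) handle $f$ with double points by first using the pinching results of \cite{LMM1} to connect such $f$ to the ``maximally unpinched'' deformation lying in $\Sigma_{d,0}^*$ via a one-real-parameter family in which the relevant touching points separate, and then invoking (i). Step (ii) uses precisely the structure recorded around Remark~\ref{Markov} and the fact, established in \cite{LMM1}, that pinching parameters can be continuously turned on and off.

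Concretely, I would: fix $f \in \Sigma_d^*$; let $\mu$ be the Beltrami coefficient on $\widehat{\C}$ (supported on the grand orbit of the tile, zero on $\mathcal{B}_\infty$) whose straightening with hydrodynamic normalization is $f$; form the family $\mu_t := t\mu$ for $t \in [0,1]$ (each $\mu_t$ is still $\sigma$-compatible and vanishes on $\D^*$, so its straightening $f_t$ lies in $\Sigma_d^*$); observe $f_0$ as defined above equals the straightening of $\mu_0 \equiv 0$ up to the normalization, hence the path $t \mapsto f_t$ joins $f$ to $f_0$ inside $\Sigma_d^*$; and finally invoke continuous dependence of the normalized solution of the Beltrami equation on the dilatation (e.g.\ \cite[Theorem~1.10]{CG1} or the measurable Riemann mapping theorem with parameters) to conclude $t \mapsto f_t$ is continuous in the coefficient-wise topology on $\Sigma_d^*$. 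The main obstacle is verifying that $\mu_t = t\mu$ really does remain an \emph{admissible} deformation for all $t$ — i.e.\ that the straightened map still has a pole of order exactly $d$ with leading coefficient $-1/d$ and is univalent on $\D^*$ — when $f(\mathbb{T})$ has double points and the tile is degenerate; this requires knowing (from \cite{LMM1}) that the class of such Schwarz reflection configurations is stable under scaling the Beltrami coefficient toward $0$, so that no new degeneracies are created along the path and existing ones are removed continuously. Once that stability is in hand, connectedness follows immediately since every point is joined to the single base point $f_0$.
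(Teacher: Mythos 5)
There is a genuine gap, and it sits exactly where you flag your ``main obstacle'': the Beltrami coefficient $\mu$ your construction is built on does not exist for most $f$. A quasiconformal deformation of $\sigma_{f_0}$ supported on the grand orbit of the tile is integrated by a \emph{global} quasiconformal homeomorphism of $\widehat{\C}$ conjugating $\sigma_{f_0}$ to $\sigma_f$; such a map carries $T(\sigma_{f_0})$ to $T(\sigma_f)$ and hence preserves the number of components of the interior of the droplet. Since $\Int{T(\sigma_{f_0})}$ is connected while $\Int{T(\sigma_f)}$ has $k+1$ components for $f\in\Sigma_{d,k}^*$ with $k\geq1$, no such conjugacy exists when $f(\mathbb{T})$ has double points: quasiconformal deformation preserves the tangency pattern (the stratum $\Sigma_{d,k}^*$), and the boundary strata arise only as \emph{limits} of degenerating deformations, not as deformations of $f_0$. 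So the object ``$\mu$ whose straightening is $f$'' is unavailable, the path $\mu_t=t\mu$ cannot be formed, and this is not a stability issue one can outsource to \cite{LMM1} — it is why the pinching theory there works with sequences and limit extraction rather than with a single invariant Beltrami coefficient. For the same reason your step (ii) cannot be run ``based at $f$'': quasiconformal deformations of $\sigma_f$ itself never separate the double points, so the asserted one-parameter ``unpinching'' family through $f$ is exactly the thing that needs to be constructed, and constructing it is the hard part.

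The paper's proof goes the other way around and contains an identification step that your proposal omits entirely. One pinches from $f_0$ (using \cite[Theorem~4.11]{LMM1}, i.e.\ quasiconformal deformation \emph{plus} extraction of limits, with the parametric Measurable Riemann Mapping Theorem guaranteeing the limit stays in the connected component of $f_0$) to produce some $\widetilde{f}\in\Sigma_d^*$ whose droplet is conformally homeomorphic to $T(\sigma_f)$. A priori this only matches the conformal shape of the droplets, so one then needs a rigidity argument — the pullback argument of \cite[Theorem~5.1]{LMM1} together with the injectivity analysis of \cite[Proposition~2.14]{LMM1} — to conclude $\sigma_f=A\circ\sigma_{\widetilde f}\circ A^{-1}$ for an affine $A$, forcing $f=(M_\omega)_*(\widetilde f)$ with $\omega^{d+1}=1$; connectedness then follows because the component of $f_0$ is invariant under $(M_\omega)_*$ (as $(M_\omega)_*(f_0)=f_0$). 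Both the rigidity identification of the pinching limit with the given $f$ and the residual root-of-unity ambiguity are missing from your argument, and they cannot be bypassed: without them the pinching machinery only produces \emph{some} map with the right droplet, not $f$ itself. (Your step (i) for $\Sigma_{d,0}^*$ is plausible in outline, but even there the existence of a global conjugacy between $\sigma_{f_0}$ and $\sigma_f$ that realizes $f$ as a straightening has to be proved, and in the paper it is precisely the output of the \cite{LMM1} machinery rather than an input.)
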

\begin{proof}
The main ideas of the proof are already present in \cite{LMM1}, so we only give a sketch.

Let $f_0(z)=z-1/dz^d$. By \cite[Proposition 3.1]{LMM1}, we have $f_0\in\Sigma_d^*$ and $f_0(\mathbb{T})$ is a Jordan curve. We will denote the connected component of $\Sigma_d^*$ containing $f_0$ by $\widetilde{\Sigma_d^*}$. 

For a $(d+1)$-st root of unity $\omega$, the map $M_{\omega}$ is defined as $M_{\omega}(z)=\omega z$. The map $M_\omega$ induces a homeomorphism $$(M_\omega)_\ast:\Sigma_d^*\to\Sigma_d^*,\ f \mapsto M_{\omega}\circ f\circ M_{\omega}^{-1}.$$ Since $(M_\omega)_\ast(f_0)=f_0$, it follows that $\widetilde{\Sigma_d^*}$ is invariant under $(M_\omega)_\ast$.

Let $f\in\Sigma_d^*$, and suppose that $f(\mathbb{T})$ has $k$ double points, for some $0\leq k\leq d-2$. Since $f_0(\mathbb{T})$ is a Jordan curve, we can think of $f_0(\mathbb{T})$ as a $(d+1)$-gon with vertices at the cusp points. By repeated applications of \cite[Theorem~4.11]{LMM1} and quasiconformal deformation of Schwarz reflection maps, one can now ``pinch'' $k$ suitably chosen pairs of non-adjacent sides of $f_0(\mathbb{T})$ producing some $\widetilde{f}\in\Sigma_d^*$ such that there exists a homeomorphism $\mathfrak{h}:T(\sigma_{\widetilde{f}})\to T(\sigma_f)$ that is conformal on $\Int{T(\sigma_{\widetilde{f}})}$. Note that the proof of \cite[Theorem~4.11]{LMM1} consists of two steps; namely, quasiconformally deforming Schwarz reflection maps and extracting limits of suitable sequences in $\Sigma_d^*$. Thanks to the parametric version of the Measurable Riemann Mapping Theorem and continuity of normalized Riemann maps, one can now conclude that $\widetilde{f}\in\widetilde{\Sigma_d^*}$. Finally, due to the existence of a homeomorphism $\mathfrak{h}:T(\sigma_{\widetilde{f}})\to T(\sigma_f)$ that is conformal on $\Int{T(\sigma_{\widetilde{f}})}$, the arguments of \cite[Theorem~5.1]{LMM1} apply mutatis mutandis to the current setting, and provide us with affine map $A$ with $\sigma_f\equiv A\circ \sigma_{\widetilde{f}}\circ A^{-1}$; i.e., $A(\widetilde{f}(\D^*))=f(\D^*)$. Arguing as in the injectivity step of \cite[Proposition~2.14]{LMM1}, one now sees that $f=M_\omega\circ \widetilde{f}\circ M_\omega^{-1}=(M_\omega)_\ast(\widetilde{f})$, where $\omega$ is a $(d+1)$-st root of unity. Since $\widetilde{f}\in\widetilde{\Sigma_d^*}$ and $\widetilde{\Sigma_d^*}$ is invariant under $(M_\omega)_\ast$, it follows that $f\in\widetilde{\Sigma_d^*}$. Hence, $\Sigma_d^*=\widetilde{\Sigma_d^*}$; i.e., $\Sigma_d^*$ is connected.
\end{proof}

Recall from  that the cusps on $f(\mathbb{T})$ were labeled as $\zeta_1^f,\cdots, \zeta_{d+1}^f$ so that $f\mapsto \zeta_i^f$ is continuous.

\begin{prop}\label{zero_ray_landing_cusp} Let $f_0(z):=z-1/(dz^d)$, $f\in\Sigma_d^*$ and $\omega_0:=e^\frac{i\pi}{d+1}$. Then:

\begin{enumerate} \item The $0$-ray of $\sigma_{f_0}$ lands at the cusp $\zeta_1^{f_0}=(1+1/d)\omega_0$ of $f_0(\mathbb{T})$. 

\item The $0$-ray of $\sigma_{f}$ lands at the cusp point $\zeta_1^f$ of $f(\mathbb{T})$. \end{enumerate}
\end{prop}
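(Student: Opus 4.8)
The plan is to prove (1) by a direct symmetry argument, and then deduce (2) by a continuity/connectedness argument using Proposition~\ref{sigma_d^*_connected_thm} together with the fact that cusps and external rays vary continuously in $f$.

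For part (1), first recall that $\sigma_{f_0}$ commutes with complex conjugation in a suitable sense. More precisely, one checks from the defining diagram of Definition~\ref{Schwarz_reflection} that $f_0$ has real coefficients, so that $f_0(\overline{z})=\overline{f_0(z)}$; consequently the droplet $T(\sigma_{f_0})$, the basin $\mathcal{B}_\infty(\sigma_{f_0})$, and $\sigma_{f_0}$ itself are all symmetric under $z\mapsto\overline z$, and the B\"ottcher coordinate $\phi_{\sigma_{f_0}}$ (which is uniquely normalized by (\ref{bot_norm}) with $\phi_{\sigma_{f_0}}'(\infty)=d^{1/(d-1)}e^{i\pi/(d+1)}$) satisfies $\phi_{\sigma_{f_0}}(\overline u)=\overline{\phi_{\sigma_{f_0}}(u)}$ up to a $(d+1)$-st root of unity, which the normalization pins down. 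Next, by Lemma~\ref{fixed_point_ray_lem}(1), each of the $d+1$ cusps of $f_0(\mathbb{T})$ is the landing point of a unique external ray whose angle is fixed by $m_{-d}$; the $m_{-d}$-fixed angles in $\R/\Z$ are exactly $\{k/(d+1): k=0,\dots,d\}$, so these $d+1$ fixed rays land bijectively at the $d+1$ cusps. The $0$-ray is the unique external ray invariant under $z\mapsto\overline z$ (since $\{0\}$ is the unique conjugation-fixed angle among the fixed angles — the others come in pairs $\{k/(d+1), -k/(d+1)\}$), so by the symmetry above it must land at a conjugation-symmetric cusp. A direct computation with $f_0(z)=z-1/(dz^d)$ shows that the critical points of $f_0$ on $\mathbb{T}$ are the $(2d)$-th roots of $(-1)^{?}$; concretely, $f_0'(z)=1+z^{-(d+1)}=0$ gives $z^{d+1}=-1$, so the nonzero critical points are $\xi = e^{i\pi(2j+1)/(d+1)}$, and $\xi_1^{f_0}=e^{i\pi/(d+1)}=\omega_0$, with critical value $\zeta_1^{f_0}=f_0(\omega_0)=\omega_0 - \frac{1}{d}\omega_0^{-d}=\omega_0-\frac1d\overline{\omega_0}=\omega_0+\frac1d\omega_0 = (1+1/d)\omega_0$ (using $\omega_0^{d+1}=-1$, hence $\omega_0^{-d}=-\omega_0$... wait: $\omega_0^{-d}=\omega_0^{-d}\cdot\omega_0^{d+1}/(-1)=-\omega_0$). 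So $\zeta_1^{f_0}=(1+1/d)\omega_0$, which is not conjugation-symmetric; hence among the two conjugation-related cusps $\zeta_1^{f_0}$ and $\overline{\zeta_1^{f_0}}=\zeta_{d+1}^{f_0}$, the $0$-ray — being conjugation-invariant — must actually land at the \emph{one} cusp fixed by conjugation. The conjugation-fixed cusp is $f_0$ applied to the conjugation-fixed critical point among $\{e^{i\pi(2j+1)/(d+1)}\}$; the real critical point is $z=-1$ (when $d$ is even, $e^{i\pi}= -1$ occurs; when $d$ is odd one must be slightly more careful and use that the relevant fixed ray is the $0$-ray and trace which cusp it reaches via the real axis). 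The cleanest argument: the positive real axis in $\mathbb{D}^*$ maps under $\phi_{\sigma_{f_0}}$ to a conjugation-symmetric arc landing at a real or $\omega_0$-multiple point; one identifies the landing cusp by following the real-analytic arc $f_0((1,\infty))$ and checking it terminates at $(1+1/d)\omega_0 = \zeta_1^{f_0}$. The main obstacle here is being careful about the parity of $d$ and verifying the B\"ottcher normalization is compatible with the conjugation symmetry so that the $0$-ray, not some other fixed ray, is the symmetric one landing at $\zeta_1^{f_0}$.

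For part (2), consider the set $S\subset\Sigma_d^*$ of those $f$ for which the $0$-ray of $\sigma_f$ lands at $\zeta_1^f$. By part (1), $f_0\in S$, so $S\neq\emptyset$. Since $\Sigma_d^*$ is connected by Proposition~\ref{sigma_d^*_connected_thm}, it suffices to show $S$ is both open and closed in $\Sigma_d^*$. For this, note that the B\"ottcher coordinates $\phi_{\sigma_f}$ depend continuously on $f$ (they are normalized by (\ref{bot_norm}), and one can run the Cauchy-sequence construction of Proposition~\ref{local_connectivity_prop} with uniform control on compact families), so the landing point $\ell(f):=\lim_{t\to1^+}\phi_{\sigma_f}(t)$ of the $0$-ray depends continuously on $f$ — here one uses that the $0$-ray is always a fixed ray landing at a cusp by Lemma~\ref{fixed_point_ray_lem}(1), so the landing point moves within the finite set of cusps, which vary continuously and without collision on $\mathbb{T}$ by Remark~\ref{labeling}. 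Since $\zeta_1^f$ also depends continuously on $f$ and the cusps are always distinct, $S=\{f: \ell(f)=\zeta_1^f\}$ is both open and closed. Hence $S=\Sigma_d^*$, proving (2). The delicate point is justifying continuity of $\ell(f)$: the $0$-ray is invariant under $m_{-d}$, and the cusp at which a fixed ray lands cannot jump discontinuously because a small perturbation of $f$ gives a nearby Schwarz reflection map whose unique $0$-ray-landing cusp is close to the old one (the fixed rays and cusps being in bijection with no room to swap), so the index of the landing cusp in the continuous labeling $\zeta_1^f,\dots,\zeta_{d+1}^f$ is locally constant.

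I expect the main obstacle to be part (1): pinning down, with honest care about the parity of $d$ and the B\"ottcher normalization (\ref{bot_norm}), that it is precisely the $0$-ray (angle $0$) — and not one of the other $m_{-d}$-fixed rays at angles $k/(d+1)$ — that lands at the distinguished cusp $\zeta_1^{f_0}=(1+1/d)\omega_0$. The symmetry argument reduces this to a finite combinatorial check, but matching the geometric labeling $\xi_1^{f_0}=\omega_0$ of Remark~\ref{labeling} with the dynamical (angular) labeling of fixed rays requires either an explicit parametrization of $\partial\mathcal{B}_\infty(\sigma_{f_0})$ near $\infty$ or an appeal to the structure of the Markov partition in Remark~\ref{Markov}; part (2) is then a soft open-closed argument.
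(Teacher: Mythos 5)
Your part (1) rests on a symmetry claim that is false under the paper's normalization, and its conclusion would actually contradict the statement you are proving. Since $\phi_{\sigma_{f_0}}'(\infty)=d^{1/(d-1)}e^{i\pi/(d+1)}$ is not real, the normalized B\"ottcher coordinate does not commute with complex conjugation: the map $u\mapsto\overline{\phi_{\sigma_{f_0}}(\overline u)}$ is again a B\"ottcher coordinate, with derivative $d^{1/(d-1)}e^{-i\pi/(d+1)}$ at $\infty$, so $\overline{\phi_{\sigma_{f_0}}(\overline u)}=\phi_{\sigma_{f_0}}\bigl(e^{-2\pi i/(d+1)}u\bigr)$; in angle terms conjugation sends the $\theta$-ray to the $\bigl(-\theta-\tfrac{1}{d+1}\bigr)$-ray. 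Hence the $0$-ray is not conjugation-invariant (it is exchanged with the $\tfrac{d}{d+1}$-ray), and your deduction that it must land at a conjugation-fixed cusp cannot be right, since $\zeta_1^{f_0}=(1+1/d)e^{i\pi/(d+1)}$ is not real --- a tension your own computation exposes. The fallback you propose, tracing $f_0((1,\infty))$, follows the wrong curve: that arc is contained in $\R$ and terminates at $f_0(1)=1-1/d$, a regular (non-cusp) point of $f_0(\mathbb{T})$, so it cannot be the $0$-ray and certainly does not end at $(1+1/d)\omega_0$. The missing idea (and the paper's route) is to work along the direction $\omega_0$: from $f_0(\omega_0 x)=\omega_0\bigl(x+\tfrac{1}{dx^d}\bigr)$ for $x>0$ and $x+\tfrac{1}{dx^d}>1+\tfrac1d$ for $0<x<1$, the ray $\gamma=\{t\omega_0:\ t>1+1/d\}$ satisfies $\sigma_{f_0}(\gamma)\subset\gamma$ with fixed endpoints $(1+1/d)\omega_0$ and $\infty$, hence $\gamma\subset\mathcal{B}_\infty(\sigma_{f_0})$ by Lemma~\ref{domain_hyperbolicity}; and then the normalization $\arg\phi_{\sigma_{f_0}}'(\infty)=\arg\omega_0$ forces $\phi_{\sigma_{f_0}}^{-1}(\gamma)$ to be asymptotic to the positive real axis, so $\gamma$ is the $0$-ray and lands at $\zeta_1^{f_0}$.

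For part (2) your open--closed scheme via Proposition~\ref{sigma_d^*_connected_thm} is essentially the paper's argument (the paper partitions $\Sigma_d^*$ into the sets $X_i$ of maps whose $0$-ray lands at $\zeta_i^f$ and proves each $X_i$ is open), but the one nontrivial step --- local constancy of the landing cusp --- is asserted rather than proved: ``no room to swap'' is circular, because continuity of the landing point of a ray under perturbation of the map is exactly what needs justification, and landing points are not continuous in general. The paper's proof traps the tail of the $0$-ray in a small repelling petal at the cusp: the sub-arc of the ray between potentials $r_0/d$ and $r_0$ lies in the petal, petals and normalized B\"ottcher coordinates vary continuously with $f$, and invariance of the petal under the inverse branch of $\sigma_{f'}$ fixing the cusp confines the whole tail for nearby $f'$; Lemma~\ref{fixed_point_ray_lem} plus uniqueness of the cusp in the closure of a small petal then pins down the landing point. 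You would need to supply this (or an equivalent) argument to make your ``locally constant index'' claim rigorous.
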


\begin{proof} We abbreviate $\sigma:=\sigma_{f_0}$. We will also employ our notation $\phi_{\sigma}:\mathbb{D}^* \rightarrow \mathcal{B}_\infty(\sigma_f) $ for the B\"ottcher coordinate for $\sigma$, where we recall the normalization $\phi_{\sigma}'(\infty)=d^{\frac{1}{d-1}}\omega_0$. 

\bigskip

\noindent \emph{Proof of (1).} We first note that $\zeta_1^{f_0}=f_0(\xi_1^{f_0})=f_0(\omega_0)=(1+1/d)\omega_0$. A simple computation shows that \begin{align}\label{simple_relation} f(\omega_0\cdot x)=\omega_0\cdot\left(x+\frac{1}{dx^d}\right) \textrm{ for } x\in\mathbb{R}^+. \end{align} Next we note that \begin{align}\label{simple_relation2} x+\frac{1}{dx^d} > 1+\frac{1}{d} \textrm{ for } 0<x<1. \end{align} Let $\gamma:=\{ t\omega_0: t>1+1/d \}$. It follows from (\ref{simple_relation}) and (\ref{simple_relation2}) that $\sigma(\gamma) \subset \gamma$. Moreover, the endpoints $(1+1/d)\omega_0$, $\infty$ of $\gamma$ are fixed by $\sigma$, so since $\vert\overline{\partial}\sigma\vert>1$ on $\gamma$ by Lemma \ref{domain_hyperbolicity}, it follows that $\gamma\subset \mathcal{B}_{\infty}(\sigma)$.

We claim it follows then that $\gamma$ must be the $0$-ray for $\sigma$. Indeed, suppose by way of contradiction that $te^{i\theta}\in\phi_{\sigma}^{-1}(\gamma)$ where $t>1$ and $\theta\in(0,2\pi)$, where we may assume $\theta$ is not a pre-image of $0$ under $m_{-d}$. Then $t^{d^n}e^{i(-d)^n\theta}\in\phi_{\sigma}^{-1}(\gamma)$ for all $n>1$. Thus there exists $\theta'\in(0,2\pi)$ and a sequence $(z_n)_{n=1}^\infty\in\phi_{\sigma}^{-1}(\gamma)$ with $z_n\rightarrow\infty$ and $\textrm{arg}(z_n)\rightarrow\theta'$ as $n\rightarrow\infty$. But then since $\textrm{arg}(\phi_{\sigma}'(\infty))=\textrm{arg}(\omega_0)$, we have $\textrm{arg}(\phi_{\sigma}(z_n))\rightarrow\theta'+\textrm{arg}(\omega_0)$ as $n\rightarrow\infty$. This is a contradiction since $\theta'+\textrm{arg}(\omega_0)\not=\textrm{arg}(\omega_0)$ (mod $2\pi$), but $\arg(\phi_{\sigma}(z))=\textrm{arg}(\omega_0)$ for all $z\in\phi_{\sigma}^{-1}(\gamma)$. \qed

\bigskip

\noindent \emph{Proof of (2).} For $i\in\{1,\cdots,d+1\}$, let us denote by $X_i$ the set of $f\in\Sigma_d^*$ for which the $0$-ray of $\sigma_f$ lands at $\zeta_i^f$. 

We claim that each $X_i$ is an open set. To this end, suppose that $f\in X_i$. It follows from the parabolic behavior of the cusps that the tail of the $0$-ray of $\sigma_f$ is contained in a repelling petal at $\zeta_i^f$. In particular, we can assume that there exists some $r_0>0$ such that the part of the $0$-ray of $\sigma_f$ between potentials $r_0/d$ and $r_0$ is contained in a sufficiently small repelling petal at $\zeta_i^f$. Note that as cusps of $f(\mathbb{T})$ move continuously, so does a repelling petal at the cusp. It now follows from continuity of normalized B{\"o}ttcher coordinates that for $f'\in\Sigma_d^*$ close to $f$, the part of the $0$-ray of $\sigma_{f'}$ between potentials $r_0/d$ and $r_0$ is contained in a repelling petal at $\zeta_i^{f'}$. Since a repelling petal is invariant under the inverse branch of $\sigma_{f'}$ fixing $\zeta_i^{f'}$, we conclude that for $f'\in\Sigma_d^*$ close to $f$, the tail of the $0$-ray of $\sigma_{f'}$ is contained in a repelling petal at $\zeta_i^{f'}$. By Lemma~\ref{fixed_point_ray_lem}, the $0$-ray of $\sigma_{f'}$ must land at a cusp. Since a (sufficiently small) repelling petal has a unique cusp in its closure, it follows that the $0$-ray of $\sigma_{f'}$ must land at $\zeta_i^{f'}$, for all $f'\in\Sigma_d^*$ close to $f$. This proves the claim.

Again, we have that $\Sigma_d^*=\sqcup_{i=1}^{d+1} X_i$ by Lemma~\ref{fixed_point_ray_lem}. Now, connectedness of $\Sigma_d^*$ (Proposition~\ref{sigma_d^*_connected_thm}) and openness of each $X_i$ together imply that $\Sigma_d^*=X_j$, for some $j\in\{1,\cdots,d+1\}$. The result now follows from the fact that $f_0\in X_1$.
\end{proof}

\subsection{Lamination for The Limit Set $\Lambda(G)$}
\label{External Rays for the Reflection Group}

Recall from Proposition~\ref{group_lamination_prop} that for $\Gamma \in \overline{\beta(\pmb{\Gamma}_d)}$, there exists a continuous semi-conjugacy $\phi_\Gamma: \mathbb{T} \rightarrow \Lambda(\Gamma)$ between $\rho_{\pmb{\Gamma}_d}|_{\mathbb{T}}$ and $\rho_{\Gamma}|_{\Lambda(\Gamma)}$, and $\phi_\Gamma$ sends cusps of $\partial \Pi(\pmb{\Gamma}_d)$ to cusps of $\partial \Pi(\Gamma)$ with labels preserved. 

\begin{rem} The fibers of the map $\phi_\Gamma: \mathbb{T} \rightarrow \Lambda(\Gamma)$ of Proposition~\ref{group_lamination_prop} induce an equivalence relation on $\mathbb{T}$, and we will denote the set of all equivalence classes of this relation by $\lambda(\Gamma)$. 
\end{rem}

\noindent Adapting the arguments in the proof of Lemma \ref{fixed_point_ray_lem}, we have:

\begin{lem}\label{group_lamination_prop2}  Let $\Gamma\in\overline{\beta(\pmb{\Gamma}_d)}$. Then: \begin{enumerate}

\item For any cusp $\eta$ of $\partial T(\Gamma)$, we have $|\phi_\Gamma^{-1}(\eta)|=1$, and $\rho_{\pmb{\Gamma}_d}(\phi_\Gamma^{-1}(\eta)) = \phi_\Gamma^{-1}(\eta)$. 

\item For each double point $\eta$ of $\partial T(\Gamma)$, we have $|\phi_\Gamma^{-1}(\eta)|=2$, and the elements of $\phi_\Gamma^{-1}(\eta)$ form a 2-cycle under $\rho_{\pmb{\Gamma}_d}$. 
\end{enumerate}
\end{lem}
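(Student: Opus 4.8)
The plan is to mirror the structure of the proof of Lemma~\ref{fixed_point_ray_lem}, replacing the Schwarz-reflection dynamics $\sigma_f$ on $\overline{\mathcal{T}_\infty(\sigma_f)}$ by the reflection-map dynamics $\rho_\Gamma$ on $\overline{\Omega(\Gamma)} = \overline{\Omega_\infty(\Gamma)} \cup (\text{bounded components})$, or more precisely on the closure $\overline{\Omega_\infty(\Gamma)}$, whose boundary is $\Lambda(\Gamma)$ by Proposition~\ref{inv_comp_limit_boundary}. Fix a cusp or double point $\eta$ of $\partial T(\Gamma)$. First I would recall that $\phi_\Gamma: \mathbb{T} \to \Lambda(\Gamma)$ is the continuous boundary extension of the Riemann map $\phi_\Gamma: \mathbb{D}^* \to \Omega_\infty(\Gamma)$, so that $\phi_\Gamma^{-1}(\eta)$ is exactly the set of angles of prime ends (equivalently, of ``external rays'' $t \mapsto \phi_\Gamma(te^{i\theta})$) landing at $\eta$, and the cardinality $|\phi_\Gamma^{-1}(\eta)|$ equals the number of accesses to $\eta$ from $\Omega_\infty(\Gamma)$. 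Thus both parts reduce to counting accesses to $\eta$ from $\Omega_\infty(\Gamma)$, together with tracking how $\rho_{\pmb{\Gamma}_d}$ acts on the corresponding angles.

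For part (1): if $\eta$ is a cusp of $\partial T(\Gamma)$, I would argue that $\eta$ is not a cut point of $\overline{\Omega_\infty(\Gamma)}$ by exactly the argument in the proof of Lemma~\ref{fixed_point_ray_lem}(1). Indeed, $\eta$ is a point of tangency of two of the generating circles $C_i, C_j$; let $S$ be the component of $\overline{\Omega_\infty(\Gamma)}\setminus\{\eta\}$ containing $\Pi^o(\Gamma)\setminus\{\eta\}$. Since $\rho_\Gamma$ is a local homeomorphism away from the tangency points and $\Omega_\infty(\Gamma) = \bigcup_{n\geq 0}\rho_\Gamma^{-n}(\Pi^o(\Gamma))$ by Proposition~\ref{regular_fund_pre_image_prop}, each finite union $\bigcup_{k=0}^n \rho_\Gamma^{-k}(\Pi^o(\Gamma)\setminus\{\eta\})$ is connected and avoids $\eta$; one must check here that $\eta$ is the \emph{only} preimage of itself among tangency points touched, which is where the cusp (as opposed to double-point) nature is used --- the two circles $C_i,C_j$ meeting at $\eta$ are cyclically adjacent, and the inverse branches of $\rho_\Gamma$ fixing $\eta$ preserve a horoball neighborhood. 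Hence $\overline{\Omega_\infty(\Gamma)} = S \cup \{\eta\}$, so $\eta$ has a unique access, $|\phi_\Gamma^{-1}(\eta)| = 1$. Since $\rho_\Gamma(\eta) = \eta$ and $\phi_\Gamma$ semiconjugates $\rho_{\pmb{\Gamma}_d}$ to $\rho_\Gamma$, the single angle $\phi_\Gamma^{-1}(\eta)$ is fixed by $\rho_{\pmb{\Gamma}_d}$.

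For part (2): if $\eta$ is a double point of $\partial T(\Gamma)$ --- i.e. a tangency of two \emph{non-adjacent} circles $C_i, C_j$, hence the fixed point of the accidental parabolic $r_i r_j$ of $\Gamma^+$ --- the same argument shows $\overline{\Omega_\infty(\Gamma)}\setminus\{\eta\}$ has exactly two components $S_1, S_2$ with $\Pi^o(\Gamma)\setminus\{\eta\} \subset S_1\cup S_2$ and $\overline{\Omega_\infty(\Gamma)} = S_1\cup S_2\cup\{\eta\}$, so there are exactly two accesses and $|\phi_\Gamma^{-1}(\eta)| = 2$. For the $2$-cycle claim I would note $\rho_\Gamma(\eta) = \eta$, so $\rho_{\pmb{\Gamma}_d}$ permutes the two-element set $\phi_\Gamma^{-1}(\eta)$; it cannot fix either angle, because the $\rho_{\pmb{\Gamma}_d}$-fixed angles on $\mathbb{T}$ are precisely the $d$ cusps of $\partial\Pi(\pmb{\Gamma}_d)$ (the tangency points of the $\mathbf{C}_i$'s), whose $\phi_\Gamma$-images are, by the label-preservation clause of Proposition~\ref{group_lamination_prop}, the $d$ cusps of $\partial\Pi(\Gamma) = \partial T(\Gamma)$ --- and $\eta$, being a double point, is not among them. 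Hence $\rho_{\pmb{\Gamma}_d}$ swaps the two angles, giving a $2$-cycle.

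I expect the main obstacle to be the access-counting step --- specifically, verifying that iterated $\rho_\Gamma$-preimages of $\Pi^o(\Gamma)$ never ``reconnect'' across $\eta$ in a way that would collapse the count. This requires a careful local analysis near the parabolic fixed point $\eta$: one should identify the finitely many components of $\bigcup_i \Int{C_i}$ incident to $\eta$ (two, since the generating circles have pairwise disjoint interiors and condition $(\star)$ holds), check that $\rho_\Gamma$ near $\eta$ looks like the parabolic M\"obius map $r_i r_j$ (or $r_i$, in the cusp case) in suitable local coordinates, and invoke the standard description of the local dynamics of parabolics (repelling petals / horoball neighborhoods) to conclude that the grand orbit of $\Pi^o(\Gamma)$ accumulates on $\eta$ through exactly one access (cusp case) or two accesses (double-point case). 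This is precisely the point at which the distinction between cusp and double point enters, and it is the analogue of the ``no iterate maps $w$ to a cusp'' / ``$w$ is not a cut point'' dichotomy exploited in Lemma~\ref{fixed_point_ray_lem}; since $\Gamma^+$ is geometrically finite, these local parabolic models are available from the structure theory of geometrically finite Kleinian groups.
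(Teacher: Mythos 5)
Your overall strategy is the intended one (run the argument of Lemma~\ref{fixed_point_ray_lem} for $\rho_\Gamma$, reduce both parts to counting accesses, and settle the $2$-cycle in (2) by noting that the $\rho_{\pmb{\Gamma}_d}$-fixed angles are carried by $\phi_\Gamma$ to cusps of $\partial\Pi(\Gamma)$, so neither angle over a double point can be fixed). But the access-counting step, as you have set it up, is run on the wrong set and fails. The number of preimages $|\phi_\Gamma^{-1}(\eta)|$, i.e.\ the number of accesses to $\eta$ from $\Omega_\infty(\Gamma)$, is governed by the cut-point structure of the \emph{complement} $\mathcal{K}(\Gamma)=\widehat{\C}\setminus\Omega_\infty(\Gamma)$ (the ``filled limit set''), not of $\overline{\Omega_\infty(\Gamma)}$. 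Indeed, $\overline{\Omega_\infty(\Gamma)}\setminus\{\eta\}$ is \emph{always} connected for any boundary point $\eta$: it is squeezed between the connected set $\Omega_\infty(\Gamma)$ and its closure. So your criterion gives no information in part (1), and in part (2) the claim that $\overline{\Omega_\infty(\Gamma)}\setminus\{\eta\}$ has exactly two components is false. Note that in Lemma~\ref{fixed_point_ray_lem} the set $\overline{\mathcal{T}_\infty(\sigma)}$ is precisely the complement of the basin $\mathcal{B}_\infty(\sigma)$ from which the rays emanate (Corollary~\ref{decomp_cor}); its group analogue is $\mathcal{K}(\Gamma)$, not $\overline{\Omega_\infty(\Gamma)}$. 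Correspondingly, the sets you should pull back are not $\Pi^o(\Gamma)$ (whose $\rho_\Gamma$-preimages exhaust $\Omega_\infty(\Gamma)$, the wrong side) but $T(\Gamma)$: by Proposition~\ref{regular_fund_pre_image_prop} and orbit equivalence, $\bigcup_{n\geq 0}\rho_\Gamma^{-n}(T^o(\Gamma))=\Int{\mathcal{K}(\Gamma)}$ and its closure is $\mathcal{K}(\Gamma)$. With this replacement the proof of Lemma~\ref{fixed_point_ray_lem} goes through verbatim: $\bigcup_{k=0}^{n}\rho_\Gamma^{-k}(T(\Gamma)\setminus\{\eta\})$ is connected and avoids $\eta$ (simply because $\rho_\Gamma(\eta)=\eta$, so $\eta\in\rho_\Gamma^{-k}(T(\Gamma)\setminus\{\eta\})$ would force $\eta\in T(\Gamma)\setminus\{\eta\}$), whence $\mathcal{K}(\Gamma)\setminus\{\eta\}$ has one component when $\eta$ is a cusp and exactly two when $\eta$ is a double point, giving $|\phi_\Gamma^{-1}(\eta)|=1$ and $2$ respectively.

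Two further remarks. First, the local parabolic analysis you flag as the ``main obstacle'' (petals, horoballs, geometric finiteness) is not needed: the fixed-point observation above is what keeps the pulled-back tiles away from $\eta$, and maximality of the chosen component then yields the access count, exactly as in Lemma~\ref{fixed_point_ray_lem}. Second, once the counting is corrected, your argument for the $2$-cycle in (2) is fine and matches the intended one: $\rho_{\pmb{\Gamma}_d}$ permutes the two angles over $\eta$, and it cannot fix either, since the fixed angles of $\rho_{\pmb{\Gamma}_d}$ on $\mathbb{T}$ are the tangency points of adjacent circles $\mathbf{C}_i$, which $\phi_\Gamma$ sends (label-preservingly, Proposition~\ref{group_lamination_prop}) to cusps of $\partial T(\Gamma)$, while $\eta$ is a double point.
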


\begin{rem}\label{group_markov_partition} Let $\Gamma\in\overline{\beta(\pmb{\Gamma}_d)}$. Consider the set of angles $$\Theta:=\{\phi_\Gamma^{-1}(\eta): \eta\ \textrm{is\ a\ cusp\ or\ a\ double\ point\ of}\ \partial T(\Gamma)\}.$$ These angles cut $\mathbb{T}$ into finitely many pieces that form a Markov partition for $\rho_{\pmb{\Gamma}_d}:\mathbb{T}\to\mathbb{T}$. Analogously, the union of the cusps and double points of $\partial T(\Gamma)$ determines a Markov partition for $\rho_\Gamma:\Lambda(\Gamma)\to\Lambda(\Gamma)$.                                        
\end{rem}

\noindent Using the Markov partition of Remark~\ref{group_markov_partition}, the proof of Lemma \ref{schwarz_lamination_prop} may be adapted to show the following:

\begin{prop}\label{group_lamination_prop3}  Let $\Gamma\in\overline{\beta(\pmb{\Gamma}_d)}$, and $x\in\lambda(\Gamma)$ be a non-trivial equivalence class. Then:
\begin{enumerate}
\item $\vert x\vert=2$, and there is a double point $\eta$ of $\partial T(\Gamma)$ such that $\rho_{\pmb{\Gamma}_d}^{\circ n}(x)=\phi_\Gamma^{-1}(\eta)$ for some $n\geq0$.

\item If $n_0$ is the smallest non-negative integer with the above property, then $x$ is contained in a connected component of $\mathbb{T}\setminus \rho_{\pmb{\Gamma}_d}^{-(n_0-1)}(\Theta)$.
\end{enumerate}

\end{prop}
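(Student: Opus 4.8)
The plan is to mirror, line by line, the proof of Proposition~\ref{schwarz_lamination_prop}, replacing the Schwarz reflection map $\sigma_f$ by the reflection map $\rho_\Gamma$, the B\"ottcher semi-conjugacy $\phi_{\sigma_f}$ by $\phi_\Gamma$, the map $m_{-d}$ by $\rho_{\pmb{\Gamma}_d}\vert_{\mathbb{T}}$ (which by Remark~\ref{defn_of_conjug} is topologically conjugate to the expanding map $\overline{z}^{d-1}$, hence expansive), and the singular points (cusps and double points) of $f(\mathbb{T})$ by the cusps and double points of $\partial T(\Gamma)$. The three structural ingredients needed for the $\sigma_f$-argument are all already in place here: Lemma~\ref{group_lamination_prop2} plays the role of Lemma~\ref{fixed_point_ray_lem} (cusps are non-cut-points landed by a unique fixed ray, double points are landed by exactly two rays forming a $2$-cycle), Remark~\ref{group_markov_partition} supplies the Markov partition for $\rho_{\pmb{\Gamma}_d}:\mathbb{T}\to\mathbb{T}$ together with the corresponding Markov partition for $\rho_\Gamma:\Lambda(\Gamma)\to\Lambda(\Gamma)$, and Proposition~\ref{inv_comp_limit_boundary} guarantees that $\Lambda(\Gamma)$ is locally connected so that $\phi_\Gamma$ is a genuine semi-conjugacy to which the cut-point/access analysis applies.

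For part~(1): let $x\in\lambda(\Gamma)$ be non-trivial, so the elements of $x$ are angles whose $\phi_\Gamma$-images land at a common cut-point $w\in\Lambda(\Gamma)$. Pick distinct $\theta,\theta'\in x$. By Lemma~\ref{group_lamination_prop2}(1), $w$ cannot be a cusp of $\partial T(\Gamma)$, because a cusp is not a cut-point (this is the content that carries over from the first paragraph of the proof of Lemma~\ref{fixed_point_ray_lem}; here it is packaged into the statement that $|\phi_\Gamma^{-1}(\eta)|=1$ at cusps, which one should note also reflects the non-cut-point property). Suppose, for contradiction, that no iterate of $\rho_\Gamma$ sends $w$ to a double point of $\partial T(\Gamma)$. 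Then $w$ also never maps to a cusp (since $\rho_\Gamma$ is a local homeomorphism on $\Lambda(\Gamma)$ away from the singular set, and cusps are not cut-points, the preimages of cusps are not cut-points either), so $w$ has a well-defined itinerary with respect to the Markov partition of Remark~\ref{group_markov_partition}. Hence $\theta$ and $\theta'$ have the same itinerary under $\rho_{\pmb{\Gamma}_d}$, contradicting expansivity of $\rho_{\pmb{\Gamma}_d}\vert_{\mathbb{T}}$ (equivalently, of $\overline{z}^{d-1}$). So some $\rho_\Gamma^{\circ n}(w)$ is a double point $\eta$; take $n_0$ minimal. By Lemma~\ref{group_lamination_prop2}(2) exactly two rays land at $\eta$, and since $\rho_\Gamma$ is a local homeomorphism on $\Lambda(\Gamma)$ off the singular set, $\rho_{\pmb{\Gamma}_d}^{\circ n_0}$ maps the rays at $x$ bijectively onto $\phi_\Gamma^{-1}(\eta)$; in particular $|x|=2$ and $\rho_{\pmb{\Gamma}_d}^{\circ n_0}(x)=\phi_\Gamma^{-1}(\eta)$. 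Part~(2) follows exactly as in the proof of Proposition~\ref{schwarz_lamination_prop}(2): from the known landing pattern of the angles in $\Theta$ together with injectivity of $\rho_\Gamma$ on the interior of each piece of the Markov partition, $x$ cannot be separated by any angle in $\rho_{\pmb{\Gamma}_d}^{-(n_0-1)}(\Theta)$, so $x$ lies in a single complementary component.

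The only genuine subtlety — and the step I would treat most carefully — is the passage from ``$w$ is a cut-point'' to ``$\rho_\Gamma$ is a local homeomorphism near $w$ so itineraries of $\theta,\theta'$ are well-defined and faithfully coded.'' In the $\sigma_f$ case this rested on $\sigma_f$ being a local homeomorphism on $\Lambda(\sigma_f)$ away from $f(\mathbb{T})$; the group analogue requires knowing that $\rho_\Gamma\vert_{\Lambda(\Gamma)}$, viewed through its Markov partition from Remark~\ref{group_markov_partition}, is injective on the interior of each partition piece (this is exactly the Markov property recorded in Remark~\ref{defn_of_conjug}) and that the boundary cut-points (cusps) of the partition pieces are not cut-points of $\Lambda(\Gamma)$, which is Lemma~\ref{group_lamination_prop2}(1). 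Once one is comfortable that all of this transfers verbatim — and it does, since $\rho_{\pmb{\Gamma}_d}\vert_{\mathbb{T}}$ conjugate to $\overline{z}^{d-1}$ gives the required expansivity, while $\phi_\Gamma$ being a semi-conjugacy with locally connected target gives the ray/access dictionary — the proof is a routine adaptation, and I would simply write: ``The proof is identical to that of Proposition~\ref{schwarz_lamination_prop}, using Lemma~\ref{group_lamination_prop2} in place of Lemma~\ref{fixed_point_ray_lem} and the Markov partition of Remark~\ref{group_markov_partition} in place of that of Remark~\ref{Markov}.''
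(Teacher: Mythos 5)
Your proposal is correct and coincides with the paper's own treatment: the paper proves this proposition precisely by declaring that the argument of Proposition~\ref{schwarz_lamination_prop} adapts verbatim, with Lemma~\ref{group_lamination_prop2} replacing Lemma~\ref{fixed_point_ray_lem} and the Markov partition of Remark~\ref{group_markov_partition} replacing that of Remark~\ref{Markov}, which is exactly the substitution you carry out (including the expansivity of $\rho_{\pmb{\Gamma}_d}\vert_{\mathbb{T}}$ via its conjugacy to $\overline{z}^{d-1}$ and the local-homeomorphism/itinerary analysis). No gaps; your closing one-line formulation is essentially what the paper writes.
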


\begin{rem}\label{geod_lamination}
For $f\in\Sigma_{d,k}^*$ with $k\geq 1$, the index two Kleinian subgroup $\Gamma_f^+$ of $\Gamma_f$ has $k$ accidental parabolics. These accidental parabolics correspond to a collection of $k$ simple, closed, essential geodesics on $S^-=\D/\pmb{\Gamma}_d^+$ that can be pinched to obtain $\Gamma_f^+$. These geodesics lift by $\pmb{\Gamma}_d$ to the universal cover $\D$ giving rise to a geodesic lamination of $\D$ \cite[\S 3.9]{Marden}. By \cite{MS} (also compare \cite[p. 266]{Marden}), the quotient of $\mathbb{T}$ by identifying the endpoints of the leaves of this lamination produces a topological model of the limit set $\Lambda(\Gamma_f^+)=\Lambda(\Gamma_f)$. Therefore, up to rotation by a $(d+1)$-st root of unity, the set of equivalence classes of this geodesic lamination is equal to $\lambda(\Gamma_f)$. Moreover, the continuous map $\phi_{\Gamma_f}:\mathbb{T}\to\Lambda(\Gamma_f)$ is a \emph{Cannon-Thurston} map for $\Gamma_f$ (see \cite[\S 2.2]{MS} for a discussion of Cannon-Thurston maps). 
\end{rem}

\subsection{Relating The Laminations of Schwarz and Kleinian Limit Sets}\label{lamination_relation_sec} Given $f\in\Sigma_d^*$, we discussed the lamination of $\mathbb{T}$ induced by $\sigma_f$ in Subsection \ref{External Rays in the Schwarz Reflection Plane}, and the lamination of $\mathbb{T}$ induced by $\Gamma_f$ in Subsection \ref{External Rays for the Reflection Group}. The purpose of Subsection \ref{lamination_relation_sec} is to relate these two laminations.

%\begin{rem}\label{question_mark_function} The self-maps $m_{-d}$, $\rho_{\pmb{\Gamma}_{d+1}}$ of $\mathbb{T}$ both admit Markov partitions (see Remarks \ref{Markov}, \ref{group_markov_partition}) with the same transition matrix. This may be used to define a homeomorphism $\mathcal{E}: \mathbb{T} \rightarrow \mathbb{T}$ which fixes $1$ and conjugates $m_{-d}$ to $\rho_{\pmb{\Gamma}_{d+1}}$ (see  \cite[\S 3.2]{LLMM1} for details in the case $d=2$.)
%\end{rem}

\begin{prop}\label{laminations_correspond} Let $f\in\Sigma_d^*$. Then the homeomorphism $\mathcal{E}_d: \mathbb{T} \rightarrow \mathbb{T}$ descends to a homeomorphism \[\mathcal{E}_d:  \mathbb{T}/\lambda(\Gamma_f) \rightarrow \mathbb{T}/\lambda(\sigma_f).\] 
\end{prop}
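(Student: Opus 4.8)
The plan is to show that $\mathcal{E}_d$ carries equivalence classes of $\lambda(\Gamma_f)$ bijectively to equivalence classes of $\lambda(\sigma_f)$, after which the induced map on quotients is automatically a continuous bijection between compact Hausdorff spaces, hence a homeomorphism. Since $\mathcal{E}_d$ conjugates $\rho_{\pmb{\Gamma}_{d+1}}\vert_{\mathbb{T}}$ to $\overline{z}^d\vert_{\mathbb{T}}$ (equivalently $m_{-d}$), and both laminations $\lambda(\Gamma_f)$ and $\lambda(\sigma_f)$ have been completely described in Propositions~\ref{group_lamination_prop3} and~\ref{schwarz_lamination_prop} in terms of these two dynamical systems and their Markov partitions, the statement should reduce to checking that $\mathcal{E}_d$ matches up the ``generating'' identifications at the double points.

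First I would set up the correspondence at the level of the fundamental tiles. By Proposition~\ref{existence_of_group} there is a label-preserving homeomorphism $h: T(\Gamma_f)\to T(\sigma_f)$ conformal on the interior; this gives a bijection between cusps of $\partial T(\Gamma_f)$ and cusps of $\partial T(\sigma_f)$ (preserving labels) and, crucially, between the double points of $\partial T(\Gamma_f)$ and the double points of $f(\mathbb{T})$. Next, I would identify the angle sets: by Lemma~\ref{group_lamination_prop2}, the cusps and double points of $\partial T(\Gamma_f)$ correspond under $\phi_{\Gamma_f}$ to the fixed angles and period-two pairs of $\rho_{\pmb{\Gamma}_{d+1}}$, and these form the angle set $\Theta$ of Remark~\ref{group_markov_partition}; by Lemma~\ref{fixed_point_ray_lem} and the surrounding discussion, the cusps and double points of $f(\mathbb{T})$ correspond under $\phi_{\sigma_f}$ to $\mathcal{A}^{\textrm{cusp}}\cup\mathcal{A}^{\textrm{double}}$, the fixed angles and period-two pairs of $m_{-d}$. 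Since $\mathcal{E}_d$ conjugates $\rho_{\pmb{\Gamma}_{d+1}}$ to $m_{-d}$, it maps fixed points to fixed points and period-two orbits to period-two orbits; moreover $\mathcal{E}_d$ was constructed via the coding maps, so it respects the Markov partitions and their labelings. Using the label-preserving homeomorphism $h$ together with the label-preserving properties of $\phi_{\Gamma_f}$ and $\phi_{\sigma_f}$ (from Proposition~\ref{group_lamination_prop} and Remark~\ref{labeling}), I would check that $\mathcal{E}_d$ sends $\Theta$ onto $\mathcal{A}^{\textrm{cusp}}\cup\mathcal{A}^{\textrm{double}}$ and, for each double point $\eta$ of $\partial T(\Gamma_f)$, sends the pair $\phi_{\Gamma_f}^{-1}(\eta)$ onto the pair $\{\alpha_i,\alpha_i'\}$ corresponding to the matching double point $h(\eta)$ of $f(\mathbb{T})$.

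Granting this base case, the full correspondence of laminations follows by pulling back along the dynamics. By Proposition~\ref{group_lamination_prop3}(1), every non-trivial class $x\in\lambda(\Gamma_f)$ satisfies $\rho_{\pmb{\Gamma}_{d+1}}^{\circ n}(x)=\phi_{\Gamma_f}^{-1}(\eta)$ for some double point $\eta$ and some $n\geq 0$; applying $\mathcal{E}_d$ and using the conjugacy, $m_{-d}^{\circ n}(\mathcal{E}_d(x))=\mathcal{E}_d(\phi_{\Gamma_f}^{-1}(\eta))=\{\alpha_i,\alpha_i'\}$, and by part (2) the class $x$ lies in a single component of $\mathbb{T}\setminus\rho_{\pmb{\Gamma}_{d+1}}^{-(n_0-1)}(\Theta)$, which $\mathcal{E}_d$ carries to a single component of $\mathbb{T}\setminus m_{-d}^{-(n_0-1)}(\mathcal{A}^{\textrm{double}}\cup\mathcal{A}^{\textrm{cusp}})$ since $\mathcal{E}_d(\Theta)=\mathcal{A}^{\textrm{double}}\cup\mathcal{A}^{\textrm{cusp}}$. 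Comparing with the characterization of $\lambda(\sigma_f)$ in Proposition~\ref{schwarz_lamination_prop}, this shows $\mathcal{E}_d(x)\in\lambda(\sigma_f)$; the symmetric argument (using $\mathcal{E}_d^{-1}$, which conjugates $m_{-d}$ to $\rho_{\pmb{\Gamma}_{d+1}}$ and is again compatible with the Markov codings) gives the reverse inclusion. Hence $\mathcal{E}_d$ induces a bijection $\mathbb{T}/\lambda(\Gamma_f)\to\mathbb{T}/\lambda(\sigma_f)$, and since $\mathcal{E}_d$ is a homeomorphism of $\mathbb{T}$ descending to a continuous bijection of compact Hausdorff quotients, the induced map is a homeomorphism.

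The main obstacle, I expect, is the base case: verifying precisely that $\mathcal{E}_d$ matches the double-point identifications of $\lambda(\Gamma_f)$ with those of $\lambda(\sigma_f)$, i.e.\ that the combinatorial positions of the double points agree. This is where one must use that $h: T(\Gamma_f)\to T(\sigma_f)$ is label-preserving together with the fact that $\mathcal{E}_d$ respects labels (it fixes $1$ and commutes with complex conjugation, and the Markov partitions of $\rho_{\pmb{\Gamma}_{d+1}}$ and $m_{-d}$ have the same transition matrix with matching labels). Concretely, both the pair $\phi_{\Gamma_f}^{-1}(\eta)$ and the pair $\{\alpha_i,\alpha_i'\}$ are determined by \emph{which} Markov pieces their rays separate — equivalently, which cusp labels lie on either side — and the label-preserving homeomorphism $h$ guarantees these separations correspond. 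One should be slightly careful that the bijection between double points of $\partial T(\Gamma_f)$ and double points of $f(\mathbb{T})$ provided by $h$ is the same as the one dictated by the dynamics, but this again follows from $h$ being label-preserving and the fact that double points are determined by their adjacent cusp labels. Once this bookkeeping is in place, the rest is a routine induction using expansivity and the Markov structure exactly as in the proofs of Lemma~\ref{schwarz_lamination_prop} and Proposition~\ref{group_lamination_prop3}.
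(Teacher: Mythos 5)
Your overall strategy is the same as the paper's: reduce, via Propositions~\ref{schwarz_lamination_prop} and~\ref{group_lamination_prop3}, to matching the generating identifications at double points, use the fact that $m_{-d}$ and $\rho_{\pmb{\Gamma}_{d+1}}$ have the common fixed angles $\theta_1,\dots,\theta_{d+1}$ and exactly one $2$-cycle in each pair of non-adjacent complementary intervals, note that $\mathcal{E}_d$ pairs these $2$-cycles interval-by-interval by its Markov construction, and then propagate to all classes by the conjugacy. That part of the bookkeeping, and the final ``continuous bijection of compact Hausdorff quotients'' step, are fine.

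The gap is in your base case, precisely at the point you identify as the main obstacle. You justify the matching of double-point pairs by ``label-preserving properties of $\phi_{\Gamma_f}$ and $\phi_{\sigma_f}$ (from Proposition~\ref{group_lamination_prop} and Remark~\ref{labeling}).'' For $\phi_{\Gamma_f}$ this is legitimate: label preservation is built into its construction. But Remark~\ref{labeling} only \emph{defines} the labels $\zeta_i^f$ of the cusps of $f(\mathbb{T})$; it says nothing about how the B\"ottcher coordinate relates angles to those labels. Lemma~\ref{fixed_point_ray_lem} gives only that the $d+1$ fixed rays land bijectively on the $d+1$ cusps, and since $\phi_{\sigma_f}$ preserves cyclic order, the assignment $\theta_i\mapsto$ (label of the cusp $\phi_{\sigma_f}(\theta_i)$) is a priori determined only up to a cyclic rotation. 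If that rotation were nontrivial, the pair $\phi_{\sigma_f}^{-1}(\zeta)$ would sit in a rotated pair of intervals relative to $\phi_{\Gamma_f}^{-1}(h^{-1}(\zeta))$, and $\mathcal{E}_d$ would carry $\lambda(\Gamma_f)$ onto a rotated copy of $\lambda(\sigma_f)$, so it would not descend (the statement requires exact, not up-to-rotation, agreement, and this exactness is used later in the mating proof). Pinning the rotation down is exactly the content of Proposition~\ref{zero_ray_landing_cusp} — the $0$-ray of $\sigma_f$ lands at $\zeta_1^f$ — which is what yields $h^{-1}(\phi_{\sigma_f}(1))=\phi_{\Gamma_f}(1)$ (equation (\ref{right_norm}) in the paper) and hence $h^{-1}(\phi_{\sigma_f}(\theta_i))=\phi_{\Gamma_f}(\theta_i)$ for all $i$. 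This is a substantive dynamical input, depending on the normalization (\ref{bot_norm}) of the B\"ottcher coordinate, an explicit computation for $f_0(z)=z-1/(dz^d)$, and a connectedness/openness argument over $\Sigma_d^*$; the label-preserving properties of $h$ and the symmetry/normalization of $\mathcal{E}_d$ that you invoke cannot supply it. Once you add this anchoring step, your argument closes up and coincides with the paper's proof.
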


\begin{figure}[ht!]
{\includegraphics[width=0.8\textwidth]{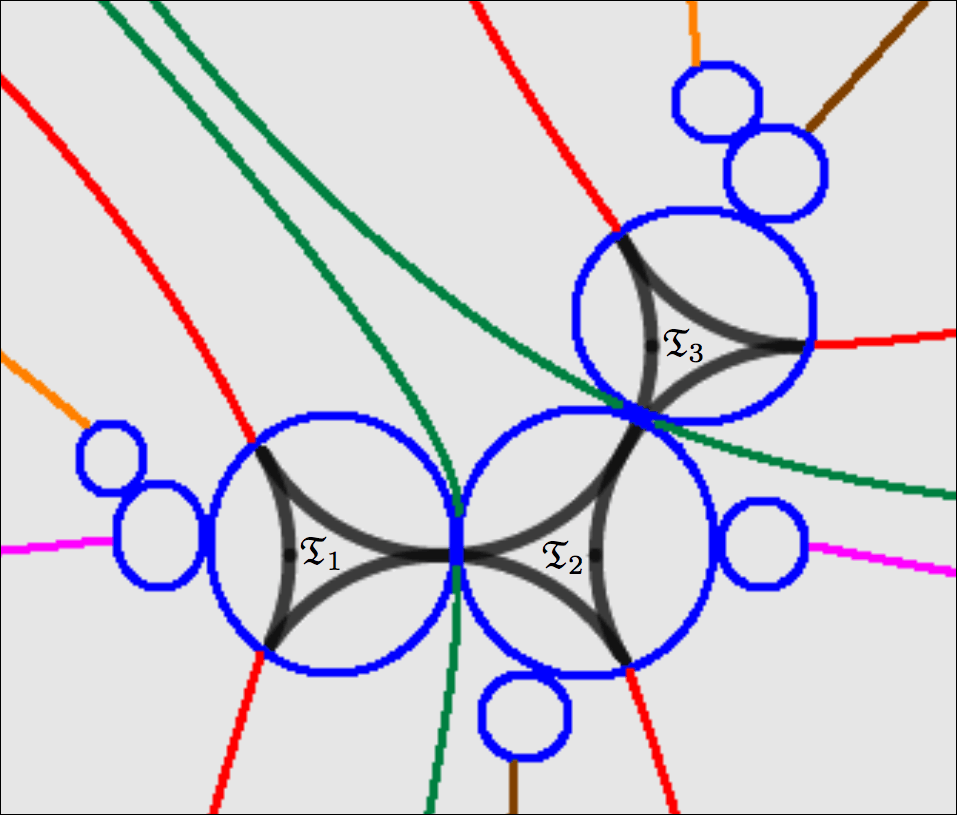}}
\caption{A cartoon of the rays of period $1$ and $2$ landing on $\Lambda(\sigma)$. Each cusp is the landing point of a unique fixed ray (in red), and each double point is the landing point of exactly two rays of period two (in green). The other rays of period two land at non-cut points of $\Lambda(\sigma)$. These rays are colored such that the two rays of the same color form a $2$-cycle. That the same pattern holds for the limit set $\Lambda(\Gamma)$ is the crux of the proof of Proposition \ref{laminations_correspond}. }
\label{fig:period_two_rays}
\end{figure}

\begin{proof} We let $0\leq k\leq d-2$ and fix $f\in\Sigma_{d,k}^*$. We abbreviate $\sigma:=\sigma_f$, $\Gamma:=\Gamma_f$. We denote by $\phi_\sigma$ the B\"ottcher coordinate for $\sigma$, and $\phi_\Gamma$ the map of Proposition~\ref{group_lamination_prop}.  Recall that the homeomorphism \[ h:  T(\Gamma) \rightarrow T(\sigma) \] of Proposition \ref{existence_of_group} is label-preserving. By Proposition \ref{schwarz_lamination_prop}, $\lambda(\sigma)$ is generated by \[ \left\{ \phi_\sigma^{-1}(\zeta) : \zeta \textrm{ is a double point of } f(\mathbb{T})    \right\}. \] Moreover, $\zeta$ is a double point of $f(\mathbb{T})$ if and only if $h^{-1}(\zeta)$ is a double point of $\partial T(\Gamma)$. Thus, by Proposition~\ref{group_lamination_prop3}, $\lambda(\Gamma)$ is generated by \[ \left\{ \phi_\Gamma^{-1}(h^{-1}(\zeta)) : \zeta \textrm{ is a double point of } f(\mathbb{T})    \right\}. \]  Thus it will suffice to show that \begin{align}\label{want_to_show} \tag{$\star$} \mathcal{E}_d^{-1}( \phi_\sigma^{-1}(\zeta) ) =   \phi_\Gamma^{-1}(h^{-1}(\zeta)) \textrm{ for each double point } \zeta \textrm{ of } f(\mathbb{T}). \end{align}

Let $\zeta$ be a double point of $f(\mathbb{T})$. By Lemma \ref{fixed_point_ray_lem}, $\phi_\sigma^{-1}(\zeta)$ is a $2$-cycle for $m_{-d}$ on $\mathbb{T}$. Similarly, by Lemma \ref{group_lamination_prop2}, $\phi_{\Gamma}^{-1}(h^{-1}(\zeta))$ is a 2-cycle for $\rho_{\pmb{\Gamma}_{d+1}}$. Note that the maps $m_{-d}$ and $\rho_{\pmb{\Gamma}_{d+1}}$ both have the same fixed points on $\mathbb{T}\cong\R/\Z$ which we label counter-clockwise as $\theta_1,\cdots, \theta_{d+1}$ with $\theta_1=0$. From the Markov property, there is a simple description of all $2$-cycles of $m_{-d}$ on $\mathbb{T}$: there is exactly one $2$-cycle $\{x, m_{-d}(x)\}$ in each pair of non-adjacent intervals $(\theta_i, \theta_{i+1})$, $(\theta_j, \theta_{j+1})$ with $x\in(\theta_i, \theta_{i+1})$, $m_{-d}(x)\in(\theta_j, \theta_{j+1})$. The same description holds for all $2$-cycles of $\rho_{\pmb{\Gamma}_{d+1}}$, and by definition via the Markov-property, the map $\mathcal{E}_d^{-1}$ sends the $2$-cycle of $m_{-d}$ in $(\theta_i, \theta_{i+1})$, $(\theta_j, \theta_{j+1})$ to the $2$-cycle of $\rho_{\pmb{\Gamma}_{d+1}}$ in $(\theta_i, \theta_{i+1})$, $(\theta_j, \theta_{j+1})$.

Now observe that by Proposition \ref{zero_ray_landing_cusp} and the label-preserving statement in Proposition \ref{group_lamination_prop}, we have the relation: \begin{align}\label{right_norm} h^{-1}(\phi_\sigma(1)) = \phi_\Gamma(1). \end{align} For $1\leq i \leq d+1$, $\phi_\sigma(\theta_i)$ is a cusp of $f(\mathbb{T})$ by Lemma \ref{fixed_point_ray_lem}, and $\phi_\Gamma(\theta_i)$ is a cusp of $\partial T(\Gamma)$ by Proposition~\ref{group_lamination_prop}. Since $h$ is label-preserving, it then follows from (\ref{right_norm}) that: \begin{align}\label{referenced_in_proof_of_theorem_A} h^{-1}(\phi_\sigma(\theta_i)) = \phi_\Gamma(\theta_i) \textrm{ for } 1\leq i \leq d+1. \end{align}

\noindent Thus it follows from the mapping properties of $h$ that \begin{align}\nonumber   \zeta\in\phi_\sigma(\theta_i, \theta_{i+1})\cap\phi_\sigma(\theta_j, \theta_{j+1}) \textrm{ if and only if } h^{-1}(\zeta)\in\phi_\Gamma(\theta_i, \theta_{i+1})\cap\phi_\Gamma(\theta_j, \theta_{j+1}).  \end{align}

\noindent Hence, the 2-cycle $\phi_\sigma^{-1}(\zeta)$ for $m_{-d}$ lies in $(\theta_i, \theta_{i+1})$, $(\theta_j, \theta_{j+1})$ if and only if the 2-cycle $\phi_\Gamma^{-1}(h^{-1}(\zeta))$ for $\rho_{\pmb{\Gamma}_{d+1}}$ lies in $(\theta_i, \theta_{i+1})$, $(\theta_j, \theta_{j+1})$. By the definition of the homeomorphism $\mathcal{E}_d$ via the Markov-partitions for $m_{-d}$ and $\rho_{\pmb{\Gamma}_{d+1}}$, it follows then that $\mathcal{E}^{-1}_d(\phi_\sigma^{-1}(\zeta)) = \phi_\Gamma^{-1}(h^{-1}(\zeta))$, as needed. 
\end{proof}

\begin{rem}\label{consequence_of_laminations_correspond1} Let notation be as in Proposition \ref{laminations_correspond}, and denote by $\phi_{\sigma_f}$ the B\"ottcher coordinate of $\sigma_f$, and $\phi_{\Gamma_f}$ the map of Proposition \ref{group_lamination_prop}. It follows from Proposition \ref{laminations_correspond} that 
\begin{align*}
\phi_{\sigma_f} \circ \mathcal{E}_d \circ \phi_{\Gamma_f}^{-1}: \Lambda(\Gamma_f) \rightarrow \Lambda(\sigma_f)
\end{align*}
is well defined, and indeed a topological conjugacy (see Figure~\ref{various_conjugacy}). 

\end{rem}

\subsection{Proof of Conformal Mating}\label{Proof of Conformal Mating} With Proposition \ref{laminations_correspond} in hand, we can finally prove the conformal mating statement of Theorem \ref{theorem_A}. We follow Definition \ref{mating} of conformal mating. %First we record the following definition:
\vspace{2mm}

\begin{proof}[Proof of Theorem~\ref{theorem_A}]
 The map \begin{align} \Sigma_d^* \rightarrow \overline{\beta(\pmb{\Gamma}_{d+1})} \nonumber \\ f\mapsto\Gamma_f \phantom{as} \nonumber \end{align} was already defined and proven to be a homeomorphism in Section \ref{homeomorphism}. The uniqueness statement of Theorem \ref{theorem_A} is evident since if $\Gamma\in\overline{\beta(\pmb{\Gamma}_{d+1})}$ is such that $\sigma_f$ is a conformal mating of $\Gamma$ and $w\mapsto\overline{w}^d$, then Condition (2) of Definition \ref{mating} and the uniqueness statement in Proposition \ref{existence_of_group} imply that $\Gamma=\Gamma_f$. Thus it only remains to show that $\sigma_f$ is indeed a conformal mating of $\Gamma_f$ and $w\mapsto\overline{w}^d$. Fix $f\in\Sigma_d^*$. We will abbreviate $\sigma:=\sigma_f$ and $\Gamma:=\Gamma_f$.

Recall from Remark \ref{bottcher_coordinate} the  B\"ottcher coordinate \begin{align}\nonumber \phi_\sigma: \mathbb{D}^* \rightarrow \mathcal{B}_\infty(\sigma) \textrm{ satisfying } \phi_\sigma^{-1} \circ \sigma \circ \phi_\sigma(u)= \overline{u}^d,\ \forall\ u \in\D^*.\end{align} By Corollary \ref{decomp_cor}, we have the relation \begin{equation} \widehat{\mathbb{C}} = \mathcal{B}_\infty(\sigma) \sqcup \Lambda(\sigma) \sqcup \mathcal{T}_\infty(\sigma). \end{equation} Thus $\widehat{\mathbb{C}}\setminus \mathcal{T}_\infty(\sigma)= \mathcal{B}_\infty(\sigma) \sqcup \Lambda(\sigma)$. By Proposition \ref{local_connectivity_prop}, $ \Lambda(\sigma)=\partial \mathcal{B}_\infty(\sigma)$ is locally connected, so that $\phi_\sigma$ extends as a semi-conjugacy $\mathbb{T}\rightarrow \Lambda(\sigma)$. Thus taking $p(w):=\overline{w}^d$ (so that $\mathcal{K}(p)=\overline{\mathbb{D}}$), and $\psi_p(w):=\phi_\sigma(1/w)$ for $w\in\overline{\mathbb{D}}$, it is evident that $\psi_p$ is conformal in $\Int{\mathcal{K}(p)}=\mathbb{D}$ and satisfies Condition (1) of Definition \ref{mating}.

Let $h: T(\Gamma) \rightarrow T(\sigma)$ be the mapping of Proposition \ref{existence_of_group} applied to $f$. Define $\psi_\Gamma(z):=h(z)$ for $z\in T^o(\Gamma)$. Note that $\psi_\Gamma$ is label-preserving by Proposition \ref{existence_of_group}. Lifting $\psi_\Gamma$ by $\rho_\Gamma$ and $\sigma$, we extend $\psi_\Gamma$ to a conformal map \[ \psi_\Gamma: \bigcup_{n=0}^\infty \rho_\Gamma^{-n}(T^o(\Gamma)) \rightarrow \bigcup_{n=0}^\infty \sigma^{-n}(T^o(\sigma)). \] Recall our notation $\mathcal{K}(\Gamma):=\mathbb{C}\setminus\Omega_\infty(\Gamma)$. Then $\Int{\mathcal{K}(\Gamma)}$ is the union of all bounded components of $\Omega(\Gamma)$, and we have \begin{align}\label{again_decomp} \mathcal{K}(\Gamma)=\Int{(\mathcal{K}(\Gamma))}\sqcup\Lambda(\Gamma) \textrm{ and } \overline{\mathcal{T}_\infty(\sigma)}=\mathcal{T}_\infty(\sigma)\sqcup\Lambda(\sigma). \end{align} By Proposition \ref{regular_fund_pre_image_prop} and Definition \ref{Schwarz_reflection}, we have \[ \bigcup_{n=0}^\infty \rho_\Gamma^{-n}(T^o(\Gamma)) = \Int{\mathcal{K}(\Gamma)} \textrm{ and } \bigcup_{n=0}^\infty \sigma^{-n}(T^o(\sigma)) = \mathcal{T}_\infty(\sigma). \] Thus $\psi_\Gamma: \Int{\mathcal{K}(\Gamma)} \rightarrow \mathcal{T}_\infty(\sigma)$ is conformal. Moreover, by the definition of $\psi_\Gamma$ via lifting, we have \begin{align} \psi_\Gamma\circ \rho_\Gamma(z)=\sigma_f\circ\psi_\Gamma(z) \textrm{ for } z\in \Int{\mathcal{K}(\Gamma)}\setminus \Int{T^o(\Gamma)}. \end{align} Thus in order to conclude that Condition (2) of Definition \ref{mating} holds, by (\ref{again_decomp}) it only remains to show that $\psi_\Gamma$ extends to a semi-conjugacy $\Lambda(\Gamma) \rightarrow \Lambda(\sigma)$. We will show that in fact  $\psi_\Gamma$ extends as a topological conjugacy.

Let $\phi_{\Gamma}: \mathbb{D}^* \rightarrow \Omega_{\infty}(\Gamma)$ denote the conformal map of Proposition \ref{group_lamination_prop}. As observed in Remark \ref{consequence_of_laminations_correspond1}, Proposition \ref{laminations_correspond} implies that the map \[\phi_\sigma\circ\mathcal{E}_d\circ\phi_{\Gamma}^{-1}: \Lambda(\Gamma) \rightarrow \Lambda(\sigma)\] is a well-defined homeomorphism, so that we only need to show that $\phi_\sigma\circ\mathcal{E}_d\circ\phi_{\Gamma}^{-1}$ is an extension of $\psi_\Gamma: \Int{\mathcal{K}(\Gamma)} \rightarrow \mathcal{T}_\infty(\sigma_f)$. Note that by construction and the normalization in Remark \ref{bottcher_coordinate}, $\psi_\Gamma$ and $\phi_\sigma\circ\mathcal{E}_d\circ\phi_{\Gamma}^{-1}$ agree on the cusps of $\partial T(\Gamma)$. One may then verify via the definition of $\psi_\Gamma$ (by lifting $\rho_\Gamma$ and $\sigma$) and $\mathcal{E}_d$ (in Remark~\ref{defn_of_conjug}) that $\psi_\Gamma$ and $\phi_\sigma\circ\mathcal{E}_d\circ\phi_{\Gamma}^{-1}$ agree on all preimages of cusps of $\partial T(\Gamma)$. As these preimages form a dense subset of $\Lambda(\Gamma)$, it follows that $\phi_\sigma\circ\mathcal{E}_d\circ\phi_{\Gamma}^{-1}$ is the desired homeomorphic extension of $\psi_\Gamma$.

It remains only to show Condition (3) of Definition \ref{mating}. Let $t\in\mathbb{T}$ and consider $\phi_\Gamma(t)\in\Lambda(\Gamma)$, $\phi_p\circ\overline{\mathcal{E}_d(t)}\in\mathcal{J}(p)$, where we note $\phi_p\equiv \textrm{id}$. We readily compute that \[  \psi_p(\phi_p\circ\overline{\mathcal{E}_d(t)} ) = \phi_\sigma(1/\overline{\mathcal{E}_d(t)} ) = \phi_\sigma(\mathcal{E}_d(t)) = \phi_\sigma\circ\mathcal{E}_d\circ\phi_\Gamma^{-1}(\phi_\Gamma(t)) = \psi_\Gamma(\phi_\Gamma(t)).  \] Thus for $z\in\mathcal{K}(\Gamma)$, $w\in\mathcal{K}(p)$, we see that $z\sim w \implies \psi_\Gamma(z)=\psi_p(w)$. 

If, conversely, $\psi_\Gamma(z)=\psi_p(w)$, we must have firstly that $\psi_\Gamma(z)=\psi_p(w)\in\Lambda(\sigma)$. Thus $z\in\Lambda(\Gamma)$, and $w\in\mathbb{T}$. Recalling $\psi_\Gamma=\phi_\sigma\circ\mathcal{E}_d\circ\phi_\Gamma^{-1}$ on $\Lambda(\Gamma)$ and $\psi_p(w)=\phi_\sigma(\overline{w})$ for $w\in\mathbb{T}$, we see that \begin{align}\label{good!} \phi_\sigma\circ\mathcal{E}_d\circ\phi_{\Gamma}^{-1}(z)=\phi_\sigma(\overline{w}). \end{align} As already noted, $\psi_\Gamma=\phi_\sigma\circ\mathcal{E}_d\circ\phi_\Gamma^{-1}: \Lambda(\Gamma)\mapsto\Lambda(\sigma)$ is a homeomorphism, so that we deduce from (\ref{good!}) that $z=\phi_\Gamma\circ\mathcal{E}_d^{-1}(\overline{w})$. Letting $t=\mathcal{E}_d^{-1}(\overline{w})$, we see that $\phi_\Gamma(t)=z$ and $\phi_p\circ\overline{\mathcal{E}_d(t)}=w$, so that by Definition~\ref{conf_mating_equiv_reltn},  $z\sim w$ as needed. 
\end{proof}

\section{Sullivan's Dictionary}\label{sullivan_sec}

\begin{definition}\label{abstract_tree_def}
An \emph{abstract angled tree} is a triple $(\mathcal{T},\textrm{deg},\angle)$, where: 

\begin{enumerate}
\item $\mathcal{T}$ is a tree,

\item $\textrm{deg}:V(\mathcal{T})\to\N$ is a function with $\textrm{deg}(v)\geq2$ for each vertex $v$ of $\mathcal{T}$,

\item $\textrm{valence}(v)\leq 1+\textrm{deg}(v)$ for each vertex $v$ of $\mathcal{T}$, and

\item $\angle$ is a skew-symmetric, non-degenerate, additive function defined on pairs of edges incident at a common vertex, and takes values in $\faktor{\frac{2\pi}{1+\textrm{deg}(v)}\Z}{2\pi\Z}$. 
\end{enumerate}

\end{definition}

\begin{rem}\label{abstract_tree_def2} If $(\mathcal{T},\textrm{deg},\angle)$ is an abstract angled tree, the positive integer \[d:=1+\sum_{v\in V(\mathcal{T})}\left(\textrm{deg}(v)-1\right)\] is called the \emph{total degree} of the angled tree. Two angled trees are said to be isomorphic if there is a tree isomorphism between them that preserves the functions $\textrm{deg}$ and $\angle$.
\end{rem}

\begin{example}\label{tree_to_f} To any $f\in\Sigma_{d,k}^*$, we will associate an abstract angled tree $\mathcal{T}(f)$ with $k+1$ vertices as follows. Denote by  $T_1,\cdots, T_{k+1}$ the components of $T^o(\sigma_f)$. Let $j_i\geq0$ be such that the boundary of $T_i$ has $3+j_i$ cusps. Assign a vertex $v_i$ to each component $T_i$, and connect two vertices $v_i$, $v_j$ by an edge if and only if $T_i$, $T_j$ share a common boundary point. We define the deg function by: \begin{align} \nonumber \textrm{deg}:V(\mathcal{T}(f))\rightarrow\N  \\ v_i\mapsto 2+j_i. \phantom{as} \nonumber \end{align} It remains to define the $\angle$ function for two edges $e$, $e'$ meeting at a vertex $v_i$. Suppose $e$, $e'$ correspond to two cusps $\zeta$, $\zeta' \in \partial T_i$, and denote by $\gamma_i$ the component of $\partial T_i\setminus\{\zeta, \zeta'\}$ which, when traversed counter-clockwise, is oriented positively with respect to $T_i$. Then \[  \angle(e,e') := \frac{2\pi}{3+j_i}\cdot\left(1 + \#\left\{ \textrm{cusps of $\partial T_i$ on the curve } \gamma_i  \right\}\right).  \] We leave it to the reader to verify that $(\mathcal{T}(f), \textrm{deg}, \angle)$ satisfies Definition \ref{abstract_tree_def} of an abstract angled tree. Note that if $f\in\Sigma_{d,d-2}^*$, then the tree $(\mathcal{T}(f),\textrm{deg}, \angle)$ is simply a \emph{bi-angled tree} in the language of \cite[\S 2.5]{LMM1}.
\end{example}

\begin{prop}\label{existence_poly_prop} For each $f\in\Sigma_{d,k}^*$, there exists an anti-polynomial $p_f$ of degree $d$ such that: 
\begin{enumerate} 
\item $p_f$ has a total of $k+1$ distinct critical points in $\mathbb{C}$, 
\item Each critical point of $p_f$ is fixed by $p_f$, and 
\item The angled Hubbard tree of $p_f$ is isomorphic to $(\mathcal{T}(f), \emph{deg}, \angle)$.  
\end{enumerate} 
\end{prop}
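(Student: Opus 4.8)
The plan is to realize the abstract angled tree $(\mathcal{T}(f),\textrm{deg},\angle)$ as the Hubbard tree of a critically fixed anti-polynomial by appealing to a classification/realization theorem for such trees. The key point is that critically fixed anti-polynomials of degree $d$ are in bijective correspondence with their angled Hubbard trees, which are exactly the abstract angled trees of total degree $d$ in the sense of Definition~\ref{abstract_tree_def}; this realization result is available in the literature on Hubbard trees for anti-holomorphic polynomials (the relevant case $k = d-2$, where the tree is a bi-angled tree, is already treated in \cite{LMM1}, and the general case is an analogous application of Poirier-type realization theorems adapted to the anti-holomorphic setting). Thus the bulk of the argument is the verification, done in Example~\ref{tree_to_f}, that $(\mathcal{T}(f),\textrm{deg},\angle)$ genuinely satisfies the axioms of an abstract angled tree and has total degree $d$.

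First I would check that $\mathcal{T}(f)$ is a tree: its vertices are the $k+1$ components $T_1,\dots,T_{k+1}$ of $T^o(\sigma_f)$, and edges record shared boundary points (the double points of $f(\mathbb{T})$). Since $T(\sigma_f)$ is connected and simply connected (as $f(\D^*)$ is a Jordan-type complement and the droplet deformation retracts appropriately — compare \cite[Proposition~2.8]{LMM1}), the incidence graph on the components is connected and acyclic, hence a tree. Second, I would verify the degree and valence conditions: $\textrm{deg}(v_i) = 2+j_i \geq 2$ since each $T_i$ is at least a triangle ($j_i \geq 0$), and $\textrm{valence}(v_i) \leq 1 + \textrm{deg}(v_i)$ because the number of double points on $\partial T_i$ (which equals the number of edges at $v_i$) is at most the number of cusps $3+j_i$ minus one, as cusps and double points alternate appropriately around $\partial T_i$ (each double point separates a pair of cusps). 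Third, I would check the total degree: $1 + \sum_i (\textrm{deg}(v_i) - 1) = 1 + \sum_i (1+j_i) = 1 + (k+1) + \sum_i j_i$; since the total number of cusps of $f(\mathbb{T})$ is $d+1$ and each of the $k$ double points is shared by two components while cusps are not shared, a counting argument gives $\sum_i (3+j_i) = (d+1) + k$, i.e. $\sum_i j_i = d - 2 - 2k + \cdots$ — I would carefully reconcile this so that the total degree comes out to $d$ (this is the bookkeeping already implicit in \cite[\S 2.5]{LMM1}). Fourth, the $\angle$ function as defined is manifestly $\frac{2\pi}{3+j_i}\Z / 2\pi\Z$-valued, skew-symmetric (reversing the roles of $\zeta,\zeta'$ swaps $\gamma_i$ with its complementary arc, and the two cusp-counts sum to $1+j_i$, so the two angles sum to $2\pi$), additive around each vertex, and non-degenerate (distinct edges get distinct cusps, hence distinct angles).

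Having established that $(\mathcal{T}(f),\textrm{deg},\angle)$ is an abstract angled tree of total degree $d$, I would then invoke the realization theorem: every such tree is the angled Hubbard tree of a unique critically fixed anti-polynomial $p_f$ of degree $d$. Conditions (1) and (2) of the Proposition then follow from the structure of critically fixed maps — the $k+1$ vertices correspond to the $k+1$ critical points, each of which is a fixed point — and condition (3) is the statement of the realization theorem itself. The main obstacle is the realization theorem for anti-holomorphic Hubbard trees in the generality needed here (arbitrary $k$, not just $k = d-2$): I expect this to follow from the standard Poirier-style combinatorial realization of (anti-)polynomials with prescribed postcritically finite dynamics, together with the rigidity of critically fixed maps, but one must be careful that the anti-holomorphic angle constraint ($\angle$ valued in $\frac{2\pi}{1+\textrm{deg}(v)}\Z$ rather than $\frac{2\pi}{\textrm{deg}(v)}\Z$) is exactly the one produced by the local model $z \mapsto \bar z^{\textrm{deg}(v)}$ at a critical fixed point, which it is. The proof would conclude by citing this realization result and the verifications of Example~\ref{tree_to_f}.
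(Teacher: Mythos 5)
Your proposal follows essentially the same route as the paper: verify that $(\mathcal{T}(f),\textrm{deg},\angle)$ is a valid angled tree of total degree $d$ and then invoke Poirier's realization theorem in its anti-holomorphic form to produce the critically fixed anti-polynomial $p_f$ with this angled Hubbard tree, with (1) and (2) read off from the tree isomorphism. The paper makes your ``anti-holomorphic angle constraint'' point precise by checking the congruence $-(2+j_i)\tfrac{2\pi n}{3+j_i}\equiv\tfrac{2\pi n}{3+j_i}\ (\mathrm{mod}\ 2\pi)$, so that the identity on $\mathcal{T}(f)$ is an orientation-reversing angled tree map of degree $d$ with all vertices of Fatou type, to which \cite[Theorem~5.1]{Poi3} applies; also note your cusp/double-point count should give $\sum_i(3+j_i)=(d+1)+2k$ (double points counted once on each of the two adjacent components), which is what makes the total degree come out to $d$.
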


\begin{proof} We continue to use the notation introduced in Example \ref{tree_to_f}. One readily verifies that: \[ -(2+j_i)\left(\frac{2\pi n}{3+j_i}\right)=\frac{2\pi n}{3+j_i}\ \left(\textrm{mod}\ 2\pi\right) \textrm{ for } i=1,\cdots,k+1.  \] Thus, $\mathrm{id}:\mathcal{T}(f)\to\mathcal{T}(f)$ is an orientation-reversing angled tree map (see \cite[\S 2.7]{LMM1}) of degree \[1+\sum_{i=1}^{k+1}(\textrm{deg}(v_i)-1)=1+\sum_{i=1}^{k+1}(1+j_i)=d.\]  Moreover, since all vertices of $\mathcal{T}$ are critical and fixed under $\mathrm{id}$, it follows that all vertices are of \emph{Fatou type} (again, see \cite[\S 2.7]{LMM1}). Hence, the realization theorem \cite[Theorem~5.1]{Poi3} applied to the orientation-reversing angled tree map $\textrm{id}$ yields a postcritically finite anti-polynomial $p_f$ of degree $d$  such that the angled Hubbard tree of $p_f$ is isomorphic to $(\mathcal{T}(f),\textrm{deg},\angle)$. That $p_f$ satisfies (1), (2) follows since the Hubbard tree of $p_f$ is isomorphic to $(\mathcal{T}(f),\textrm{deg},\angle)$.
\end{proof}

\begin{prop}\label{cut_point_structure} Let $f\in\Sigma_{d,k}^*$, and $p_f$ as in Proposition \ref{existence_poly_prop}. Denote by $U_1,\cdots, U_{k+1}$ the immediate attracting basins of the fixed critical points of $p_f$. Then \[ U:= \bigcup_{i=1}^{k+1} \overline{U_i} \] is connected. Moreover, $p_f$ has exactly $2k+d+2$ fixed points in $\mathbb{C}$, of which: 
\begin{enumerate} 
\item $k+1$ are critical points, 
\item $k$ are cut-points of $U$ and belong to $\mathcal{J}(p_f)$, and 
\item $d+1$ are not cut-points of $U$ and belong to $\mathcal{J}(p_f)$.  
\end{enumerate}
\end{prop}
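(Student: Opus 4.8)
The plan is to combine a Lefschetz fixed point count with the combinatorics of the angled Hubbard tree $(\mathcal{T}(f),\deg,\angle)$ of $p_f$ (Example~\ref{tree_to_f}, Proposition~\ref{existence_poly_prop}), in close analogy with the proof of Lemma~\ref{sing_on_limit}. First I would record the standard facts: since $p_f$ is postcritically finite with every critical point fixed, $\mathcal{J}(p_f)$ is connected and locally connected, every bounded Fatou component is eventually iterated onto one of the $U_i$, each $\overline{U_i}$ is a Jordan domain, and $p_f\colon\overline{U_i}\to\overline{U_i}$ is topologically conjugate to $\overline{z}^{\,m_i}\colon\overline{\mathbb{D}}\to\overline{\mathbb{D}}$, where $m_i:=\deg(v_i)$ is the local degree of $p_f$ at $c_i$. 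Consequently $p_f$ has exactly $m_i+1$ fixed points on $\partial U_i$, all repelling, namely the landing points of the $m_i+1$ internal rays of $U_i$ fixed by $\theta\mapsto-m_i\theta$; and postcritical finiteness forces \emph{every} fixed point of $p_f$ in $\widehat{\mathbb{C}}$ to be attracting (index $+1$) or repelling (index $-1$).

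Next comes the count. Viewing $p_f$ as an orientation-reversing degree $d$ branched self-cover of $\mathbb{S}^2$ with a fixed critical point of local degree $d$ at $\infty$, and noting that the only attracting fixed points are the $k+2$ points $c_1,\dots,c_{k+1},\infty$, the Lefschetz fixed point formula (compare \cite[Lemma~6.1]{2014arXiv1411.3415L}) gives $2(k+2)+(d-1)=d+2k+3$ fixed points in $\widehat{\mathbb{C}}$, hence $d+2k+2$ in $\mathbb{C}$. The $k+1$ critical ones give item~(1); the remaining $d+k+1$ are repelling, hence lie in $\mathcal{J}(p_f)$.

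The heart of the argument is to pin down the contact pattern of the basins. I would first show that if $\overline{U_i}\cap\overline{U_j}\neq\emptyset$ then $v_i,v_j$ are adjacent in $\mathcal{T}(f)$ and $|\overline{U_i}\cap\overline{U_j}|\le 1$: for $w\in\overline{U_i}\cap\overline{U_j}$, the concatenation of the internal ray of $U_i$ from $c_i$ to $w$ with the internal ray of $U_j$ from $w$ to $c_j$ is a regulated arc, hence (regulated arcs being unique) equals the Hubbard-tree arc from $c_i$ to $c_j$; this arc meets $\partial U_i$ only in its internal-ray endpoint, so $w$ is forced to be that endpoint, and the arc contains no other $c_\ell$, so $v_i,v_j$ are adjacent. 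In particular no point lies on three basin closures (that would make $\mathcal{T}(f)$ contain a triangle). Conversely, if $v_i,v_j$ are adjacent the Hubbard edge $E$ is the unique arc in the (invariant) Hubbard tree between the fixed points $c_i,c_j$, hence $p_f(E)=E$, $p_f\vert_E$ is an orientation-preserving homeomorphism fixing $c_i,c_j$, $E$ meets no Fatou component but $U_i,U_j$, and so $E\cap\overline{U_i}$ is a fixed internal ray of $U_i$ landing at a fixed point $a_i\in\partial U_i$ (similarly $a_j\in\partial U_j$). Now the exact count closes the loop: letting $r$ be the number of edges for which $a_i=a_j$, the set $\bigcup_i(\mathrm{Fix}(p_f)\cap\partial U_i)$ has $\sum_i(m_i+1)-r=(d-1)+2(k+1)-r=d+2k+1-r$ elements, all of them repelling, so $d+2k+1-r\le d+k+1$, forcing $r\ge k$, hence $r=k$. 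Thus for every edge $\overline{U_i}\cap\overline{U_j}$ is a single point $w_{ij}$ (a repelling fixed point, as $\overline{U_i},\overline{U_j}$ are forward invariant), and all $d+k+1$ repelling fixed points lie on $\bigcup_i\partial U_i$.

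Finally I would assemble the conclusions. Connectedness of $U$ follows from connectedness of $\mathcal{T}(f)$ together with $\overline{U_i}\cap\overline{U_j}\neq\emptyset$ for adjacent $v_i,v_j$. Each $w_{ij}$ is a cut point of $U$, since deleting it splits $U$ along the edge $v_iv_j$ (the two vertex-classes of $\mathcal{T}(f)$ minus that edge span subsets of $U$ meeting only in $w_{ij}$), giving item~(2) with $w_{ij}\in\mathcal{J}(p_f)$. Of the remaining $d+1$ repelling fixed points each lies on a single $\partial U_i$, and such a point $w$ is not a cut point of $U$: a neighborhood of $w$ in $U$ is a neighborhood of $w$ in the disk $\overline{U_i}$, and every other basin stays attached to $\overline{U_i}$ inside $U\setminus\{w\}$ through a chain of contact points $w_{\cdot\cdot}\neq w$; this is item~(3). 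The step I expect to be the real work is the basin-contact analysis of the third paragraph — in particular verifying that the Hubbard edge between adjacent Fatou-type vertices is precisely two internal rays meeting at a single repelling fixed point — which rests on uniqueness of regulated arcs, invariance of Hubbard-tree edges, the tree structure of $\mathcal{T}(f)$, and the exact Lefschetz count; the remainder is bookkeeping.
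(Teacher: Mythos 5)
Your argument is correct, and its combinatorial core is genuinely different from the paper's. Both proofs share the same skeleton: the Lefschetz count (only the $k+2$ superattracting fixed points are non-repelling, hence $d+k+1$ repelling fixed points and $2k+d+2$ fixed points in $\mathbb{C}$), and the count of $m_i+1$ boundary fixed points on each $\partial U_i$. But the paper does not touch the Hubbard tree at this stage: it proves connectedness of $U$ by contradiction, bounding the number of contact points among $U_2,\dots,U_{k+1}$ by $k-1$ via fullness of $\mathcal{K}(p_f)$, quoting \cite[Proposition~6.2]{LMM1} for the facts that two basin closures meet in at most one point and that such a point is fixed, and then deduces the cut-point count from its claims that a repelling fixed point lies on at most two basin boundaries together with fullness again. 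You instead read off the contact pattern from the angled Hubbard tree: uniqueness of regulated arcs gives ``touching implies adjacent, with a single common point'', the tree structure rules out triple points, and the exact Lefschetz count forces $r=k$, so every edge of $\mathcal{T}(f)$ is realized by a tangency; connectedness, the $k$ cut points, and the non-cut-point property of the remaining $d+1$ repelling fixed points then follow from the tree combinatorics. Your route yields a sharper picture (all repelling fixed points lie on $\bigcup_i\partial U_i$, and the cut points are exactly the landing points of the fixed internal rays along Hubbard edges) and an explicit verification of item (3), at the cost of leaning on Hubbard-tree machinery --- in particular on the fact that the realization in Proposition~\ref{existence_poly_prop} makes each edge of the Hubbard tree invariant under $p_f$, which comes from \cite[Theorem~5.1]{Poi3} but is not stated in the proposition --- and on Jordan-ness of the $\overline{U_i}$ (the paper only invokes local connectivity of $\partial U_i$, though it needs the same exact boundary fixed-point count). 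Note also that the edge-invariance step is dispensable: once two touching closures meet in a single point, invariance of the closures makes that point fixed, so your inequality $d+2k+1-r\le d+k+1$ can be run with $r$ defined simply as the number of touching pairs.
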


\begin{proof} We abbreviate $p:=p_f$. Enumerate the critical points of $p$ by $c_1, \cdots, c_{k+1}$. Since \begin{align}\label{fixed_points} p(c_i)=c_i \textrm{ for } 1\leq i \leq k+1, \end{align} $p$ can not have any indifferent fixed point. It also follows from (\ref{fixed_points}) that $(c_i)_{i=1}^{k+1}$ are the only attracting fixed points of $p$, since any basin of attraction of $p$ must contain a critical value. It then follows from the Lefschetz fixed point formula (see \cite[Lemma~6.1]{2014arXiv1411.3415L}) that $p$ has a total of $2k+d+2$ fixed points in $\mathbb{C}$, of which $k+d+1$ are repelling and thus belong to $\mathcal{J}(p)$.  

Note that $p|_{U_i}$ is conformally conjugate to $\overline{z}^{2+j_i}|_{\mathbb{D}}$. As $p$ is hyperbolic, $\partial U_i$ is locally connected (see \cite[Lemma~19.3]{MR2193309}), and so this conjugacy extends to the boundary. Thus, $p|_{\partial U_i}$ has $3+j_i$ fixed points for each $i$. We claim that: \begin{itemize}  \item[($\star$)]  a repelling fixed point can lie on the boundary of at most two $(U_i)_{i=1}^{k+1}$, and \item[($\star\star$)] there is at least one repelling fixed point which is on the boundary of precisely one $(U_i)_{i=1}^{k+1}$. \end{itemize} Statement ($\star$) follows from the fact that the basins of attraction are invariant under $p$ and that $p$ is an orientation-reversing homeomorphism in a neighborhood of a repelling fixed point. Statement ($\star\star$) follows from fullness of the filled Julia set $\mathcal{K}(p)$. Thus if we suppose, by way of contradiction, that, say $\overline{U_1}$ is disjoint from $\cup_{i=2}^{k+1}\overline{U_i}$, a counting argument yields that $p$ has at least \[   \underbrace{4+j_1}_{\textrm{ fixed points in }\overline{U_1}} + \underbrace{k}_{\textrm{ critical fixed points in }U_i\textrm{, } i>1} + \underbrace{ \sum_{i>1}(3+j_i) }_{ \textrm{ fixed points in } \partial U_i\textrm{, } i>1 } - \underbrace{(k-1)}_{\textrm{shared fixed points}} = \phantom{asd} 2k+d+3  \] fixed points in $\mathbb{C}$, which is a contradiction. Thus $U$ is connected.

An elementary argument using fullness of $\mathcal{K}(p)$ shows that two $\overline{U_i}$, $\overline{U_j}$ can intersect in at most one point, and that an intersection point of $\overline{U_i}$, $\overline{U_j}$ must be a fixed point of $p$ (see \cite[Proposition~6.2]{LMM1}). Thus by ($\star$) above and connectedness of $U$, it follows that $U$ has at least $k$ cut-points which are in $\mathcal{J}(f)$. Futhermore, by fullness of $\mathcal{K}(p)$, $U$ can not have more than $k$ cut-points. 
\end{proof}

\begin{notation} Let $p_f$ be as in Propositions \ref{existence_poly_prop}, \ref{cut_point_structure}. We denote by $\textrm{Rep}(p_f)$ the set of repelling fixed points of $p_f$, and by $\textrm{Cut}(p_f)$ the cut points of $U$. 

\end{notation}

\begin{rem}\label{bottcher_poly} Consider a B\"ottcher coordinate $\phi_p: \mathbb{D}^*\rightarrow \mathcal{B}_\infty(p)$ for $p=p_f$ as in Proposition \ref{existence_poly_prop}. Note that $\mathcal{J}(p)$ is connected as each finite critical point of $p$ is fixed. Thus since $p$ is hyperbolic, it follows that $\mathcal{J}(p)$ is locally connected \cite[Theorem~19.2]{MR2193309}. Hence $\phi_p$ extends continuously to a surjection $\phi_p: \mathbb{T} \rightarrow \mathcal{J}(p)$ which semi-conjugates $m_{-d}|_{\mathbb{R}/\mathbb{Z}}$ to $p|_{\mathcal{J}(p)}$, and all external rays of $\mathcal{B}_\infty(p)$ land. The fibers of $\phi_p|_{\mathbb{T}}$ induce an equivalence relation on $\mathbb{T}$ which we denote by $\lambda(p)$. Note that $\lambda(p)$ depends on a normalization of the B\"ottcher coordinate.

\end{rem}

\begin{lem}\label{cut_point_sara_cro} Let $f\in\Sigma_{d}^*$, $p_f$ as in Proposition \ref{existence_poly_prop}, and $\phi_p$ any B\"ottcher coordinate for $p_f$. Then: 
\begin{enumerate} 
\item Each $\beta\in\emph{Rep}(p_f)\setminus\emph{Cut}(p_f)$ is the landing point of a unique external ray. The angle of this external ray is fixed by $m_{-d}$. 
\item Each $\beta\in\emph{Rep}(p_f)\cap\emph{Cut}(p_f)$ is the landing point of exactly two external rays. The angles of these two rays form a $2$-cycle under $m_{-d}$. 
\end{enumerate}
\end{lem}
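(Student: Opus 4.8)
The statement is the exact analogue, for the critically-fixed anti-polynomial $p_f$, of Lemma \ref{fixed_point_ray_lem} for the Schwarz reflection map $\sigma_f$; accordingly the plan is to transplant the proof of Lemma \ref{fixed_point_ray_lem} into the polynomial setting, using Proposition \ref{cut_point_structure} as the structural input in place of the droplet/singular-point analysis. Throughout I write $p:=p_f$, $\phi_p$ for the chosen B\"ottcher coordinate, and I use that $\mathcal{J}(p)$ is connected and locally connected (Remark \ref{bottcher_poly}), so every external ray lands and every point of $\mathcal{J}(p)$ is the landing point of at least one ray; also $\mathcal{K}(p)$ is full. The combinatorial skeleton I rely on is Proposition \ref{cut_point_structure}: the set $U=\bigcup_{i=1}^{k+1}\overline{U_i}$ is connected, $p$ has exactly $2k+d+2$ fixed points in $\mathbb{C}$, of which $k+1$ are the critical (super-attracting) fixed points, $k$ are repelling cut points of $U$, and $d+1$ are repelling non-cut points of $U$, and $\overline{U_i}\cap\overline{U_j}$ is at most one point (a fixed point of $p$) when $i\neq j$.

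For part (1), let $\beta\in\mathrm{Rep}(p)\setminus\mathrm{Cut}(p)$. The key claim is that $\beta$ is not a cut point of $\mathcal{K}(p)$; granting this, $\mathbb{C}\setminus\mathcal{K}(p)=\mathcal{B}_\infty(p)$ minus $\{\infty\}$ has exactly one access to $\beta$, so exactly one external ray lands at $\beta$, and since $p(\beta)=\beta$ that ray is mapped to itself, forcing its angle to be fixed by $m_{-d}$. To see $\beta$ is not a cut point of $\mathcal{K}(p)$: $\beta$ lies on the boundary of exactly one $U_i$ (it is not a cut point of $U$, and by statement $(\star)$ in the proof of Proposition \ref{cut_point_structure} a repelling fixed point meets at most two of the $\overline{U_j}$; being on the boundary of two would make it a cut point of $U$ by connectedness of $U$ and the intersection-in-one-point property, so it is on exactly one). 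Now one argues exactly as in the proof of part (1) of Lemma \ref{fixed_point_ray_lem}: under $p$, the immediate basin $U_i$ is a forward-invariant full open set whose closure contains $\beta$; the component $S$ of $\mathcal{K}(p)\setminus\{\beta\}$ containing $\overline{U_i}\setminus\{\beta\}$ must, by taking iterated $p$-preimages of $\overline{U_i}\setminus\{\beta\}$ (which stay connected and avoid $\beta$ because $\beta$ is repelling and is not a cut point of $U$, hence its $p$-preimage pattern is controlled) and passing to the closure, swallow all of $\mathcal{K}(p)$; fullness of $\mathcal{K}(p)$ and the fact that $\mathcal{J}(p)=\bigcup_i\partial U_i\cup(\text{the rest})$ together with local connectivity then give $\mathcal{K}(p)=S\cup\{\beta\}$, so $\beta$ is not a cut point. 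The cleanest route is in fact to combine: $\beta\notin\mathrm{Cut}(p)$ means $U\setminus\{\beta\}$ is connected, and since $\mathcal{K}(p)$ deformation-retracts (in the appropriate combinatorial sense of the Hubbard tree / the closure of $\bigcup U_i$ being a spine) onto $U$, cut points of $\mathcal{K}(p)$ among repelling fixed points coincide with cut points of $U$; this is where Proposition \ref{cut_point_structure}'s statement $(\star\star)$ and the full-$\mathcal{K}(p)$ argument do the work.

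For part (2), let $\beta\in\mathrm{Rep}(p)\cap\mathrm{Cut}(p)$. Then $\beta$ lies on the boundary of exactly two of the $\overline{U_i}$, say $\overline{U_1}$ and $\overline{U_2}$, and $U\setminus\{\beta\}$ has exactly two components $S_1\supset\overline{U_1}\setminus\{\beta\}$ and $S_2\supset\overline{U_2}\setminus\{\beta\}$; by the same swallowing argument as in part (1) (and fullness of $\mathcal{K}(p)$), $\mathcal{K}(p)\setminus\{\beta\}=S_1'\sqcup S_2'$ with exactly two components, hence there are exactly two accesses to $\beta$ from $\mathcal{B}_\infty(p)$ and exactly two external rays land at $\beta$. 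Their angles form a $p$-periodic set of size dividing two; it cannot be two fixed angles, since there are exactly $d+1$ angles fixed by $m_{-d}$ and, by part (1) together with Proposition \ref{cut_point_structure}(3), these are used up as the unique rays landing at the $d+1$ non-cut repelling fixed points (these landing points are pairwise distinct, one ray each), so no fixed ray is available to land at a cut point. Hence the two rays at $\beta$ form a genuine $2$-cycle under $m_{-d}$. I expect the main obstacle to be the first claim in part (1) — that a non-cut point of $U$ among the repelling fixed points is a non-cut point of $\mathcal{K}(p)$ — i.e. making rigorous the passage from the combinatorial cut-point structure of $U$ (given by Proposition \ref{cut_point_structure}) to the topological cut-point structure of $\mathcal{K}(p)$; the rest is a faithful adaptation of the arguments already given for Lemma \ref{fixed_point_ray_lem}, using local connectivity of $\mathcal{J}(p)$ in place of that of $\partial\mathcal{T}_\infty(\sigma_f)$.
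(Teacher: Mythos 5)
Your overall strategy is the paper's: mirror the proof of Lemma \ref{fixed_point_ray_lem}, with Proposition \ref{cut_point_structure} supplying the structure of $U$, and deduce the $2$-cycle statement by playing the $d+1$ fixed angles off against the $d+1$ non-cut repelling fixed points. But the step you yourself flag as the crux of part (1) is carried out incorrectly. You pull back only $\overline{U_i}\setminus\{\beta\}$, the closure of the single immediate basin whose closure contains $\beta$, and assert that the closure of $\bigcup_{r}p^{-r}(\overline{U_i}\setminus\{\beta\})$ swallows all of $\mathcal{K}(p)$. That is false as soon as $k\geq 1$: every other immediate basin $U_j$, $j\neq i$, is a fixed Fatou component, so $U_j$ is disjoint from $p^{-r}(\overline{U_i})$ for every $r$ (indeed $p^{r}(U_j)=U_j\neq U_i$ and $p^{r}(U_j)$ misses $\partial U_i\subset\mathcal{J}(p)$), and being open it is also disjoint from the closure of this backward orbit. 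Hence that closure contains $\mathcal{J}(p)$ but omits all Fatou components outside the grand orbit of $U_i$, and the asserted equality $\mathcal{K}(p)=S\cup\{\beta\}$ does not follow from what you wrote. Your proposed shortcut, that $\mathcal{K}(p)$ ``deformation-retracts onto $U$'' so that cut points of $\mathcal{K}(p)$ among repelling fixed points coincide with cut points of $U$, is exactly the statement that has to be proved and is left unjustified. (Also, the preimages avoid $\beta$ simply because $p^{r}(\beta)=\beta$ does not lie in the set being pulled back; this has nothing to do with $\beta$ being repelling.)

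The repair is the paper's choice of set: pull back $U\setminus\{\beta\}=\bigl(\bigcup_{i}\overline{U_i}\bigr)\setminus\{\beta\}$ itself. Its connectedness is precisely the hypothesis $\beta\notin\mathrm{Cut}(p_f)$ (so your preliminary reduction to ``$\beta$ lies on the boundary of exactly one $U_i$'' is unnecessary); the cumulative preimages $\bigcup_{r=0}^{n}p^{-r}(U\setminus\{\beta\})$ are connected, avoid $\beta$, and their union over all $r$ contains $\Int{\mathcal{K}(p)}=\bigcup_{r\geq 0}p^{-r}\bigl(\bigcup_{i}U_i\bigr)$; then $\mathcal{K}(p)=\overline{\Int{\mathcal{K}(p)}}\subseteq\overline{S}\subseteq S\cup\{\beta\}$ (the first equality is \cite[Corollary~4.12]{MR2193309}), so $\beta$ is not a cut point of $\mathcal{K}(p)$, local connectivity gives the unique ray, and invariance of $\beta$ gives the fixed angle. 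The same full-$U$ pullback is what makes part (2) go through: with $U\setminus\{\beta\}$ having exactly two components, the identical argument bounds the number of components of $\mathcal{K}(p)\setminus\{\beta\}$ by two, and your counting argument (all $d+1$ fixed angles are already consumed by the non-cut points from part (1)) then forces the two rays landing at $\beta$ to form a $2$-cycle, exactly as in the paper.
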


\begin{proof} We abbreviate $p:=p_f$, and continue to use the notation of Propositions \ref{existence_poly_prop}, \ref{cut_point_structure}. By Remark \ref{bottcher_poly}, each $\beta\in\textrm{Rep}(p)$ is the landing point of at least one external ray.

\vspace{2mm}

\noindent \emph{Proof of (1)}: Let $\beta\in\textrm{Rep}(p_f)\setminus\textrm{Cut}(p_f)$. Denote by $S$ the component of $\mathcal{K}(p)\setminus\{\beta\}$ containing $U\setminus\{\beta\}$. It follows from the covering properties of $p$ that $\cup_{r=0}^n p^{-r}(U\setminus\{\beta\})$ is connected for each $n\geq0$. Thus \[\bigcup_{r=0}^n p^{-r}(U\setminus\{\beta\}) \subseteq S \textrm{ for all } n\geq0. \] Observe that \begin{align}\label{interior_of_filled_julia} \Int{\mathcal{K}(p)}=\bigcup_{r=0}^\infty p^{-r}\left(\bigcup_{i=1}^{k+1} U_i\right). \end{align} We then have \begin{align}\label{filled_relation} \mathcal{K}(p) = \overline{\Int{\mathcal{K}(p)}} \subseteq\overline{\bigcup_{r=0}^\infty p^{-r}(U\setminus\{\beta\})}\subseteq \overline{S}=S\cup\{\beta\}, \end{align} where the first equality in (\ref{filled_relation}) follows from \cite[Corollary~4.12]{MR2193309}, and the proceeding $\subseteq$ relation follows from (\ref{interior_of_filled_julia}). By definition of $S$, we have $\mathcal{K}(p)=S\cup\{\beta\}$, so that $\beta$ is not a cut point of $\mathcal{K}(p)$. Thus $\beta$ is the landing point for exactly one external ray. Since $\beta$ is fixed, it follows that the angle of the external ray landing at $\beta$ is fixed under $m_{-d}$.

\vspace{2mm}

\noindent \emph{Proof of (2)}: Let $\beta\in\textrm{Rep}(p_f)\cap\textrm{Cut}(p_f)$. Let $S_1$, $S_2$ be the two components of $\mathcal{K}(p)\setminus\{\beta\}$ such that $U\setminus\{\beta\}\subseteq S_1\cup S_2$. A similar argument as for (1) shows that $S_1$ and $S_2$ are the only components of $\mathcal{K}(p)\setminus\{\beta\}$. Thus there are only two accesses to $\beta$ in $\mathcal{B}_\infty(p)$, and hence exactly two external rays landing at $\beta$. Since the $d+1$ fixed external rays land at the $d+1$ points of $\textrm{Rep}(p_f)\setminus\textrm{Cut}(p_f)$ by (1), it follows that the external rays landing at $\beta$ must have period $2$, and hence form a 2-cycle under $m_{-d}$.
\end{proof}

\begin{figure}[ht!]
{\includegraphics[width=0.9\textwidth]{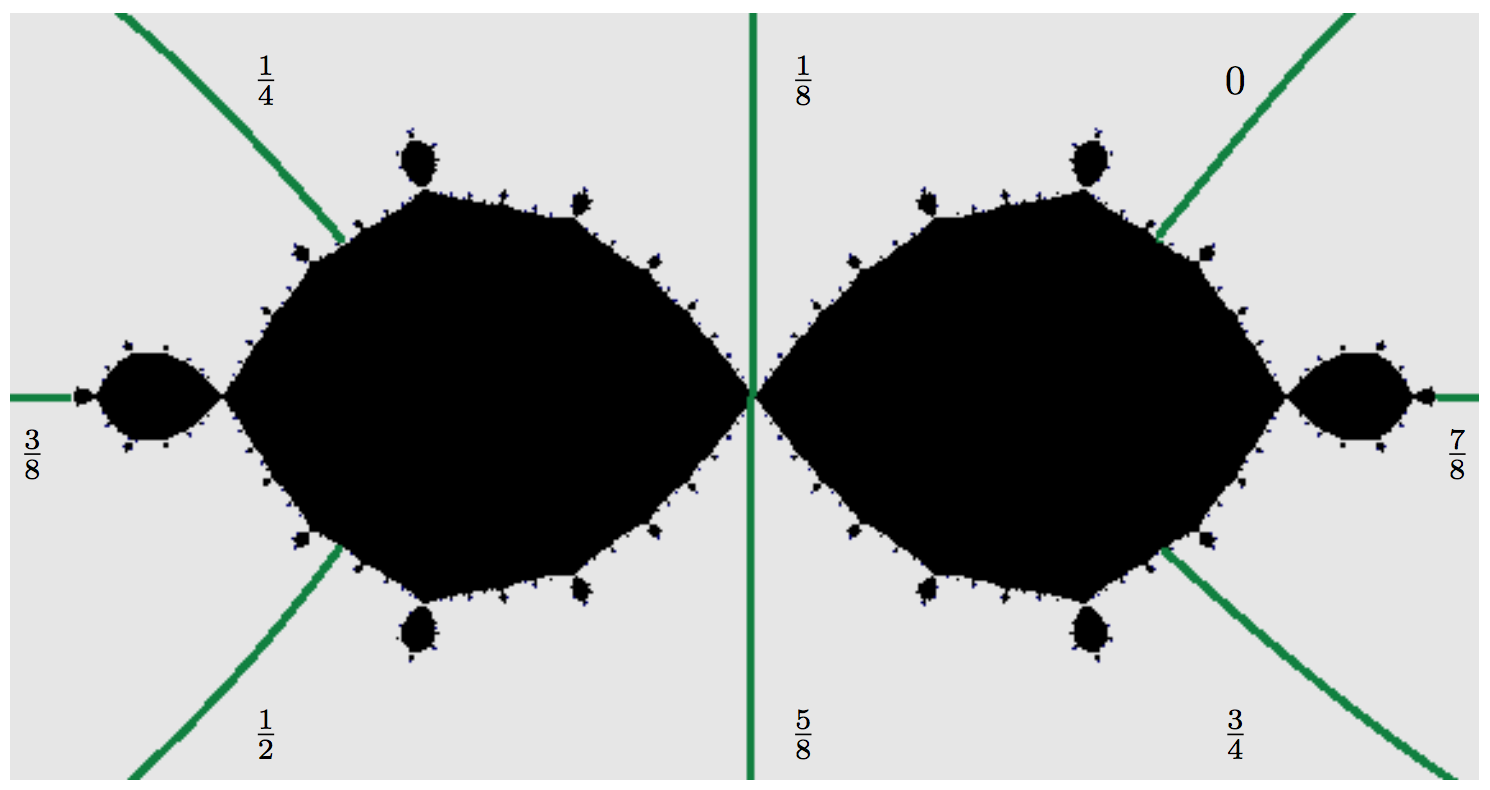}}
\caption{Illustrated is the dynamical plane of $p_f$, where $f(z):=z+2/(3z)-1/(3z^3)$. For this particular $f$, an explicit formula is known for $p_f$: $p_f(z)=\overline{z}^3-\frac{3i}{2}\overline{z}$ (the figure displayed is a $\pi/4$-rotate
of the actual dynamical plane). Also shown are all external rays of $p_f$ of period $1$ and $2$ (with angles indicated). The idea of the proof of Proposition~\ref{laminations_correspond2} is to show that the external rays of $\sigma_f$ landing at the double points of $f(\mathbb{T})$ have the same landing pattern as for those rays landing at the cut-points of the immediate basins of attraction for $p_f$.}
\label{fig:cubic_crit_fixed}
\end{figure}

\begin{prop}\label{laminations_correspond2} Let $f\in\Sigma_d^*$, and $p_f$ as in Proposition \ref{existence_poly_prop}. There is a normalization of the B\"ottcher coordinate for $p_f$ such that  $\lambda(p_f)=\lambda(\sigma_f)$. 
\end{prop}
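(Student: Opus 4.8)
The strategy is to compare the two laminations $\lambda(\sigma_f)$ and $\lambda(p_f)$ via their respective descriptions in terms of $2$-cycles and their preimages, exactly as was done for $\lambda(\sigma_f)$ versus $\lambda(\Gamma_f)$ in Proposition~\ref{laminations_correspond}. First I would recall that by Proposition~\ref{schwarz_lamination_prop}, $\lambda(\sigma_f)$ is generated (as an equivalence relation under pullback by $m_{-d}$) by the set of pairs $\{\phi_{\sigma_f}^{-1}(\zeta) : \zeta \textrm{ a double point of } f(\mathbb{T})\}$, each such pair being a $2$-cycle of $m_{-d}$. On the polynomial side, the key input is the cut-point analysis: by Proposition~\ref{cut_point_structure}, $\textrm{Cut}(p_f)\subset\textrm{Rep}(p_f)$ consists of exactly $k$ points, and by Lemma~\ref{cut_point_sara_cro}, each point of $\textrm{Rep}(p_f)\cap\textrm{Cut}(p_f)$ is the landing point of exactly two external rays whose angles form a $2$-cycle of $m_{-d}$, while each point of $\textrm{Rep}(p_f)\setminus\textrm{Cut}(p_f)$ is the landing point of a unique fixed ray. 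Mimicking the argument in the proof of Proposition~\ref{schwarz_lamination_prop} (using that $p_f$ is expanding on $\mathcal{J}(p_f)$ and the Markov partition of $\mathcal{J}(p_f)$ cut out by $U$ together with the rays landing at $\textrm{Rep}(p_f)$), one sees that $\lambda(p_f)$ is also generated by the $2$-cycles $\{\phi_p^{-1}(\beta):\beta\in\textrm{Cut}(p_f)\}$. So both laminations are generated by a collection of $2$-cycles of $m_{-d}$, and it suffices to exhibit a normalization of $\phi_p$ under which these two collections of $2$-cycles coincide.

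Next I would invoke the combinatorial classification of $2$-cycles of $m_{-d}$ already used in the proof of Proposition~\ref{laminations_correspond}: the fixed rays land, in counter-clockwise order, at the $(d+1)$ non-cut repelling points, and a $2$-cycle of $m_{-d}$ is uniquely determined by the (unordered, non-adjacent) pair of complementary arcs of $\mathbb{T}\setminus\{\textrm{fixed angles}\}$ that contains it. Thus a $2$-cycle on the Schwarz side is encoded by the pair of boundary arcs of $\partial T(\sigma_f)$ (between consecutive cusps) on whose sides the corresponding double point sits, and similarly a $2$-cycle on the polynomial side is encoded by the pair of ``sides'' of $U$ between consecutive non-cut fixed points. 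The point is that both encodings are read off from the same combinatorial object, namely the abstract angled tree $\mathcal{T}(f)$ of Example~\ref{tree_to_f}: the double points of $f(\mathbb{T})$ correspond bijectively to edges of $\mathcal{T}(f)$ (equivalently, to the cut-points of $U$, since the Hubbard tree of $p_f$ is isomorphic to $\mathcal{T}(f)$ by Proposition~\ref{existence_poly_prop}), and for each such edge the pair of cusps of $\partial T_i$ that collide, respectively the pair of fixed points on $\partial U_i$ that are identified, are prescribed by the same angle data $\angle$. Concretely, I would fix the normalization of $\phi_p$ so that the fixed ray at angle $0$ lands at the repelling fixed point of $p_f$ playing the role of $\zeta_1^f$ (the cusp labeled $1$), which — using the counting/combinatorial rigidity from Proposition~\ref{cut_point_structure} and the $d+1$-fold ambiguity of the B\"ottcher coordinate — determines $\phi_p$ uniquely up to the symmetry already accounted for. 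Then, matching cusps with boundary fixed points of the $U_i$ edge by edge along $\mathcal{T}(f)$, one checks that $\phi_{\sigma_f}^{-1}(\zeta)$ and $\phi_p^{-1}(\beta)$ are the $2$-cycle of $m_{-d}$ lying in the same pair of complementary arcs $(\theta_i,\theta_{i+1}),(\theta_j,\theta_{j+1})$, hence are equal; since these $2$-cycles generate the two laminations and the generating sets coincide, $\lambda(p_f)=\lambda(\sigma_f)$.

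The main obstacle I anticipate is the bookkeeping that makes ``the same pair of arcs'' precise: one must verify that the bijection between double points of $f(\mathbb{T})$ and cut-points of $U$ furnished by the isomorphism of $\mathcal{T}(f)$ with the Hubbard tree of $p_f$ actually respects the cyclic order of cusps around each $\partial T_i$ and the cyclic order of fixed points around each $\partial U_i$ — i.e., that the $\angle$-data, which only remembers the combinatorics of the tree map, genuinely pins down \emph{which} two arcs each $2$-cycle sits in on both sides. This is where the normalization of $\phi_p$ has to be chosen with care and where the argument parallels, but is slightly more delicate than, the corresponding step in Proposition~\ref{laminations_correspond} (there the label-preserving homeomorphism $h:T(\Gamma_f)\to T(\sigma_f)$ did this matching automatically via (\ref{referenced_in_proof_of_theorem_A}); here the analogous matching must be extracted from the realization theorem \cite[Theorem~5.1]{Poi3} and the fixed-point count of Proposition~\ref{cut_point_structure}). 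Once the correspondence of arcs is established, the remainder is the same Markov-partition argument as before and is routine.
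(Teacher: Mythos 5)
Your proposal is correct and follows essentially the same route as the paper: the paper also uses the angled-tree isomorphism (via the $\angle$-preservation) to build an orientation-compatible bijection $\chi$ between $\mathrm{Rep}(p_f)$ and the singular points of $f(\mathbb{T})$, normalizes $\phi_{p_f}$ by $\chi\circ\phi_{p_f}(1)=\phi_{\sigma_f}(1)$ (i.e.\ the $0$-ray lands at the point corresponding to $\zeta_1^f$), and then matches the generating $2$-cycles using the fact that each pair of non-adjacent arcs between fixed angles contains a unique $2$-cycle of $m_{-d}$. The ``main obstacle'' you flag — that the tree isomorphism respects the cyclic order around each $\partial T_i$ and $\partial U_i$ — is exactly what the paper settles by noting that preservation of the $\angle$-function yields the orientation condition on the bijections $\chi_i$, so your plan is complete as stated.
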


\begin{rem} The idea of the proof is similar to that of Proposition \ref{laminations_correspond}, for which we refer to Figure \ref{fig:cubic_crit_fixed}. \end{rem}

\begin{proof} Let $k$ be such that $f\in\Sigma_{d,k}^*$. We abbreviate $\sigma:=\sigma_f$, $p:=p_f$. Consider the isomorphism of the angled Hubbard tree of $p$ with the abstract angled tree of $f$ as defined in Example \ref{tree_to_f}. Thus there is, first of all, a bijection between the attracting basins $U_1, \cdots, U_{k+1}$ of $p$ and the components $T_1, \cdots, T_{k+1}$ of $T^o(\sigma)$. We ensure the labeling is such that $U_i$ is mapped to $T_i$. Since the $\textrm{deg}$ function is preserved, the number of singular points on each $\partial T_i$ is equal to the number of fixed points of $p|_{\partial U_i}$. Moreover, since the $\angle$ function is preserved, for each $1\leq i\leq k+1$ there is a bijection \[\chi_i: \partial U_i \cap \textrm{Rep}(p) \to \partial T_i \cap \textrm{Sing}(f(\mathbb{T}))\] satisfying: 
\begin{enumerate} 
\item For $1\leq j\leq k+1$, one has $\chi_i(\beta)\in\partial T_i\cap \partial T_j$ if and only if $\beta\in\partial U_i\cap \partial U_j$;

\item $(\beta_1, \beta_2, \beta_3)$ is oriented positively with respect to $U_i$ if and only if

\noindent $(\chi_i(\beta_1), \chi_i(\beta_2), \chi_i(\beta_3))$ is oriented positively with respect to $T_i$. \end{enumerate} 
By (1), the map $\chi: \textrm{Rep}(p) \rightarrow \textrm{Sing}(f(\mathbb{T}))$ defined piecewise as $\chi_i$ on each $\textrm{Rep}(p)\cap\partial U_i$ is well-defined, whence it follows that $\chi$ is a bijection.

Denote by $\phi_p$, $\phi_{\sigma}$ the B\"ottcher coordinates for $p$, $\sigma$, respectively. We normalize $\phi_p$ so that \[ \chi\circ\phi_p(1) = \phi_\sigma(1). \] Recall that the cusps of $\partial T_i$ are the landing points of the fixed rays in $\partial \mathcal{B}_\infty(\sigma)$ by Lemma \ref{fixed_point_ray_lem}, and the points $\textrm{Rep}(p)\setminus\textrm{Cut}(p)$ are the landing points of the fixed rays in $\partial \mathcal{B}_\infty(p)$ by Lemma \ref{cut_point_sara_cro}. The fixed rays of $\partial \mathcal{B}_\infty(\sigma)$ and $\partial \mathcal{B}_\infty(p)$ have the same angles, and we enumerate them $\theta_1,\cdots, \theta_{d+1}$ where $\theta_1:=0$. There is a 2-cycle (under $m_{-d}$) on $\mathbb{T}$ in each pair of non-adjacent intervals $(\theta_i, \theta_{i+1})$, $(\theta_j, \theta_{j+1})$, and this constitutes all 2-cycles of $m_{-d}$.

 By Lemma \ref{cut_point_sara_cro}, for each $\beta \in\textrm{Rep}(p)\cap\textrm{Cut}(p)$, the set $\phi_p^{-1}(\beta)$ is a 2-cycle on $\mathbb{T}$.  The $2$-cycle $\phi_p^{-1}(\beta)$ is in the pair of intervals $(\theta_i, \theta_{i+1})$, $(\theta_j, \theta_{j+1})$ if and only if $\beta$ lies on both $\phi_p((\theta_i,\theta_{i+1}))$ and $\phi_p((\theta_j,\theta_{j+1}))$. Similarly, for each double point $\zeta$ of $f(\mathbb{T})$, the set $\phi_\sigma^{-1}(\zeta)$ is a $2$-cycle on $\mathbb{T}$ by Lemma \ref{fixed_point_ray_lem}. And moreover, the $2$-cycle $\phi_\sigma^{-1}(\zeta)$ is in the pair of intervals $(\theta_i, \theta_{i+1})$, $(\theta_j, \theta_{j+1})$ if and only if $\zeta$ lies on both $\phi_\sigma((\theta_i,\theta_{i+1}))$ and $\phi_\sigma((\theta_j,\theta_{j+1}))$. Thus, by the definition of $\chi$, for $\beta \in\textrm{Rep}(p)\cap\textrm{Cut}(p)$, the $2$-cycle $\phi_p^{-1}(\beta)$ is in the pair of intervals $(\theta_i, \theta_{i+1})$, $(\theta_j, \theta_{j+1})$ if and only if $\phi_\sigma^{-1}(\chi(\beta))$ is in the same pair of intervals. As there is only one $2$-cycle in any such pair, it follows that $\phi_p^{-1}(\beta)=\phi_\sigma^{-1}(\chi(\beta))$. 
 
Recall that by Proposition \ref{schwarz_lamination_prop}, the pairs $\phi_\sigma^{-1}(\zeta)$ over all double points $\zeta$ of $f(\mathbb{T})$ generate $\lambda(\sigma)$. A completely analogous proof to that of Proposition \ref{schwarz_lamination_prop} shows $\lambda(p)$ is generated by pairs $\phi_p^{-1}(\beta)$ where $\beta$ ranges over $\textrm{Rep}(p)\cap\textrm{Cut}(p)$. Thus since $\chi$ is a bijection and $\phi_p^{-1}(\beta)=\phi_\sigma^{-1}(\chi(\beta))$ for all $\beta \in \textrm{Rep}(p)\cap\textrm{Cut}(p)$, it follows that  $\lambda(\sigma)=\lambda(p)$. 
\end{proof}

\begin{figure}[ht]
\begin{tikzpicture}
\node at (9.6,5.4) {$\R/\Z$};
\node at (3.1,5.4) {$\R/\Z$};
\node at (3,6.48) {$m_{-d}$};
\node at (6.4,6) {$\mathcal{E}_d$};
\node at (9.6,6.48) {$\pmb{\rho}_{\Gamma_{d+1}}$};
\node at (1.6,1.4) {$\Lambda(\sigma_f)$};
\node at (5.12,1.4) {$\mathcal{J}(p_f)$};
\node at (9.5,1.4) {$\Lambda(\Gamma_f)$};
\node at (1.5,0.28) {$\sigma_f$};
\node at (5.12,0.28) {$p_f$};
\node at (9.5,0.28) {$\rho_{\Gamma_f}$};
\node at (1.8,3.5) {$\phi_{\sigma_f}$};
\node at (4.8,3.5) {$\phi_{p_f}$};
\node at (10.2,3.5) {$\phi_{\Gamma_f}$};

\node at (1.5,0.8) {\begin{huge}$\circlearrowleft$\end{huge}};
\node at (5.1,0.8) {\begin{huge}$\circlearrowleft$\end{huge}};
\node at (9.5,0.8) {\begin{huge}$\circlearrowleft$\end{huge}};
\node at (9.6,6) {\begin{huge}$\curvearrowright$\end{huge}};
\node at (3.1,6) {\begin{huge}$\curvearrowleft$\end{huge}};

\draw [<-] (3.6,5.4) to (9,5.4);
\draw [->] (3,5) to (1.8,1.8);
\draw [->] (3.3,5) to (5,1.8);
\draw [->] (9.6,5) to (9.6,1.8);

\end{tikzpicture}
\caption{Various topological conjugacies.}

\label{various_conjugacy}
\end{figure}
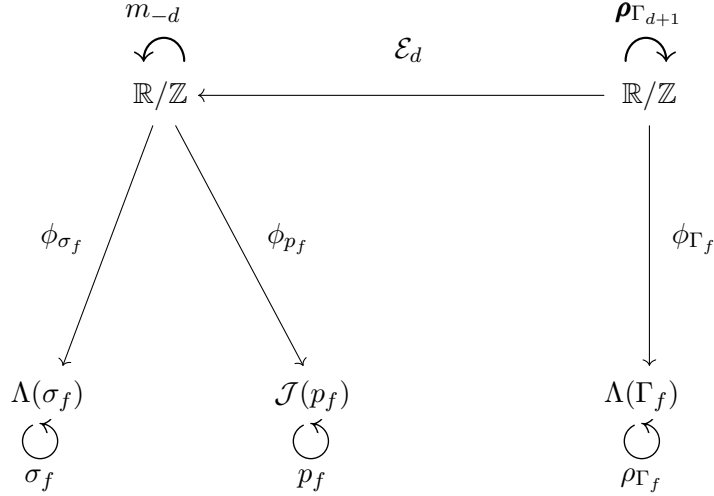

\begin{rem}\label{consequence_of_lamination_prop} Let notation be as in Proposition \ref{laminations_correspond2}, and denote by $\phi_{p_f}$, $\phi_{\sigma_f}$ the B\"ottcher coordinates of $p_f$, $\sigma_f$ (respectively) with $\phi_{p_f}$ normalized as in Proposition \ref{laminations_correspond2}. It follows from Proposition \ref{laminations_correspond2} that \begin{align} \phi_{p_f} \circ \phi_{\sigma_f}^{-1}: \Lambda(\sigma_f) \rightarrow \mathcal{J}(p_f)  \nonumber \end{align} is well-defined, and indeed a topological conjugacy (see Figure~\ref{various_conjugacy}).

\end{rem}

\noindent We note that Theorem \ref{theorem_B} follows immediately from:

\begin{thmx}\label{theorem_B_modified} 
Let $f\in\Sigma_d^*$. Denote by $\sigma_f$, $\Gamma_f$, $p_f$ the Schwarz reflection map, Kleinian reflection group, and critically fixed anti-polynomial determined by Definition \ref{Schwarz_reflection}, Theorem \ref{theorem_A}, and Proposition \ref{existence_poly_prop}, respectively. Then the dynamical systems \begin{align} \sigma_f: \Lambda(\sigma_f) \rightarrow \Lambda(\sigma_f), \nonumber \\ \rho_{\Gamma_f}: \Lambda(\Gamma_f) \rightarrow \Lambda(\Gamma_f), \nonumber \\  p_f: \mathcal{J}(p_f) \rightarrow \mathcal{J}(p_f) \nonumber \end{align} are pairwise topologically conjugate. \end{thmx}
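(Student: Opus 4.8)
The plan is to produce two topological conjugacies, $\Lambda(\sigma_f)\to\mathcal{J}(p_f)$ and $\Lambda(\sigma_f)\to\Lambda(\Gamma_f)$, and observe that the third conjugacy follows by composition. Both of these maps have already been constructed, essentially, in the body of the paper; the work here is to verify that they are genuinely conjugacies and to assemble the pieces.

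\textbf{Step 1: the conjugacy with the anti-polynomial.} By Proposition~\ref{existence_poly_prop}, the critically fixed anti-polynomial $p_f$ has connected and locally connected Julia set (Remark~\ref{bottcher_poly}), so its B\"ottcher coordinate $\phi_{p_f}$ extends continuously to a semi-conjugacy $\mathbb{T}\to\mathcal{J}(p_f)$ between $m_{-d}$ and $p_f\vert_{\mathcal{J}(p_f)}$. By Proposition~\ref{local_connectivity_prop} and Corollary~\ref{decomp_cor}, the B\"ottcher coordinate $\phi_{\sigma_f}$ likewise extends to a semi-conjugacy $\mathbb{T}\to\Lambda(\sigma_f)$ between $m_{-d}$ and $\sigma_f\vert_{\Lambda(\sigma_f)}$. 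Proposition~\ref{laminations_correspond2} asserts that, for an appropriate normalization of $\phi_{p_f}$, the two laminations $\lambda(p_f)$ and $\lambda(\sigma_f)$ coincide as equivalence relations on $\mathbb{T}$. Therefore $\phi_{p_f}\circ\phi_{\sigma_f}^{-1}$ is well-defined on $\Lambda(\sigma_f)$; it is a continuous bijection between compact Hausdorff spaces, hence a homeomorphism, and it intertwines $\sigma_f$ with $p_f$ because each of $\phi_{\sigma_f}$, $\phi_{p_f}$ intertwines $m_{-d}$ with the respective dynamics (this is exactly Remark~\ref{consequence_of_lamination_prop}). This gives the conjugacy $\sigma_f\vert_{\Lambda(\sigma_f)}\cong p_f\vert_{\mathcal{J}(p_f)}$.

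\textbf{Step 2: the conjugacy with the reflection group.} By Proposition~\ref{group_lamination_prop}, the conformal map $\phi_{\Gamma_f}:\mathbb{D}^*\to\Omega_\infty(\Gamma_f)$ extends to a semi-conjugacy $\mathbb{T}\to\Lambda(\Gamma_f)$ between $\rho_{\pmb{\Gamma}_{d+1}}\vert_{\mathbb{T}}$ and $\rho_{\Gamma_f}\vert_{\Lambda(\Gamma_f)}$, with fibers constituting the lamination $\lambda(\Gamma_f)$. Recall from Remark~\ref{defn_of_conjug} that $\mathcal{E}_d:\mathbb{T}\to\mathbb{T}$ is a homeomorphism conjugating $\rho_{\pmb{\Gamma}_{d+1}}\vert_{\mathbb{T}}$ to $m_{-d}$ (equivalently $\overline{z}^d$). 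Proposition~\ref{laminations_correspond} says that $\mathcal{E}_d$ descends to a homeomorphism $\mathbb{T}/\lambda(\Gamma_f)\to\mathbb{T}/\lambda(\sigma_f)$; consequently, as recorded in Remark~\ref{consequence_of_laminations_correspond1}, the composition $\phi_{\sigma_f}\circ\mathcal{E}_d\circ\phi_{\Gamma_f}^{-1}:\Lambda(\Gamma_f)\to\Lambda(\sigma_f)$ is a well-defined homeomorphism. It is a conjugacy between $\rho_{\Gamma_f}$ and $\sigma_f$ because $\phi_{\Gamma_f}$ intertwines $\rho_{\pmb{\Gamma}_{d+1}}$ with $\rho_{\Gamma_f}$, $\mathcal{E}_d$ intertwines $\rho_{\pmb{\Gamma}_{d+1}}$ with $m_{-d}$, and $\phi_{\sigma_f}$ intertwines $m_{-d}$ with $\sigma_f$ (valid on the relevant parts of $\mathbb{T}$ where the Markov structure applies, as in Proposition~\ref{group_lamination_prop}). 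This yields $\rho_{\Gamma_f}\vert_{\Lambda(\Gamma_f)}\cong\sigma_f\vert_{\Lambda(\sigma_f)}$.

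\textbf{Step 3: assembly and the main obstacle.} Composing the conjugacy of Step~2 with that of Step~1 gives the conjugacy $\rho_{\Gamma_f}\vert_{\Lambda(\Gamma_f)}\cong p_f\vert_{\mathcal{J}(p_f)}$, so all three systems are pairwise topologically conjugate. The genuinely substantive input is the compatibility of laminations, i.e.\ Propositions~\ref{laminations_correspond} and~\ref{laminations_correspond2}; these in turn rest on the structural facts that fixed external rays land at cusps (resp.\ non-cut repelling fixed points) while period-two rays detect the double points (resp.\ cut points), together with the label-preserving homeomorphism $T(\Gamma_f)\to T(\sigma_f)$ from Proposition~\ref{existence_of_group} and the normalization provided by Proposition~\ref{zero_ray_landing_cusp}. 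All of this being available from earlier in the paper, the only remaining care is bookkeeping: checking that the three B\"ottcher/uniformizing normalizations are mutually consistent (the $0$-ray of $\sigma_f$ landing at the cusp $\zeta_1^f$, the image under $\phi_{\Gamma_f}$ of $1$ being the corresponding cusp of $\partial T(\Gamma_f)$, and the normalization of $\phi_{p_f}$ chosen in Proposition~\ref{laminations_correspond2}), so that the composed maps are honest conjugacies and not merely conjugacies up to a rotation by a $(d+1)$-st root of unity.
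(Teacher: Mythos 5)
Your proposal is correct and follows essentially the same route as the paper: the paper's proof likewise deduces the conjugacy $\sigma_f\vert_{\Lambda(\sigma_f)}\cong p_f\vert_{\mathcal{J}(p_f)}$ from Proposition~\ref{laminations_correspond2} (via Remark~\ref{consequence_of_lamination_prop}) and the conjugacy $\sigma_f\vert_{\Lambda(\sigma_f)}\cong\rho_{\Gamma_f}\vert_{\Lambda(\Gamma_f)}$ from Proposition~\ref{laminations_correspond} (via Remark~\ref{consequence_of_laminations_correspond1}), with the third conjugacy obtained by composition. Your additional bookkeeping about the compatibility of normalizations is consistent with how those propositions are set up in the paper.
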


\begin{proof}[Proof of Theorem~\ref{theorem_B_modified}] That $\sigma_f|_{\Lambda(\sigma_f)}$ and $p_f|_{\mathcal{J}(p_f)}$ are topologically conjugate is a consequence of Proposition~\ref{laminations_correspond2} as explained in Remark \ref{consequence_of_lamination_prop}. That $\sigma_f|_{\Lambda(\sigma_f)}$ and $\rho_{\Gamma_f}|_{\Lambda(\Gamma_f)}$ are topologically conjugate follows from Proposition~\ref{laminations_correspond}, as explained in Remark~\ref{consequence_of_laminations_correspond1}.
\end{proof}

\begin{rem} In the spirit of \cite[Theorem~7.2]{LLMM4}, it is natural to ask whether $\Lambda(\sigma_f)$, $\Lambda(\Gamma_f)$, $\mathcal{J}(p_f)$ can be distinguished by their quasisymmetry groups. \end{rem}

\begin{rem}\label{monic_normalization}
In light of Proposition~\ref{laminations_correspond2}, we can conjugate $p_f$ by an affine map to assume that $p_f$ is monic, centered, and $\lambda(p_f)=\lambda(\sigma_f)$, where $\lambda(p_f)$ is determined by the B{\"o}ttcher coordinate of $p_f$ that is tangent to the identity at $\infty$. In fact, $p_f$ becomes unique with such normalization. Moreover, it directly follows from the proof of Proposition~\ref{laminations_correspond} and Remark~\ref{geod_lamination} that the circle homeomorphism $\mathcal{E}_d$ transports the geodesic lamination that produces a topological model for $\Lambda(\Gamma_f)$ to the lamination that produces a topological model for $\mathcal{J}(p_f)$.
\end{rem}

%\begin{cor}\label{crit_fixed_poly_dictionary_cor}
%Let $p$ be a critically fixed anti-polynomial. Then, the Julia set of $p$ is homeomorphic to the limit set of some Kleinian reflection group.
%\end{cor}
%\begin{proof}
%By Propositions~\ref{fatou_touching_prop} and~\ref{existence_qd_prop}, there exists some $f\in\Sigma_d^*$ such that the angled tree associated with $f$ is isomorphic to the angled Hubbard tree of $p$. One can now follow the arguments of Lemma~\ref{cut_point_sara_cro} and Propositions~\ref{schwarz_group_conjugacy_prop},~\ref{anti_poly_lamination_prop},~\ref{top_conj_limit_julia_dynamics} to conclude the result.
%\end{proof}


\begin{thebibliography}{LLMM18b}

\bibitem[AM96]{AnMa}
J.~W. Anderson and B.~Maskit.
\newblock On the local connectivity of limit sets of {K}leinian groups.
\newblock {\em Complex Variables, Theory and Application}, 31(2):177--183,
  1996.

\bibitem[AS76]{AS}
D.~Aharonov and H.~S. Shapiro.
\newblock Domains on which analytic functions satisfy quadrature identities.
\newblock {\em J. Analyse Math.}, 30:39--73, 1976.

\bibitem[Ber60]{Bers60}
L.~Bers.
\newblock Simultaneous uniformization.
\newblock {\em Bulletin of the American Mathematical Society}, 66(2):94--97,
  1960.

\bibitem[BL20]{BL20}
S.~Bullett and L.~Lomonaco.
\newblock Mating quadratic maps with the modular group {II}.
\newblock {\em Inventiones Mathematicae}, 220:185--210, 2020.

\bibitem[BP94]{BP}
S.~Bullett and C.~Penrose.
\newblock Mating quadratic maps with the modular group.
\newblock {\em Inventiones Mathematicae}, 115:483--511, 1994.

\bibitem[BS79]{BoSe}
R.~Bowen and C.~Series.
\newblock Markov maps associated with {F}uchsian groups.
\newblock {\em Publications Math{\'e}matiques de L'I.H.{\'E}.S}, 50(153-170),
  1979.

\bibitem[CG93]{CG1}
L.~Carleson and T.~W. Gamelin.
\newblock {\em Complex Dynamics}.
\newblock Springer, Berlin, 1993.

\bibitem[DH85]{orsay}
Adrien Douady and John~H. Hubbard.
\newblock {\em {\'E}tude dynamique des polyn\^omes complexes {I}, {II}}.
\newblock Publications Math\'ematiques d'Orsay. Universit\'e de Paris-Sud,
  D\'epartement de Math\'ematiques, Orsay, 1984 - 1985.

\bibitem[Dou83]{Dou83}
A.~Douady.
\newblock Syst{\`e}mes dynamiques holomorphes.
\newblock In {\em S{\'e}minaire Bourbaki}, volume 1982/83, pages 39--63.
  Ast{\'e}risque, 105--106, Soc. Math. France, Paris, 1983.

\bibitem[KT90]{KeTh}
S.~P. Kerckhoff and W.~P. Thurston.
\newblock Non-continuity of the action of the modular group at {B}ers' boundary
  of {T}eichmuller space.
\newblock {\em Inventiones mathematicae}, 100:25--47, 1990.

\bibitem[LLMM18a]{LLMM1}
S.-Y. Lee, M.~Lyubich, N.~G. Makarov, and S.~Mukherjee.
\newblock Dynamics of {S}chwarz reflections: the mating phenomena.
\newblock \url{https://arxiv.org/abs/1811.04979}, 2018.

\bibitem[LLMM18b]{LLMM2}
S.-Y. Lee, M.~Lyubich, N.~G. Makarov, and S.~Mukherjee.
\newblock {S}chwarz reflections and the {T}ricorn.
\newblock \url{https://arxiv.org/abs/1812.01573}, 2018.

\bibitem[LLMM19]{LLMM4}
R.~Lodge, M.~Lyubich, S.~Merenkov, and S.~Mukherjee.
\newblock On dynamical gaskets generated by rational maps, {K}leinian groups,
  and {S}chwarz reflections.
\newblock \url{https://arxiv.org/abs/1912.13438}, 2019.

\bibitem[LM97]{LyMi}
M.~Lyubich and Y.~Minsky.
\newblock Laminations in holomorphic dynamics.
\newblock {\em J. Differential Geom.}, 47:17--94, 1997.

\bibitem[LM14]{2014arXiv1411.3415L}
S.-Y. {Lee} and N.~{Makarov}.
\newblock {Sharpness of connectivity bounds for quadrature domains}.
\newblock \url{https://arxiv.org/abs/1411.3415}, November 2014.

\bibitem[LMM19]{LMM1}
K.~Lazebnik, N.~G. Makarov, and S.~Mukherjee.
\newblock Univalent polynomials and {H}ubbard trees.
\newblock \url{https://arxiv.org/abs/1908.05813}, 2019.

\bibitem[LMMN20]{LMMN}
M.~Lyubich, S.~Merenkov, S.~Mukherjee, and D.~Ntalampekos.
\newblock David extension of circle homeomorphisms, mating, and removability.
\newblock \url{https://arxiv.org/abs/2010.11256}, 2020.

\bibitem[LV73]{MR0344463}
O.~Lehto and K.~I. Virtanen.
\newblock {\em Quasiconformal mappings in the plane}.
\newblock Springer-Verlag, New York-Heidelberg, second edition, 1973.
\newblock Translated from the German by K. W. Lucas, Die Grundlehren der
  mathematischen Wissenschaften, Band 126.

\bibitem[Mar07]{Marden}
A.~Marden.
\newblock {\em Outer circles: an introduction to hyperbolic 3-manifolds}.
\newblock Cambridge University Press, 2007.

\bibitem[McM95]{McM95}
C.~T. McMullen.
\newblock The classification of conformal dynamical systems.
\newblock In R.~Bott, M.~Hopkins, A.~Jaffe, I.~Singer, D.~W. Stroock, and S.-T.
  Yau, editors, {\em Current Developments in Mathematics}, pages 323-- 360.
  International Press, 1995.

\bibitem[Mil06]{MR2193309}
John Milnor.
\newblock {\em Dynamics in one complex variable}, volume 160 of {\em Annals of
  Mathematics Studies}.
\newblock Princeton University Press, Princeton, NJ, third edition, 2006.

\bibitem[MS13]{MS}
M.~Mj and C.~Series.
\newblock Limits of limit sets {I}.
\newblock {\em Geom. Dedicata}, 167:35--67, 2013.

\bibitem[Pil03]{Pil03}
K.~M. Pilgrim.
\newblock {\em Combinations of complex dynamical systems}.
\newblock Lecture Notes in Mathematics. Springer, 2003.

\bibitem[PM12]{AFST_2012_6_21_S5_839_0}
Carsten~Lunde Petersen and Daniel Meyer.
\newblock On the notions of mating.
\newblock {\em Annales de la Facult\'e des sciences de Toulouse :
  Math\'ematiques}, Ser. 6, 21(S5):839--876, 2012.

\bibitem[Poi13]{Poi3}
A.~Poirier.
\newblock Hubbard forests.
\newblock {\em Ergodic Theory and Dynamical systems}, 33:303--317, 2013.

\bibitem[SM98]{SulMc}
D.~Sullivan and C.~T. McMullen.
\newblock Quasiconformal homeomorphisms and dynamics {III}: The
  {T}eichm{\"u}ller space of a holomorphic dynamical system.
\newblock {\em Advances in Mathematics}, 135:351--395, 1998.

\bibitem[Ste05]{MR2131318}
Kenneth Stephenson.
\newblock {\em Introduction to circle packing: The theory of discrete analytic
  functions}.
\newblock Cambridge University Press, Cambridge, 2005.

\bibitem[Sul85]{Sul}
D.~Sullivan.
\newblock Quasiconformal homeomorphisms and dynamics {I}. solution of the
  {F}atou-{J}ulia problem on wandering domains.
\newblock {\em Annals of Mathematics}, 122(2):401--418, 1985.

\bibitem[Tuk85]{MR783351}
Pekka Tukia.
\newblock On isomorphisms of geometrically finite {M}\"{o}bius groups.
\newblock {\em Inst. Hautes \'{E}tudes Sci. Publ. Math.}, 61:171--214, 1985.

\bibitem[VS93]{Vin1}
E.~B. Vinberg and O.~V. Shvartsman.
\newblock {\em Geometry II: Spaces of Constant Curvature}, volume~29 of {\em
  Encyclopaedia of Mathematical Sciences}.
\newblock Springer-Verlag, 1993.

\end{thebibliography}
\end{document}